\documentclass[10]{amsart}
\usepackage{url}
\usepackage{latexsym}
\usepackage{lscape} 
\usepackage{pdflscape}
\usepackage{float}
\usepackage{amssymb}
\usepackage{mathtools}
\usepackage{stmaryrd}%forthelllbrak

\usepackage[colorlinks=true,linkcolor=black,citecolor=black]{hyperref}
\usepackage[nameinlink,capitalise]{cleveref}
\usepackage{nicematrix}

\newtheorem{theorem}[equation]{Theorem}

\newtheorem{example}[equation]{Example}
\newtheorem{lemma}[equation]{Lemma}
\newtheorem{corollary}[equation]{Corollary}
\newtheorem{proposition}[equation]{Proposition}

\theoremstyle{remark}
\newtheorem{remark}[equation]{Remark}

\theoremstyle{definition}
\newtheorem{definition}[equation]{Definition}
\newtheorem{convention}[equation]{Convention}
\newtheorem{convention*}{Convention}

\numberwithin{equation}{section}

\newcommand{\R}{\mathbb{R}}           % Use for real numbers.
\newcommand{\C}{\mathbb{C}}           % Use for complex numbers.
\newcommand{\Z}{\mathbb{Z}}           % Use for integers.
\newcommand{\Q}{\mathbb{Q}}           % Use for rationals not quaternions
\newcommand{\End}{\text{End}}
 
\newcommand{\Hom}{\mathrm{Hom}}
\newcommand{\id}{\mathrm{id}}

\newcommand{\Ad}{\text{Ad}}
\newcommand{\ad}{\text{ad}}

\newcommand{\vol}{{\mathrm{vol}}}

\newcommand{\tr}{\ensuremath{\mathrm{tr}}}
\newcommand{\diag}{\ensuremath{\mathrm{diag}}}
\newcommand{\bg}{\ensuremath{\boldsymbol{\gamma}}}

\newcommand{\x}{\times}
\newcommand{\op}{\oplus}
\newcommand{\ox}{\otimes}
\newcommand{\w}{\wedge}
\newcommand{\txs}{\textsubscript}

\newcommand{\fg}{{\mathfrak g}}

\newcommand{\fh}{{\mathfrak h}}
\newcommand{\fk}{{\mathfrak k}}

\newcommand{\fp}{{\mathfrak p}}

\newcommand{\fq}{{\mathfrak q}}
\newcommand{\ft}{{\mathfrak t}}
\newcommand{\fu}{{\mathfrak u}}

\newcommand{\fz}{{\mathfrak z}}

\newcommand{\fsu}{\ensuremath{\mathfrak{su}}}

\newcommand{\fso}{\ensuremath{\mathfrak{so}}}

\newcommand{\Si}{\ensuremath{\mathrm{S}}}
\newcommand{\GL}{\ensuremath{\mathrm{GL}}}

\newcommand{\SU}{\ensuremath{\mathrm{SU}}}
\newcommand{\U}{\ensuremath{\mathrm{U}}}
\newcommand{\SO}{\ensuremath{\mathrm{SO}}}
\newcommand{\Sp}{\ensuremath{\mathrm{Sp}}}
\newcommand{\Spin}{\ensuremath{\mathrm{Spin}}}

\newcommand{\sprep}{\ensuremath{\mathrm{sp\,}}}
\newcommand{\Quad}{\mathcal{Q}}
\newcommand{\Div}{\ensuremath{\mathrm{Div}\,}}
\newcommand{\Hol}{\ensuremath{\mathrm{Hol}\,}}
\newcommand{\Herm}{\ensuremath{\mathrm{Herm}\,}}

\newcommand{\scal}{\ensuremath{\mathrm{scal\,}}}
\newcommand{\mono}[1][n]{c_{#1}}

\newcommand{\ga}{\alpha}

\newcommand{\gve}{\ensuremath{\varepsilon}}

\newcommand{\be}{\begin{equation}}
\newcommand{\beu}{\begin{equation*}}

\begin{document}

\baselineskip=18pt
\sloppy
\title[Equivariant Seiberg--Witten equations on Hermitian symmetric spaces]
%{Spectrum of pseudo-riemannian locally symmetric spaces\\ and admissibility of spherical representations}
{Equivariant Seiberg--Witten equations\\
on Hermitian symmetric spaces}
\author{Rafailia-Persefoni Tsiavou}
\thanks{{\em Mathematics Subject Classification MSC2020}. Primary: 53C27; Secondary 53C35, 57R57, 22E46}
\keywords{\em Seiberg--Witten theory, self-duality, Dirac operators, Hermitian symmetric spaces}
\address{Aristotle University of Thessaloniki, Department of Mathematics, Greece}
\email{rdtsiavou@math.auth.gr}
\maketitle

\begin{abstract}
We formulate and solve a $G$-equivariant analogue of the Seiberg--Witten equations on any irreducible Hermitian symmetric space $G/K$. We use a canonical $K$-invariant form to define a twisted Hodge operator generalising self-duality to higher dimensions. The resulting equivariant Seiberg--Witten equations arise, for a compact $G/K$, as conditions for the absolute minima of an energy functional. Their linear part %is governed by representation theory of $G$
has a representation theoretic interpretation: the same Weyl group data control the highest-weight realisation of the positive Dirac kernel in the compact case and its discrete-series realisation on the non-compact dual. This space of harmonic spinors collapses almost entirely by the quadratic conditions of the monopole equation. In the compact case, the equivariant moduli set, when non-empty, is a single gauge class, identifying with the $G$-invariant part of Bradlow's trichotomy for vortices, while in the non-compact case, every irreducible solution sits outside the discrete series.

\end{abstract}
\setcounter{tocdepth}{1}
\tableofcontents

\section{Introduction}\label{Intro}

The Seiberg--Witten equations emerged from an effort to understand how
particles interact when the forces between them become too strong for
standard approximation methods. In their study of a four-dimensional
supersymmetric gauge theory, Seiberg and Witten showed that changing the
description of the theory could make this difficult problem tractable
\cite{SW94a,SW94b}. In certain regimes, magnetic monopoles become the natural
objects to consider, and interactions that appeared strong in the original
description become weak in the new one. This insight allowed them to
determine the low-energy behaviour of the theory and, after a suitable
modification, to explain how monopole condensation can confine electric
charges. Witten recognised that these ideas could also be used to study
the geometry and topology of four-dimensional manifolds \cite{W94}. The
resulting Seiberg--Witten equations couple a connection on a line bundle to
a spinor field representing the monopole. Compared with the instanton
equations underlying Donaldson theory, this simpler system offered a more
accessible way to investigate the smooth structure of four-dimensional manifolds.
\medskip

Therefore, let $M$ be a closed oriented Riemannian four-dimensional manifold equipped
with a $\operatorname{Spin}^{c}$ structure $\mathfrak s$. Denote by
$S^+_{\mathfrak s}$ the positive spinor bundle and by
$L=\det S^+_{\mathfrak s}$ its determinant line bundle. For a unitary
connection $A$ on $L$ and a spinor
$\psi\in\Gamma(S^+_{\mathfrak s})$, the $\zeta$-perturbed Seiberg--Witten equations
are
\begin{equation}
  D_A^+\psi=0,
  \qquad
  F_A^+ + \zeta=-\frac12\sigma(\psi).
  \label{eq:classical-sw-intro}
\end{equation}
Here $D_A^+$ is the Dirac operator (acting on positive spinors), $F_A^+$ is the self-dual part of the
curvature, $\zeta$ is a self-dual perturbation, and $\sigma$ is the quadratic
map from positive spinors to self-dual two-forms. The variational origin of the unperturbed equations is expressed by the
energy functional
\begin{equation}
 \mathcal E(\psi,A)=
 \int_M\left(
   |\nabla_A\psi|^2+|F_A|^2
   +\frac{\operatorname{scal}}4|\psi|^2
   +\frac1{16}|\psi|^4
 \right)\operatorname{vol}_M .
 \label{eq:sw-energy-intro}
\end{equation}
The Lichnerowicz formula
\begin{equation}
 D_A^2=\nabla_A^*\nabla_A
       +\frac{\operatorname{scal}}4
       +\frac12\gamma(F_A^+)
 \label{eq:lichnerowicz-intro}
\end{equation}
relates the kinetic energy of the spinor to the square of the Dirac
operator and to the action of the self-dual curvature. Here, $\gamma$ is the Clifford multiplication and $\scal$ the scalar curvature of $M$. Combining this
identity with the four-dimensional decomposition
$F_A=F_A^++F_A^-$ and the norm identity for $\sigma(\psi)$ rewrites
\eqref{eq:sw-energy-intro}, up to a topological term, as a sum of
non-negative squares proportional to
\[
  |D_A^+\psi|^2
  \qquad\text{and}\qquad
  \left|F_A^++\frac12\sigma(\psi)\right|^2.
\]
Consequently, the solutions of the unperturbed equations, when they exist, are absolute minima; a self-dual perturbation $\zeta$ shifts the second square and
gives \eqref{eq:classical-sw-intro}. This energy identity explains why the
Dirac and curvature equations occur together and is the source of many of
the compactness and vanishing arguments in the theory. Solutions modulo the
abelian gauge group form the Seiberg--Witten moduli space and define the
Seiberg--Witten invariants, which retain deep information about smooth
four-dimensional manifolds while replacing the non-abelian instanton equation by an
abelian connection and a single spinor. On K\"ahler surfaces the same
variational structure also connects the equations with complex geometry and
vortex equations \cite{Br90,Ni00}.
\smallskip

The form of \eqref{eq:classical-sw-intro} is nevertheless special to dimension
four. The Hodge operator preserves two-forms only in four dimensions and
splits them into self-dual and anti-self-dual parts. At the same time,
Clifford multiplication identifies the self-dual two-forms with the traceless
Hermitian endomorphisms of the positive spinor bundle. These two facts allow
$F_A^+$ and the quadratic expression $\sigma(\psi)$ to take values in the
same bundle. In higher dimensions there is no such canonical coincidence.
Any generalisation must therefore provide both a replacement for self-duality
and a geometrically natural curvature component to which the spinor bilinear
can be equated.
\medskip

The starting point of this paper is that symmetry can supply the missing
structure. We consider manifolds admitting a transitive action of a
semisimple Lie group $G$, and hence presented as homogeneous spaces $G/H$,
with the aim of formulating and studying a $G$-equivariant analogue of the
Seiberg--Witten equations. This is also natural from the physical viewpoint.
The most familiar idealised spacetimes are homogeneous: Minkowski, de Sitter,
and anti-de Sitter spacetimes are determined by large transitive groups of
isometries. Their symmetry makes them basic backgrounds for field theory and
gravity. De Sitter and anti-de Sitter spaces are Lorentzian rather than
Riemannian, so they are motivations rather than direct examples of the
equations considered here; nevertheless, they illustrate why it is useful to
understand gauge fields and spinors on homogeneous backgrounds. On $G/H$,
bundles, connections, curvature, and differential operators can be described
by representations of the isotropy group $H$. Invariant connections are
controlled by finite-dimensional algebraic data \cite{Wan58}, and a Dirac
operator constructed from invariant data intertwines the action of $G$.
\medskip

The class on which this programme becomes particularly explicit is that of
irreducible Hermitian symmetric spaces $G/K$, of compact or non-compact type.
Such a space carries a $G$-invariant complex structure and K\"ahler form
$\omega$. At the base point, the complex structure is encoded by a
distinguished central element $Z_0\in\mathfrak k$ through
$J=\operatorname{ad}(Z_0)$. From $\omega$ one obtains a canonical
$K$-invariant form and a twisted Hodge operator; its positive eigenspace gives
the notion of self-duality used in this paper in every even dimension. The
homogeneous spinor bundle, the invariant curvature projection, and the
quadratic monopole map can consequently be written in explicit Lie-theoretic
terms. Hermitian symmetric spaces therefore provide all the ingredients
needed to retain one essential feature of the classical theory while suitably adapting the other:
%retain the two essential features of the classical theory: 
a curvature equation obtained from a self-dual component of the curvature and a $G$-equivariant Dirac equation.
\medskip

The first step is to identify the twists for which the spin representation
lifts from the isotropy algebra to the isotropy group. We establish a
correspondence between homogeneous $\operatorname{Spin}^{c}$ structures and
one-dimensional \emph{admissible twists} $E_\nu$ (\cref{1-1cor}). Such twists exist on every irreducible Hermitian symmetric space and are parametrised by
$\nu=1+2\kappa/N$, where $\kappa$ is an integer and $N$ is the dual Coxeter number; we write $E_\nu=E_{\nu\rho_{\mathfrak p}}$, where $\rho_{\mathfrak p}$ is half the sum of positive roots in $\fp$ and $\fg=\fk\op\fp$ the corresponding Lie algebra decomposition for $G/K$. The one-dimensionality of $E_\nu$ is important:
the auxiliary bundle remains a line bundle, exactly as required in the
Seiberg--Witten setting. If $\mathcal A^G$ denotes the invariant connections
and $\psi=\sum_k\psi^{(2k)}$ is the decomposition of a positive spinor by
exterior degree, then for every $A\in\mathcal A^G$ the corresponding Dirac
operator is the classical $D_0^+$ defined on symmetric spaces and is independent of $A$.
\medskip

The invariant K\"ahler form also determines a distinguished
$K$-invariant form $\Phi$ and a twisted Hodge operator $*_{\Phi}$. This provides a generalised notion of self-duality, which recovers the standard one in the four-dimensional case. It allows the curvature and
spinor terms to be compared in any complex dimension and, in the compact
case, preserves the variational origin of the equations. The first main
result can therefore be viewed as the higher-dimensional analogue of the
energy identity underlying the classical theory.

Therefore, let $\mathcal{C}_{SD}$ be the subset of the configuration space consisting of pairs $(\psi,A)$ such that $F_A^-=0$, and let $a_n$ be the dimension-dependent constant
\[a_n=\begin{cases}
4, & n=2,\\
2^{n-1}, & n\neq2.
\end{cases}\]
We define the following energy functional, which recovers the original \eqref{eq:sw-energy-intro} in the four-dimensional case.
\begin{equation}\label{eq:energy-intro2}
\mathcal{E}(\psi,A)=\int_{M}\left(
\bigl|\nabla_A\psi\bigr|^{2}
+\bigl|F_A\bigr|^{2}
+\frac{\scal}{4}|\psi|^{2}
+\frac{a_{n}\bigl|\gamma(\sigma(\psi)^{+})\bigr|^{2}}{32}
\right){\vol}_{M}.
\end{equation}

\medskip
\noindent\textbf{Theorem A.}
\emph{Let $G/K$ be a compact Hermitian symmetric
space of complex dimension $n$, and let $E_\nu$ be a one-dimensional
admissible twist. A pair $(\psi,A)$ satisfying
$F_{A}^-=0$ is an absolute minimum of the energy functional $\mathcal{E}_{{|}_{\mathcal{C}_{SD}}}$ if
\begin{equation} 
  D_{A}^+\psi=0,\qquad
  F_{A}^+=-\frac{a_n}{8}\sigma(\psi)^+,\qquad
  \Lambda_\omega F_{A}\ \text{is constant}.
  \label{eq:theorem-a-intro}
\end{equation}
The last condition in \eqref{eq:theorem-a-intro} is vacuous when $n=2$ and is automatic when $A\in\mathcal A^G$. We equip $G/K$ with a Riemannian metric corresponding to $\lambda {B_{\mathfrak g}}_{{|}_\fp}$, where $B_\fg$ is the Killing form and $\lambda$ is a negative real parameter. For an invariant connection, the
$\zeta$-perturbed monopole equation for $\zeta=it\omega$, where $t$ is a real parameter, is equivalent to
\begin{equation}
 \sum_{k=0}^{\lfloor n/2\rfloor}(n-4k)
       |\psi^{(2k)}|^2=c_n(t,\nu,\lambda),\qquad
 \langle\gamma(\eta_{ij})\psi,\psi\rangle
 =\langle\gamma(\theta_{ij})\psi,\psi\rangle=0,
 \label{eq:explicit-monopole-intro}
\end{equation}
where
\[ c_n(t,\nu,\lambda)=8n\left(t+\frac{\nu}{2\lambda}\right).\]
}

A proof of this result is a combination of \cref{theo:var} and \cref{theo:monopole}. Theorem~A makes the two parts of the problem visible. The Dirac equation is linear and representation-theoretically rich, while \eqref{eq:explicit-monopole-intro} imposes pointwise quadratic conditions. Let $\mathcal S_{\nu}^\infty$ denote the set of smooth solutions of the $it\omega$-perturbed equations of \cref{eq:theorem-a-intro} and $\mathcal M^\infty_{\nu}$ their moduli space; we denote by $\mathcal S^\infty_{\nu,{eq}}$, $\mathcal S^{L^2}_{\nu,{eq}}$ those for which $A\in\mathcal A^G$. % spinor lies in $\ker_{L^2}D_0^+$. 
In the compact case, the kernel of the full Dirac operator is a non-zero irreducible representation for every admissible $|\nu|\geq1$, and its dimension grows polynomially with $|\nu|$. In the non-compact case the corresponding discrete-series representation is infinite-dimensional. The prominent conclusion is that the quadratic monopole condition collapses almost all of this linear solution space. We denote by $\mathcal M^\infty_{\nu,{eq}}$, $\mathcal M^{L^2}_{\nu,{eq}}$ the corresponding moduli sets.% obtained by dividing the corresponding set of solutions by the equivariant gauge group. 

\medskip
\noindent\textbf{Theorem B.}
\emph{Let $G/K$ be irreducible of complex dimension $n$, let $E_\nu$ be a
one-dimensional admissible twist, and let $A\in\mathcal A^G$. If $G$ is
compact, then
\begin{equation}
 \mathcal M^\infty_{\nu,{eq}}(c_n)\cong
 \begin{cases}
   \{\mathrm{pt}\},&
       c_n>0,\ \nu=1,\\
   \{\mathrm{reducible\ pt}\},&
       c_n=0,\ \text{for every admissible }\nu,\\
   \{\mathrm{pt}\},&
       c_n<0,\ \nu=-1,\ n\ \text{even},\\
   \varnothing,&\text{otherwise}.
 \end{cases}
 \label{eq:theorem-b-intro}
\end{equation}
If $G$ is non-compact, then
$\mathcal M^{L^2}_{\nu,\mathrm{eq}}(c_n)$ is the reducible point when $c_n=0$ and
is empty otherwise.
}

\medskip
The compact statement, proved in \cref{theo:compactmoduli}, says more than finiteness: whenever the irreducible part of the equivariant moduli set is non-empty, it consists of a single gauge class. In the non-compact case, a non-zero solution of the monopole equation would have constant pointwise norm. Since $G/K$ has infinite
volume, such a spinor cannot be square-integrable; hence no non-zero element
of the $L^2$ Dirac kernel survives the nonlinear equation. This is the basic
rigidity phenomenon of the paper.

\medskip
The parity condition in Theorem~B is not an artefact of the calculation. It
comes from the representation carried by the Dirac kernel. Since $D_0$
commutes with the left action of $G$, its kernel is a $G$-representation. For
compact $G$, it is determined by the Borel--Weil--Bott theorem and its
Dirac-operator formulation \cite{Bo57,La00}. For non-compact $G$,
Parthasarathy proved that suitable $L^2$ kernels of twisted Dirac operators
are discrete-series representations \cite{Par72}. Atiyah and Schmid proved
the converse realisation: every discrete-series representation of $G$ is
obtained as the $L^2$ kernel of an appropriately twisted Dirac operator
\cite{AS77}. Thus the first Seiberg--Witten equation does not merely select harmonic
spinors; it places the monopole in an irreducible representation of the
transitive symmetry group.

\medskip
In the present setting this representation is described explicitly using
only the one-dimensional twists required by Seiberg--Witten theory. The same
Weyl group element controls the compact highest-weight realisation and the
discrete-series realisation on the non-compact dual. Computing its length
explains exactly when the representation occurs in the positive half-spinor
kernel.

\medskip
\noindent\textbf{Theorem C.}
\emph{Let $G/K$ be an irreducible Hermitian symmetric space of complex
dimension $n$. If $G$ is compact and $\nu\leq-1$, there is a unique
$w\in W_G$ for which
\[
  w(\nu\rho_{\mathfrak p}+\rho_{\mathfrak k})
       -\rho_{\mathfrak g}
\]
is dominant, and $\ell(w)=n$. Consequently,
\[
  \ker D^+(E_\nu)\neq0
  \quad\Longleftrightarrow\quad n\in2\mathbb Z.
\]
For $\nu\geq1$, the non-triviality holds unconditionally. If $G$ is non-compact, $\nu\geq1$, and
$\delta_\nu:=\nu\rho_{\mathfrak p}+\rho_{\mathfrak k}$, then
\[
  \ker_{L^2}D^+(E_{\nu\rho_{\mathfrak p}})
  \ \text{is a discrete-series representation}
  \quad\Longleftrightarrow\quad n\in2\mathbb Z.
\]
Whenever it occurs, its Harish--Chandra parameter is $\delta_\nu$;  for $\nu\leq -1$, this holds unconditionally. For
$|\nu|<1$, both non-triviality and the discrete-series realisation fail.
}

\medskip
Theorem~C is the representation-theoretic core of the paper and is of independent interest in relation to Seiberg--Witten equations; its proof is a combination of \cref{coro:neven} and \cref{coro:nevennc}. It identifies the Harish--Chandra parameter directly from the $\operatorname{Spin}^{c}$ twist and shows that the parity obstruction is the geometric shadow of the equality $\ell(w)=n$. It also clarifies how the nonlinear problem interacts with representation theory. The Dirac equation can supply an entire irreducible representation, finite-dimensional in the compact case and infinite-dimensional in the non-compact case, whereas the monopole equation selects only the vectors compatible with its rank-one quadratic constraint. In the compact case these reduce to the distinguished degree-zero or top-degree spinors appearing in Theorem~B; in the non-compact $L^2$ theory, every non-zero vector is excluded.

\medskip
There is also a complex-geometric interpretation of this collapse. In
complex dimension two, Witten showed that the K\"ahler Seiberg--Witten
equations decouple and are related to effective divisors \cite{W94}; the
parallel statement for vortices is Bradlow's theorem \cite{Br90}. In our
setting the equivariant moduli set, $\mathcal{M}_{\nu,eq}^\infty$, agrees with the $G$-fixed locus of the moduli space $\mathcal{M}_\nu^\infty$. Since an irreducible Hermitian symmetric space has no non-zero effective $G$-invariant divisors, Bradlow's trichotomy becomes the rigid trichotomy of Theorem~B.

\medskip
\noindent\textbf{Theorem D.}
\emph{Let $M=G/K$ be compact and irreducible, let $E_\nu$ be a one-dimensional admissible twist and let $\mu=\deg_\omega(\mathcal{E}_{\nu-1})$, $\nu\geq 1$. Then
\begin{equation}
 \bigl(\mathcal M_\nu^\infty(c_n)\bigr)^G\cong
 \begin{cases}
   \bigl(\operatorname{Div}^{\mu}_+(M)\bigr)^G,&c_n>0,\\
   \bigl(\operatorname{Hol}^{\mu}(M)\bigr)^G,&c_n=0,\\
   \varnothing,&c_n<0.
 \end{cases}
 \label{eq:theorem-d-intro}
\end{equation}}

\medskip
Here $\Div_+^\mu(M)$ denotes the set of effective divisors of degree $\mu$ and $\Hol^\mu(M)$ the set of holomorphic line bundles of degree $\mu$; the proof is found in \cref{equivreg}. The four theorems exhibit the structure of the argument. The invariant K\"ahler geometry produces a higher-dimensional self-duality operator and a variational monopole system; homogeneous Dirac theory converts the linear equation into a problem in representations of $G$; the explicit Weyl group calculation determines the relevant highest-weight and discrete-series representations; and the quadratic monopole equation forces the final rigidity. The four-dimensional K\"ahler case goes back to Witten \cite{W94}, while Seiberg--Witten equations on four-dimensional locally symmetric spaces were studied by Speh using representation theory \cite{Sp00}. The
representation-theoretic tools used here are those of Bott, Landweber,
Parthasarathy, and Atiyah--Schmid \cite{Bo57,La00,Par72,AS77}. Detailed
background and some of the case-by-case weight computations are developed
further in the author's doctoral thesis \cite{Tsi26}.
\medskip

The equivariant setting thus reveals the strong rigidity of the
Seiberg--Witten equations on Hermitian symmetric spaces: although the Dirac
equation may admit a rich space of solutions, the nonlinear monopole
constraint leaves very few possibilities. In the compact case, the
equivariant moduli set is either empty or a single gauge class, while in
the non-compact case square integrability excludes all non-zero monopoles.
Symmetry therefore makes both the equations and the scarcity of their
solutions explicit.

%\raggedbottom

\medskip
\begingroup
\makeatletter
\let\@tocwrite\@gobbletwo
\section*{Notational conventions}

\begin{convention*}\label{conventionHSS} $ $
\begin{enumerate}
    \item[(i)] From \cref{subsec:HSS} onwards, $G$ is a connected simple Lie group with finite center and $K$ is a subgroup with one-dimensional center, %maximal compact if $G$ is non-compact and proper connected of maximal rank if $G$ is compact, 
    so that $G/K$ is an irreducible Hermitian symmetric space. The reductive notation $\fg=\fh\,\oplus\,\fq$ is used only in \cref{wang} and \cref{subsec:dirac}; everywhere else $\fh=\fk$ and $\fq=\fp$.
    \item[(ii)] We fix the Cartan subalgebra $i\ft$, where $\ft$ is a maximal abelian subalgebra of $\fk$. Positive roots are chosen so that $\fp^{\pm}_{\C}=\bigoplus_{\alpha\in\Delta^{+}_{\fp}}\fg_{\pm\alpha}$ are the $\pm i$-eigenspaces of $J$, see \cite{Kna96} for reference.% In order to treat the compact and non-compact case simultaneously, we employ the unified notation %($\,\mathrm{{\txs{---}}_\fp}\,$) for $\rho_\fp$ and $\Delta_\fp$.
    \item[(iii)] Sections of vector-valued forms of homogeneous bundles are occasionally expressed in the form $f^{ij}\ox u_i\ox v_j$, as elements of $\{C^\infty(G)\ox U\ox V\}^K$.
    \item[(iv)] On endomorphisms we use the norm induced by the Hilbert--Schmidt inner product $\langle A,B\rangle=\tr(B^*A)$.
    %\item[(iv)] All statements about $E_{\nu}$ presuppose that $E_{\nu}$ is a one-dimensional admissible twist, and all statements about solutions presuppose $A\in\mathcal{A}^G$, unless said otherwise.
\end{enumerate}
\end{convention*}
\bigskip
\section*{Notation table}
\medskip
\small
\setlength{\tabcolsep}{7pt}
\renewcommand{\arraystretch}{1.18}
\begin{center}
\begin{tabular}{@{}ll@{\qquad}ll@{}}
$G/K$ & irreducible Hermitian symmetric space & $n$ & $\dim_{\C}G/K$, $\dim_{\R}\fp=2n$\\
$\fg=\fk\oplus\fp$ & Lie algebra decomposition & $Z_{0}$ & generator of $\fz_{\fk}$, $J=\ad(Z_0)$\\
$B_\fg$ & Killing form, $B_\fp={B_\fg}_{{|}_\fp}$ & $\lambda$ & metric parameter, $(\cdot,\cdot)_{\lambda}=\lambda B_{\fp}$  \\
$\rho_{\fg},\rho_{\fk},\rho_{\fp}$ & half-sums of positive roots & $N$ & dual Coxeter number, $\varpi_{\fp}=\tfrac{2}{N}\rho_{\fp}$\\
$\nu$ & admissible parameter, $\nu=1+2\kappa/N$ &$\mono\left(t,\nu,\lambda\right)$ & monopole constant $8n(t+\tfrac{\nu}{2\lambda})$ \\
$E_{\nu}$ & one-dimensional admissible twist $E_{\nu\rho_{\fp}}$ & $\mathcal{L}_{\nu}$ & $\mathcal{E}_{\nu}\otimes \mathcal{E}_{\nu}$, characteristic bundle\\
$S^{(\pm)}$ & (half-)spin modules, $\sprep^{(\pm)}$ the spin rep. & $m^{\pm}$ & $\dim(S^{\pm}\otimes E_{\nu})=2^{n-1}$\\
$\mathcal{K}$ & canonical bundle, $\mathcal{K}=E_{-2\rho_{\fp}}$ & $\Lambda_{\varphi}$ & contraction against $\varphi$\\
$\mathcal{A},\mathcal{A}^G$ & connections, $G$-invariant ones & $\Lambda$ & Wang map $\fg\to\mathfrak f$\\
$\omega$ & Kähler form, $|\,\omega\,|^{2}=n$ & $\Phi$ & canonical $(2n-4)$-form\\
$*_{\Phi}$ & twisted Hodge operator & $a_{n}$ & $4$, $n=2$; $2^{\,n-1}$, $n\neq2$\\
$\sigma(\psi)$ & $2$-form valued squaring map & $\mathcal{Q}(\psi)$ & $\psi\psi^{*}$, $\Quad_0(\psi)$ its traceless part\\
$\mathcal{E}$, $\mathcal{C}$ & energy functional, configuration space & $\mathcal{G}, \mathcal{S},\mathcal{M}$ & gauge group, solutions and moduli sets
\end{tabular}
\end{center}
\makeatother
\endgroup
\smallskip
\section{Preliminaries}

\subsection{Seiberg--Witten and vortex equations}\label{SWpara}

Let $(M,g)$ be a closed oriented Riemannian four-manifold equipped with a $\Spin^c$ structure $\mathfrak{s}$. Let also $\psi$ be a section of the positive complex spinor bundle $S_\mathfrak{s}^+$ of $\mathfrak{s}$, $F_{A}$ be the curvature of a connection $A$ on the determinant bundle of $\mathfrak{s}$ and $\nabla_A^+$ be the connection on $S_\mathfrak{s}^+$ induced by the Levi-Civita connection on $M$ and $A$. If $\gamma$ denotes the Clifford multiplication, we consider $D_{A}^+=\gamma\circ\nabla_A^+ $ to be the Dirac operator corresponding to $\nabla_A^+$ and $\sigma$ to be the following quadratic map
\begin{equation}\label{eq:sigma4}
\sigma:\Gamma(S_\mathfrak{s}^+)\rightarrow \Lambda^2T^*M, \qquad \sigma(\psi)=\frac{1}{4}\sum_{i<j}^{4}\bigl(\gamma(X^{*}_{i})\gamma(X^{*}_{j})\psi,\psi\bigr) X^{*}_{i}\w X^{*}_{j} .
\end{equation}
Since $M$ is four-dimensional, the Hodge-star operator $*$ is an involution on the space of two-forms. Let $\Lambda^{2,+}$ be its positive eigenspace, the \emph{self-dual} forms, and $F^{+}_{A}$ be the projection of $F_{A}$ on $\Lambda^{2,+}$. The extension of $\gamma$ on the space of two-forms induces a natural isomorphism between $\Lambda^{2,+}$ and the traceless skew-Hermitian endomorphisms of $S_\mathfrak{s}^+$, hence $\sigma(\psi)\in \Lambda^{2,+}$. If $\zeta\in\Lambda^{2,+}$, the $\zeta$-perturbed Seiberg--Witten equations are
\begin{equation}\label{sweq4}
D_{A}^+\psi=0,\qquad F_{A}^{+}+\zeta=-\frac{1}{2}\sigma(\psi).
\end{equation} 
As mentioned above, the appearance of the unperturbed equations is closely tied to their variational nature. Namely, their solutions attain the absolute minimum of the energy functional \cref{eq:sw-energy-intro}. The general version of this computation is carried out in \cref{sec:gen}, where the four-dimensional case is recovered as $n=2$. 
\medskip

We next introduce the vortex equations, which may be regarded as the two-dimensional precursor of the Seiberg--Witten equations, see \cite{GP94} for reference. On a closed Kähler manifold $(M,\omega)$ we consider 
\begin{equation*}
    {\nabla_A}^{(0,1)}\psi=0, \qquad {F_A}^{(0,2)}=0, \qquad i\Lambda_{\omega}F_A+\frac{1}{2}\psi\ox\psi^*=\frac{1}{2}t,
\end{equation*}
where $\psi$ is a section of a Hermitian line bundle $\mathcal{F}$ over $M$ and $A$ is a connection on $\mathcal{F}$. They also arise as the absolute minimum of an energy functional, called the Yang--Mills--Higgs functional. The moduli space of their solutions, $\mathcal{M}^{\mu}_{t}$, was described by Bradlow in the following theorem. Here ${\cong}$ denotes the one-to-one correspondence.
\begin{theorem}[\cite{Br90}, Theorems 4.6, 4.7]\label{Brad1} Let $(M,\omega)$ be a closed Kähler manifold and $\mu$ a non-negative integer. Let $\Div_+^\mu(M)$ be the set of effective divisors with degree $\deg_\omega$ equal to $\mu$ and $\Hol^\mu(M)$ the set of holomorphic line bundles of degree $\mu$.
We fix $t$ and set $b(\mu,t)=t-4\pi\mu/\vol(M)$. 
\begin{equation}
 \mathcal{M}_t^\mu\cong
 \begin{cases}
   \Div_+^{\mu}(M),&b(\mu,t)>0,\\
   \Hol^{\mu}(M),&b(\mu,t)=0,\\
   \varnothing,&b(\mu,t)<0.
 \end{cases}
 \end{equation}
\end{theorem}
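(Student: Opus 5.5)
The statement is Bradlow's theorem, which this paper quotes from \cite{Br90} and does not reprove. I would therefore follow Bradlow's argument. The first step is to split the vortex equations into a holomorphic part and a moment-map part. The conditions ${F_A}^{(0,2)}=0$ and $\nabla_A^{(0,1)}\psi=0$ say two things: $\bar\partial_A=\nabla_A^{(0,1)}$ defines a holomorphic structure on $\mathcal F$, and $\psi$ is a holomorphic section of it. The third equation, $i\Lambda_\omega F_A+\tfrac12|\psi|^2=\tfrac12 t$, is the moment-map condition for the unitary gauge group. I then integrate this third equation over $M$. This uses two facts. First, $\int_M i\Lambda_\omega F_A\,\vol$ equals $2\pi\deg_\omega(\mathcal F)=2\pi\mu$, up to the normalisation fixed by the choice of $\omega$. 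Second, with this normalisation one gets
\[
\frac12\int_M|\psi|^2\,\vol=\frac{\vol(M)}2\,b(\mu,t).
\]
This identity gives two of the three cases at once. If $b(\mu,t)<0$ there are no solutions. If $b(\mu,t)=0$ then $\psi=0$, and the remaining equations say $A$ is a Hermitian--Einstein (constant $\Lambda_\omega F_A$) connection on a line bundle with holomorphic structure. Every holomorphic line bundle of degree $\mu$ carries exactly one such metric up to unitary gauge, obtained by solving a linear Poisson equation for the conformal factor. This yields $\Hol^\mu(M)$.

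The main case is $b(\mu,t)>0$. The map sends a solution $(A,\psi)$ to the divisor of zeros of $\psi$, which is effective of degree $\mu$ because $\psi\neq0$. This divisor is well defined on gauge classes. For the inverse, I start from an effective divisor $D$. It determines the holomorphic bundle $\mathcal O(D)$ together with a section $s$, unique up to $\C^*$. I then look for a Hermitian metric $h=e^{2u}h_0$, or equivalently a complex gauge transformation, such that the Chern connection of $h$ and the section $s$ satisfy the moment-map equation. Writing the curvature change as $\Delta u$, this becomes the Kazdan--Warner-type equation
\[
\Delta u+\tfrac12|s|_{h_0}^2e^{2u}=\tfrac12 t-i\Lambda_\omega F_{h_0}.
\]
Here $\Delta$ is the positive Laplacian, and constants are absorbed into the normalisation.

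This equation is the main obstacle. I would solve it by the variational or sub/supersolution method. Kazdan--Warner's criterion for $\Delta u+fe^{2u}=g$, with $f\ge0$ and $f\not\equiv0$, states that a solution exists if and only if $\int g>0$. Here $\int g$ is a positive multiple of $b(\mu,t)$, so the condition holds. Uniqueness of the solution follows from the maximum principle, since the nonlinearity is monotone increasing in $u$. The residual $U(1)$ acts by constant phase on $s$, which is the unitary gauge. Together with uniqueness this shows the two maps are mutually inverse, giving $\Div_+^\mu(M)$.
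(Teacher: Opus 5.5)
Your sketch is correct and follows Bradlow's own argument in \cite{Br90}: integrate the moment-map equation to read off the sign of $b(\mu,t)$, use the Hermitian--Einstein metric at $b=0$, and solve the Kazdan--Warner equation for $b>0$. The paper gives no proof of its own and simply quotes the theorem, so this is the same approach. One caveat is that on a fixed smooth bundle $\mathcal F$ only divisors with $c_1(\mathcal O(D))=c_1(\mathcal F)$ arise, so matching $\Div_+^{\mu}(M)$ and $\Hol^{\mu}(M)$ exactly requires reading $\mathcal M_t^\mu$ as ranging over all line bundles of degree $\mu$, as your inverse map implicitly does.
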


\subsection{Homogeneous bundles and Wang's Theorem}\label{wang}

Let $G$ be a connected Lie group and $H$ a closed subgroup of $G$ with $G/H$ reductive, that is, there exists a subspace $\fq$ of $\fg$ such that $\fg=\fh\op\fq$ and ${\Ad(H)\fq\subseteq \fq}$. Let $(\cdot,\cdot)_\fq$ be an $\Ad(H)$-invariant inner product on $\fq$ such that $([X,Y]_\fq,Z)_\fq=-(Y,[X,Z]_\fq)_\fq$ for all $X,Y,Z\in\fq$. The scalar curvature of $G/H$ is constant and, for an orthonormal basis $\{X_{i}\}$ of $\fq$ with respect to $(\cdot,\cdot)_\fq$, 
\begin{equation}\label{eq:scal}
\scal=-\frac{1}{4}\sum_{i,j} ([X_i,X_j]_\fq,[X_i,X_j]_\fq)_\fq - \frac{1}{2} \sum_{i} B_\fq(X_i,X_i).
\end{equation}
If $B_\fg$ is the Killing form of $\fg$, we set ${B_\fq= B_{\fg}}_{|_{\fq}}$ and consider $(\cdot,\cdot)_{\fq,\lambda}\coloneqq\lambda B_{\fq}$, where $\lambda$ is a non-zero real parameter. In this case, we rewrite \cref{eq:scal} as follows; %one has $B_\fq(X_{i},X_{i})=1/\lambda$, so the second term equals $-\dim\fq/(2\lambda)$. 
we refer to %7.37b of 
\cite{Be87} for more details.
%for $ {B_\fq(X_i,X_i)}$
\begin{equation}\label{eq:scal2}
\scal(\lambda)=\lambda\Bigl(\sum_{i,j}B_{\fg}\bigl([X_{i},X_{j}]_{\fh},[X_{i},X_{j}]_{\fh}\bigr)
+\frac{1}{4}\sum_{i,j}B_{\fg}\bigl([X_{i},X_{j}]_{\fq},[X_{i},X_{j}]_{\fq}\bigr)\Bigr).
\end{equation}

\begin{remark}\label{rem:signlambda}
For $(\cdot,\cdot)_{\fq,\lambda}$ to be positive definite, $\lambda$ must be negative
when $G$ is compact and positive when $G$ is non-compact. This is the criterion by
which the two cases are distinguished throughout.
\end{remark}

A principal bundle $F\to P\overset{\pi}{\to} G/H$ is \emph{homogeneous} if the $G$-action on the base lifts compatibly with the $F$-action; %By Proposition 7.1.3 of \cite{ANT23}, 
this happens if and only if there is a Lie group homomorphism $\varphi\colon H\to F$, the \emph{isotropy homomorphism}, with $P\simeq G\times_{\varphi}F$. Wang's correspondence, \cite{Wan58}, then identifies the $G$-invariant connection one-forms on $P$, $\mathcal{A}^G$, with the $H$-equivariant linear maps
\[\Lambda\colon\fg\longrightarrow\mathfrak{f},\qquad {\Lambda}_{{|}_{\fh}}=d\varphi.\]
We refer to \cite{ANT23} for a detailed proof of both Wang's Theorem and the following formulas. We choose $p_{0}\in\pi^{-1}(eH)$ of $P$ and $\{W_i\}$ a basis of $\mathfrak f$. If $X_i^*$ and $W_j^*$ denote the duals of $(X_i,0)$, $(0,W_j)$ belonging to %by $\llbracket\cdot,\cdot\rrbracket$ the equivalence classes in 
$\fg\times_{\fh}\mathfrak{f}$, an invariant connection and its curvature at $p_0$ are given by
%A_{p_{0}}=\sum_{i=1}^{\dim\fq}1\otimes\llbracket X_{i},0\rrbracket^{*}\otimes\Lambda(X_{i})+\sum_{j=1}^{\dim\mathfrak{f}}1\otimes\llbracket0,W_{j}\rrbracket^{*}\otimes W_{j},
%F_{A_{p_{0}}}=\sum_{i<j}^{\dim\fq}1\otimes\llbracket X_{i},0\rrbracket^{*}\wedge\llbracket X_{j},0\rrbracket^{*}\otimes\Bigl([\Lambda(X_{i}),\Lambda(X_{j})]_{\mathfrak{f}}-d\varphi([X_{i},X_{j}]_{\fh})-\Lambda([X_{i},X_{j}]_{\fq})\Bigr).
\begin{equation}\label{eq:Aformula}
A_{p_{0}}=\sum_{i=1}^{\dim\fq}1\ox X_i^{*}\ox\Lambda(X_{i})
+\sum_{j=1}^{\dim\mathfrak{f}}1\ox W_j^{*}\ox W_{j},
\end{equation}
\begin{equation}\label{eq:Fformula}
F_{A_{p_{0}}}=\sum_{i<j}^{\dim\fq}1\ox X_i^{*}\wedge X_j^{*}
\otimes\Bigl([\Lambda(X_{i}),\Lambda(X_{j})]_{\mathfrak{f}}-d\varphi([X_{i},X_{j}]_{\fh})
-\Lambda([X_{i},X_{j}]_{\fq})\Bigr).
\end{equation}
\begin{remark}\label{FAabelian}
At any other point $p$, ${F_{A}}_p$ is given by the image of ${F_{A}}_{p_0}$ under the adjoint action of the Lie group $F$. If $F$ is the abelian $U(1)$, the curvature of an invariant connection is therefore constant.
\end{remark}

\subsection{Hermitian symmetric spaces}\label{subsec:HSS}
We now impose Convention \ref{conventionHSS}, $(G/K,\lambda B_\fp)$ is assumed to be a Hermitian symmetric space, where $\fg=\fk\op \fp$ is the corresponding Lie algebra decomposition. We denote by $\Delta_\fg$ the roots of $\fg$ and by $\Delta_\fk$ its subset containing those which vanish on the center $\fz_\fk$ of $\fk$. We set $\Delta_\fp= \Delta_{\fg}\setminus\Delta_{\fk}$ and henceforth refer to $\Delta_\fp$ as the set of $\fp$-roots. Up to scaling of the invariant metric, every irreducible Hermitian symmetric space belongs to one of the following families; see \cite{Hel78} for reference.
\begin{table}[h]
\begingroup
\small
\begin{center}
\renewcommand{\arraystretch}{1.2}
\begin{NiceTabular}{|c|c|c|c|}[hlines]
\textbf{Type} & \textbf{Non-Compact} & \textbf{Compact} & \textbf{Real Dimension} \\  
A\txs{III} (${p\geq q}$) & ${\SU(p,q)/\Si(\U(p)\x\U(q))}$ & ${\SU(p+q)/\Si(\U(p)\x\U(q))}$
& ${2pq}$ \\ 
BD\txs{I} (${p>2}$) & ${\SO_e(2,p)/(\SO(2)\x\SO(p))}$ & ${\SO(2+p)/(\SO(2)\x\SO(p))}$ & ${2p}$ \\ 
D\txs{III} & ${\SO^*(2p)/\U(p)}$ & ${\SO(2p)/\U(p)}$ & ${p(p-1)}$ \\ 
C\txs{I} & ${\Sp(p,\R)/\U(p)}$ & ${\Sp(p)/\U(p) }$ & ${p(p+1)}$ \\ 
E\txs{III} & $ {\text{E}_{6(-14)}/(\Spin(10)\x_{\Z_4}\U(1))} $ & $ {\text{E}_{6(-78)}/(\Spin(10)\x_{\Z_4}\U(1)) }$ & 32 \\ 
E\txs{VII} & $ {\text{E}_{7(-25)}/(\text{E}_{6}\x_{\Z_3}\U(1)})  $& $ {\text{E}_{7(-133)}/(\text{E}_{6}\x_{\Z_3}\U(1)) }$ & 54 \\
\end{NiceTabular}
\end{center}
\endgroup
\end{table}

These spaces satisfy $[\fp,\fp]_{\fp}=0$ and are characterised among symmetric spaces by the property $(Z(K))_0\simeq U(1)$; here $(Z(K))_0$ denotes the identity component of the center of $K$. Equivalently, there exists a unique, up to sign, element $Z_{0}\in\fz_{\fk}$ for which $J=\ad(Z_0)$ is the $K$-equivariant complex structure. A second key property of such spaces is that the group $K$ acts irreducibly on each of $\fp^{\pm}_{\C}$; we refer to \cite{Kna96} for both. 
\medskip

We consider the ordering specified in Convention \ref{conventionHSS} and denote by $\Delta_\fg^+$ the set of positive roots and by $\rho_\fg$ their half-sum. For this ordering, 
\begin{equation}\label{aZ0}
    \alpha(Z_0)=\begin{cases}
    i & \text{ for all }\alpha\in\Delta_{\fp}^+\\
    -i & \text{ for all }\alpha\in\Delta_{\fp}^-\\
    0 & \text{ for all }\alpha\in\Delta_{\fk}.
\end{cases}
\end{equation}
If $\varpi_{\fp}$ is the fundamental weight of the unique simple $\fp$-root and $N$ the dual Coxeter number, that is, the number such that $\varpi_{\fp}=\tfrac{2}{N}\rho_{\fp}$, where $\rho_{\fp}$ is the half-sum of positive $\fp$-roots, we obtain the following data.
\begin{table}[h]
\begingroup
\small
\begin{center}
\renewcommand{\arraystretch}{1.5}
    \fontsize{8.6}{10}\selectfont
    \begin{NiceTabular}{|c|c|c|c|c|c|c|}[hlines]
\textbf{Type} & \textbf{A\txs{III}} & \textbf{BD\txs{I}} & \textbf{D\txs{III}} & \textbf{C\txs{I}} & \textbf{E\txs{III}} & \textbf{E\txs{VII}} \\
$N$ & $p+q$ & $p$ & $2(p-1)$ & $p+1$ & $12$ & $18$ \\
$\rho_\fp$
& $\tfrac{q}{2}(\gve_1+\ldots+\gve_p)
   -\tfrac{p}{2}(\gve_{p+1}+\ldots+\gve_{p+q})$ & $\tfrac{p}{2}\gve_1$& $\tfrac{p-1}{2}\sum_{i=1}^{p}\gve_i$ & $\tfrac{p+1}{2}\sum_{i=1}^{p}\gve_i$ & $6\varpi_6$ & $9\varpi_7$ \\
$\varpi_\fp$ & $\tfrac{q}{p+q}(\gve_1+\ldots+\gve_p)
   -\tfrac{p}{p+q}(\gve_{p+1}+\ldots+\gve_{p+q})$ & $\gve_1$ & $\tfrac{1}{2}(\gve_1+\ldots+\gve_p)$ & $\gve_1+\ldots+\gve_p$ & $\varpi_6$ & $\varpi_7$ \\
$iZ_0^*$ & $\tfrac{1}{p}(\gve_1+\ldots+\gve_p)
   -\tfrac{1}{q}(\gve_{p+1}+\ldots+\gve_{p+q})$ & $\gve_1$ & $\tfrac{2}{p}\sum_{i=1}^{p}\gve_i$ & $\tfrac{2}{p}\sum_{i=1}^{p}\gve_i$ & $\tfrac{3}{4}\varpi_6$ & $\tfrac{2}{3}\varpi_7$ \\
$n$ & $pq$ & $p$ & $\tfrac{p(p-1)}{2}$ & $\tfrac{p(p+1)}{2}$ & $16$ & $27$
\end{NiceTabular}
\end{center}
%\caption{Hermitian data}\label{Hermdata}
\endgroup
\end{table}
\vspace{-1mm}
\begin{lemma}\label{Z_0*rho}
If $Z_{0}^*\in\fz_{\fk}^*$ denotes the dual of $Z_{0}$, then $iZ^{*}_{0}=\tfrac2n\rho_{\fp}$. In particular, $\rho_{\fp}\perp\Delta_\fk$.
\end{lemma}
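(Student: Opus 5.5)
The plan is to pin down $\rho_{\fp}$ as a functional on the Cartan subalgebra by two facts: it is $W_K$-invariant, and its value on $Z_0$ can be computed from \eqref{aZ0}. I read $Z_0^*$ as the functional on $\ft_\C$ (equivalently on $i\ft$) that vanishes on $\ft\cap[\fk,\fk]$ and satisfies $Z_0^*(Z_0)=1$. This uses the decomposition $\ft=\fz_{\fk}\oplus(\ft\cap[\fk,\fk])$, where $\fz_{\fk}=\R Z_0$ because $K$ has one-dimensional center. With this reading, the identity $iZ_0^*=\tfrac2n\rho_{\fp}$ reduces to two claims: $\rho_{\fp}$ vanishes on $\ft\cap[\fk,\fk]$, and $\rho_{\fp}(Z_0)=\tfrac{i n}{2}$.

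\textbf{Step 1: $\rho_{\fp}\perp\Delta_{\fk}$.} The subspace $\fp^+_\C=\bigoplus_{\alpha\in\Delta^+_{\fp}}\fg_\alpha$ is $\Ad(K)$-stable, being the $+i$-eigenspace of the $K$-equivariant $J$. Hence the Weyl group $W_K$, realised by elements of $N_K(T)$, permutes the weights of $\fp^+_\C$, so $W_K$ preserves $\Delta^+_{\fp}$ and fixes $\rho_{\fp}$. For $\alpha\in\Delta_{\fk}$ the reflection $s_\alpha\in W_K$ satisfies $s_\alpha\rho_{\fp}=\rho_{\fp}-\tfrac{2(\rho_{\fp},\alpha)}{(\alpha,\alpha)}\alpha$, and this forces $(\rho_{\fp},\alpha)=0$. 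This already gives the ``in particular'' statement. Since the coroots $H_\alpha$, $\alpha\in\Delta_{\fk}$, span $(\ft\cap[\fk,\fk])_\C$, it also shows that $\rho_{\fp}$ vanishes on $\ft\cap[\fk,\fk]$. So $\rho_{\fp}=c\,Z_0^*$ for some scalar $c$.

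\textbf{Step 2: compute $c$.} By \eqref{aZ0}, every $\alpha\in\Delta^+_{\fp}$ satisfies $\alpha(Z_0)=i$. There are $|\Delta^+_{\fp}|=\dim_\C\fp^+_\C=n$ such roots, so
\[
\rho_{\fp}(Z_0)=\tfrac12\sum_{\alpha\in\Delta^+_{\fp}}\alpha(Z_0)=\tfrac{in}{2}.
\]
Therefore $c=\tfrac{in}{2}$, i.e.\ $\rho_{\fp}=\tfrac{in}{2}Z_0^*$, which rearranges to $iZ_0^*=\tfrac{2}{n}\rho_{\fp}$. As a sanity check, the table entries for $iZ_0^*$ and $\rho_{\fp}$ match this in every type; for instance, in type BD\txs{I} we have $n=p$, $\rho_{\fp}=\tfrac p2\gve_1$ and $iZ_0^*=\gve_1$.

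The only real subtlety is the convention for ``dual'': whether $Z_0^*$ is defined by the splitting above or via the Killing form $B_\fg$. If it is meant via $B_\fg$, then by $\Ad(K)$-invariance the $B_\fg$-dual of $Z_0$ is orthogonal to $[\fk,\fk]$, so it is again proportional to the functional above and Step 1 is unchanged. The normalisation in Step 2 must then be checked against the table, which is consistent with $Z_0^*(Z_0)=1$.
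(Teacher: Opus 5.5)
Your argument is correct, but it takes a different route from the paper. The paper's proof combines \eqref{aZ0} with the table of Hermitian data: it reads off $\rho_{\fp}$, $iZ_0^*$ and $n$ for each type, checks $iZ_0^*=\tfrac2n\rho_{\fp}$ case by case, and then gets $\rho_{\fp}\perp\Delta_{\fk}$ as a consequence, because $Z_0^*$ kills $\ft\cap[\fk,\fk]$.

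You reverse the order. You first prove $\rho_{\fp}\perp\Delta_{\fk}$ uniformly from the $W_K$-stability of $\Delta^+_{\fp}$. An equivalent and slightly quicker route is that $2\rho_{\fp}$ is the weight of the one-dimensional $\fk$-module $\Lambda^{\mathrm{top}}\fp^+_{\C}$, so it kills $[\fk,\fk]$. From this you deduce that $\rho_{\fp}$ is a multiple of $Z_0^*$, and you fix the multiple by evaluating at $Z_0$ using \eqref{aZ0} and $|\Delta^+_{\fp}|=n$.

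Your version does not use the classification, and it explains why the tabulated entries must agree rather than relying on a table that was itself computed type by type. The paper's version is shorter, since it needs the table later anyway.

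Your remark on normalisation is also to the point. The convention actually in force is $Z_0^*(Z_0)=1$ with $Z_0^*$ vanishing on $\ft\cap[\fk,\fk]$, as the BD\txs{I} entry $iZ_0^*=\gve_1$ confirms. The Killing-form dual would satisfy $B_{\fg}(\cdot,Z_0)=-2nZ_0^*$, giving $iB_{\fg}(\cdot,Z_0)=-4\rho_{\fp}$, which contradicts the table. Your normalisation is exactly the one the paper uses afterwards in the proof of \cref{omegavarpirel}, through $B_{\fg}(Z_0,Z_0)=-2n$.
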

\begin{proof}
    Follows directly from \cref{aZ0} and the table above.
\end{proof}
\begin{lemma}\label{omegavarpirel}
We consider the K\"ahler form $\omega_\lambda(X,Y)=\lambda B_{\fp}(JX,Y)$. For all $X,Y\in\fp$, 
\[\rho_{\fp}\bigl([X,Y]\bigr)=-\frac{i}{4\lambda}\,\omega_\lambda(X,Y)
\qquad\iff \quad \omega_\lambda(X,Y)=2\lambda N i\,\varpi_{\fp}(\ad(X)Y).\]
\end{lemma}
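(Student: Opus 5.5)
The plan is to use the symmetric-space structure to turn both sides into statements about the $\fk$-component of a bracket, and then evaluate everything on root vectors. Since $[\fp,\fp]\subseteq\fk$, for $X,Y\in\fp$ the bracket $[X,Y]$ lies in $\fk$. By \cref{Z_0*rho}, $\rho_\fp$ is orthogonal to $\Delta_\fk$, so it is a multiple of $iZ_0^*$. It therefore only sees the component of $[X,Y]$ along the centre $\fz_\fk$; more precisely, after complexifying, it sees only the $\ft$-component, which pairs with $Z_0$ through the Killing form.

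The first step is to write $\rho_\fp([X,Y])=\tfrac n2\, iZ_0^*([X,Y])$. Here $Z_0^*$ is the functional dual to $Z_0$ and vanishes on the $B_\fg$-orthogonal complement of $\fz_\fk$ in $\fk$. This gives
\[Z_0^*([X,Y])=\frac{B_\fg(Z_0,[X,Y])}{B_\fg(Z_0,Z_0)}.\]
By invariance of the Killing form,
\[B_\fg(Z_0,[X,Y])=B_\fg([Z_0,X],Y)=B_\fg(JX,Y)=\lambda^{-1}\omega_\lambda(X,Y).\]
Consequently
\[\rho_\fp([X,Y])=\frac{i n}{2\lambda\,B_\fg(Z_0,Z_0)}\,\omega_\lambda(X,Y).\]

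The main obstacle is computing $B_\fg(Z_0,Z_0)$. I would use $B_\fg(Z_0,Z_0)=\tr(\ad Z_0)^2$. By \cref{aZ0}, $\ad Z_0$ vanishes on $\fk_\C$ and acts by $\pm i$ on $\fp^\pm_\C$. Hence
\[B_\fg(Z_0,Z_0)=-\dim_\R\fp=-2n.\]
Substituting gives $\rho_\fp([X,Y])=-\tfrac{i}{4\lambda}\omega_\lambda(X,Y)$, which is the first identity.

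The care needed here is in the normalisation of $Z_0^*$. It must be the dual of $Z_0$ in the sense that $Z_0^*(Z_0)=1$ and that it annihilates the $B_\fg$-complement. This normalisation should be checked against the table entries for $iZ_0^*$ in one case, say BD\txs{I}. The factor $i$ relating real $\ft$ and the Cartan subalgebra $i\ft$ must also be tracked consistently.

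For the equivalence, substitute $\rho_\fp=\tfrac N2\varpi_\fp$ and $[X,Y]=\ad(X)Y$. This gives
\[\tfrac N2\,\varpi_\fp(\ad(X)Y)=-\tfrac{i}{4\lambda}\,\omega_\lambda(X,Y).\]
Multiplying by $4\lambda i$ yields $\omega_\lambda(X,Y)=2\lambda N i\,\varpi_\fp(\ad(X)Y)$. Each step is reversible, so the two identities are equivalent.
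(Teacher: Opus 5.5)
Your proof is correct and follows the paper's argument. Both proofs write $\rho_\fp=\tfrac n2\, iZ_0^*$ via \cref{Z_0*rho}, use $B_\fg(Z_0,Z_0)=-2n$ to get $\rho_\fp([X,Y])=-\tfrac i4 B_\fg([X,Y],Z_0)$, and then apply $\ad$-invariance to reach $B_\fg(JX,Y)=\lambda^{-1}\omega_\lambda(X,Y)$. You also justify $B_\fg(Z_0,Z_0)=\tr(\ad Z_0)^2=-2n$ from \cref{aZ0} and make the rescaling $\rho_\fp=\tfrac N2\varpi_\fp$ explicit for the equivalence; the paper leaves both implicit.
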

\begin{proof}
By \cref{Z_0*rho} and $B_{\fg}(Z_0,Z_0)=-2n$, we have $\rho_{\fp}([X,Y])=-\tfrac{i}{4}B_{\fg}([X,Y],Z_{0})$. The Killing form is invariant under the adjoint action, thus $B_{\fg}([X,Y],Z_{0})=B_{\fg}(JX,Y)=B_{\fp}(JX,Y)$. %The Lemma follows from the definition of $\omega_\lambda$.
\end{proof}

\begin{lemma}\label{LiftTable}
    Let $\xi\in \Q$, $o(\cdot)$ denote the order of an element, and let $\ell =\big |(Z(K))_0\, \cap \, K_{ss}\big|$, where $K_{ss}=[K,K]$. Then $\xi\rho_\fp$ %lifts to $(Z(K))_0^\C$ if and only if $(\xi\cdot n \cdot o(e^{2\pi Z_0}))/2$ is an integer and 
    lifts to $K$ if and only if $(\xi n  o(e^{2\pi Z_0}))/(2\ell)$ is an integer.
   % \begin{align*}     {\xi\rho_\fp \text{ lifts to ${(Z(K))_0}^\C$ }}&\iff {\tfrac{\xi\cdot n \cdot o(e^{2\pi Z_0})}{2}\in \Z}\\   {\xi\rho_\fp\quad \text{ lifts to ${K^\C}$ }}&\iff {\tfrac{\xi\cdot n \cdot o(e^{2\pi Z_0})}{2\cdot \ell}\in \Z.} \end{align*} \vspace{2mm}
Explicitly, the character $\chi_{\xi\rho_{\fp}}$ lifts to $K$ precisely under the following conditions. The table reads $\xi N/2\in\Z$ in all types except D\txs{III}, where it reads $\xi N/4\in\Z$.
\begin{table}[h]
\begingroup
\small
\begin{center}
\renewcommand{\arraystretch}{1.4}
\begin{NiceTabular}{c|c|c|c|c|c|c}[hvlines]
\textbf{Type} & A\txs{III} & BD\txs{I} & D\txs{III} & C\txs{I}  & E\txs{III} & E\txs{VII} \\
\textbf{Condition} & ${\tfrac{\xi  (p+q)}{2}\in \Z}$ & ${\tfrac{\xi p}{2}\in \Z}$ & ${\tfrac{\xi (p-1)}{2}\in \Z}$ & ${\tfrac{\xi (p+1)}{2}\in \Z}$ & $6\xi\in \Z$ & $9\xi\in \Z$ 
\end{NiceTabular}
\end{center}
\endgroup
\vspace{-1mm}
\end{table}
\end{lemma}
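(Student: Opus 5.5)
The plan is to reduce the lifting problem to the centre of $K$. Write $K=(Z(K))_0\cdot K_{ss}$, with $(Z(K))_0\cap K_{ss}$ a finite cyclic group of order $\ell$. Then $K\cong\bigl((Z(K))_0\times K_{ss}\bigr)/F$, where $F$ is the antidiagonal copy of that intersection. The infinitesimal character $\xi\rho_\fp$ is a linear form on $\fk$ vanishing on $[\fk,\fk]$, because $\rho_\fp\perp\Delta_\fk$ by \cref{Z_0*rho}. It therefore integrates on the simply connected $K_{ss}$ to the trivial character, and the only question is on the circle factor. Consequently $\chi_{\xi\rho_\fp}$ lifts to $K$ exactly when two conditions hold: it lifts to $(Z(K))_0$, and the lifted character is trivial on $(Z(K))_0\cap K_{ss}$, since the $K_{ss}$-part is trivial there.

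\textbf{Step 1.} Parametrise the circle $(Z(K))_0$ by $s\mapsto \exp(sZ_0)$. Its period is $2\pi\, o(e^{2\pi Z_0})$: by \cref{aZ0} the eigenvalues of $\ad Z_0$ are $0,\pm i$, so $e^{2\pi Z_0}$ is central of finite order. The derivative gives $\xi\rho_\fp(Z_0)=\xi\tfrac n2\, iZ_0^*(Z_0)$ from \cref{Z_0*rho}, that is, $\xi\rho_\fp(Z_0)=\tfrac{i\xi n}{2}$. The character $e^{sZ_0}\mapsto e^{i s\xi n/2}$ is well defined on the circle if and only if $\xi n\, o(e^{2\pi Z_0})/2\in\Z$.

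\textbf{Step 2.} The subgroup $(Z(K))_0\cap K_{ss}$ is the cyclic subgroup of order $\ell$ in the circle. It is generated by $\exp\bigl(2\pi o(e^{2\pi Z_0})Z_0/\ell\bigr)$, and the character takes the value $\exp\bigl(2\pi i\,\xi n\, o(e^{2\pi Z_0})/(2\ell)\bigr)$ on this generator. Triviality there is the condition $\xi n\, o(e^{2\pi Z_0})/(2\ell)\in\Z$, and it subsumes Step 1. This proves the general criterion.

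\textbf{Step 3.} Make the criterion explicit type by type, which I expect to be the main obstacle, since it requires realising each $K$ concretely. For each family I will compute $o(e^{2\pi Z_0})$ and $\ell$ from an explicit matrix form of $Z_0$, read off from $iZ_0^*$ in the table. For example, in A\txs{III} we have $Z_0=\tfrac{i}{p+q}\diag(q I_p,-pI_q)$. Hence $e^{2\pi Z_0}=e^{2\pi i q/(p+q)}I$, whose order equals $(p+q)/\gcd(p,q)$, and $\ell$ comes from $\U(1)\cap \mathrm{S}(\U(p)\times\U(q))_{ss}$. For the exceptional types E\txs{III} and E\txs{VII} I would use $K=\Spin(10)\times_{\Z_4}\U(1)$ and $\mathrm{E}_6\times_{\Z_3}\U(1)$ respectively. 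In each case I then check that $n\,o(e^{2\pi Z_0})/(2\ell)$ equals $N/2$, or $N/4$ in type D\txs{III}, using the values of $n$ and $N$ from the table. As a consistency check, $\xi=-2$ must always lift, because it gives the canonical bundle $\mathcal K=E_{-2\rho_\fp}$; this holds for both $N/2$ and $N/4$ in D\txs{III}, where $N=2(p-1)$ is even.
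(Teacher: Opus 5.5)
Your proposal is correct and is essentially the paper's argument. Both use $K=(Z(K))_0K_{ss}$ and the vanishing of $\xi\rho_\fp$ on $\fk_{ss}$ to reduce to the condition that $\xi\rho_\fp$ lies in $2\pi i\Z$ on the generator $2\pi\,o(e^{2\pi Z_0})Z_0/\ell$, evaluate $\rho_\fp(Z_0)=in/2$ via \cref{Z_0*rho}, and then substitute the type-by-type values of $o(e^{2\pi Z_0})$ and $\ell$; your quotient $((Z(K))_0\times K_{ss})/F$ just makes explicit what the paper's appeal to Knapp leaves implicit. One harmless slip: $K_{ss}$ need not be simply connected (e.g.\ $\SO(p)$ in type BD\txs{I}), but a character with zero differential on a connected group is trivial, so nothing changes.
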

\begin{proof}
    According to \cite{Kna96} Proposition 4.58, a linear form $\zeta$ lifts to the identity component $(Z(K))_0$ if and only if $\zeta(Z)\in 2\pi i\Z$ for all $Z\in \exp^{-1}(1_{(Z(K))_0})$. By Theorem 4.29 of the same reference, $K=(Z_{K})_0 K_{ss}$. Therefore every such element $Z$ is of the form $\kappa 2\pi Z_0 \, o(e^{2\pi Z_0})/\ell$, where $\kappa$ is an integer. From \cref{Z_0*rho}, $\xi\rho_\fp$ lifts if and only if
    \begin{align*}
        \xi\rho_\fp\left(\kappa 2\pi Z_0 \, o(e^{2\pi Z_0})/\ell\right)\in 2\pi i\Z \iff  (\xi n  o(e^{2\pi Z_0}) )/(2\ell)\in \Z.
    \end{align*}
 To obtain the stated criterion, we substitute the following data in each case.
    \begin{center}
    \small
\renewcommand{\arraystretch}{1.2}
\begin{NiceTabular}{|c|c|c|c|c|c|c|}[hlines]
\textbf{Type} & A\txs{III} & BD\txs{I} & D\txs{III} & C\txs{I}  & E\txs{III} & E\txs{VII}\\
${o(e^{2\pi Z_0})}$ & ${\tfrac{p+q}{gcd(p,q)}} $ & $1$ & $2$ & $2$  & $3$ & $2$\\
${(Z(K))_0 \, \cap K_{ss}}$  
& ${\Z_{lcm(p,q)}}$ & ${\{I_{2+ p}\}}$ & ${\Z_p }$ & ${\Z_p}$ & ${\Z_4}$ & ${\Z_3}$\\
\end{NiceTabular}
\end{center}
    \end{proof}

\subsection{Spin representations and admissible twists}\label{sec:spin} Twisted spin bundles are treated from the spin-geometric side in \cite{LM89,Ni00} and from the representation-theoretic side in \cite{Par72, HP06}. The following conventions are chosen to bridge the two. We consider the dual product of $(\cdot,\cdot)_{\fp,\lambda}$, which we denote by $(\cdot,\cdot)_\lambda$. We fix an orthonormal basis $\{X^{*}_{1},Y^{*}_{1},\dots,X^{*}_{n},Y^{*}_{n}\}$ with respect to $(\cdot,\cdot)_\lambda$,
such that ${\fp_\C^{*}}^{\pm}=\langle X^{*}_{\pm\alpha_{j}}\rangle_{j=1}^{n}$ with
$X^{*}_{\pm\alpha_{j}}=\tfrac{1}{\sqrt2}(X^{*}_{j}\pm iY^{*}_{j})$. Let 
\[S=\Lambda\fp^{*}_{\C}{}^{-} \qquad \text{ equipped with }\qquad \gamma\colon\fp_\C^{*}\to\End(S), \qquad \gamma=\sqrt2\,\varepsilon-\sqrt2\,\iota\] where
$\varepsilon$ and $\iota$ are exterior and interior multiplication. For the $\C$-bilinear extension $(\cdot,\cdot)_\lambda^\C$, we denote by $R_{u,v}$ the elements $(v,\cdot)_\lambda^\C u-(u,\cdot)_\lambda^\C v$ and by $(Cl(\fp_\C^{*}),[\cdot,\cdot])$ the Clifford algebra of $\fp_\C^*$. The composition
\[\fk\xrightarrow{\ \ad^{*}\ }\fso(\fp_\C^{*})\xrightarrow{\ \varphi\ }
Cl(\fp_\C^{*})\xrightarrow{\ \gamma\ }\End(S),\qquad
\varphi(R_{u,v})=-\frac{1}{4}[u,v],\]
is the \emph{spin representation} $\sprep$ of $\fk$. Its restrictions to
$S^{+/-}=\Lambda^{\mathrm{ev/odd}}\fp^{*}_{\C}{}^{-}$ are the half-spin representations, written as $\sprep^{(\pm)}$ when the distinction is immaterial. We refer to \cite{Tsi26} for the proofs of the following statements.

\begin{proposition}\label{spinrepformula}
    The spin representation is given by the following formulas.
    \begin{align*}
	    sp(Y)=\sum_{\alpha_j>0}\alpha_j(Y)\left(\frac{1}{2}\gamma(X_{\alpha_j}^*)\gamma(X_{-\alpha_j}^*)+\frac{1}{2}\id\right),\ Y\in\ft, \quad sp(Y)=\frac{1}{2}\sum_{i<j}([Y,X_i],X_j)_{\lambda}\gamma(X_i^*)\gamma(X_j^*), \ Y\in \fk.
	\end{align*}
\end{proposition}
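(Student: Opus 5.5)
The second formula is the more basic one, so I will prove it first and then specialise it to $\ft$. Throughout I unwind the composition $\sprep=\gamma\circ\varphi\circ\ad^*$.

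\textbf{General formula for $Y\in\fk$.} Since $\ad^*(Y)$ is skew with respect to $(\cdot,\cdot)_\lambda$, in the orthonormal basis $\{X_i^*\}$ (here including the $Y_j^*$) it expands as
\[
\ad^*(Y)=\sum_{i<j}c_{ij}\,R_{X_i^*,X_j^*}.
\]
Evaluating on $X_k^*$ identifies $c_{ij}$, up to sign, with the matrix coefficient $\bigl(\ad^*(Y)X_i^*,X_j^*\bigr)_\lambda$. Dualising, this equals $\pm([Y,X_i],X_j)_\lambda$, where the sign is fixed by the convention $\ad^*(Y)=-\ad(Y)^t$. Applying $\varphi(R_{u,v})=-\tfrac14[u,v]$ gives
\[
[X_i^*,X_j^*]=2X_i^*X_j^*\qquad\text{for } i\neq j,
\]
by orthogonality in the Clifford algebra. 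We therefore get $-\tfrac12 c_{ij}X_i^*X_j^*$, and applying $\gamma$ yields
\[
\sprep(Y)=\tfrac12\sum_{i<j}([Y,X_i],X_j)_\lambda\,\gamma(X_i^*)\gamma(X_j^*),
\]
once the two signs are seen to cancel. The only real work in this step is bookkeeping the signs consistently between the definition of $R_{u,v}$, the dualisation and the Clifford relation $uv+vu=\mp2(u,v)$ used in $Cl(\fp_\C^*)$. I will check the resulting sign against the requirement that $\gamma\circ\varphi$ intertwine $\ad^*$, namely
\[
[\sprep(Y),\gamma(u)]=\gamma(\ad^*(Y)u),
\]
which is unambiguous.

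\textbf{Specialisation to $Y\in\ft$.} For $Y\in\ft$ the vectors $X_{\pm\alpha_j}^*$ are weight vectors, with $\ad^*(Y)X^*_{\pm\alpha_j}=\pm\alpha_j(Y)X^*_{\pm\alpha_j}$ up to the dual-sign convention. So $\ad^*(Y)$ acts on $\fp_\C^*$ by $\pm\alpha_j(Y)$ on the complex lines ${\fp_\C^*}^\pm$. I will rewrite the real pair $X_j^*,Y_j^*$ in terms of $X^*_{\pm\alpha_j}$, using $(X^*_{\alpha_j},X^*_{-\alpha_j})^\C_\lambda=1$. The relevant block of the general formula is a multiple of $\gamma(X_j^*)\gamma(Y_j^*)$, and converting it gives a combination of $\gamma(X^*_{\alpha_j})\gamma(X^*_{-\alpha_j})$ and $\gamma(X^*_{-\alpha_j})\gamma(X^*_{\alpha_j})$. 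The Clifford relation
\[
\gamma(X^*_{\alpha_j})\gamma(X^*_{-\alpha_j})+\gamma(X^*_{-\alpha_j})\gamma(X^*_{\alpha_j})=c\,\id
\]
then turns this into $\alpha_j(Y)\bigl(\tfrac12\gamma(X^*_{\alpha_j})\gamma(X^*_{-\alpha_j})+\tfrac12\id\bigr)$ for the appropriate constant $c$. I will verify this constant directly from $\gamma=\sqrt2\varepsilon-\sqrt2\iota$ on $\Lambda{\fp_\C^*}^-$, which gives $c=-2$ on the appropriate side.

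\textbf{Consistency check.} As an independent check, apply the torus formula to $1\in\Lambda^0$. The result should be the weight $-\rho_\fp$ or $+\rho_\fp$, the standard lowest or highest weight of the spin module. On $1$ one of $\gamma(X^*_{\pm\alpha_j})$ is interior multiplication, which kills $1$, so the operator $\gamma(X^*_{\alpha_j})\gamma(X^*_{-\alpha_j})$ acts by $0$ or $-2$. The $\tfrac12\id$ shift then produces $\pm\tfrac12\alpha_j(Y)$, which confirms the normalisation.

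The main obstacle is purely the sign and normalisation conventions. No conceptual difficulty is expected; all the risk lies in keeping the four sign choices aligned.
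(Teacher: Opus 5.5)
Your proposal is correct. The paper gives no argument of its own here (it defers to \cite{Tsi26}), and your direct unwinding of $\sprep=\gamma\circ\varphi\circ\ad^{*}$ is the natural proof.

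In the general formula the signs do cancel as you expect. With $R_{u,v}=(v,\cdot)_\lambda u-(u,\cdot)_\lambda v$ and $\ad^{*}(Y)X_i^*=\sum_k([Y,X_i],X_k)_\lambda X_k^*$, one gets $c_{ij}=-([Y,X_i],X_j)_\lambda$. This is paired with $-\tfrac14[X_i^*,X_j^*]=-\tfrac12X_i^*X_j^*$.

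The one sign you leave open is the $\ad^*$-weight of $X^*_{\pm\alpha_j}$, and it is not a free convention here. Since $\omega=\sum_j X_j^*\wedge Y_j^*$, one has $Y_j=JX_j$. Then $X^*_{\alpha_j}=\tfrac1{\sqrt2}(X_j^*+iY_j^*)$ annihilates $\fp^{-}_{\C}$, so it has weight $-\alpha_j$, not $+\alpha_j$. The literal reading of your $\pm\alpha_j(Y)$ is therefore the wrong one, and it would flip the overall sign of the torus formula.

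With the weight fixed, $([Y,X_j],Y_j)_\lambda=-i\alpha_j(Y)$. Together with $\gamma(X_j^*)\gamma(Y_j^*)=i\bigl(\gamma(X^*_{\alpha_j})\gamma(X^*_{-\alpha_j})+\id\bigr)$, this gives exactly the stated formula.

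Your consistency check should then be made sharp rather than allowing $\pm\rho_\fp$. On $1$ the operator $\gamma(X^*_{\alpha_j})\gamma(X^*_{-\alpha_j})$ acts by exactly $-2$, because the exterior factor acts first. Hence $1$ has weight $-\rho_\fp$, in agreement with \cref{prop:weights}.
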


\begin{proposition}\label{uweights}
        The vector ${u_I=X_{-\alpha_{i_1}}^*\w \cdots \w X_{-\alpha_{i_s}}^*, \, I=\{ 1\leq i_1<\cdots<i_s \leq n\}}$, where ${u_{\varnothing}\coloneqq 1}$, is a $\ft_\fk$-vector of weight 
        \[ {-\rho_\fg+\rho_\fk+\sum_{j\in I}\alpha_j=\frac{1}{2}\sum_{j\in I}\alpha_j-\frac{1}{2}\sum_{j\notin I}\alpha_j}.\]
    \end{proposition}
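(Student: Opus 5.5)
We need to prove Proposition \ref{uweights}: $u_I$ is a weight vector for $\ft$ under $\sprep$, with weight $-\rho_\fg+\rho_\fk+\sum_{j\in I}\alpha_j=\tfrac12\sum_{j\in I}\alpha_j-\tfrac12\sum_{j\notin I}\alpha_j$.

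The plan is to compute directly with the torus formula of \cref{spinrepformula}. For $Y\in\ft$ we have
\[\sprep(Y)=\sum_{j}\alpha_j(Y)\left(\tfrac12\gamma(X^*_{\alpha_j})\gamma(X^*_{-\alpha_j})+\tfrac12\id\right),\]
so it suffices to find how each operator $\gamma(X^*_{\alpha_j})\gamma(X^*_{-\alpha_j})$ acts on $u_I$.

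Recall $\gamma=\sqrt2\,\varepsilon-\sqrt2\,\iota$ on $S=\Lambda\fp^{*}_{\C}{}^{-}$, and that $S$ is spanned by wedges of the $X^*_{-\alpha_j}$. The first step is to record the pairings. Since $\{X_j^*,Y_j^*\}$ is orthonormal, the $\C$-bilinear form satisfies $(X^*_{\alpha_j},X^*_{-\alpha_k})^\C_\lambda=\delta_{jk}$ and $(X^*_{\pm\alpha_j},X^*_{\pm\alpha_k})^\C_\lambda=0$.

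These pairings determine how $\gamma$ acts. Exterior multiplication by $X^*_{\alpha_j}\notin\fp^{*}_{\C}{}^{-}$ does not preserve $S$, so under the standard identification $S\cong\Lambda\fp^{*}_{\C}{}^{-}$:
\begin{itemize}
\item $\gamma(X^*_{-\alpha_j})=\sqrt2\,\varepsilon(X^*_{-\alpha_j})$ acts by wedging;
\item $\gamma(X^*_{\alpha_j})=-\sqrt2\,\iota(X^*_{\alpha_j})$ acts by contraction via the pairing above.
\end{itemize}
I will fix this normalisation first, checking that the Clifford relation $\gamma(u)\gamma(v)+\gamma(v)\gamma(u)=-2(u,v)^\C_\lambda$ holds with the chosen signs. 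This bookkeeping of signs and factors is the only real obstacle, since the final sign of $\alpha_j$ in the weight depends on it.

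With these conventions, $\gamma(X^*_{\alpha_j})\gamma(X^*_{-\alpha_j})=-2\,\iota_j\varepsilon_j$. On $u_I$ this operator gives:
\begin{itemize}
\item $0$ if $j\in I$, because $\varepsilon_j u_I=0$;
\item $-2u_I$ if $j\notin I$, because $\iota_j\varepsilon_j u_I=u_I$ when $X^*_{-\alpha_j}$ is not already a factor.
\end{itemize}

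Hence $\tfrac12\gamma(X^*_{\alpha_j})\gamma(X^*_{-\alpha_j})+\tfrac12\id$ acts on $u_I$ by $\tfrac12$ for $j\in I$ and by $-\tfrac12$ for $j\notin I$. Therefore
\[\sprep(Y)u_I=\Bigl(\tfrac12\sum_{j\in I}\alpha_j(Y)-\tfrac12\sum_{j\notin I}\alpha_j(Y)\Bigr)u_I .\]
This shows $u_I$ is a $\ft$-weight vector with the second stated form of the weight. If the normalisation instead produced the opposite signs, I would recheck it against $u_\varnothing=1$, which must have weight $-\rho_\fp$, the lowest weight.

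Finally, the two expressions for the weight agree. The roots $\alpha_1,\dots,\alpha_n$ are exactly the positive $\fp$-roots, so $\tfrac12\sum_{j\in I}\alpha_j-\tfrac12\sum_{j\notin I}\alpha_j=-\rho_\fp+\sum_{j\in I}\alpha_j$. Since $\rho_\fg=\rho_\fk+\rho_\fp$, this equals $-\rho_\fg+\rho_\fk+\sum_{j\in I}\alpha_j$.
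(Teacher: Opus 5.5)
Your argument is correct. The paper defers this proof to \cite{Tsi26}, but plugging the torus formula of \cref{spinrepformula} into $u_I$ is the natural route, and your key identity (that $\tfrac12\gamma(X^*_{\alpha_j})\gamma(X^*_{-\alpha_j})+\tfrac12\id$ acts on $u_I$ by $+\tfrac12$ for $j\in I$ and by $-\tfrac12$ for $j\notin I$) is the same one the paper uses in the proof of \cref{lem:cliffordomega}. Since your signs come out right directly, drop the fallback check against $u_\varnothing$: \cref{prop:weights} is deduced from this very proposition, so invoking it here would be circular.
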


\begin{corollary}
    \label{prop:weights}
The vectors $u_{\varnothing}$ and $u_{\mathrm{top}}$ have weights $-\rho_{\fp}$ and $\rho_{\fp}$.
\end{corollary}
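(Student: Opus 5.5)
The plan is to specialise \cref{uweights} to the two extreme index sets; the corollary is essentially a bookkeeping consequence of that proposition. Throughout, $\{\alpha_1,\dots,\alpha_n\}=\Delta_\fp^+$ denotes the positive $\fp$-roots indexing the basis $X^*_{\pm\alpha_j}$ of ${\fp_\C^*}^{\pm}$. With this convention,
\[\rho_\fp=\tfrac12\sum_{j=1}^n\alpha_j .\]

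For $u_{\varnothing}=1$, take $I=\varnothing$ in \cref{uweights}. The first sum is empty and the complement of $I$ is all of $\{1,\dots,n\}$, so the weight is
\[-\tfrac12\sum_{j=1}^n\alpha_j=-\rho_\fp .\]
Equivalently, the left-hand form of the proposition gives $-\rho_\fg+\rho_\fk$. This equals $-\rho_\fp$ because $\Delta_\fg^+=\Delta_\fk^+\sqcup\Delta_\fp^+$, which is the choice of ordering fixed in \cref{conventionHSS}(ii), and hence $\rho_\fg=\rho_\fk+\rho_\fp$.

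For $u_{\mathrm{top}}=X^*_{-\alpha_1}\w\cdots\w X^*_{-\alpha_n}$, take $I=\{1,\dots,n\}$. Now the complement of $I$ is empty, and the weight is
\[\tfrac12\sum_{j=1}^n\alpha_j=\rho_\fp .\]
Using the left-hand form instead, one gets $-\rho_\fg+\rho_\fk+2\rho_\fp=\rho_\fp$, which agrees.

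The only point needing care is the identification $\sum_{j}\alpha_j=2\rho_\fp$, that is, that the $\alpha_j$ run exactly over $\Delta_\fp^+$. This is immediate from the definition $\fp_\C^{\pm}=\bigoplus_{\alpha\in\Delta_\fp^+}\fg_{\pm\alpha}$ in the conventions, so I expect no real obstacle. As a consistency check, the two weights are negatives of each other, and both are orthogonal to $\Delta_\fk$ by \cref{Z_0*rho}. This is what one expects for the $K$-invariant lines $\Lambda^0$ and $\Lambda^{\mathrm{top}}{\fp_\C^*}^{-}$, on which $K$ acts by characters.
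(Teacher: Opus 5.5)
Your proposal is correct and follows the paper's route exactly. The corollary is recorded without a separate proof as the specialisation of \cref{uweights} to $I=\varnothing$ and $I=\{1,\dots,n\}$, with $\sum_j\alpha_j=2\rho_\fp$ (equivalently $\rho_\fg=\rho_\fk+\rho_\fp$) giving the weights $\mp\rho_\fp$.
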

\begin{proposition}\label{sptraceless}
For $n\ne1$ the representations $\sprep^{(\pm)}$ are traceless. For $n=1$ the representation $\sprep$ is traceless while $\tr(\sprep^{\pm})=\mp\rho_{\fp}$.
\end{proposition}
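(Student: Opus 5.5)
The trace of $\sprep^{(\pm)}(Y)$ is a linear functional on $\fk$ that vanishes on $[\fk,\fk]$. So it is determined by its value on the centre $\fz_\fk=\R Z_0$, and it suffices to evaluate it on $\ft$ using the weight description of \cref{uweights}. The trace of $\sprep^{+}$ on $\ft$ is the sum of the weights of the vectors $u_I$ with $|I|$ even, and the trace of $\sprep^{-}$ is the corresponding sum over $|I|$ odd. Writing $\alpha_1,\dots,\alpha_n$ for the positive $\fp$-roots, \cref{uweights} gives the weight of $u_I$ as $\tfrac12\sum_{j}\epsilon_j\alpha_j$, where $\epsilon_j=+1$ if $j\in I$ and $\epsilon_j=-1$ otherwise.

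Next I compute the coefficient of a fixed $\alpha_j$ in the sum over even subsets. It equals $\tfrac12$ times the difference between the number of even subsets containing $j$ and the number of even subsets not containing $j$. For $n\geq 2$, removing $j$ leaves $n-1\geq1$ elements. Even subsets containing $j$ correspond to odd subsets of the remaining $n-1$ elements, and even subsets avoiding $j$ correspond to even subsets of them. For $n-1\geq1$ there are equally many of each, namely $2^{n-2}$. Hence every coefficient vanishes and $\tr\sprep^{+}=0$ on $\ft$. The same argument, with the parities swapped, gives $\tr\sprep^{-}=0$. Summing the two shows that $\sprep$ itself is traceless.

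To pass from $\ft$ to all of $\fk$, I use the first paragraph: the trace is a character of $\fk$, hence vanishes on $[\fk,\fk]$. Since $\fz_\fk\subset\ft$ and $\fk=\fz_\fk\oplus[\fk,\fk]$, vanishing on $\ft$ forces vanishing everywhere. Alternatively, one can argue directly that every element of $\fk$ is conjugate into $\ft$ when $K$ is compact.

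For $n=1$, $S^+=\Lambda^0$ is spanned by $u_\varnothing$, which has weight $-\rho_\fp$ by \cref{prop:weights}, and $S^-=\Lambda^1$ is spanned by $u_{\mathrm{top}}$, which has weight $\rho_\fp$. So $\tr\sprep^{\pm}=\mp\rho_\fp$ and the total trace is $0$. Since both half-spin modules are one-dimensional, their trace is just the weight, so no extension argument is needed here. The main subtlety is the identification of the weights from \cref{uweights} together with the counting parity when $n-1=0$, which is exactly what separates the two cases.
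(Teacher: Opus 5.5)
Your proof is correct and complete. The trace $\tr\circ\sprep^{(\pm)}$ is a character of $\fk$, so it is determined on $\fz_\fk\subset\ft$. On $\ft$ it is the sum of the weights of Proposition~\ref{uweights}, and your parity count kills every $\alpha_j$-coefficient exactly when $n-1\ge1$. The $n=1$ values come straight from Corollary~\ref{prop:weights}.

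The paper does not prove this statement itself (it defers to \cite{Tsi26}), so there is no competing route to compare. One optional shortcut: evaluate at $Z_0$ alone, where $u_I$ has eigenvalue $\tfrac{i}{2}(2|I|-n)$. Thus $\sprep(Z_0)$ acts as $\tfrac12\gamma(\omega)$, cf.\ Lemma~\ref{lem:cliffordomega}. The whole claim then reduces to $\sum_{k\ \mathrm{even}}(2k-n)\binom{n}{k}=0$ for $n\ge2$ and $=-1$ for $n=1$. The latter gives $\tr\sprep^{+}(Z_0)=-\tfrac{i}{2}=-\rho_\fp(Z_0)$.
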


\begin{definition}\label{admissibledef}
A finite-dimensional irreducible $\fk$-representation $(E_{\nu},\pi_{\nu})$ is an \emph{admissible twist} if $(S\ox E_{\nu},\sprep\ox\pi_{\nu})$
lifts to a representation $\beta^{\nu}$ of $K$, that is, $d\beta^{\nu}=\sprep\ox\pi_{\nu}$.
\end{definition}
The pair $(S,\sprep)$ need not integrate to $K$, and neither need the half-spin representations; this is remedied by twisting. Since $K$ is connected and $\sprep^\pm\ox \pi_\nu$ are subrepresentations of $\sprep\ox \pi_\nu$, they also integrate to $K$. We write $\beta^{\nu,\,\pm}$ for the restriction and $m^\pm$ for $\dim(S^\pm\ox E_\nu)$. Admissible twists play the role of higher homogeneous spin structures in the equivariant setting. The one-dimensional ones correspond to $\Spin^c$-structures, and they always exist.
\begin{proposition}\label{Liftinglemma}
If $S^{(\pm)}$ does not lift to $K$, there exists a one-dimensional admissible twist $E_{\nu}:=E_{\nu\rho_{\fp}}$. Every such twist has $\nu=1+2\kappa/N$, with $\kappa\in2\Z$ if $G/K$ is of type D\txs{III} and $\kappa\in\Z$ otherwise.
\end{proposition}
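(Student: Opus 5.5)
We reduce the problem to a lifting problem for characters of $K$. Since $E_\nu$ is one-dimensional, $\pi_\nu=\chi_{\nu\rho_\fp}$ is a character of $\fk$, which must vanish on $[\fk,\fk]$. Hence $E_\nu=E_{\xi\rho_\fp}$ for some $\xi$ with $\xi\rho_\fp$ a character of $\fz_\fk$; this uses $\rho_\fp\perp\Delta_\fk$ from \cref{Z_0*rho}. The weights of $S\ox E_\nu$ are then, by \cref{uweights}, the weights $\tfrac12\sum_{j\in I}\alpha_j-\tfrac12\sum_{j\notin I}\alpha_j$ shifted by $\nu\rho_\fp$. Since $K$ is connected, $S\ox E_\nu$ lifts to $K$ if and only if every weight of $\sprep\ox\pi_\nu$ lifts, i.e. is analytically integral for $K$. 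The weights of $S$ differ from one another by sums of $\fp$-roots, and these lift because $\fp$ is a $K$-module. So it suffices to check one weight, say $-\rho_\fp+\nu\rho_\fp=(\nu-1)\rho_\fp$, the weight of $u_\varnothing\ox 1$ (\cref{prop:weights}).

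Admissibility of $E_\nu$ is therefore equivalent to $(\nu-1)\rho_\fp$ lifting to $K$. By \cref{LiftTable}, this happens if and only if $(\nu-1)N/2\in\Z$ in all types except D\txs{III}, where the condition is $(\nu-1)N/4\in\Z$. Writing $\nu-1=2\kappa/N$, the condition becomes $\kappa\in\Z$, respectively $\kappa/2\in\Z$, that is $\kappa\in2\Z$ for D\txs{III}. This gives the stated parametrisation of every one-dimensional admissible twist.

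For existence we take $\kappa=0$, i.e. $\nu=1$: the weight $0$ trivially lifts, so $E_{\rho_\fp}$ is admissible. Equivalently, $S\ox E_{\rho_\fp}\cong\Lambda\fp^{*,-}_\C\ox\chi_{\rho_\fp}$ is the standard $\Spin^c$ twist by a square root of $\mathcal K^{-1}$, whose product with $S$ always integrates. The hypothesis that $S^{(\pm)}$ does not lift is needed only to ensure that $E_\nu$ is non-trivial, i.e. that $\nu=0$ is excluded. This is consistent with the formula: $\nu=0$ corresponds to $-\rho_\fp$ lifting, hence $\kappa=-N/2$, and that integer or even-integer value of $\kappa$ occurs exactly when $S$ itself lifts.

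The main obstacle is the first reduction. We must justify that lifting $\sprep\ox\pi_\nu$ to $K$ (connected, but possibly not simply connected) is detected by a single weight. This requires two checks: first, that integrality of a $\fk$-representation on a connected compact group with given center structure reduces to integrality of its weights on a maximal torus; second, that the lattice generated by the $\fp$-roots lies in the character lattice of $K$. The second holds because $\Ad$ integrates on $\fp$. With these in hand, \cref{LiftTable} does the rest.
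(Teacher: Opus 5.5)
Your proposal is correct and follows essentially the paper's route. The paper uses Knapp's factorisation $S\simeq\Lambda\fp^{+}\otimes\C_{-\rho_{\fp}}$, with $\Lambda\fp^{+}$ lifting because $\fp^{+}$ does, where you use the weights of \cref{uweights} differing by $\fp$-roots; both arguments reduce admissibility to the lifting of the single character $(\nu-1)\rho_{\fp}$ and then read off $\nu=1+2\kappa/N$ from \cref{LiftTable}, and your explicit choice $\nu=1$ for existence and your use of the weight of $u_{\varnothing}\otimes1$ for the ``every such twist'' direction simply spell out points the paper leaves implicit.
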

\begin{proof}
      From \cite{Kna86}, $S\simeq\Lambda\fp^{+}\otimes(\Lambda^{\mathrm{top}}\fp^{-})^{1/2}$ with $\fk$ acting by the adjoint action on each factor. The first factor lifts to $K$ because $\fp^{+}$ does. If $(\Lambda^{\mathrm{top}}\fp^{-})^{1/2}=\mathbb{C}_{-\rho_{\fp}}$ does not lift, consider a one-dimensional $\fk$-representation $E_{\chi}=V_{0}\ox\C_{\chi}$ with $V_{0}$ trivial on $\fk_{ss}$. Since $\fz_{\fk}$ is one-dimensional, $\chi=\nu\rho_{\fp}$ for some $\nu\in\R$. By \cref{LiftTable} there is always $\nu\in\Q$ for which $\C_{-\rho_{\fp}}\ox E_{\nu\rho_{\fp}}\simeq\C_{(-1+\nu)\rho_{\fp}}$ lifts to $K$, namely $\nu=1+2\kappa/N$ with $\kappa$ as stated. The same argument applies to $S^{(\pm)}$.
\end{proof}

\begin{example}[$\fg=\fsu(3)$]\label{ex:su3}
Here $\fk=\mathfrak{s}(\fu(2)\op\fu(1))$ and $Z_{0}=\diag(i/3,i/3,-2i/3)$. The lexicographic ordering with respect to $\{H_{1}=-iZ_{0},H_{2}=i\,\diag(-i,i,0)\}$ gives $\gve_1>\gve_{2}>\gve_{3}$, so
$\Delta_\fp^{+}=\{\gve_{1}-\gve_{3},\gve_{2}-\gve_{3}\}$,
$\Delta_\fk^{+}=\{\gve_{1}-\gve_{2}\}$ and
$\rho_{\fp}=\tfrac{1}{2}\gve_{1}+\tfrac{1}{2}\gve_{2}-\gve_{3}$. Then, writing $S_{j}=S_{ss}^{j}\otimes S_{\fz_\fk}^{j}$ for an irreducible summand
\[
S=V_{0}\otimes\C_{-\rho_\fp}\oplus V_{1/2(\gve_1-\gve_2)}\otimes\C_{0}\oplus V_{0}\otimes\C_{\rho_\fp}
\simeq\bigl(V_{0}\otimes\C_{0}\oplus V_{1/2(\gve_1-\gve_2)}\otimes\C_{\rho_\fp}\oplus V_{0}\otimes\C_{2\rho_\fp}\bigr)\otimes\C_{-\rho_\fp},
\]
and $E_{\nu}$ is admissible if and only if $\nu=1+2\kappa/3$, $\kappa\in\Z$. Analogous decompositions for $\fso(5)$ and $\fso(6)$, as well as their non-compact duals, %obtained by the same procedure from Lemma 9.3.2 of \cite{Wal88}, 
are recorded in \cref{exatable}.
\end{example}
\vspace{-3mm}
\begin{table}[h]
\begingroup
\fontsize{9.5}{10}\selectfont
\begin{center}
\renewcommand{\arraystretch}{1.5}
\begin{NiceTabular}{c|c|c|c|c|c}
\hline
\multicolumn{1}{|c}{$n$} & $\fg$ & $\fk$ & $\rho_\fp$ & decomposition of $S^{(\pm)}$ & \multicolumn{1}{c|}{admissible $\nu$}
 %\\ ${G}$ & ${\SU(p+q), \ p\geq q}$ & ${\SO_e(2+p)}$ & ${\SO(2p)}$ & ${\Sp(p)}$ 
\\
\hline
\multicolumn{1}{|c}{2} & $\fsu(3)$ & $\mathfrak{s}(\fu(2)\op\fu(1))$& 
 $\tfrac{1}{2}\gve_{1}+\tfrac{1}{2}\gve_{2}-\gve_{3}$ &  $S=V_{0}\C_{-\rho_{\fp}}\op V_{1}\C_{0}\op V_{0}\C_{\rho_{\fp}}$ & \multicolumn{1}{c|}{$1+\tfrac{2\kappa}{3}$}  
\\
\hline
\multicolumn{1}{|c}{3} & $\fso(5)$ & $\fso(2)\op\fso(3)$ & $\tfrac{3}{2}\gve_{1}$ & $S=V_{0}\C_{\rho_{\fp}}\op V_{2}\C_{-1/3\rho_{\fp}}\op V_{0}\C_{-\rho_{\fp}}\op V_{2}\C_{{1/3}\rho_{\fp}}$  &\multicolumn{1}{c|}{$1+\tfrac{2\kappa}{3}$ } \\
\hline
\multicolumn{1}{|c}{4} & $\fso(6)$ & $\fso(2)\op\fso(4)$ & $2\gve_{1}$ &  $S^{+}=V_{(0,0)}\C_{\rho_{\fp}}\oplus V_{(2,0)}\C_{0}\oplus V_{(0,2)}\C_{0}\oplus V_{(0,0)}\C_{-\rho_{\fp}}$& \multicolumn{1}{c|}{ $1+\tfrac{\kappa}{2}$} \\
\hline
%\multicolumn{4}{c}{} & $S^{-}=V_{(1/2,1/2)}\C_{\chi_{1/2}}\oplus V_{(1/2,1/2)}\C_{\chi_{-1/2}}$ & \\ \cline{5}
\end{NiceTabular}
\end{center}
\vspace{2mm}
\caption{Decomposition of the (half-)spin modules in the lowest-dimensional examples. %The first row is \cref{ex:su3}. 
Highest weights for $\fk_{ss}$ actions are given with respect to its fundamental weights.}
\label{exatable}
\endgroup
\end{table}
\vspace{-7mm}
\begin{remark}\label{rem:liftfails}
Liftability is not detected factor-wise: if $S_{ss}$ lifts to $K_{ss}$ and $S_{\fz_{\fk}}$ lifts to $K$, then $S_{ss}\ox S_{\fz_{\fk}}$ lifts to $K$, but the converse fails. For example, if $\fg=\fsu(3)$ the summand $V_{1}\ox\C_{\rho_\fp}$ lifts although $\C_{\rho_\fp}$ does not. Likewise, $S^{(\pm)}\otimes E_{\nu}$ may lift to $K$, while neither factor does. For example, if $\fg=\fso(5)$ this happens for every admissible $\nu=1+2\kappa/3$.
\end{remark}
We often refer to the possibly not well-defined bundles $ {\mathcal{S^{(\pm)}}=G\x_{K} S^{(\pm)}}$, $ {\mathcal{E}_\nu=G\x_{K} E_{\nu}}$ as \emph{virtual bundles}. The virtual line bundle $ {\mathcal{E}_{\nu}}$ is most commonly described as the ``square root" of $ {\mathcal{E}_{\nu}\ox\mathcal{E}_{\nu}}$ and its curvature $ {F_{A^{E_\nu}}}$ as half of the curvature $ {F_{A^{E_\nu\ox E_\nu}}}$ of the induced product connection. That is because the \emph{characteristic} line bundle $ {\mathcal{L}_\nu\coloneqq \mathcal{E}_{\nu}\ox\mathcal{E}_\nu}$ is a well-defined homogeneous bundle because of the proposition below.

\begin{proposition}\label{2nu}
If $(E_{\nu},\pi_{\nu})$ is a one-dimensional admissible twist, then
$(E_{\nu}\otimes E_{\nu},\pi_{\nu}\otimes\pi_{\nu})$ lifts to $K$.
\end{proposition}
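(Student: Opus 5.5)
The plan is to use the determinant of the twisted spin module. Let $(E_\nu,\pi_\nu)$ be a one-dimensional admissible twist with weight $\nu\rho_\fp$. Then $S^+\ox E_\nu$ integrates to $K$ through $\beta^{\nu,+}$, and so does its top exterior power $\Lambda^{m^+}(S^+\ox E_\nu)$, via $\det\beta^{\nu,+}$. Because $E_\nu$ is a line, the differential of this character is
\[
\tr(\sprep^{+}\ox\pi_\nu)=\tr(\sprep^+)+m^+\,\nu\rho_\fp .
\]
By \cref{sptraceless}, for $n\neq 1$ the first term vanishes. Hence the character $m^+\nu\rho_\fp=2^{n-1}\nu\rho_\fp$ lifts to $K$.

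\medskip
This gives only a multiple, so I would replace it with a more efficient argument that works for every $n$. By \cref{Liftinglemma} and its proof, admissibility of $E_{\nu}$ means that $\C_{(\nu-1)\rho_\fp}$ lifts to $K$. Indeed, $S\simeq\Lambda\fp^+\ox\C_{-\rho_\fp}$ and $\Lambda\fp^+$ lifts, so $S\ox E_\nu$ lifts exactly when each weight $(\nu-1)\rho_\fp+\mu$ lifts, where $\mu$ runs over the weights of $\Lambda\fp^+$. Taking $\mu=0$ from $\Lambda^0\fp^+$ shows that $\C_{(\nu-1)\rho_\fp}$ lifts. It remains to see that $\C_{2\rho_\fp}$ lifts. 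This holds because $\C_{2\rho_\fp}=\Lambda^{\mathrm{top}}\fp^+$, the dual of the canonical character $\mathcal K=E_{-2\rho_\fp}$, and it is realised by the adjoint action of $K$ on $\fp^+$, a genuine $K$-representation.

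\medskip
Tensoring these two genuine characters gives the required lift:
\[
E_\nu\ox E_\nu=\C_{2\nu\rho_\fp}=\C_{(\nu-1)\rho_\fp}^{\ox 2}\ox\C_{2\rho_\fp}.
\]
The differentials add correctly, $2(\nu-1)\rho_\fp+2\rho_\fp=2\nu\rho_\fp$, so $\pi_\nu\ox\pi_\nu$ integrates. One can also check this against \cref{LiftTable}: $\nu=1+2\kappa/N$ gives $2\nu N/2=N+2\kappa\in\Z$, and $2\nu N/4=(N+2\kappa)/2\in\Z$ in type D\txs{III}, since there $N=2(p-1)$ and $\kappa$ is even. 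The main point requiring care is extracting the degree-zero summand of $S\ox E_\nu$ as a $K$-subrepresentation. This follows from connectedness of $K$: a $\fk$-invariant subspace of a $K$-module is $K$-invariant, which is the same argument the paper uses for $S^\pm$.
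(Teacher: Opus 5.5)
Your argument is correct and uses the same splitting $2\nu\rho_\fp=2(\nu-1)\rho_\fp+2\rho_\fp$ as the paper, but you justify the two pieces differently.

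The paper argues purely numerically. It takes from the proof of \cref{Liftinglemma} that $\C_{(\nu-1)\rho_\fp}$ lifts, and rewrites every entry of \cref{LiftTable} in the uniform form $\xi\ell/2\in\Z$, with $\ell=N$, or $\ell=N/2=p-1$ in type D\txs{III}. It then observes that $(\nu-1)\ell/2=k\in\Z$ gives $2\nu\ell/2=2k+\ell\in\Z$. The extra summand $\ell$ is exactly the numerical form of your statement that $\C_{2\rho_\fp}$ lifts.

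You instead realise both factors as genuine $K$-modules. You obtain $\C_{(\nu-1)\rho_\fp}$ as the degree-zero line of $S\otimes E_\nu\simeq\Lambda\fp^+\otimes\C_{(\nu-1)\rho_\fp}$. This line is $\fk$-invariant because $\fk$ preserves the grading of $\Lambda\fp^+$, hence $K$-invariant because $K$ is connected. You obtain $\C_{2\rho_\fp}$ as $\Lambda^{\mathrm{top}}\fp^+$.

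Your route gives a type-free proof that does not rely on the case-by-case data behind \cref{LiftTable}, namely $o(e^{2\pi Z_0})$ and $(Z(K))_0\cap K_{ss}$. It also proves the implication ``admissible $\Rightarrow$ $\C_{(\nu-1)\rho_\fp}$ lifts'', which the paper only imports. The paper's version is shorter once the table is available, and your closing consistency check is essentially that proof.

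Two small clean-ups:
\begin{itemize}
\item Delete the opening determinant paragraph. It only yields that $2^{n-1}\nu\rho_\fp$ lifts, and only for $n\neq1$.
\item The evenness of $\kappa$ plays no role in your type D\txs{III} check, since $(N+2\kappa)/2=p-1+\kappa$ is an integer for any integer $\kappa$.
\end{itemize}
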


\begin{proof}
If $\C_{(-1+\nu)\rho_{\fp}}$ lifts, then $\nu-1$ satisfies the condition of \cref{LiftTable}, which in all types has the form $\xi \ell /2\in\Z$ for a fixed
integer $\ell$; $\ell$ is $N\in\Z $ for all types except D\txs{III}, where $\ell=N/2=p-1\in\Z$. Since $(\nu-1)\ell/2=k\in\Z$ then $\nu=(2k+\ell)/\ell$ and
$2\nu\ell/2=2k+\ell\in\Z$, so $2\nu$ satisfies the same condition.
\end{proof}

This last proposition clarifies the relation between the one-dimensional admissible twist, the characteristic bundle, and the determinant bundle of the $\Spin^c$ structure mentioned in \cref{SWpara} and the introduction. This can be summarised in the following equivalence. For $n=1$, this holds for the full spin module.
\begin{equation}\label{linebndleequiv}
\mathcal{E}_{\nu}\simeq\mathcal{L}_{\nu}^{1/2}\simeq det\bigl(\mathcal{S}^{(\pm)}\ox \mathcal{E}_{\nu})^{1/m^{(\pm)}}
\iff
\mathcal{E}_{\nu}^{\ox m^{(\pm)}}\simeq\mathcal{L}_{\nu}^{\ox m^{(\pm)}/2}
\simeq\det\bigl(\mathcal{S}^{(\pm)}\ox \mathcal{E}_{\nu}\bigr).
\end{equation}
\begin{proposition}\label{detnu}
Let $(E_{\nu},\pi_{\nu})$ be a one-dimensional admissible twist. % and $m^{(\pm)}=\dim(S^{(\pm)}\otimes E_{\nu})$. 
For $n\neq 1$, $\pi_{\nu}=\tfrac{1}{m^{(\pm)}}\tr\bigl(d\beta^{\nu,(\pm)}\bigr)$. For $n=1$,
$\pi_{\nu}=\tfrac1m\tr(d\beta^{\nu})$ and $\pi_{\nu\mp1}=\tfrac{1}{m^{\pm}}\tr(d\beta^{\nu,\pm})$.
\end{proposition}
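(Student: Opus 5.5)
The plan is to compute the trace of $d\beta^{\nu,(\pm)}=\sprep^{(\pm)}\ox\pi_\nu$ directly and then divide by the dimension. Since $E_\nu$ is one-dimensional, $\pi_\nu(Y)$ is a scalar for every $Y\in\fk$. On $S^{(\pm)}\ox E_\nu\cong S^{(\pm)}$ we then have
\[
d\beta^{\nu,(\pm)}(Y)=\sprep^{(\pm)}(Y)\ox 1+\id\ox\pi_\nu(Y)=\sprep^{(\pm)}(Y)+\pi_\nu(Y)\,\id.
\]
Taking traces gives
\[
\tr\bigl(d\beta^{\nu,(\pm)}(Y)\bigr)=\tr\bigl(\sprep^{(\pm)}(Y)\bigr)+m^{(\pm)}\pi_\nu(Y),
\]
with $m^{(\pm)}=\dim S^{(\pm)}=2^{n-1}$. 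For the full module the dimension is $m=2^n$.

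\textbf{Case $n\neq1$.} Here \cref{sptraceless} gives $\tr(\sprep^{(\pm)})=0$. Dividing by $m^{(\pm)}$ yields $\pi_\nu=\tfrac{1}{m^{(\pm)}}\tr(d\beta^{\nu,(\pm)})$ at once. The same argument applies to the full spin module, which is also traceless.

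\textbf{Case $n=1$.} The full representation $\sprep$ is still traceless by \cref{sptraceless}, so the full-module formula $\pi_\nu=\tfrac1m\tr(d\beta^\nu)$ holds as before, now with $m=2$. For the halves, $m^{\pm}=1$ and $\tr(\sprep^{\pm})=\mp\rho_\fp$. Hence
\[
\tr\bigl(d\beta^{\nu,\pm}\bigr)=\mp\rho_\fp+\nu\rho_\fp=(\nu\mp1)\rho_\fp,
\]
which is $\pi_{\nu\mp1}$ under the identification $E_{\nu}=E_{\nu\rho_\fp}$, i.e.\ $\pi_\nu$ is the character $\nu\rho_\fp$ extended to $\fk$. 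For a cross-check, \cref{prop:weights} says the two basis vectors $u_\varnothing\in S^+$ and $u_{\mathrm{top}}\in S^-$ have weights $-\rho_\fp$ and $\rho_\fp$. This matches the trace formula once it is verified that these characters vanish on $\fk_{ss}$, which they do by \cref{Z_0*rho}.

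\textbf{Main obstacle.} The only subtle point is bookkeeping: one must interpret the characters on all of $\fk$ rather than only on $\ft$. I expect the key observation to be that a one-dimensional representation kills $[\fk,\fk]$, so it is determined by its value on $\fz_\fk$. There it is fixed by the $\ft$-weights, which is why the trace computed on $\ft$ via \cref{spinrepformula} suffices. The rest is the arithmetic above.
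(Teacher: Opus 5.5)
Your proof is correct and is essentially the paper's argument: write $d\beta^{\nu,(\pm)}(Y)=\sprep^{(\pm)}(Y)\ox 1+1\ox\pi_\nu(Y)$, take traces, and apply \cref{sptraceless}; for $n=1$ the correction $\mp\rho_{\fp}$ produces $\pi_{\nu\mp1}$. Your extra remark, that the trace is a character of $\fk$ and is therefore determined on $\fz_{\fk}\subset\ft$, is a harmless clarification that the paper leaves implicit.
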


\begin{proof}
A weight basis of $S^{(\pm)}\ox E_{\nu}$ consists of vectors $u\otimes v$ for a fixed $0\ne v\in E_{\nu}$, on which $d\beta^{\nu,(\pm)}(Y)$ acts by
$\sprep^{(\pm)}(Y)u\ox v+u\ox\pi_{\nu}(Y)v$. By \cref{sptraceless} the first summand is traceless for $n\ne1$, so the trace is $m^{(\pm)}\pi_{\nu}(Y)$. For $n=1$ the correction is $\mp\rho_{\fp}$.
\end{proof}

\begin{proposition}
\label{1-1cor}
Let $\mathfrak{s}_0$ denote the homogeneous $\Spin^c$ structure with $\mathcal{K}^*=\mathcal{E}_{2\rho_\fp}$ as its characteristic bundle. Then, with $\kappa\in2\Z$ for type D\txs{III} and $\kappa\in\Z$ otherwise, the assignment
\begin{align*}
    \{ \text{one-dimensional admissible twists }{E_{\nu}}\} & \quad {\longrightarrow} \quad \{ \text{homogeneous $\Spin^{c}$ structures}\}/\sim\\
    {E_\nu, \, \nu=1+2\kappa/N}\hspace{20mm} & \quad \longmapsto \hspace{20mm} {\mathfrak{s}_0\ox E_{2\kappa/N}}
\end{align*}
is a bijection between one-dimensional admissible twists and homogeneous $\Spin^c$ structures up to equivariant isomorphism.
\end{proposition}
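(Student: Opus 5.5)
We first fix what ``homogeneous $\Spin^c$ structure'' means here. It is a $K$-representation on $S\ox E$, where $E$ is one-dimensional, whose differential is $\sprep\ox\pi_E$ and which is compatible with Clifford multiplication. Two such structures are identified if they are $G$-equivariantly isomorphic. Equivariant isomorphism classes of homogeneous bundles $G\x_K V$ correspond to isomorphism classes of $K$-representations $V$. Since $K$ is connected, such a class is determined by its differential. Hence a homogeneous $\Spin^c$ structure is encoded, up to equivariant isomorphism, by the $\fk$-character of its twist. As $\fz_\fk$ is one-dimensional and characters vanish on $\fk_{ss}$, that character has the form $\nu\rho_\fp$. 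Admissibility in the sense of \cref{admissibledef} is then exactly the requirement that the resulting $K$-representation exists.

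The plan has three steps.

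\textbf{Well-definedness.} We show that $\mathfrak s_0$ corresponds to the twist $E_1=E_{\rho_\fp}$. By the proof of \cref{Liftinglemma}, $S\ox E_1\simeq\Lambda\fp^+\ox\C_{-\rho_\fp}\ox\C_{\rho_\fp}=\Lambda\fp^+$, which lifts to $K$. By \cref{2nu} and \eqref{linebndleequiv} its characteristic bundle is $\mathcal{L}_1=\mathcal E_{2\rho_\fp}=\mathcal K^*$. Tensoring $\mathfrak s_0$ with the one-dimensional representation $E_{2\kappa/N}=\C_{(2\kappa/N)\rho_\fp}$ gives the twist with character $(1+2\kappa/N)\rho_\fp=\nu\rho_\fp$. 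This is a genuine $K$-representation precisely when $\C_{(\nu-1)\rho_\fp}$ lifts. By \cref{LiftTable} that happens iff $\kappa\in\Z$, with $\kappa\in2\Z$ in type D\txs{III}. So the map lands in homogeneous $\Spin^c$ structures exactly for the stated $\kappa$.

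\textbf{Injectivity.} Suppose $\mathfrak s_0\ox E_{2\kappa/N}\simeq\mathfrak s_0\ox E_{2\kappa'/N}$. Then the characteristic bundles, i.e. the determinant lines via \eqref{linebndleequiv}, agree. By \cref{detnu}, the trace of the differential of $\beta^{\nu}$ recovers $\pi_\nu$, so $\nu\rho_\fp=\nu'\rho_\fp$ and therefore $\kappa=\kappa'$. For $n=1$ we use the full spin module version of \cref{detnu} to avoid the $\mp\rho_\fp$ correction.

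\textbf{Surjectivity.} Take an arbitrary homogeneous $\Spin^c$ structure $S\ox E$. Its twist $E$ is a one-dimensional $\fk$-representation with character $\nu\rho_\fp$. The tensor product $S\ox E$ lifts, so $E$ is a one-dimensional admissible twist. \cref{Liftinglemma} then forces $\nu=1+2\kappa/N$ with $\kappa$ as stated, and so the structure is the image of $E_\nu$.

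The main obstacle is to justify carefully that every homogeneous $\Spin^c$ structure, up to equivariant isomorphism, has this form $S\ox E$ with $E$ one-dimensional. One also has to show that the spinor module is forced to be $S$ with the spin action up to a scalar character, rather than some other lift. We plan to argue as follows. Any homogeneous $\Spin^c$ structure is given by a homomorphism $K\to\Spin^c(2n)$ lifting the isotropy map $K\to\SO(2n)$. Two lifts of the same map differ by a homomorphism $K\to\U(1)$, which is again a character of $\fz_\fk$ and hence a multiple of $\rho_\fp$. This reduces surjectivity to the lifting computation above.

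The case $\dim\fz_\fk=1$ and the connectedness of $K$ are used throughout. They guarantee that equivariant isomorphism is detected by characters on $\fz_\fk$.
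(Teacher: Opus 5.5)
Your proposal is correct and is essentially the paper's torsor argument, unpacked into three steps. Your surjectivity step, where lifts of $K\to\SO(2n)$ to $\Spin^c(2n)$ differ by central characters in $\Hom(K,U(1))$, is the paper's torsor claim for homogeneous $\Spin^c$ structures. Your well-definedness step via \cref{LiftTable} is the paper's torsor claim for admissible twists. The one addition is that you prove injectivity explicitly through the trace and determinant line (\cref{detnu}), whereas the paper leaves it implicit in the freeness of the torsor action.
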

\begin{proof}
Both sides are torsors over $\Hom(K,U(1))$. For the admissible twists, if $\nu=1+2\kappa/N$ and $\nu'=1+2\kappa'/N$ then $E_{\nu'}=E_{\nu}\otimes E_{\mu}$ with $\mu=2(\kappa'-\kappa)/N$, and by \cref{LiftTable} $E_{\mu}$ lifts, so $\mu\rho_{\fp}$ integrates to an element of $\Hom(K,U(1))$. For the homogeneous $\Spin^c$ structures, if $\mathfrak{s},\mathfrak{s}'$ are homogeneous with $\mathfrak{s}'=\mathfrak{s}\otimes E$, then $E$ must be homogeneous, hence
$E=G\times_{\chi}\C$ for some $\chi\in\Hom(K,U(1))$.
\end{proof}
\begin{remark}\label{rem:basepoint}
The base point $\mathfrak{s}_{0}$ corresponds to $S\otimes E_{1}$, with which it
shares the characteristic bundle $\mathcal{K}^{*}=E_{2}$. Moreover, when $n=2$ one has $m^{\pm}=2$ and \cref{linebndleequiv} gives $\mathcal{L}_{\nu}\simeq\det(\mathcal{S}^{\pm}\ox \mathcal{E}_{\nu})$; so $\mathfrak{s}_{0}$ is the $\Spin^c$ structure with determinant bundle $\mathcal{K}^{*}$, as in the classical formulation. %Finally, every $\Spin^c$ structure on $G/K$ is homogeneous, since $\Pic(G/K)\simeq H^{2}(G/K,\Z)$ is $\Z$ in the compact case and $0$ in the non-compact case, so that $\Hom(K,U(1))\to H^{2}(G/K,\Z)$ is surjective.
\end{remark}

\subsection{Invariant connection and its curvature}\label{subsec:curvature}

Let $(E_{\mu},\pi_{\mu})$ be a finite-dimensional admissible twist, so that there is $\beta^{\mu,(\pm)}\colon K\to GL(S^{(\pm)}\otimes E_{\mu})$ with
$d\beta^{\mu,(\pm)}=\sprep^{(\pm)}\otimes\pi_{\mu}$. Invariant connection $1$-forms on
$P=G\times_{\beta^{\mu,(\pm)}}GL(S^{(\pm)}\otimes E_{\mu})$ correspond to elements of
$\Hom_{K}\bigl(\fg,\End(S^{(\pm)})\otimes\End(E_{\mu})\bigr)$ restricting to
$\sprep^{(\pm)}\ox{\pi}_{\mu}$ on $\fk$. A natural such element is $\Lambda^{s}_{S^{(\pm)}}\ox1+1\ox \Lambda_{E_{\mu}}$, where $\Lambda_{E_{\mu}}$ is any element of $\Hom_{\fk}(\fg,\End(E_{\mu}))$ with ${\Lambda_{E_{\mu}}}_{{|}_{\fk}}=\pi_{\mu}$ and
\begin{equation}\label{eq:equivariance}
\Lambda_{E_{\mu}}([Y,X])=\pi_{\mu}(Y)\circ\Lambda_{E_{\mu}}(X)-\Lambda_{E_{\mu}}(X)\circ\pi_{\mu}(Y), \qquad X\in\fp,\ Y\in\fk,
\end{equation}
and where, for $X\in\fp$ and a real parameter $s$,
\begin{equation}\label{eq:LambdaS}
\Lambda^{s}_{S^{(\pm)}}(X)=\frac12\sum_{i<j}^{2n}\bigl(s[X,X_{i}]_{\fp},X_{j}\bigr)_{\lambda}\gamma(X^{*}_{i})\gamma(X^{*}_{j})\ \in\ \End(S^{(\pm)}), \qquad {\Lambda^{s}_{S^{(\pm)}}}_{{|}_{\fk}}=\sprep^{(\pm)}.
\end{equation}
Specialise now to a one-dimensional admissible twist $(E_{\nu},\chi_{\nu})$. Since the induced morphism after composing with the adjoint action is trivial, \cref{eq:equivariance} reduces to $\Lambda_{E_{\nu}}([Y,X])$ $=0$. As $K$ acts irreducibly on $\fp$, the latter implies that
\begin{equation}\label{eq:Lambdavanish}
{\Lambda_{E_{\nu}}}_{{|}_{\fp}}\equiv0,
\qquad\text{and}\qquad
{\Lambda^{s}_{S^{(\pm)}}}_{{|}_{\fp}}\equiv0\ \ \text{because}\ [\fp,\fp]_{\fp}=0 .
\end{equation}
Substituting \cref{eq:Lambdavanish} into \cref{eq:Fformula}, all bracket terms lying in $\fp$ drop out and
\begin{equation}\label{eq:Ftwisted}
F_{A_{S^{(\pm)}\otimes E_{\nu}}}(X_{i},X_{j})
=\bigl(-\sprep^{(\pm)}([X_{i},X_{j}]_{\fk})\bigr)\otimes1
+1\otimes\bigl(-\nu\rho_{\fp}([X_{i},X_{j}]_{\fk})\bigr).
\end{equation}
We summarize the above in the following table, which leads to the remark below.
\vspace{2mm}
\begin{center}
\small
\renewcommand{\arraystretch}{1.2}
\begin{NiceTabular}{c|c}[hvlines]
$\ {P=G\x_K F}$ &  $\ {{F_{A_\bullet}}}$ at the base point
\\

$\ {G\x_{\pi_{\nu}}\U(E_\nu)}$  & $-\ {\sum_{i<j}^{2n}1\ox X_i^*\w  X_j^*\ox \nu\rho_\fp([X_i,X_j]_\fk)}$
\\
$\ {G\x_{\beta^{\nu,(\pm)}}\GL(S^{(\pm)}\ox E_\nu)}$   & $\ {-\sum_{i<j}^{2n}1\ox X_i^*\w  X_j^*\ox \left( sp^{(\pm)} ([X_i,X_j]_{\fk})\ox1+1\ox \nu\rho_\fp([X_i,X_j]_\fk)\right)}$
\end{NiceTabular}
\vspace{2mm}
\label{AFA}
\end{center}
\begin{remark}\label{rem:trace}
The curvature of the \emph{virtual} connection on $\mathcal{E}_{\nu}$ is the trace of one living on a bundle that always exists. Indeed, for $n\ne1$, \cref{sptraceless} and \cref{eq:Ftwisted} give
\[\frac{1}{m^{(\pm)}}\tr\bigl(F_{A_{S^{(\pm)}\ox E_{\nu}}}\bigr)=-\sum_{i<j}^{2n}1\ox X^{*}_{i}\w X^{*}_{j}\ox\nu\rho_{\fp}([X_{i},X_{j}]_{\fk})
=F_{A_{{E}_{\nu}}} .
\]
For $n=1$ the same holds for $S$, with $\nu$ replaced by $\nu\mp1$ on $S^{\pm}$.
\end{remark}
The following identity is the computational heart of the paper. The proportionality between the curvature and the Kähler form is crucial, since every notion of self-duality containing $\omega$ in its positive eigenspace forces the anti-self-dual part of $F$ to vanish. This property is intended to replace the consequences arising from the isomorphism between self-dual forms and skew-Hermitian endomorphisms of the positive spinors.
\begin{lemma}\label{lem:Fomega}
Let $E_{\nu}$ be a one-dimensional admissible twist and $A_{E_{\nu}}\in\mathcal{A}^G$. Then
\[
F_{A_{E_{\nu}}}=\frac{\nu i}{4\lambda}\,\omega .
\]
Consequently $\Lambda_{\omega}F_{A_{E_{\nu}}}=\tfrac{n\nu i}{4\lambda}$ is constant.
\end{lemma}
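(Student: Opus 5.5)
We will work at the base point and then transport the identity to all of $G/K$ by invariance. By \cref{FAabelian}, the structure group $\U(E_\nu)=U(1)$ is abelian, so the curvature of an invariant connection is determined by its value at $p_0$. The Kähler form $\omega_\lambda$ is $G$-invariant, and both sides are invariant $2$-forms. Hence it suffices to compare them on $\fp\times\fp$.

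By \cref{eq:Lambdavanish}, every $A_{E_\nu}\in\mathcal A^G$ has $\Lambda_{E_\nu}|_{\fp}\equiv0$. Therefore the table following \cref{eq:Ftwisted} gives, for $X,Y\in\fp$,
\[
F_{A_{E_\nu}}(X,Y)=-\nu\rho_{\fp}\bigl([X,Y]_{\fk}\bigr).
\]
Since $G/K$ is symmetric, $[\fp,\fp]\subseteq\fk$, so $[X,Y]_{\fk}=[X,Y]$. We now apply \cref{omegavarpirel}:
\[
\rho_{\fp}([X,Y])=-\frac{i}{4\lambda}\,\omega_\lambda(X,Y).
\]
Substituting yields $F_{A_{E_\nu}}(X,Y)=\frac{\nu i}{4\lambda}\,\omega_\lambda(X,Y)$, which is the stated identity $F_{A_{E_\nu}}=\frac{\nu i}{4\lambda}\omega$ at the base point. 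By invariance it holds everywhere.

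The main point to check is the sign and normalisation conventions relating the expression $\sum_{i<j}X_i^*\wedge X_j^*\otimes(\cdot)$ for the curvature to evaluation on pairs $(X,Y)$. Here $\omega$ must be read as $\omega_\lambda$, with the basis orthonormal for $\lambda B_\fp$. We also need that $\omega$, with $|\omega|^2=n$, is exactly this $\omega_\lambda$. I would verify this on $X=X_j$, $Y=Y_j=JX_j$: there $\omega_\lambda(X_j,Y_j)=\lambda B_\fp(JX_j,JX_j)=1$, so $\omega=\sum_j X_j^*\wedge Y_j^*$ has norm squared $n$, and both sides use the same wedge convention.

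For the consequence, we contract with $\omega$:
\[
\Lambda_\omega F_{A_{E_\nu}}=\frac{\nu i}{4\lambda}\,\Lambda_\omega\omega=\frac{\nu i}{4\lambda}\,|\omega|^2=\frac{n\nu i}{4\lambda},
\]
where we use $\Lambda_\omega\omega=\langle\omega,\omega\rangle=n$ under the convention that $\Lambda_\omega$ is the adjoint of wedging with $\omega$, evaluated on $2$-forms. This is a constant, because $\nu$, $n$ and $\lambda$ are fixed parameters.
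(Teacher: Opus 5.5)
Your proposal is correct and follows the paper's own proof. Like the paper, you read off the base-point curvature $-\nu\rho_{\fp}([X,Y])$ from the table (using $\Lambda_{E_\nu}|_{\fp}=0$ and $[\fp,\fp]\subseteq\fk$), convert it with \cref{omegavarpirel}, and conclude with $\Lambda_\omega\omega=|\omega|^2=n$. Your additional check that $\omega_\lambda=\sum_j X_{2j-1}^*\wedge X_{2j}^*$ in the chosen orthonormal basis is a convention verification that the paper leaves implicit.
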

\begin{proof}
By the table above and \cref{omegavarpirel},
$F_{A_{E_{\nu}}}=\tfrac{\nu i}{4\lambda}\sum_{i<j}\omega(X_{i},X_{j})X^{*}_{i}\wedge X^{*}_{j}
=\tfrac{\nu i}{4\lambda}\omega$. The last assertion follows from
$\Lambda_{\omega}\omega=|\omega|^{2}=n$.
\end{proof}
\subsection{Equivariant Dirac operators and the Lichnerowicz formula}\label{subsec:dirac}
The definition of the spin representation can be carried out for the reductive case. We consider a reductive $G/H$ and an admissible twist ${(E_\mu,\pi_\mu})$ on $G/H$, equipped with a Hermitian product $h$. If $\tau$ denotes the conjugation in $\fq_{\C}$ with respect to $\fq$, we consider the induced product $(\cdot,\cdot)$ of $(\cdot,\tau(\cdot))_\lambda$ with $h$. We select an orthonormal basis $\{u_k\ox e_l\}$ of $S\ox E_\mu$ with respect to $(\cdot,\cdot)$ and consider a section of the spinor bundle $\psi=\sum_{k,l}\psi_{kl}\otimes u_{k}\otimes e_{l}$. If $\mathrm{st}$ denotes the standard action of $\mathfrak{f}=\mathfrak{gl}(S\ox E_{\mu})$, the evaluation at $\psi$ of a Dirac operator, associated with an arbitrary spin connection $A=A^{ij}\ox X_i^*\ox W_j$ is given below.
\begin{equation}\label{eq:diracgeneric}
\sum_{i=1}^{2n}\sum_{k,l}\bigl[X_{i}(\psi_{kl})\bigr](g)\otimes\gamma(X^{*}_{i})u_{k}\ox e_{l}
+\sum_{i=1}^{2n}\sum_{k,l}\sum_{j=1}^{m^{2}}A^{ij}(g,I)\,\psi_{kl}(g)\otimes
\bigl[(\gamma(X^{*}_{i})\otimes1)\circ\mathrm{st}(W_{j})\bigr](u_{k}\otimes e_{l}).
\end{equation}

\medskip
Since the space of connection one-forms is an affine space over $\mathfrak{f}$-valued one-forms, we may write $A=A_0+a$, where $A_0$ is a fixed $G$-invariant connection and $a\in \Omega^1(M,\mathfrak{f)}$. So, the action of $G$ by left translation commutes with $D_A$ if and only if $a$ is constant on $g$. The $G$-invariance of an element 
    \[a^{ij}\ox X_i^*\ox W_j\in \{C^\infty(G)\ox \fq^*\ox \mathfrak{f}\}^H\]
    forces the components corresponding to the $ {C^{\infty}(G)}$-part to be invariant under the action of $ {H}$. Since the action of $ {H}$ on $\fq^*\ox \mathfrak f$ is not trivial, $ \sum_{i,j}X_i^*\ox W_j\in \{  \fq^*\ox \mathfrak{f}\}^H$. An element in the latter space corresponds to an element of $ {\Hom_{H}(\fq,\mathfrak{f}})$. Thus, we have proved the following. 
\begin{proposition}\label{prop:kernelrep}
If $A\in\mathcal{A}^G$, then $D_A$ is $G$-equivariant and therefore $\ker D_{A}$ is a $G$-representation.
Conversely, if $D_A$ commutes with the left action of $G$, then the calculation
following \cref{eq:diracgeneric} forces $A\in\mathcal{A}^G$.
\end{proposition}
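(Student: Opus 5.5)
The first assertion follows from the coordinate formula \cref{eq:diracgeneric}. Fix $A\in\mathcal{A}^G$ with connection form $A=A^{ij}\ox X_i^*\ox W_j$. By Wang's correspondence and \cref{eq:Aformula}, the coefficients $A^{ij}$ are constant functions on $G$: at $p_0$ they equal the components of $\Lambda(X_i)$ in the basis $\{W_j\}$, and $G$-invariance propagates this to every point. Write $(L_h\psi)(g)=\psi(h^{-1}g)$ for the left-regular action on $\{C^\infty(G)\ox S\ox E_\mu\}^H$. The first sum in \cref{eq:diracgeneric} uses the left-invariant vector fields $X_i$, and these commute with left translation, so $X_i(L_h\psi_{kl})=L_h(X_i\psi_{kl})$. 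The second sum multiplies $\psi_{kl}$ by constants and then applies a fixed endomorphism of $S\ox E_\mu$, so it also commutes with $L_h$. Hence $D_A L_h=L_h D_A$. Consequently $\ker D_A$ is an $L_h$-stable subspace, that is, a $G$-representation, which is the same statement as for $D_A^\pm$ on the half-spin parts.

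For the converse, write $A=A_0+a$ with $A_0\in\mathcal{A}^G$ fixed and $a=a^{ij}\ox X_i^*\ox W_j$, where $a^{ij}\in C^\infty(G)$ and the tensor is $H$-invariant. Since $D_{A_0}$ is equivariant by the first part, $D_A$ commutes with $L_h$ if and only if the zeroth-order operator
\[
\psi\mapsto\sum_{i,j,k,l}a^{ij}\psi_{kl}\ox(\gamma(X_i^*)\ox1)\mathrm{st}(W_j)(u_k\ox e_l)
\]
does. Commuting with $L_h$ for all $h$ means $a^{ij}(h^{-1}g)=a^{ij}(g)$ in every component that acts non-trivially. Since $G$ acts transitively on itself, the $a^{ij}$ must then be constant.

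The main obstacle is that the endomorphism $\sum_{i,j}a^{ij}\gamma(X_i^*)\mathrm{st}(W_j)$ might vanish even though $a$ is non-constant, in which case commutation would not force constancy. I would handle this by testing against spinors $\psi$ concentrated in a single $u_k\ox e_l$. Using that Clifford multiplication $\gamma\colon\fq^*\ox\End(S\ox E_\mu)\to\End(S\ox E_\mu)$ has no kernel on the relevant elements, one can recover each $a^{ij}$ as a coefficient. If that injectivity is unavailable, I would instead argue only with the part of $a$ that is seen by $D_A$, and define $A\in\mathcal{A}^G$ modulo that kernel. Once the $a^{ij}$ are constant, $H$-invariance of $a^{ij}\ox X_i^*\ox W_j$ reduces to $\sum a^{ij}X_i^*\ox W_j\in\{\fq^*\ox\mathfrak f\}^H=\Hom_H(\fq,\mathfrak f)$. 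Adding such an element to the Wang map $\Lambda_0$ of $A_0$ gives a new $H$-equivariant map $\Lambda$ with $\Lambda_{|\fh}=d\varphi$, so by Wang's theorem $A\in\mathcal{A}^G$.
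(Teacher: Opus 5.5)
Your forward direction is correct and matches what the paper uses implicitly. For $A\in\mathcal{A}^G$ the coefficients $A^{ij}$ in \cref{eq:diracgeneric} are constant, and $r(X_i)$ commutes with left translation, so $D_A$ does too. Your converse also follows the paper's route: write $A=A_0+a$, argue that commutation forces $a$ to be constant, place the constant $H$-invariant element in $\{\fq^*\ox\mathfrak f\}^H=\Hom_H(\fq,\mathfrak f)$, and apply Wang.

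The gap is exactly the step you flagged, and your repair does not work. The zeroth-order part of $D_{A_0+a}-D_{A_0}$ only sees $c(a)(g)$, where
$c\colon\fq^*\ox\mathfrak f\to\End(S\ox E_\mu)$ is given by $X^*\ox W\mapsto(\gamma(X^*)\ox1)\circ\mathrm{st}(W)$. With $\mathfrak f=\mathfrak{gl}(S\ox E_\mu)$ and $m=\dim(S\ox E_\mu)$, this map goes from a space of dimension $2n\,m^2$ to one of dimension $m^2$. Its kernel therefore has dimension at least $(2n-1)m^2$. Testing on the basis spinors $u_k\ox e_l$ recovers the matrix entries of $c(a)(g)$, not the individual $a^{ij}(g)$. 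So the claimed injectivity of Clifford contraction is false.

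Worse, the gap cannot be closed for the statement as written. The map $c$ is $H$-equivariant, since Clifford multiplication intertwines the isotropy actions, so $\ker c$ is a non-zero $H$-submodule. Take any smooth section $a$ of $G\x_H\ker c$ that vanishes at one point but not identically. It is not $G$-invariant, because an invariant section vanishing at one point vanishes everywhere by transitivity. Yet $D_{A_0+a}=D_{A_0}$ commutes with $G$, while $A_0+a\notin\mathcal{A}^G$. The paper's one-line assertion that commutation holds ``if and only if $a$ is constant'' makes the same unjustified leap. Your fallback of defining $\mathcal{A}^G$ modulo $\ker c$ changes the statement rather than proving it.

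What does work is to restrict to the perturbations the paper actually varies: unitary connections on $\mathcal L_\nu$, i.e.\ $a\in\Omega^1(M,i\R)$ acting by scalars. Then $c(a)$ is a non-zero multiple of $\gamma(a)$, and $\gamma(\alpha)^2=-|\alpha|^2\,\id$ for real $\alpha$ gives the injectivity you need, also after restricting to $S^+$. Your argument then shows that $a$ is constant and $H$-invariant. Hence $a\in(\fp^*)^K\ox i\R=0$ by irreducibility of the $K$-action on $\fp$.
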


Combining \cref{eq:diracgeneric} with \cref{eq:Aformula}, the Dirac
operator $D^{\Lambda_{E_{\mu}}}_{s}$, associated with
$\Lambda^{s}_{S}\otimes1+1\otimes\Lambda_{E_{\mu}}$ %, viewed inside $\mathcal{U}(\fg)\otimes_{\mathcal{U}(\fk)}\End(S\otimes E_{\mu})$, 
is
\begin{equation}\label{DAsecond}\small
\underset{ {D_0}}{\underbrace{ {\sum_{i}^{2n} r(X_i) \ox \gamma(X_i^*)\ox 1}}} + \underset{ {D_1}}{\underbrace{ {\sum_{i<j<k}^{2n}\tfrac{3}{2}(s[X_i,X_j]_{\fq},X_k)_\fq \ox \gamma(X_i^*)\gamma(X_j^*)\gamma(X_k^*) \ox 1}}}+ \underset{ {D_2}}{\underbrace{ {\sum_{i}^{2n}1\ox \gamma(X_i^*)\ox \Lambda_{E_\mu}(X_i)}},}
\end{equation}
where $r$ is the right action of the universal enveloping algebra $\mathcal{U}(\fg)$, which extends the action $\fg$ by left invariant vector fields: $X\mapsto\frac{d}{dt}\bigl|_{t=0}R_{\exp(tX)}$, where $R_g(f)(g')=f(g'g)$.
%to an algebra map $\mathcal{U}(\fg)\to \text{Diff}(G)^{L}$.

\begin{remark}\label{rem:D0}
The operator $D_0+D_1$ in \cref{DAsecond} is Kostant's cubic Dirac operator for the pair $(\fg,\fh)$ and $s=1/3$, with cubic term $D_1$, see \cite{Ko99}. On a Hermitian symmetric space, $\fh=\fk$, $\fq=\fp$ and $[\fp,\fp]_\fp=0$ annihilates $D_1$. Also, under the assumption that $E_{\mu}=E_{\nu\rho_\fp}$ is one-dimensional, $D_{2}=0$ by \cref{eq:Lambdavanish}. Hence,
\[D^{\Lambda_{E_{\nu}}}_{s}=D_{0}\] 
for every $s$, and we write $D_{0}$ throughout.
\end{remark}

Apart from the criterion regarding the eligibility of the kernel to be a representation and the formula for the Dirac operator, we provide the formula of its square. Although the square of the Dirac operator on $G/K$ in representation-theoretic terms is known, %Proposition 3.1 
see \cite{Par72} for reference, we give an equivalent formulation which matches Lichnerowicz formula and clarifies the relation of $F_{A_{E_\nu}}$ with $F_A$ of the introduction.
\begin{lemma}\label{Y2}
    Let $ {Y\in \fk}$, $ {\psi\in \{ C^{\infty}(G)\ox S\ox E_\nu\}^K}$.
    \begin{align*}
         {[2r(Y)\ox sp(Y)\ox1]\psi}& {=[-r(Y^2)\ox 1\ox1-1\ox sp(Y^2)\ox1+1\ox1\ox\nu\rho_\fp(Y^2)]\psi}\\
         {[2 (1\ox sp(Y)\ox\nu\rho_\fp(Y))]\psi}& {=[-1\ox sp(Y^2)\ox1-1\ox 1\ox\nu\rho_\fp(Y^2)+r(Y^2)\ox1\ox1]\psi}\\
         {[2r(Y)\ox1\ox\nu\rho_\fp(Y)]\psi}& {=[-r(Y^2)\ox 1\ox1-1\ox 1\ox\nu\rho_\fp(Y^2)+1\ox sp(Y^2)\ox1]\psi}
    \end{align*}
\end{lemma}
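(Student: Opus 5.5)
The plan is to derive all three identities from the infinitesimal form of the $K$-equivariance of $\psi$, using that the three operators involved commute. Throughout I read $r(Y^2)$, $\sprep(Y^2)$ and $\nu\rho_\fp(Y^2)$ as the squares $r(Y)^2$, $\sprep(Y)^2$ and $(\nu\rho_\fp(Y))^2$. For $r$ this is justified because $r$ is an algebra map on $\mathcal U(\fg)$; for the other two it is the natural extension of a representation to $\mathcal U(\fk)$.

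\emph{Step 1: the infinitesimal equivariance condition.} An element $\psi\in\{C^\infty(G)\ox S\ox E_\nu\}^K$ satisfies
\[
\psi(gk)=\beta^{\nu}(k)^{-1}\psi(g),\qquad k\in K .
\]
Put $k=\exp(tY)$ and differentiate at $t=0$. Since $d\beta^\nu=\sprep\ox\pi_\nu$ and $\pi_\nu=\nu\rho_\fp$ for the one-dimensional twist, this gives
\[
\bigl(r(Y)\ox1\ox1+1\ox\sprep(Y)\ox1+1\ox1\ox\nu\rho_\fp(Y)\bigr)\psi=0 .
\]
Abbreviate $a=r(Y)\ox1\ox1$, $b=1\ox\sprep(Y)\ox1$ and $c=1\ox1\ox\nu\rho_\fp(Y)$, so the condition reads $(a+b+c)\psi=0$. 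These three operators act on different tensor factors of $C^\infty(G)\ox S\ox E_\nu$, so they commute pairwise.

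\emph{Step 2: square a two-term sum.} For the first identity, rewrite the condition as $(a+b)\psi=-c\psi$. Apply $a+b$ once more and use that $c$ commutes with $a+b$:
\[
(a+b)^2\psi=-(a+b)c\psi=-c(a+b)\psi=c^2\psi .
\]
Expanding with $ab=ba$ yields $2ab\psi=(-a^2-b^2+c^2)\psi$, which is exactly the first line of the statement.

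\emph{Step 3: the remaining identities.} The same manipulation applied to $(b+c)\psi=-a\psi$ gives $2bc\psi=(a^2-b^2-c^2)\psi$, the second line. Applied to $(a+c)\psi=-b\psi$, it gives $2ac\psi=(-a^2+b^2-c^2)\psi$, the third line.

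\emph{Main obstacle.} The only delicate point is the sign convention in Step 1: whether the right action and the equivariance condition produce $+d\beta^\nu(Y)$ or $-d\beta^\nu(Y)$. Since the three identities are quadratic, flipping that sign would change the pattern of signs in the statement, so it must be checked against the paper's convention $R_g(f)(g')=f(g'g)$. With that convention and $f(gk)=\beta(k)^{-1}f(g)$ the sign is as used above, and everything else is formal commutative algebra.
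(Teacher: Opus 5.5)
Your proposal is correct and follows the paper's proof essentially verbatim: the paper also sets $a=r(Y)\ox1\ox1$, $b=1\ox\sprep(Y)\ox1$ and $c=1\ox1\ox\nu\rho_{\fp}(Y)$, reads $K$-invariance as $(a+b+c)\psi=0$, expands $(a+b)^2\psi=c^2\psi$, and gets the other two identities by symmetry. Your derivation of the sign from $\psi(gk)=\beta^{\nu}(k)^{-1}\psi(g)$ with $R_g(f)(g')=f(g'g)$, and your explicit use of the pairwise commutation of $a,b,c$, make precise what the paper leaves implicit.
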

\begin{proof}
    We write $a=r(Y)\ox1\ox1$, $b=1\ox\sprep(Y)\ox1$ and $c=1\ox1\ox\nu\rho_{\fp}(Y)$ and we need to show 
\[
2ab\,\psi=(c^{2}-a^{2}-b^{2})\psi,\qquad
2bc\,\psi=(a^{2}-b^{2}-c^{2})\psi,\qquad
2ac\,\psi=(b^{2}-a^{2}-c^{2})\psi .
\]
$K$-invariance of $\psi$ reads exactly $(a+b+c)\psi=0$. Then $(a+b)^{2}\psi=c^{2}\psi$, and expanding gives the first identity. The other two follow by symmetry.
\end{proof}
We recall that the Casimir operators $\Omega_\fk$, $\Omega_\fg$, as elements of the universal enveloping algebras $\mathcal{U}(\fk)$, $\mathcal{U}(\fg)$, are expressed as $\Omega_\fk=\sum_i -Y_i^2$ and $\Omega_\fg=\Omega_\fk+\sum_i X_i^2$, where $\{Y_{r}\}$ is a basis of $\fk$ with $(Y_{i},Y_{j})_{\lambda}=-\delta_{ij}$ and $\{X_{i}\}$ an orthonormal basis of $\fp$ with respect to $(\cdot,\cdot)_{\fp,\lambda}$. Their difference $\Omega_\fp\coloneqq \Omega_\fg-\Omega_\fk$ plays the role of the Laplacian in the homogeneous setting.
\begin{proposition}\label{Dsquare}
    The square of the operator $ {D_0}$ applied on $ {\psi\in \{ C^{\infty}(G)\ox S\ox E_\nu\}^K}$ is given by 
     \[ {D_0^2 \psi=\left[-r(\Omega_{\fp})\ox 1\ox 1 +\frac{\scal(\lambda)}{4}\cdot 1\ox 1\ox1-\sum_{1\leq i<j \leq 2n}1\ox \gamma(X_i^*)\gamma(X_j^*)\ox \nu\rho_\fp([X_i,X_j])\right]\psi}.\]
and the last term is $\gamma\bigl(F_{A_{\mathcal{E}_{\nu}}}\bigr)$.
\end{proposition}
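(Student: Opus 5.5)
We square $D_0=\sum_i r(X_i)\ox\gamma(X_i^*)\ox1$ directly and reorganise the result with the Clifford relations, \cref{Y2}, and the scalar curvature formula \cref{eq:scal2}. Writing
\[
D_0^2=\sum_{i,j}r(X_i)r(X_j)\ox\gamma(X_i^*)\gamma(X_j^*)\ox1,
\]
we split the double sum into its diagonal and off-diagonal parts.

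The diagonal terms use $\gamma(X_i^*)^2=-1$, which follows from the normalisation $\gamma=\sqrt2\varepsilon-\sqrt2\iota$ and orthonormality. They give $-\sum_i r(X_i^2)\ox1\ox1$. In the off-diagonal terms, antisymmetry of $\gamma(X_i^*)\gamma(X_j^*)$ leaves
\[
\sum_{i<j}r([X_i,X_j])\ox\gamma(X_i^*)\gamma(X_j^*)\ox1 .
\]
Since $[\fp,\fp]\subseteq\fk$, we expand $[X_i,X_j]=-\sum_r([X_i,X_j],Y_r)_\lambda Y_r$ in the basis $\{Y_r\}$. Regrouping with the second formula of \cref{spinrepformula}, $\sprep(Y)=\tfrac12\sum_{i<j}([Y,X_i],X_j)_\lambda\gamma(X_i^*)\gamma(X_j^*)$, and using ad-invariance $([X_i,X_j],Y)=-([Y,X_i],X_j)$ up to the sign convention for $(Y_r,Y_r)_\lambda=-1$, this off-diagonal term becomes $\pm2\sum_r r(Y_r)\ox\sprep(Y_r)\ox1$. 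The constant here must be tracked carefully.

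Next we apply the first identity of \cref{Y2} to each $Y_r$. This replaces $2r(Y_r)\ox\sprep(Y_r)$ by $-r(Y_r^2)-\sprep(Y_r^2)+\nu\rho_\fp(Y_r^2)$ on $K$-invariant $\psi$. Summing over $r$ produces three pieces:
\begin{itemize}
\item $\sum_r r(Y_r^2)$, which combines with $-\sum_i r(X_i^2)$ to give $-r(\Omega_\fp)$, since $\Omega_\fg-\Omega_\fk=\sum_i X_i^2$ up to the stated sign conventions;
\item $-\sum_r\sprep(Y_r)^2=\sprep(\Omega_\fk)$, the spin Casimir;
\item $\nu\rho_\fp(Y_r)^2$, interpreted as the scalar $\sum_r\nu^2\rho_\fp(Y_r)^2$.
\end{itemize}
The last piece is awkward because it is quadratic in $\nu$, while the claimed formula is linear in $\nu$. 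I will therefore instead use a mixed route: keep the $r(Y)\sprep(Y)$ cross term and replace $r(Y_r)$ by $-\sprep(Y_r)-\nu\rho_\fp(Y_r)$ via $K$-invariance.

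This gives $2\sum_r\sprep(Y_r)^2+2\sum_r\nu\rho_\fp(Y_r)\sprep(Y_r)$ together with the Casimir $-r(\Omega_\fp)$ term. The quadratic spin term $\sum_r\sprep(Y_r)^2$ acts on $S$ by the scalar $-\tfrac18\scal$-type constant (Parthasarathy's formula, i.e.\ $\|\rho_\fg\|^2-\|\rho_\fk\|^2$ rescaled). We identify it with $\scal(\lambda)/4$ through \cref{eq:scal2}, where the $\fq$-bracket term vanishes because $[\fp,\fp]_\fp=0$.

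For the mixed term we write
\[
\sum_r\rho_\fp(Y_r)\sprep(Y_r)=\sprep(Y_\rho),\qquad Y_\rho=\sum_r\rho_\fp(Y_r)Y_r .
\]
By \cref{Z_0*rho}, $\rho_\fp$ vanishes on $\fk_{ss}$, so $Y_\rho$ is a multiple of $Z_0$. Applying \cref{spinrepformula}, $\sprep(Z_0)$ can be written as $\tfrac12\sum_{i<j}([Z_0,X_i],X_j)\gamma\gamma$. Combining this with \cref{omegavarpirel} turns the mixed term into $-\sum_{i<j}\gamma(X_i^*)\gamma(X_j^*)\nu\rho_\fp([X_i,X_j])$.

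By the table in \cref{subsec:curvature} and \cref{lem:Fomega}, this last expression is exactly Clifford multiplication by
\[
F_{A_{E_\nu}}=-\sum_{i<j}X_i^*\wedge X_j^*\,\nu\rho_\fp([X_i,X_j]).
\]

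The main obstacle is the bookkeeping of constants and signs: the normalisations $(Y_r,Y_r)_\lambda=-1$, $\gamma^2=-1$, and the factors $\tfrac12$ and $\tfrac14$. I also need to confirm that the spin Casimir contributes exactly $\scal(\lambda)/4$ with \cref{eq:scal2}, rather than a multiple of it. I would check the constants on the example $\fsu(3)$ of \cref{ex:su3}.
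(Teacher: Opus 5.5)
This is essentially the paper's proof: your direct substitution $r(Y_t)\psi=-(\sprep(Y_t)+\nu\rho_\fp(Y_t))\psi$ in $-2\sum_t r(Y_t)\ox\sprep(Y_t)\ox1$ does the same job as the paper's use of the first and then the second identity of \cref{Y2}. It yields $-2\sprep(\Omega_\fk)=\scal(\lambda)/4$ plus the term $\gamma(F_{A_{\mathcal{E}_\nu}})$.

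The signs you left open come out as follows. Ad-invariance gives $([X_i,X_j],Y)_\lambda=([Y,X_i],X_j)_\lambda$ with no minus sign, which fixes the coefficient as $-2$, so $\sum_r\sprep(Y_r)^2=-\sprep(\Omega_\fk)=+\scal(\lambda)/8$. The mixed term follows from the same identity, $\sum_r\rho_\fp(Y_r)([Y_r,X_i],X_j)_\lambda=-\rho_\fp([X_i,X_j])$, so you do not need to pass through $Z_0$ or \cref{omegavarpirel}.
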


\begin{proof}
We prove the case of $G$ non-compact; the compact case is handled by analogous
arguments. Since $\gamma(X^{*}_{i})^{2}=-\mathrm{Id}$ and
$\gamma(X^{*}_{i})\gamma(X^{*}_{j})=-\gamma(X^{*}_{j})\gamma(X^{*}_{i})$,
\[
\Bigl(\sum_{i}r(X_{i})\otimes\gamma(X^{*}_{i})\otimes1\Bigr)^{2}
=-r(\Omega_{\fp})\otimes1\otimes1
+\underbrace{\sum_{i<j}^{2n}r([X_{i},X_{j}])\otimes\gamma(X^{*}_{i})\gamma(X^{*}_{j})\otimes1}_{(*)} .
\]
Choose a basis $\{Y_{t}\}$ of $\fk$ with $(Y_{i},Y_{j})_{\lambda}=-\delta_{ij}$, so
$[X_{i},X_{j}]=-\sum_{t}([X_{i},X_{j}],Y_{t})_{\lambda}Y_{t}$. By
\cref{spinrepformula} and $\ad$-invariance,
$(*)=-2\sum_{t}r(Y_{t})\otimes\sprep(Y_{t})\otimes1$. Applying the first identity of
\cref{Y2} and then the second,
\[
(*)=-2\bigl(1\otimes\sprep(\Omega_{\fk})\otimes1\bigr)
-\sum_{i<j}^{2n}1\otimes\gamma(X_i^{*})\gamma(X_j^{*})\ox\nu\rho_{\fp}([X_{i},X_{j}]).
\]
Finally, since $\gamma(X_i^{*})\gamma(X_j^{*})\gamma(X_i^{*})\gamma(X_j^{*})=-1$
for $i\neq j$, the image of the Casimir simplifies to
\[\sprep(\Omega_{\fk})=-\frac{1}{4}\sum_{i<j}\bigl([X_{i},X_{j}],[X_{i},X_{j}]\bigr)_{\lambda},
\]
which by \cref{eq:scal2} equals $-\tfrac{1}{8}\scal(\lambda)$, hence
$-2\sprep(\Omega_{\fk})=\tfrac{\scal(\lambda)}{4}$. The identification of the remaining term with $\gamma(F_{A_{\mathcal{E}_{\nu}}})$ is the first row of the table in \cref{subsec:curvature}.
\end{proof}
For Hermitian symmetric spaces the simplifications of \cref{rem:D0} give $%\bigl(\nabla^{\Lambda^{s}_{\mathfrak{so}}}\bigr)^{*}\nabla^{\Lambda^{s}_{\mathfrak{so}}}=\bigl(\nabla^{\Lambda^{s}_{S}\otimes\Lambda_{E_{\nu}}}\bigr)^{*}
{\nabla^{\Lambda^{s}_{S}\otimes\Lambda_{E_{\nu}}}}^*\nabla^{\Lambda^{s}_{S}\otimes\Lambda_{E_{\nu}}}=
-r(\Omega_{\fp})\otimes1\otimes1$. Writing $D_{0}=\begin{psmallmatrix}0&D^{-}_{0}\\ D^{+}_{0}&0\end{psmallmatrix}$ with
$D^{-}_{0}={D_0^+}^*$ relative to
$\Gamma(S^{+}\ox\mathcal{E}_{\nu})\op\Gamma(S^{-}\ox\mathcal{E}_{\nu})$ we get the desired identity.
\begin{corollary}(Lichnerowicz formula)\label{Dsquare+} For a four-dimensional Hermitian symmetric space, 
\begin{align}
    {D_0^2\big|\{ C^{\infty}(G)\ox S^{+}\ox E_\nu\}^K}&={D_0^+}^*{{D_0}^+} =-{r\left(\Omega_\fp \right)\ox 1\ox 1+\tfrac{\scal}{4}\cdot 1\ox 1\ox1+\bg({F_{A^{E_\nu}}}^+}).
\end{align} 
\end{corollary}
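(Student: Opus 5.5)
The identity follows by restricting \cref{Dsquare} to positive spinors and specialising to $n=2$. The main task is to see that the curvature term there, $\gamma(F_{A_{\mathcal{E}_\nu}})$, acts on $S^+\ox E_\nu$ only through its self-dual part.

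First I would record the block form. Since $D_0$ is odd with respect to $S=S^+\op S^-$, its square restricted to $\{C^\infty(G)\ox S^+\ox E_\nu\}^K$ equals $D_0^-D_0^+$. Formal self-adjointness of $D_0$ with respect to the $L^2$ product built from $(\cdot,\cdot)$ and the invariant volume gives $D_0^-={D_0^+}^*$. This uses that $r(X_i)$ is skew-adjoint (unimodularity of $G$) and that $\gamma(X_i^*)$ is skew. This yields the first equality.

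For the second equality I restrict the three terms of \cref{Dsquare} to $S^+$. The operator $-r(\Omega_\fp)\ox1\ox1$ preserves $S^+$ and equals ${\nabla}^*\nabla$ by the remark preceding the corollary. The scalar term preserves $S^+$ trivially. The last term is $\gamma(F_{A_{\mathcal{E}_\nu}})$, and by \cref{lem:Fomega} $F_{A_{\mathcal{E}_\nu}}=\frac{\nu i}{4\lambda}\omega$. In four dimensions $\omega$ is self-dual, since $*\omega=\omega^{n-1}/(n-1)!=\omega$ for $n=2$. Hence $F_{A^{E_\nu}}=F^+_{A^{E_\nu}}$, and by \cref{lem:Fomega} the anti-self-dual part vanishes identically. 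Consequently $\gamma(F_{A^{E_\nu}})|_{S^+}=\bg(F^+_{A^{E_\nu}})$.

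As a consistency check (and the one place needing care), I would verify directly that Clifford multiplication by a $2$-form preserves $S^+=\Lambda^{\mathrm{ev}}$. It does, because $\gamma(X_i^*)\gamma(X_j^*)$ changes degree by $0$ or $\pm2$. I would also check that anti-self-dual forms act trivially on $S^+$ under the chosen orientation. The latter is only needed to justify writing $\bg(F^+)$ in place of $\gamma(F)$, and is in any case vacuous here since $F^-=0$. The main obstacle is orientation and sign bookkeeping: the orientation must be the one induced by the complex structure $J$, so that $\omega\in\Lambda^{2,+}$ and $\Lambda^{2,+}$ acts on $\Lambda^{\mathrm{ev}}\fp^{*-}_\C$ rather than on $\Lambda^{\mathrm{odd}}$. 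I would confirm this with the weight computation of \cref{uweights}: $\gamma(\omega)$ acts on $u_\varnothing$ and $u_{\mathrm{top}}$ by the nonzero scalars $\pm$ const, consistent with $\omega$ acting nontrivially on $S^+$.
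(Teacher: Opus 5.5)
Your proposal is correct and follows the paper's own argument. You restrict \cref{Dsquare} to $S^+$ using the block form of $D_0$ with $D_0^-={D_0^+}^*$, identify $-r(\Omega_\fp)\ox1\ox1$ with $\nabla^*\nabla$, and observe that the curvature term is already self-dual. The one difference is that you spell out why $F_{A^{E_\nu}}=F^+_{A^{E_\nu}}$: it is a multiple of $\omega$ by \cref{lem:Fomega}, and $*\omega=\omega$ when $n=2$. The paper leaves this step implicit here and only invokes it later.
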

\subsection{Reducible solutions}
There is a subtle point about \emph{reducible} solutions of the Seiberg--Witten equations, that is, solutions whose spinor is zero. In the unperturbed case, these solutions consist of the zero spinor together with flat connections, the existence of which depends on the admissible parameter $\nu$. Consequently, it cannot be taken for granted and must instead be characterised by the following criterion.
\begin{proposition}\label{prop:flat}
Let $\mathcal{A}^{\mathrm{flat}}\subseteq\mathcal{A}^G$ denote the invariant flat connections on $\mathcal{E}_{\nu}$. Then $\mathcal{A}^{\mathrm{flat}}\neq\varnothing$ if and only if $\nu=0$. The value $\nu=0$ is admissible always in types E\txs{III} and E\txs{VII}, and in the remaining cases, the following criterion must be satisfied.
    \[\begin{NiceTabular}{c|c|c|c|}[hvlines]
    \renewcommand{\arraystretch}{1.3}
A\txs{III} & BD\txs{I} & D\txs{III} & C\txs{I}  \\
${(p+q)/2\in \Z}$ & ${p/2\in \Z}$ & ${(p-1)/2\in \Z}$ & ${(p+1)/2\in \Z}$ 
\end{NiceTabular}\] 
\end{proposition}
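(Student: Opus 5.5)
The argument has two parts. First I reduce flatness of an invariant connection to the single condition $\nu=0$. Then I decide when $\nu=0$ is an admissible value, using \cref{Liftinglemma} and \cref{LiftTable}.

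\emph{Flatness.} Since $E_\nu$ is one-dimensional, Wang's correspondence together with \cref{eq:Lambdavanish} shows that an invariant connection is determined by $\Lambda_{E_\nu}$. This map vanishes on $\fp$ and equals $\nu\rho_\fp$ on $\fk$, so $\mathcal A^G$ on $\mathcal E_\nu$ consists of a single connection. By \cref{lem:Fomega} its curvature is
\[
F_{A_{E_\nu}}=\frac{\nu i}{4\lambda}\,\omega .
\]
The K\"ahler form $\omega$ is non-degenerate, with $|\omega|^2=n\neq0$, so $F_{A_{E_\nu}}=0$ if and only if $\nu=0$. Hence $\mathcal A^{\mathrm{flat}}\neq\varnothing$ if and only if $\nu=0$ is an admissible parameter. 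For $\nu=0$ the curvature formula of \cref{subsec:curvature} directly gives $F=0$, since $\rho_\fp$ enters only through the factor $\nu$.

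\emph{Admissibility of $\nu=0$.} By \cref{Liftinglemma}, the one-dimensional twist $E_\nu$ is admissible exactly when $\C_{(\nu-1)\rho_\fp}$ lifts to $K$. For $\nu=0$ this means $\C_{-\rho_\fp}$ lifts. Equivalently, $0=1+2\kappa/N$ must hold for an integer $\kappa$, with the evenness restriction in type D\txs{III}. By \cref{LiftTable} with $\xi=-1$, the sign being irrelevant, the condition reads as follows:
\begin{itemize}
\item in type A\txs{III}, $(p+q)/2\in\Z$;
\item in type BD\txs{I}, $p/2\in\Z$;
\item in type D\txs{III}, $(p-1)/2\in\Z$;
\item in type C\txs{I}, $(p+1)/2\in\Z$;
\item in type E\txs{III}, $6\in\Z$;
\item in type E\txs{VII}, $9\in\Z$.
\end{itemize}
The two exceptional conditions hold trivially, which gives the claim that $\nu=0$ is always admissible in types E\txs{III} and E\txs{VII}. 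The remaining entries reproduce the stated table.

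\emph{Where the difficulty lies.} The main point needing care is the case in which $S^{(\pm)}$ already lifts to $K$. Then \cref{Liftinglemma} is stated only under the hypothesis that $S^{(\pm)}$ does not lift, so I would check that the criterion "$\C_{(\nu-1)\rho_\fp}$ lifts" still characterises admissibility in that case. This uses $S\simeq\Lambda\fp^+\otimes\C_{-\rho_\fp}$ together with the fact that $\Lambda\fp^+$ lifts: $S\otimes E_\nu$ lifts exactly when $\C_{-\rho_\fp}\otimes\C_{\nu\rho_\fp}$ lifts. This is the same argument as in the proof of \cref{Liftinglemma}, and it applies regardless of whether $S$ itself lifts.

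In type D\txs{III} I would also check the translation between the table entry $\xi(p-1)/2\in\Z$ and the parametrisation $\kappa\in2\Z$. Here $N=2(p-1)$, so $\nu=0$ gives $\kappa=-(p-1)$, and $\kappa\in2\Z$ is exactly the condition $(p-1)/2\in\Z$. The two descriptions therefore agree.
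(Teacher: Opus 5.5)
Your proposal is correct and follows the paper's proof: the curvature formula $F_{A_{E_\nu}}=\tfrac{\nu i}{4\lambda}\omega$ of \cref{lem:Fomega} shows the connection is flat exactly when $\nu=0$, and admissibility of $\nu=0$ is read off from \cref{LiftTable} with $\xi=-1$. Your additional checks are sound and fill in details the paper leaves implicit: the uniqueness of the invariant connection, the D\txs{III} translation $\kappa\in2\Z$, and the observation that \cref{Liftinglemma} assumes $S$ does not lift, which is exactly the situation excluded when $\nu=0$ is admissible, handled via $S\simeq\Lambda\fp^{+}\otimes\C_{-\rho_\fp}$.
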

\begin{proof}
By \cref{lem:Fomega}, $F_{A_{E_{\nu}}}=\tfrac{\nu i}{4\lambda}\omega$ vanishes if and only if $\nu=0$. Admissibility of $\nu=0$ means $-\rho_\fp$ satisfies the criterion of \cref{LiftTable}, which gives the stated conditions.
\end{proof}

\section{Self-duality in higher dimensions}

Two features of dimension four have to be replaced before the equations can be written down in general: the involutivity of the Hodge operator on two-forms, and
the isomorphism $\Lambda^{2,+}\simeq\fsu(S^{+})$. This section supplies a substitute for the first, shows precisely how the second fails and how this failure is remedied by \cref{lem:Fomega}. The generalised notion of self-duality contributes both to the quadratic term $\sigma(\psi)$ and the variational argument of \cref{varsec}.

\subsection{The twisted Hodge star operator}\label{twistedhodgetable}

Recall that in dimension four the Hodge star operator $*$ is an involution on the space of 2-forms and the space of self-dual forms, $\Lambda^{2,+}$, is exactly its positive eigenspace. Specifically, for the orthonormal basis $\{X^{*}_{i}\}$ of $\fp^*$ for which $X_{i+1}=\ad(Z_0)X_i$, for all odd $i$,
\[
\Lambda^{2,+}=\bigl\langle
\underbrace{X^{*}_{1}\wedge X^{*}_{2}+X^{*}_{3}\wedge X^{*}_{4}}_{\omega},\
\underbrace{X^{*}_{1}\wedge X^{*}_{3}-X^{*}_{2}\wedge X^{*}_{4}}_{\eta},\
\underbrace{X^{*}_{1}\wedge X^{*}_{4}+X^{*}_{2}\wedge X^{*}_{3}}_{\theta}
\bigr\rangle .
\]
The complex-linear extension of the Hodge operator is characterised by $\beta\w\tau(\eta)=(*\beta,\eta)_{\tau}\Omega$, where $(\cdot,\cdot)_\tau$ denotes the Hermitian product $(\cdot,\tau(\cdot))$, ${\beta \in\Lambda^{k}\fp_\C^{*},\, \eta\in\Lambda^{2n-k}\fp_\C^{*}}$ and $ \Omega=\omega^{n}/n!$ is the volume form, and satisfies the usual identities.

\begin{lemma}\label{lib1}\label{lib2} For ${\beta, \, \beta_1,\, \beta_2 \in\Lambda^{k}\fp_\C^{*}}$,
        \begin{equation*}
        \begin{array}{cccccc}
           { (i)}&{\ \tau (\Omega)=\Omega} & { (ii)}&{ (\tau \beta_1,\tau \beta_2)_{\tau }=\tau(\beta_1,\beta_2)_{\tau }} & { (iii)}&{*\tau (\beta)=\tau (*\beta)}\\
           {(iv)}&{ **\beta=(-1)^{k(2n-k)}\beta} & { (v)}&{\ \beta_1\w *\tau (\beta_2)=( \beta_1,\beta_2)_{\tau} \Omega} &  {( vi)}&{ (*\beta_1,*\beta_2)_{\tau}=(\beta_1,\beta_2)_{\tau}}
           \end{array}
        \end{equation*}
    \end{lemma}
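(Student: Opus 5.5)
The plan is to reduce every identity to a check on the standard orthonormal basis of decomposable forms. Take the real orthonormal coframe $\{X^*_1,Y^*_1,\dots,X^*_n,Y^*_n\}$ fixed in \cref{sec:spin}. For a multi-index $I$, let $e_I=X^*_{i_1}\w\cdots\w X^*_{i_k}$ in this real basis. These $e_I$ form a basis of $\Lambda^k\fp_\C^*$ which is orthonormal for $(\cdot,\cdot)_\tau$ and fixed by $\tau$. On this basis the Hodge operator acts by $*e_I=\epsilon_I e_{I^c}$, where $\epsilon_I=\pm1$ is defined by $e_I\w e_{I^c}=\epsilon_I\Omega$. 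This uses only that $\Omega=\omega^n/n!$ equals the wedge of the whole real coframe in the order $X^*_1\w Y^*_1\w\cdots$, which follows from $\omega=\sum_j X^*_j\w Y^*_j$. Since $\tau$ is the conjugation fixing the real forms, the structure constants are real.

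\textbf{Items (i) and (ii).} Item (i) is immediate because $\Omega$ is a real form. For (ii), I use that $\tau$ is conjugate-linear and the Hermitian product is built from the $\C$-bilinear extension composed with $\tau$. Writing $\beta_1=\sum a_Ie_I$ and $\beta_2=\sum b_Ie_I$ gives $(\beta_1,\beta_2)_\tau=\sum a_I\bar b_I$, while $(\tau\beta_1,\tau\beta_2)_\tau=\sum\bar a_I b_I$. The latter is the complex conjugate $\tau(\cdot)$ of the former scalar.

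\textbf{Items (iii) and (v).} Both follow from the defining relation $\beta\w\tau(\eta)=(*\beta,\eta)_\tau\Omega$ applied to basis elements. Testing with $\eta=e_J$ shows that $*$ is complex-linear with real matrix $\epsilon_I$ on the $e_I$. Hence $*$ commutes with the conjugation $\tau$, which is (iii). For (v), substitute $\eta=*\beta_2$ into the defining relation with the roles rearranged. Alternatively, verify it directly on $\beta_1=e_I$, $\beta_2=e_J$: here $e_I\w *e_J=\epsilon_J e_I\w e_{J^c}$ is nonzero only when $I=J$, and then it equals $\Omega$. Conjugate-linearity in $\beta_2$ is supplied by $\tau$.

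\textbf{Items (iv) and (vi).} For (iv), compute $**e_I=\epsilon_I\epsilon_{I^c}e_I$. Reordering $e_{I^c}\w e_I=(-1)^{k(2n-k)}e_I\w e_{I^c}$ gives $\epsilon_{I^c}=(-1)^{k(2n-k)}\epsilon_I$, and $\epsilon_I^2=1$. For (vi), $*$ maps the orthonormal basis $\{e_I\}$ to $\{\pm e_{I^c}\}$, which is again orthonormal. So $*$ is unitary, and this is (vi).

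\textbf{Main obstacle.} The only delicate point is bookkeeping with conventions. One must check that the Hermitian product used here is really $(\cdot,\tau(\cdot))$ with linearity in the first slot, so that (ii) and (v) carry no stray conjugation. One must also check that $\Omega=\omega^n/n!$ has norm one with the sign matching the orientation. I would settle both first by computing $\omega^n/n!$ explicitly in the coframe $\{X^*_j,Y^*_j\}$; the remaining items are then routine.
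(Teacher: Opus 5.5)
Your proposal is correct. The paper gives no proof of this lemma. It introduces $*$ as the complex-linear extension characterised by $\beta\wedge\tau(\eta)=(*\beta,\eta)_\tau\Omega$ and records (i)--(vi) as ``the usual identities''. Your computation on the real orthonormal basis $e_I$, with $*e_I=\epsilon_I e_{I^c}$ read off from the defining relation by testing against $\eta=e_J$, is exactly the routine verification the paper leaves implicit.

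Two small points are worth making explicit.

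First, the normalisation you flag as the main obstacle is settled by the paper's own data. From $X^*_{\pm\alpha_j}=\tfrac{1}{\sqrt2}(X^*_j\pm iY^*_j)$ and $\omega=i\sum_j X^*_{\alpha_j}\wedge X^*_{-\alpha_j}$ one gets $\omega=\sum_j X^*_j\wedge Y^*_j$. Hence $\Omega=X^*_1\wedge Y^*_1\wedge\cdots\wedge X^*_n\wedge Y^*_n$ is a real form of unit norm, which is what (i), (vi) and $\epsilon_I=\pm1$ require.

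Second, your ``roles rearranged'' route to (v) should be stated precisely. Take $\eta=*\beta_2$ in the defining relation, use (iii) to rewrite $\tau(*\beta_2)=*\tau(\beta_2)$, and use (vi) to replace $(*\beta_1,*\beta_2)_\tau$ by $(\beta_1,\beta_2)_\tau$. So it is cleanest to establish (vi) before (v). Your direct check on $e_I,e_J$ together with sesquilinearity also works on its own.
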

  The conjugation invariance of $\Omega$ is inherited by the Kähler form, while both forms are likewise K-invariant. The operator we introduce next is canonical. In the language of higher-dimensional Gauge theory, it coincides with those introduced in \cite{CDFN83, GT00} for manifolds of dimension eight or less. We first construct an invariant $(2n-4)$-form $\Phi$ using the root vectors defined in \cref{sec:spin}.
 \begin{proposition}\label{Phi}
Let $G/K$ be a Hermitian symmetric space and of complex dimension $n\geq 2$. 
\[\Phi:=\frac{\omega^{n-2}}{(n-2)!} \quad \text{ admits the expression }\quad\Phi=i^{\,n-2}\sum_{1\leq i<j\leq n}
\bigwedge_{\substack{1\leq k\leq n\\ k\notin\{i,j\}}}
\left(X_{\alpha_k}^{*}\wedge X_{-\alpha_k}^{*}\right)
%\Phi=i^{\,n-2}\sum_{i<j}^{n}X^{*}_{\alpha_{1}}\w X^{*}_{-\alpha_{1}}\w\dots\w\widehat{X^{*}_{\alpha_{i}}\wedge X^{*}_{-\alpha_{i}}}\w\dots\w\widehat{X^{*}_{\alpha_{j}}\wedge X^{*}_{-\alpha_{j}}}\w\dots\w X^{*}_{\alpha_{n}}\wedge X^{*}_{-\alpha_{n}} .
\]
and it is a $K$-invariant $(2n-4)$-form.
\end{proposition}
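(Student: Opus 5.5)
The plan is to prove the two assertions separately: first the explicit formula, by expressing $\omega$ in the root covectors, and then $K$-invariance, which follows almost formally from the invariance of $\omega$.

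\emph{Step 1: $\omega$ in the root covectors.} I would first use the orthonormal basis $\{X^*_1,Y^*_1,\dots,X^*_n,Y^*_n\}$ of \cref{sec:spin}, adapted to $J=\ad(Z_0)$ as in \cref{twistedhodgetable}, so that $\omega=\sum_{k=1}^n X^*_k\wedge Y^*_k$. This follows from $\omega_\lambda(X,Y)=\lambda B_\fp(JX,Y)$ together with the orthonormality of the basis. Next, from $X^*_{\pm\alpha_k}=\tfrac{1}{\sqrt2}(X^*_k\pm iY^*_k)$ one computes
\[
X^*_{\alpha_k}\wedge X^*_{-\alpha_k}=\tfrac12\bigl(X^*_k+iY^*_k\bigr)\wedge\bigl(X^*_k-iY^*_k\bigr)=-i\,X^*_k\wedge Y^*_k .
\]
Hence, writing $\sigma_k:=X^*_{\alpha_k}\wedge X^*_{-\alpha_k}$, we get $\omega=i\sum_k\sigma_k$.

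\emph{Step 2: the power.} The $\sigma_k$ are even forms, so they commute, and $\sigma_k\wedge\sigma_k=0$. Expanding by the multinomial theorem therefore leaves only square-free monomials:
\[
\Bigl(\sum_k\sigma_k\Bigr)^{n-2}=(n-2)!\sum_{|S|=n-2}\ \bigwedge_{k\in S}\sigma_k .
\]
Subsets $S$ of size $n-2$ correspond bijectively to their complements $\{i<j\}$. Dividing by $(n-2)!$ then gives $\Phi=i^{n-2}\sum_{i<j}\bigwedge_{k\notin\{i,j\}}\sigma_k$, which is the stated formula. For $n=2$ both sides equal $1$.

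\emph{Step 3: $K$-invariance.} For $k\in K$, $\Ad(k)$ commutes with $J=\ad(Z_0)$ because $Z_0$ is central in $\fk$, and it preserves $B_\fp$ because the Killing form is $\Ad$-invariant. Therefore $\Ad(k)^*\omega=\omega$. Since pull-back is an algebra homomorphism for the wedge product, every power $\omega^{n-2}$ is also invariant, and hence so is $\Phi$. This in turn yields a $G$-invariant form on $G/K$.

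The only genuine obstacle is the sign and normalisation bookkeeping in Step 1. Concretely, one must check that the orientation convention $X_{i+1}=\ad(Z_0)X_i$, combined with $J$ acting by $+i$ on $\fp^+_\C$ (from \cref{aZ0}), produces $\omega=+\sum_k X^*_k\wedge Y^*_k$ rather than its negative. I would verify this by evaluating $\omega_\lambda(X_k,Y_k)=\lambda B_\fp(Y_k,Y_k)=1$. If the opposite sign appeared instead, the constant $i^{n-2}$ would become $(-i)^{n-2}$, so this check is exactly where the stated constant is pinned down.
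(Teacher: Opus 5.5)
Your proposal is correct and follows the paper's route: the paper obtains the formula by direct computation from $\omega=i\sum_k X^*_{\alpha_k}\wedge X^*_{-\alpha_k}$ together with $(X^*_{\alpha_k}\wedge X^*_{-\alpha_k})^2=0$, which is exactly your multinomial expansion; a suppressed version in the source does the same expansion by induction on the power. The paper likewise deduces $K$-invariance directly from that of $\omega$, and your sign check agrees with the paper's convention $\omega=i\sum_k X^*_{\alpha_k}\wedge X^*_{-\alpha_k}$, used again in \cref{prop:eigen} and \cref{lem:cliffordomega}.
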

\begin{proof}
    The $K$-invariance follows from the invariance of $\omega$. The formula follows by direct computation.
\end{proof}
   % \begin{proof}We set $\omega_k\coloneqq X_{\alpha_k}^{*}\wedge X_{-\alpha_k}^{*}$ and note that $\omega_k^2=0$. We consider $I\subseteq\{1,\ldots,n\}$ and prove the following statement by induction on $r$\[\left(\sum_{k=1}^{n}\omega_k\right)^r =r!\sum_{\substack{|I|=r}}\bigwedge_{k\in I}\omega_k, \qquad 0\leq r\leq n. \]The case $r=0$ is immediate, with the empty wedgeunderstood to be $1$. If the identity holds for $r<n$,multiplication by $\sum_{k=1}^{n}\omega_k$ gives\[\left(\sum_{k=1}^{n}\omega_k\right)^{r+1} =r!\sum_{|I|=r}\sum_{j\notin I}\omega_j\wedge\bigwedge_{k\in I}\omega_k  =(r+1)!\sum_{|J|=r+1}\bigwedge_{k\in J}\omega_k,\]since each subset $J$ occurs exactly $r+1$ times.Taking $r=n-2$ and using $\omega=i\sum_{k=1}^{n}\omega_k$yields the stated expression. The $K$-invariance of$\Phi$ follows from that of $\omega$.   \end{proof}
 The exterior multiplication by $\Phi$ maps a $k$-form to a $(2n-4+k)$-form, to which then we apply the Hodge star operator. The result is a $(4-k)$-form, therefore their composition preserves $\Lambda^2\fp_\C^*$.
   \begin{definition}\label{def:twistedhodge}
Let $G/K$ be a Hermitian symmetric space and $\Phi$ the $K$-invariant $(2n-4)$-form of \cref{Phi}. The \emph{twisted Hodge operator} is
\[*_{\Phi}\colon\Lambda^{2}\fp^{*}_{\C}\to\Lambda^{2}\fp^{*}_{\C}, \qquad \qquad *_{\Phi}\coloneqq *\circ\epsilon(\Phi).\]
\end{definition}
 \begin{lemma}\label{lib4} The twisted Hodge star operator is a $ {K}$-equivariant and $\tau$-equivariant operator.\end{lemma}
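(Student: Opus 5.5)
The twisted Hodge operator $*_{\Phi}=*\circ\epsilon(\Phi)$ is a composition of two maps. I will show that each map is $K$-equivariant and $\tau$-equivariant separately; the statement then follows because a composition of equivariant maps is equivariant.

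\emph{Wedge with $\Phi$.} For $k\in K$, write $\Ad^{*}(k)$ for the induced action on $\Lambda\fp^{*}_{\C}$. This action is by algebra automorphisms of the exterior algebra, so $\Ad^{*}(k)(\Phi\w\beta)=\Ad^{*}(k)\Phi\w\Ad^{*}(k)\beta$. Since $\Phi$ is $K$-invariant by \cref{Phi}, this equals $\Phi\w\Ad^{*}(k)\beta$. For conjugation, $\tau$ is an antilinear algebra automorphism, so $\tau(\Phi\w\beta)=\tau(\Phi)\w\tau(\beta)$. It remains to check $\tau(\Phi)=\Phi$. The Kähler form $\omega_\lambda(X,Y)=\lambda B_{\fp}(JX,Y)$ is a real form, so $\tau\omega=\omega$, and then $\Phi=\omega^{n-2}/(n-2)!$ is real as well. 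One can confirm this against the explicit expression in \cref{Phi}: $\tau$ sends $X^{*}_{\alpha_k}\w X^{*}_{-\alpha_k}$ to $-X^{*}_{\alpha_k}\w X^{*}_{-\alpha_k}$, and this sign cancels against $\tau(i^{n-2})=(-i)^{n-2}$, since $(-1)^{n-2}(-i)^{n-2}=i^{n-2}$.

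\emph{Hodge star.} Here I use the characterisation $\beta\w\tau(\eta)=(*\beta,\eta)_{\tau}\Omega$.
\begin{itemize}
\item $K$-equivariance: $K$ preserves the metric $(\cdot,\cdot)_\lambda$, which is built from $B_\fg$, and it preserves $\Omega=\omega^n/n!$. The action also commutes with $\tau$, because $K$ acts on the real space $\fp^{*}$ and $\tau$ is conjugation with respect to $\fp$. Applying $k$ to both sides of the defining identity therefore gives $(\Ad^{*}(k)*\beta,\Ad^{*}(k)\eta)_\tau=(*\Ad^{*}(k)\beta,\Ad^{*}(k)\eta)_\tau$ for all $\eta$. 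Nondegeneracy of $(\cdot,\cdot)_\tau$ then yields $\Ad^{*}(k)\circ *=*\circ\Ad^{*}(k)$.
\item $\tau$-equivariance: this is exactly \cref{lib1}(iii).
\end{itemize}

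Composing the two pieces gives $*_{\Phi}\circ\Ad^{*}(k)=\Ad^{*}(k)\circ*_{\Phi}$ and $*_{\Phi}\circ\tau=\tau\circ*_{\Phi}$, which is the claim. The degree bookkeeping, $\Lambda^2\to\Lambda^{2n-2}\to\Lambda^2$, was already noted before \cref{def:twistedhodge}.

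The only step needing any care is the reality of $\Phi$, that is, the sign bookkeeping under $\tau$ in the root-vector expression. Deriving $\tau\Phi=\Phi$ directly from $\tau\omega=\omega$ avoids that bookkeeping altogether.
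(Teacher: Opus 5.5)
Your proof is correct and follows the paper's route. The paper's one-line proof also obtains the equivariance of $*_{\Phi}=*\circ\epsilon(\Phi)$ from the $K$- and $\tau$-invariance of $\Phi$ together with \cref{lib2}; you additionally make explicit two points it leaves implicit, namely $\tau\Phi=\Phi$ via the reality of $\omega$, and the $K$-equivariance of $*$ from its defining identity, which \cref{lib2} does not actually list.
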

      \begin{proof}
         Immediate from the $ {K},\, \tau$-invariance of $\Phi$ and Lemma \ref{lib2}
    \end{proof}
    Even though the twisted Hodge star operator is compatible with the structure, it is not an involutive operator. The eigenspace decomposition splits the space of two-forms into three subspaces.
    \begin{proposition}\label{prop:eigen}
        The twisted Hodge star operator on a Hermitian symmetric space $G/K$ is an involution on $\Lambda^2\fp_\C^*\cap \omega^{\bot}$ and its eigenspace decomposition is 
\[ \Lambda^{2}\fp^{*}_{\C}=V_{+}\oplus V_{-}\oplus\C\omega,\qquad \text{where
}\quad V_{+}=\Lambda^{2}\fp_{\C}^{+\,*}\op\Lambda^{2}\fp_{\C}^{-\,*},\qquad
V_{-}=\bigl(\fp_\C^{+\,*}\w\fp_\C^{-\,*}\bigr)\cap(\C\omega)^{\perp},
\]
The eigenvalues are $+1$, $-1$ and $n-1$ respectively.
\end{proposition}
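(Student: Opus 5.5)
The plan is to compute $*_{\Phi}$ directly on an adapted basis of $\Lambda^2\fp_\C^*$ built from the root covectors $X^*_{\pm\alpha_k}$ of \cref{sec:spin}. We then read off the eigenvalues, and we use $K$- and $\tau$-equivariance (\cref{lib4}) only as a consistency check.

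Set $\omega_k:=X^*_{\alpha_k}\wedge X^*_{-\alpha_k}$, so that $\omega=i\sum_k\omega_k$. By \cref{Phi}, $\Phi=i^{n-2}\sum_{i<j}\prod_{k\ne i,j}\omega_k$, and the volume form is $\Omega=\omega^n/n!=i^n\prod_k\omega_k$. A basis of $\Lambda^2\fp_\C^*$ consists of three kinds of elements:
\begin{itemize}
\item[(a)] $X^*_{\alpha_a}\wedge X^*_{\alpha_b}$ and $X^*_{-\alpha_a}\wedge X^*_{-\alpha_b}$ with $a<b$, which span $\Lambda^2\fp_\C^{+*}\oplus\Lambda^2\fp_\C^{-*}$;
\item[(b)] $X^*_{\alpha_a}\wedge X^*_{-\alpha_b}$ with $a\neq b$;
\item[(c)] the diagonal forms $\omega_a$.
\end{itemize}
The dimensions are $n(n-1)$ for (a), $n^2-n$ for (b) and $n$ for (c), which add up to $n(2n-1)=\dim\Lambda^2\fp_\C^*$. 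So it suffices to know $*_\Phi$ on these.

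For each basis element $\beta$ built on the index pair $\{a,b\}$ with $a\ne b$, only the summand of $\Phi$ omitting $\{a,b\}$ survives in $\Phi\wedge\beta$. Hence
\[
\Phi\wedge\beta=i^{n-2}\,\beta\wedge\prod_{k\ne a,b}\omega_k .
\]
Next we compute the Hodge star of such a degree-$(2n-2)$ monomial. We do this through \cref{lib1}(v), $\beta_1\wedge*\tau(\beta_2)=(\beta_1,\beta_2)_\tau\Omega$, together with the fact that $\tau$ exchanges $X^*_{\alpha_k}$ and $X^*_{-\alpha_k}$. Here $*$ of the monomial is the unique $2$-form pairing correctly against each basis $2$-form. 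This should give the following:
\begin{itemize}
\item for $\beta$ of type (a), $*(\Phi\wedge\beta)=\beta$, because the two $i^{n-2}$ factors and the sign from $\tau(\omega_k)=-\omega_k$ cancel against $i^n$ in $\Omega$;
\item for $\beta$ of type (b), $*(\Phi\wedge\beta)=-\beta$, where the extra sign comes from reordering $X^*_{-\alpha_b}$ past $X^*_{\alpha_a}$ relative to the conjugate pairing.
\end{itemize}

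For the diagonal part we compute differently. We have $\Phi\wedge\omega_a=i^{n-2}\sum_{j\ne a}\prod_{k\ne j}\omega_k$, and $*\prod_{k\ne j}\omega_k=c\,\omega_j$ for a single constant $c$, independent of $j$. Therefore
\[
*_\Phi\omega_a=c'\sum_{j\ne a}\omega_j=c'\bigl(\textstyle\sum_j\omega_j-\omega_a\bigr)
\]
for some constant $c'$. The constant is fixed by the global computation
\[
\Phi\wedge\omega=\frac{\omega^{n-1}}{(n-2)!}=(n-1)\frac{\omega^{n-1}}{(n-1)!},
\]
combined with the standard Kähler identity $*\omega=\omega^{n-1}/(n-1)!$. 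This gives $*_\Phi\omega=(n-1)\omega$ and forces $c'=1$ with the sign conventions above. Consequently $*_\Phi(\omega_a-\omega_b)=-(\omega_a-\omega_b)$, so the traceless diagonal forms lie in the $-1$ eigenspace.

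Collecting the cases: $V_+$ is spanned by type (a); $V_-$ is spanned by type (b) together with $\{\omega_a-\omega_b\}$, which is exactly $(\fp_\C^{+*}\wedge\fp_\C^{-*})\cap(\C\omega)^\perp$; and $\C\omega$ carries the eigenvalue $n-1$. Since $\omega^\perp=V_+\oplus V_-$ and $*_\Phi$ acts there with eigenvalues $\pm1$, it is an involution on $\omega^\perp$.

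The main obstacle is the sign bookkeeping in the Hodge star of the unitary monomials: the powers of $i$ from $\Phi$ and $\Omega$, the orientation, and the antilinearity hidden in $\tau$. The first thing to do is to carry out the computation for $n=2$, where $*_\Phi=*$ and the answer must reproduce the classical span $\langle\omega,\eta,\theta\rangle$ of \cref{twistedhodgetable}. We then propagate the normalisation to general $n$ using the $\omega$-identity above.
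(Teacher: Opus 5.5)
Your proposal is correct and follows essentially the paper's route: a direct evaluation of $*_\Phi=*\circ\epsilon(\Phi)$ on the basis built from the root covectors $X^*_{\pm\alpha_k}$. On the diagonal forms $\omega_a=X^*_{\alpha_a}\wedge X^*_{-\alpha_a}$ this gives $*_\Phi\omega_a=\sum_{j\ne a}\omega_j$, hence the eigenvalue $n-1$ on $\omega$ and $-1$ on each $\omega_a-\omega_b$. You are in fact more complete than the paper, which writes out only the diagonal forms and never the off-diagonal elements $X^*_{\alpha_a}\wedge X^*_{-\alpha_b}$. The sign bookkeeping you flag settles at once: $i\omega_k$ is the real unit area form of the $k$-th complex line, so $\Phi\wedge\beta=\beta\wedge\prod_{k\ne a,b}(i\omega_k)$, and $*_\Phi\beta$ is the classical four-dimensional Hodge star of $\beta$ on the span of the $a$-th and $b$-th lines.
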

    \begin{proof}
        %For a unit $ {\eta\in \Lambda^{n-2}\fp_\C^*}$, $\eta\w\tau(*\eta)=\Omega$. So $\tau(*\eta)$ is the sum of complementary with respect to $\Omega$ terms of $\eta$, up to a sign that rests on permutations. The latter depends on the bidegree of $\eta$, namely we consider the following cases.
        Since $X^{*}_{-\alpha_{j}}\wedge X_{\alpha_{j}}^{*}=-X_{\alpha_{j}}^{*}\wedge X^{*}_{-\alpha_{j}}$,
for $\beta=X_{\alpha_{i}}^{*}\wedge X^{*}_{-\alpha_{i}}$ we have $*_{\Phi}\beta=*(\Phi\wedge\beta)=\sum_{j\ne i}X_{\alpha_{j}}^{*}\wedge X^{*}_{-\alpha_{j}}$. Hence, for $\omega=i\sum_{i}X_{\alpha_{i}}^{*}\wedge X^{*}_{-\alpha_{i}}$, we have 
$*_{\Phi}\omega=(n-1)\omega$, while for $i\ne j$,
\[
*_{\Phi}\bigl(X_{\alpha_{i}}^{*}\wedge X^{*}_{-\alpha_{i}}-X_{\alpha_{j}}^{*}\wedge X^{*}_{-\alpha_{j}}\bigr)
=-\bigl(X_{\alpha_{i}}^{*}\wedge X^{*}_{-\alpha_{i}}-X_{\alpha_{j}}^{*}\wedge X^{*}_{-\alpha_{j}}\bigr),
\]
which identifies $V_{-}$. The computation on $V_{+}$ is identical.%, so we result in the following.   \[*_\Phi=\begin{cases}\id&\text{on }\Lambda^{2,0}\fp^*\oplus\Lambda^{0,2}\fp^*,\\-\id&\text{on }\Lambda^{1,1}\fp^*\cap(\C\omega)^\perp,\\(n-1)\id&\text{on }\C\omega.\end{cases}\]
    \end{proof} 
    We note that the eigenspace decomposition of \cref{prop:eigen} is not a decomposition into irreducible representations. In fact, writing $\Lambda^{2}\fp_\C^{*}$ as a direct sum of irreducibles $\bigoplus_i V_{i}$ %with $V_{0}=\C\omega=(\Lambda^{2}\fp_\C^{*})^{K}$,
and noting that $\fp^{\pm}_{\C}$ are irreducible and mutually dual, $K$-equivariance and Schur's lemma make
$*_{\Phi}$ act by a scalar on each summand. 
\medskip

We next define a generalised notion of self-duality by selecting the two-forms which have positive eigenvalue. This recovers the original notion of self-duality in dimension four.
     \begin{definition}\label{selfduality}
Let $G/K$ be a Hermitian symmetric space of complex dimension $n$. For $n\ge2$, a two-form is \emph{self-dual} (resp.\ \emph{anti-self-dual}) if it belongs in the sum of positive (resp.\ negative) eigenspaces of $*_{\Phi}$. Namely,
\[\Lambda^{2,+}\fp_\C^{*}=V_{+}\op\C\omega \qquad \text{ and }\qquad \Lambda^{2,-}\fp_\C^{*}=V_{-}.\] 
For $n=1$, we set $\Phi\coloneqq 0$ and declare every two-form to be self-dual.
\end{definition}
\begin{example}\label{ex:selfdualsmall}
For $n=1$, the volume form coincides with $\omega$, $*_{\Phi}=0$, both eigenspaces are trivial and $\Lambda^{2}\fp_\C^{*}=\C\omega$. For $n=2$, $\Phi$ is
the $0$-form $1$, so $*_{\Phi}=*$ and the two positive eigenvalues coincide, recovering the classical $\Lambda^{2,\pm}$. For $n>2$ one has $\dim\Lambda^{2,+}=n^{2}-n+1<n^{2}-1=\dim\Lambda^{2,-}.$
\end{example}
    Due to the $K$-invariance of the twisted Hodge operator, the notion of self-dual projection is independent of the choice of basis. We fix the following real orthonormal basis $\{ \omega/\sqrt{n}, \eta_{ij}, \theta_{ij} \}_{1\leq i<j\leq n}$ of self-dual elements.
    \begin{gather}\label{selfdualelements}
\begin{gathered}
  \frac{\omega}{\sqrt n}
=\frac{X^{*}_{1}\wedge X^{*}_{2}+\dots+X^{*}_{2n-1}\wedge X^{*}_{2n}}{\sqrt n}, \\[1ex]
  \eta_{ij}\coloneqq\frac{X^{*}_{2i-1}\wedge X^{*}_{2j-1}-X^{*}_{2i}\wedge X^{*}_{2j}}{\sqrt2},
  \qquad\qquad
  \theta_{ij}\coloneqq\frac{X^{*}_{2i-1}\wedge X^{*}_{2j}+X^{*}_{2i}\wedge X^{*}_{2j-1}}{\sqrt2}.
\end{gathered}
\end{gather}
%    \begin{gather}\frac{\omega}{\sqrt n}=\frac{X^{*}_{1}\wedge X^{*}_{2}+\dots+X^{*}_{2n-1}\wedge X^{*}_{2n}}{\sqrt n},\\[2pt] \eta_{ij}\coloneqq\frac{X^{*}_{2i-1}\wedge X^{*}_{2j-1}-X^{*}_{2i}\wedge X^{*}_{2j}}{\sqrt2}, \qquad \theta_{ij}\coloneqq\frac{X^{*}_{2i-1}\wedge X^{*}_{2j}+X^{*}_{2i}\wedge X^{*}_{2j-1}}{\sqrt2}. \end{gather}

\begin{definition}\label{def:proj}
Let $G/K$ be a Hermitian symmetric space of complex dimension $n$. For the self-dual elements fixed in \cref{selfdualelements}, the \emph{self-dual projection} is
\[\Pi^{+}:=\dfrac{\Lambda_{\omega}}{n}\,\omega
+\sum_{i<j}^{n}\bigl(\Lambda_{\eta_{ij}}\eta_{ij}+\Lambda_{\theta_{ij}}\theta_{ij}\bigr),\]
where $\Lambda_{\varphi}(\cdot)=(\varphi,\cdot)_{\lambda}$ denotes contraction.
For $n=2$, this is the classical $\Pi^{+}=(1+*)/2$.
\end{definition}
     The generalised notion of self-duality does not recover the natural isomorphism ${\gamma:\Lambda^{2,+}\fp_\C^*\rightarrow\End_0(S^+)}$ of the $n=2$ case for reasons of dimension alone.
     \begin{center}
     \vspace{2mm}
     \small
     \renewcommand{\arraystretch}{1.3}
        \begin{tabular}{|c|c|c|c|c|c|c|c|c|}\hline
            $\dim_\C\fp^*$ &${n}$& 1 & 2 & 3 & 4 & 5 & 6 & 7\\
            \hline
            %2. & $\dim_\C\fp_\C^* $&${2n}$& 2 & 4 & 6 & 8 & 10 & 12 & 14\\ \hline
            ${\dim_\C\Lambda^2\fp_\C^*} $ &${\binom{2n}{2}}$&  1 & 6 & 15 & 28 & 45 & 66 & 91\\ \hline
            $\dim_{\C}\Lambda^{2,+}\fp_\C^{*}$ & $n^{2}-n+1$ & $1$ & $3$ & $7$ & $13$ & $21$ & $31$ & $43$\\
            \hline
            %4.& $ {\dim_\C S^+}$& ${2^{n-1}}$ & 1 & 2 & 4 & 8 & 16 & 32 & 64\\ \hline
            %5. & ${\dim_\C\End(S^+) }$ &${2^{2(n-1)}}$ & 1 & 4 & 16 & 64 & 256 & 1024 & 4096\\ \hline
            ${\dim_\C\End_0(S^+) }$ &${4^{(n-1)}-1}$ & 0 & 3 & 15 & 63 & 255 & 1023 & 4095
            \\
            \hline
            %7. &${\dim_\C S}$ &${2^{n}}$ & 2 & 4 & 8 & 16 & 32 & 64 & 128 \\ \hline
            
            %8. & ${\dim_\C\End(S)}$ &${2^{2n}}$ & 4 & 16 & 64 & 256 & 1024 & 4096 & 16384\\
            
            \hline
        \end{tabular}
        \vspace{2mm}
        \end{center}
        The third and fourth rows agree only at $n=2$, while for $n>3$ the dimensions in the second and third rows are always smaller than those in the fourth. Thus, the four-dimensional identity $\Lambda^{2,-}\fp^{*}_{\C}=\bigcap_{\psi\in S^{+}}\ker\gamma(\cdot)\psi$  induced by $\Lambda^{2,+}\simeq\fsu(S^{+})$ fails for $n>2$, the intersection being trivial. Regarding curvature, this does not obstruct the generalisation of \cref{Dsquare+} in higher dimensions since ${F_{A^{ E_\nu}}^-=0}$ from \cref{lem:Fomega} 
    \begin{align*}
    {D_0^2\big|\{ C^{\infty}(G)\ox S^{+}\ox E_\nu\}^K}&=-{r\left(\Omega_\fp \right)\ox 1\ox 1+\tfrac{\scal}{4}\cdot 1\ox 1\ox1+\gamma({F_{A^{E_\nu}}}^+}).
    \end{align*}

\subsection{The norm identity and the squaring map}

Next we examine how the squaring map generalises to higher dimensions and the role self-duality plays in this natural choice. We first record the action of the Kähler form and isolate a genuinely exceptional feature of dimension four. 

\begin{lemma}\label{lem:cliffordomega}
The Clifford action of the Kähler form satisfies
${\gamma(\omega)}_{{|}_{\Lambda^{k}\fp_\C^{-\,*}}}=i(2k-n)\,\mathrm{id}$. In particular,
${\gamma(\omega)}_{{|}_{\Lambda^{2k}\fp_\C^{-\,*}}}=i(4k-n)\,\mathrm{id}$ for $0\le k\le\lfloor n/2\rfloor$.
\end{lemma}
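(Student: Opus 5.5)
The claim is pointwise and linear, so I will verify it on the weight basis $u_I$ of \cref{uweights}. The plan is to write $\omega$ in terms of the complex root covectors, express its Clifford action through $\varepsilon$ and $\iota$, and see that each $u_I$ is an eigenvector with eigenvalue depending only on $|I|$.

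For the first step, I use $X^*_{\pm\alpha_j}=\tfrac{1}{\sqrt2}(X^*_j\pm iY^*_j)$ to invert $X^*_j=\tfrac{1}{\sqrt2}(X^*_{\alpha_j}+X^*_{-\alpha_j})$ and $Y^*_j=\tfrac{1}{\sqrt2 i}(X^*_{\alpha_j}-X^*_{-\alpha_j})$. This gives $X^*_j\w Y^*_j=i\,X^*_{\alpha_j}\w X^*_{-\alpha_j}$, so
\[
\omega=\sum_j X^*_j\w Y^*_j = i\sum_j X^*_{\alpha_j}\w X^*_{-\alpha_j},
\]
which matches the expression already used in the proof of \cref{prop:eigen}. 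Its Clifford action on the two-form is then
\[
\gamma(\omega)=\tfrac{i}{2}\sum_j\bigl(\gamma(X^*_{\alpha_j})\gamma(X^*_{-\alpha_j})-\gamma(X^*_{-\alpha_j})\gamma(X^*_{\alpha_j})\bigr).
\]
Equivalently, since $X^*_j$ and $Y^*_j$ are orthogonal, $\gamma(\omega)=\sum_j\gamma(X^*_j)\gamma(Y^*_j)$.

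For the second step, I compute the Clifford operators on $S=\Lambda\fp^{*}_{\C}{}^{-}$. Here $\gamma=\sqrt2\,\varepsilon-\sqrt2\,\iota$, with contraction taken through $(\cdot,\cdot)^\C_\lambda$. Since $(X^*_{\alpha_j},X^*_{-\alpha_k})^\C_\lambda=\delta_{jk}$ and $(X^*_{\pm\alpha_j},X^*_{\pm\alpha_k})^\C_\lambda=0$, the following hold:
\begin{itemize}
\item $\gamma(X^*_{-\alpha_j})=\sqrt2\,\varepsilon(X^*_{-\alpha_j})$, because contraction by $X^*_{-\alpha_j}$ kills $\Lambda\fp^{*}_{\C}{}^{-}$;
\item $\gamma(X^*_{\alpha_j})=-\sqrt2\,\iota_j$, where $\iota_j$ removes the factor $X^*_{-\alpha_j}$.
\end{itemize}
Consequently
\[
\gamma(X^*_{\alpha_j})\gamma(X^*_{-\alpha_j})u_I=
\begin{cases}-2u_I,& j\notin I,\\ 0,& j\in I,\end{cases}
\]
and the reversed product gives $0$ for $j\notin I$ and $-2u_I$ for $j\in I$.

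For the third step, I sum over $j$ and find $\gamma(\omega)u_I=\pm i\bigl(|I|-(n-|I|)\bigr)u_I$, that is, $\pm i(2k-n)u_I$ when $|I|=k$. Restricting to even degree $2k$ then gives $i(4k-n)$.

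The only delicate point is the overall sign, which depends on the convention for $\iota$ and on the identity $\gamma(v)^2=-(v,v)$. As a cross-check I will use \cref{spinrepformula} with $Y=Z_0$. By \eqref{aZ0}, $\alpha_j(Z_0)=i$, so
\[
\sprep(Z_0)=\tfrac{i}{2}\sum_j\bigl(\gamma(X^*_{\alpha_j})\gamma(X^*_{-\alpha_j})+\id\bigr).
\]
Comparing this with the weights in \cref{uweights}, the weight of $u_I$ evaluated on $Z_0$ is $\tfrac{i}{2}(2|I|-n)$. This pins down the sign and gives $\gamma(\omega)=2\,\sprep(Z_0)$ on $S$, which yields $i(2k-n)$ as stated. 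If the direct computation disagrees by a sign, I will resolve it by this weight comparison, since it is consistent with \cref{prop:weights}.
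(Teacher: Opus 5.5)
Your proposal is correct and follows essentially the same route as the paper: write $\omega=i\sum_j X^*_{\alpha_j}\wedge X^*_{-\alpha_j}$, so that $\gamma(\omega)=i\sum_j\bigl(\gamma(X^*_{\alpha_j})\gamma(X^*_{-\alpha_j})+\mathrm{id}\bigr)$, then evaluate on the basis $u_I$, where the $j$-th summand contributes $+1$ for $j\in I$ and $-1$ for $j\notin I$. Your step-two formulas already fix the sign as $+i(2k-n)$, since the $j\notin I$ terms give $\tfrac{i}{2}(-2)=-i$ and the $j\in I$ terms give $+i$; so the $\pm$ hedge is unnecessary, and the comparison $\gamma(\omega)=2\,\sprep(Z_0)$ with the weights of $u_I$ serves only as a consistency check.
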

\begin{proof}
    \begin{align*}
\gamma(\omega)
&=\gamma\left(i\sum_{j=1}^n X_{\ga_j}^*\w X_{-\ga_j}^*\right)=
i
\sum_{j=1}^n(\gamma(X_{\alpha_j}^*)\gamma(X_{-\alpha_j}^*)+\id)=i
\left(\sum_{j\in I}\id-\sum_{j\not\in I}\id\right)=i(2k-n)\id.
\end{align*}
\end{proof}
\begin{proposition}\label{prop:norm}
Let $G/K$ be of complex dimension $n$ and let
$\zeta\in\Lambda^{2,+}(G/K,i\R)$. Then,
\[\bigl|\gamma_{{|}_{S^{+}}}(\zeta)\bigr|^{2}=a_n|\zeta|^{2},\qquad \text{where} \qquad 
a_n=
\begin{cases}
2^{\,n-1}, & n\ne2,\\[2pt]
2^n, & n=2 .
\end{cases}
\]
\end{proposition}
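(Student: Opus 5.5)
The plan is to expand $\zeta$ in the real orthonormal basis $\{\omega/\sqrt n,\eta_{ij},\theta_{ij}\}$ of \cref{selfdualelements}, writing $\zeta=i\bigl(c\,\omega/\sqrt n+\sum_{i<j}(a_{ij}\eta_{ij}+b_{ij}\theta_{ij})\bigr)$ with real coefficients. Then I will show two things:
\begin{itemize}
\item the endomorphisms $\gamma(\omega)$, $\gamma(\eta_{ij})$, $\gamma(\theta_{ij})$ restricted to $S^+$ are mutually orthogonal for the Hilbert--Schmidt product $\tr(B^*A)$;
\item each of them has squared norm $a_n$ times the squared norm of the corresponding form.
\end{itemize}
The identity $|\gamma(\zeta)|^2=a_n|\zeta|^2$ then follows by bilinearity. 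The overall factor $i$ is harmless, since $\gamma$ of a real two-form is skew-Hermitian and $|i A|=|A|$.

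\emph{Orthogonality.} I will rewrite $\eta_{ij}$ and $\theta_{ij}$ in the root basis. Up to the normalising constants, $\eta_{ij}$ and $\theta_{ij}$ are the real and imaginary parts of $X^*_{\alpha_i}\wedge X^*_{\alpha_j}$, so they lie in $\Lambda^2\fp_\C^{+\,*}\oplus\Lambda^2\fp_\C^{-\,*}$. With $\gamma=\sqrt2\,\varepsilon-\sqrt2\,\iota$, such elements act on the weight vectors $u_I$ of \cref{uweights} by combinations of $\varepsilon\varepsilon$ and $\iota\iota$. Each therefore maps $\Lambda^{2k}$ to $\Lambda^{2k\pm2}$, sending $u_I$ to $\pm 2u_{I\cup\{i,j\}}$ or $\pm2u_{I\setminus\{i,j\}}$. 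By \cref{lem:cliffordomega}, $\gamma(\omega)$ is diagonal in the $u_I$. Consequently $\tr(\gamma(\omega)^*\gamma(\eta_{ij}))=0$, and likewise with $\theta_{ij}$. For distinct pairs $\{i,j\}\neq\{k,l\}$ the images of a given $u_I$ are different basis vectors, so those traces vanish too. For the same pair, $\gamma(\eta_{ij})$ and $\gamma(\theta_{ij})$ differ by a relative phase $\pm i$ on the raising and lowering pieces. Their Hilbert--Schmidt product is then a sum of terms $1-1$ (or $i$ times a real quantity), whose real part vanishes; I will check this carefully with the explicit signs.

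\emph{Norms.} For $\omega$, \cref{lem:cliffordomega} gives
\[|\gamma(\omega)|_{S^+}|^2=\sum_{m\ \mathrm{even}}\binom nm(n-2m)^2 .\]
This is half of the full sum $\sum_m\binom nm(n-2m)^2=n2^n$, corrected by half the alternating sum $\sum_m(-1)^m\binom nm(n-2m)^2$. The alternating sum is an $n$-th finite difference of a quadratic polynomial in $m$. It vanishes for $n\ge3$, equals $8$ for $n=2$, and equals $2$ for $n=1$. Hence
\[|\gamma(\omega)|^2=n\,2^{n-1}\ \text{ for } n\neq2,\qquad |\gamma(\omega)|^2=8=n\,2^{n}\ \text{ for } n=2 .\]
For $n=1$ this reads $|\gamma(\omega)|^2=1$, which agrees with the value $-i$ of $\gamma(\omega)$ on $S^+=\Lambda^0$ (for $n=1$ the form $\zeta$ is a multiple of $\omega$, so the other basis elements do not arise). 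Dividing by $|\omega|^2=n$ gives $a_n$.

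For $\eta_{ij}$, each even $u_I$ with $i,j$ both in $I$ or both outside $I$ contributes a fixed constant, and all other $u_I$ are killed. A normalisation check using $(X^*_{\alpha},X^*_{-\alpha})$ and the $\sqrt2$ factors should make this constant $2$. The number of such even subsets is $2^{n-2}$ for $n\ge3$. For $n=2$ it is $2$, namely $\varnothing$ and $\{1,2\}$, which is $2^{n-1}$. This gives $|\gamma(\eta_{ij})|^2=2^{n-1}$ for $n\ge 3$ and $4=2^n$ for $n=2$, matching the two cases of $a_n$; $\theta_{ij}$ is identical. So the exceptional value at $n=2$ arises in both computations from the same degeneracy of parity counting.

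The main obstacle I expect is the bookkeeping of constants and signs: the exact action of $\gamma(\eta_{ij})$ on the $u_I$ including the $\sqrt2$ normalisations, and the vanishing of the $\eta_{ij}$--$\theta_{ij}$ cross term. I will verify these first on the $n=2$ case, where the result must reproduce the classical identity, before running the general count.
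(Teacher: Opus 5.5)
Your proposal is correct and is essentially the paper's argument: expand in the orthonormal basis $\{\omega/\sqrt n,\eta_{ij},\theta_{ij}\}$, get $|\gamma(\omega)|^2$ from \cref{lem:cliffordomega}, and get $|\gamma(\eta_{ij})|^2=|\gamma(\theta_{ij})|^2$ from a parity count. Your constant is indeed $2$, since $\gamma(\eta_{ij})=\sqrt2\bigl(\varepsilon(X^*_{-\alpha_i})\varepsilon(X^*_{-\alpha_j})+\iota(X^*_{\alpha_i})\iota(X^*_{\alpha_j})\bigr)$, and your explicit Hilbert--Schmidt orthogonality check supplies a step the paper only infers from orthogonality of the forms. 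One arithmetic slip: for $n=1$ the alternating sum $\sum_m(-1)^m\binom{1}{m}(1-2m)^2$ equals $0$, not $2$, and $0$ is what your stated value $|\gamma(\omega)|^2=1$ actually requires.
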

\begin{proof}
We work in the orthonormal basis $\{\omega/\sqrt n,\eta_{ij},\theta_{ij}\}$. Since $\gamma(a)^{*}=-\gamma(a)$, \cref{lem:cliffordomega} gives
\[
\Bigl|\gamma_{{|}_{S^{+}}}\Bigl(\tfrac{\omega}{\sqrt n}\Bigr)\Bigr|^{2} =-\frac{1}{n}\tr\bigl(\gamma_{{|}_{S^{+}}}(\omega)^{2}\bigr)
=\frac{1}{n}\sum_{\substack{k=0\\ k\ \mathrm{even}}}^{n}(2k-n)^{2}\binom nk=\begin{cases}
    2^{n-1}, & n\ne 2
    \\
    2^n, &n=2.
\end{cases} 
\]
For the remaining basis elements,
$\gamma(\eta_{ij})^{2}=-1-\gamma(X^{*}_{2i-1})\gamma(X^{*}_{2j-1})\gamma(X^{*}_{2i})\gamma(X^{*}_{2j})$,
and for $n\ne2$ the second summand is off-diagonal, so
$|{\gamma}_{{|}_{S^{+}}}(\eta_{ij})|^{2}=\dim S^{+}=2^{n-1}$, and the same holds for $\theta_{ij}$. The claim for two-forms follows from orthogonality of \cref{selfdualelements} and this is carried for $i\R$-valued forms as
well. For $n=2$ each of the three basis elements contributes the factor $4$, as a
direct computation from \cref{eq:cliffordn2}.
\end{proof}

We mention a natural approach to the generalised statement of the Seiberg--Witten equations. First, we remind that the $\zeta$-perturbed equations of \cref{sweq4} are $2F^{+}_{A}+2\zeta=-\sigma(\psi)$. The generalised notion of self-duality allows the expression of the left-hand side of the monopole equation to any complex dimension. The quadratic term can be expressed via one of the following approaches
\begin{equation}\label{eq:sigma}
\sigma(\psi)\coloneqq\frac{1}{a_{n}}\sum_{i<j}^{2n}\bigl(\gamma(X^{*}_{i})\gamma(X^{*}_{j})\psi,\psi\bigr) X^{*}_{i}\w X^{*}_{j}, \qquad \mathcal{Q}(\psi)\coloneqq\psi\psi^{*},
\end{equation}
where $\Quad_0(\psi)$ denotes the traceless part of $\Quad(\psi)$. For the original case of $n=2$, using the basis furnished by \cref{uweights} and the elementary matrices $W_{ij}$,
\begin{equation}\label{eq:cliffordn2}
\gamma(\omega)=-2iW_{11}+2iW_{22},\qquad
\gamma(\eta)=-2W_{12}+2W_{21},\qquad
\gamma(\theta)=2iW_{12}+2iW_{21}.
\end{equation}
so one readily verifies that $\gamma(\sigma(\psi))=-\Quad_0(\psi)$. This holds solely for $n=2$ and $n=3$, so for $n>3$ each approach results in a different expression of the monopole. Although the map $Q_0$ satisfies the following properties, its choice for the generalisation of the Seiberg--Witten equations would result in a system with no irreducible solutions for $n\geq4$.
\begin{lemma}\label{lem:Qzero}
If $\dim(S^+\ox E_\nu)=m=2^{n-1}$, then $\bigl(\Quad_0(\psi)\psi,\psi\bigr)
=\frac{m-1}{m}\,|\psi|^{4}
=\bigl\langle\Quad_0(\psi),\Quad_0(\psi)\bigr\rangle.$
\end{lemma}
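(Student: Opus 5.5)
The plan is a direct linear-algebra computation with the rank-one endomorphism $\Quad(\psi)=\psi\psi^{*}$ on the $m$-dimensional space $S^{+}\ox E_\nu$. I would use the Hermitian product $(\cdot,\cdot)$ fixed in \cref{subsec:dirac} together with the Hilbert--Schmidt inner product of Convention~(iv). Write $\Quad(\psi)u=(u,\psi)\psi$, so that $\tr\Quad(\psi)=|\psi|^{2}$. The traceless part is then
\[
\Quad_0(\psi)=\psi\psi^{*}-\frac{|\psi|^{2}}{m}\,\id .
\]
Only two facts are needed: $\Quad(\psi)$ is a rank-one projection scaled by $|\psi|^{2}$, and the trace is normalised by $m=2^{n-1}$.

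\textbf{First identity.} Apply $\Quad_0(\psi)$ to $\psi$ itself:
\[
\Quad_0(\psi)\psi=|\psi|^{2}\psi-\frac{|\psi|^{2}}{m}\psi=\frac{m-1}{m}|\psi|^{2}\psi .
\]
Pairing with $\psi$ then gives $(\Quad_0(\psi)\psi,\psi)=\frac{m-1}{m}|\psi|^{4}$.

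\textbf{Second identity.} Since $\Quad_0(\psi)$ is Hermitian, $\langle\Quad_0,\Quad_0\rangle=\tr(\Quad_0^{2})$. Expand the square using $\Quad(\psi)^{2}=|\psi|^{2}\Quad(\psi)$:
\[
\Quad_0(\psi)^{2}=|\psi|^{2}\Quad(\psi)-\frac{2|\psi|^{2}}{m}\Quad(\psi)+\frac{|\psi|^{4}}{m^{2}}\id .
\]
Taking the trace gives
\[
|\psi|^{4}-\frac{2}{m}|\psi|^{4}+\frac{1}{m}|\psi|^{4}=\frac{m-1}{m}|\psi|^{4}.
\]
This coincides with the first expression, which proves the lemma.

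\textbf{Main obstacle.} There is no genuine difficulty; the only care needed is in the conventions. First, $\psi^{*}$ must be taken with respect to the same Hermitian product used for $(\cdot,\cdot)$, so that $\Quad(\psi)^{*}=\Quad(\psi)$. Second, the dimension entering the trace must be $m=\dim(S^{+}\ox E_\nu)=2^{n-1}$; this holds because $E_\nu$ is one-dimensional. With these conventions in place the lemma holds for any $m$, with no representation theory of $K$ required.
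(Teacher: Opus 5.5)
Your proposal is correct and is essentially the paper's own proof. The paper likewise computes $\Quad_0(\psi)\psi=\bigl(1-\tfrac{1}{m}\bigr)|\psi|^{2}\psi$ for the first identity, and for the second expands $\tr\bigl(\Quad_0(\psi)^{2}\bigr)$ using $\tr\bigl((\psi\psi^{*})^{2}\bigr)=|\psi|^{4}$ and $\tr(\id)=m$.
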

\begin{proof}
Applying $\Quad_0(\psi)=\psi\psi^*-\frac{1}{m}|\psi|^2\id$ to $\psi$ we get $\Quad_0(\psi)\psi=\left(1-\frac{1}{m}\right)|\psi|^2\psi$. Thus $(\Quad_0(\psi)\psi,\psi) =\frac{m-1}{m}|\psi|^4$. Also, $
\tr(\Quad_0(\psi)^2)=\tr\left((\psi\psi^*)^2-\frac{2|\psi|^2}{m}\psi\psi^*+\frac{|\psi|^4}{m^2}\id\right)=|\psi|^4-\frac{2}{m}|\psi|^4+\frac{1}{m}|\psi|^4=\frac{m-1}{m}|\psi|^4.$
\end{proof} 
\begin{proposition}\label{Qnot}
If $n\geq 4$ and $c$ is a constant, the condition $\Quad_0(\psi)=ic \gamma(\omega)$ forces $\psi$ to be $0$.
\end{proposition}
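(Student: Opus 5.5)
The plan is to compare eigenvalues. Fix a point and regard $\psi$ as a vector in $S^+\ox E_\nu$, which has dimension $m=2^{n-1}$. By \cref{lem:cliffordomega}, $\gamma(\omega)$ acts on $S^+=\bigoplus_k\Lambda^{2k}\fp_\C^{-\,*}$ by the scalar $i(4k-n)$ on the $k$-th summand, and $E_\nu$ is one-dimensional. So $ic\,\gamma(\omega)$ is diagonal in the weight basis $u_I\ox v$ of \cref{uweights}, with eigenvalue $-c(4k-n)$ on the summand of degree $2k$, whose multiplicity is $\binom{n}{2k}$. On the other hand, $\Quad_0(\psi)=\psi\psi^*-\frac{|\psi|^2}{m}\id$ has at most two distinct eigenvalues. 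These are $\frac{m-1}{m}|\psi|^2$ on the line $\C\psi$ (the value comes from \cref{lem:Qzero}) and $-\frac{|\psi|^2}{m}$ on $\psi^\perp$, which has multiplicity $m-1$.

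Assume $\psi\neq0$ and $\Quad_0(\psi)=ic\,\gamma(\omega)$. The first step is to note that $c\neq0$: otherwise $\Quad_0(\psi)=0$, which forces $\psi\psi^*$ to be a multiple of the identity. That is impossible for $m\geq2$, since $\psi\psi^*$ has rank one.

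With $c\neq0$, the operator $ic\gamma(\omega)$ has as many distinct eigenvalues as there are distinct values of $4k-n$ for $0\le k\le\lfloor n/2\rfloor$, namely $\lfloor n/2\rfloor+1$. For $n\geq4$ this is at least $3$. The rank-one-plus-scalar operator $\Quad_0(\psi)$ has at most $2$, which is the contradiction. As a cleaner alternative, compare multiplicities. $\Quad_0(\psi)$ has an eigenvalue of multiplicity $m-1=2^{n-1}-1$. The largest eigenspace of $\gamma(\omega)$ on $S^+$ has dimension $\max_k\binom{n}{2k}$, and this is strictly less than $2^{n-1}-1$ once $n\geq4$; already for $n=4$ the bound is $6<7$. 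Either count forces $\psi=0$.

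The only step needing care is the bookkeeping of distinct eigenvalues. One should check that the values $4k-n$ are pairwise distinct, which holds because they form an arithmetic progression. It is also worth recording that for $n=2,3$ there are exactly two eigenvalues with multiplicities $(1,1)$ and $(1,3)$, which match $\Quad_0$; this explains the threshold $n\geq4$. Since $\Quad_0(\psi)$ and $\gamma(\omega)$ are pointwise endomorphisms, the argument is pointwise and applies to each value $\psi(g)$, so the section vanishes identically.
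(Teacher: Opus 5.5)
Your proof is correct and follows essentially the paper's argument: both diagonalise $ic\,\gamma(\omega)$ on the degree decomposition via \cref{lem:cliffordomega}, observe that for $c\neq0$ it has $\lfloor n/2\rfloor+1\geq3$ distinct eigenvalues, and set this against the rank-one structure of $\psi\psi^{*}$, with $c=0$ handled separately. The only difference is phrasing: the paper uses ``$\psi\psi^{*}=\tfrac{1}{m}|\psi|^{2}\id+ic\,\gamma(\omega)$ has at most one non-zero eigenvalue'', while you use the equivalent ``$\Quad_0(\psi)$ has at most two distinct eigenvalues''.
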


\begin{proof}
The condition reads $\psi\psi^{*}\,=\,\frac{1}{m}|\psi|^{2}\,\id\,+\,c\,i\gamma(\omega)$. By \cref{lem:cliffordomega}, the right-hand side is diagonal for the decomposition of $S^{+}\ox E_\nu$ by degree, %$=\bigoplus_{k}\Lambda^{2k}\fp_\C^{-*}$, 
with eigenvalue
\[\frac{1}{m}|\psi|^{2}-c(4k-n) \quad \text{ of multiplicity }\binom{n}{2k}\text{ on the k-th summand}.\] 
The numbers $4k-n$, $0\le k\le\lfloor
n/2\rfloor$, are pairwise distinct, so the map assigning $k$ to $\tfrac{1}{m}|\psi|^{2}-c(4k-n)$, $c\neq0$,
is injective and at most one of its values vanishes. The left-hand side has rank at most one, so at most one of these eigenvalues is non-zero. Hence
$\lfloor n/2\rfloor+1\le2$, that is, \ $n\le3$. For $n\ge4$, $c=0$ and
$|\psi|=0$.
\end{proof}

%Both of those reasons lead naturally for our choice of normalised map to be $-\gamma\circ\sigma$ of \cref{eq:sigma}. Since it does not satisfy the first condition outside of $n=2$, we adjust the functional accordingly. 

\section{The equivariant Seiberg--Witten equations}\label{sec:gen}

\subsection{Variational origin}\label{varsec}
Energy functionals in Gauge theory are integrals of a kinetic term, a gauge-field term and a potential. Let $M=G/K$ be a compact Hermitian symmetric space of complex dimension $n$, $E_{\nu}$ be a one-dimensional admissible twist and $A\in\mathcal{A}$, where $\mathcal A$ henceforth denotes the space of unitary connections on $\mathcal{L}_\nu$. We consider
\begin{equation}\label{eq:energy}
\mathcal{E}(\psi,A)\coloneqq\int_{M}\left(
\bigl|\nabla_{A}\psi\bigr|^{2}
+\bigl|F_{A}\bigr|^{2}
+\frac{\scal}{4}|\psi|^{2}
+\frac{a_{n}\bigl|\gamma(\sigma(\psi)^{+})\bigr|^{2}}{32}
\right){\vol}_{M}.
\end{equation}
For $n=2$ one has $a_{2}=4$ and, by \cref{lem:Qzero} and
$\gamma(\sigma(\psi))=-\Quad_0(\psi)$, the last term is $\tfrac1{16}|\psi|^{4}$. This simplifies \cref{eq:energy} to the energy functional \cref{eq:sw-energy-intro} of the introduction. 

\begin{proposition}\label{prop:topological}
Let $M=G/K$ be compact of complex dimension $n\ge2$, $E_{\nu}$ a one-dimensional
admissible twist and $A\in\mathcal{A}$. Then
\[
\int_{M}\Bigl(-\bigl|F^{+}_{A}\bigr|^{2}+\bigl|F^{-}_{A}\bigr|^{2}\Bigr){\vol}_{M}
+c\,\bigl\langle c_{1}(\mathcal{L}_{\nu})^{2}\smile[\omega]^{n-2},[M]\bigr\rangle
\ \ge\ \frac{(n-2)}{n\,\vol(M)}\biggl|\int_{M}\Lambda_{\omega}F_{A}\,{\vol}_{M}\biggr|^{2},
\]
where $c=4\pi^{2}/(n-2)!$, with equality, for $n>2$, if and only if
$\Lambda_{\omega}F_{A}$ is constant. For $n=2$, the formula is the classical topological identity and equality holds for every $A\in\mathcal{A}$.
\end{proposition}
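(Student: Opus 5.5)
The plan is to work pointwise first, decomposing $F_A$ according to \cref{prop:eigen}, and then integrate. Write $F_A=f\,\omega/n\cdot(\text{normalised})+F_{V_+}+F_{V_-}$; more precisely set $F_A=\tfrac{1}{n}(\Lambda_\omega F_A)\,\omega+F_{V_+}+F_{V_-}$. By \cref{def:proj} this gives $F_A^+=\tfrac1n(\Lambda_\omega F_A)\omega+F_{V_+}$ and $F_A^-=F_{V_-}$. These pieces are mutually orthogonal, and $|\omega|^2=n$. Using the eigenvalues $n-1,+1,-1$ of $*_\Phi$, compute the top form
\[
F_A\wedge F_A\wedge\Phi = F_A\wedge *\,{*_\Phi}\tau(F_A)\ \text{(up to sign/conjugation conventions)} = \Bigl((n-1)\tfrac{1}{n}|\Lambda_\omega F_A|^2+|F_{V_+}|^2-|F_{V_-}|^2\Bigr)\vol_M ,
\]
via \cref{lib1}(v) and the $\tau$-equivariance in \cref{lib4}. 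Since $F_A$ is $i\R$-valued, one has to track the sign coming from $\tau(F_A)=-F_A$ carefully.

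Next, rewrite $|F_A^+|^2-|F_A^-|^2=\tfrac1n|\Lambda_\omega F_A|^2+|F_{V_+}|^2-|F_{V_-}|^2$. Comparing with the expression above gives, pointwise,
\[
-|F_A^+|^2+|F_A^-|^2 = -(\text{density of } F_A\wedge F_A\wedge\Phi)\cdot(\pm1)+\tfrac{n-2}{n}|\Lambda_\omega F_A|^2 .
\]
Integrate this identity. The integral of $F_A\wedge F_A\wedge\omega^{n-2}/(n-2)!$ is topological: $F_A$ represents $-2\pi i\,c_1(\mathcal L_\nu)$ (using \cref{2nu}, so that $\mathcal L_\nu$ is a genuine bundle), and $\omega$ is closed. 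It therefore equals $-4\pi^2\langle c_1(\mathcal L_\nu)^2\smile[\omega]^{n-2},[M]\rangle/(n-2)!$. This yields the constant $c=4\pi^2/(n-2)!$, and after fixing signs one obtains the identity
\[
\int_M\bigl(-|F^+_A|^2+|F^-_A|^2\bigr)+c\langle\cdots\rangle=\frac{n-2}{n}\int_M|\Lambda_\omega F_A|^2 .
\]

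Finally, apply Cauchy--Schwarz on $M$: $\int_M|\Lambda_\omega F_A|^2\ge \vol(M)^{-1}\bigl|\int_M\Lambda_\omega F_A\bigr|^2$, with equality if and only if $\Lambda_\omega F_A$ is constant. For $n>2$ the factor $(n-2)/n$ is positive, which gives the inequality and the equality criterion. For $n=2$ the right-hand side vanishes identically and $\Phi=1$, so the statement reduces to the classical identity $\int(|F^-|^2-|F^+|^2)=-\int F\wedge F$, with equality for every $A$.

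The main obstacle is the sign and normalisation bookkeeping in the wedge–Hodge identity for imaginary-valued forms. This involves the complex-linear Hodge star with $\tau$, the eigenvalue $n-1$ on $\omega$, and the convention relating $F_A$ to $c_1$, all of which must be consistent with how \cref{lib1}(v) is stated. I would first check the identity on the explicit basis $\omega,\eta_{ij},\theta_{ij}$ and on $V_-$ using the computation in the proof of \cref{prop:eigen}, and then extend by orthogonality.
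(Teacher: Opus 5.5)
Your proposal follows the paper's proof: decompose $F_A$ along the eigenspaces $\C\omega$, $V_+$, $V_-$ of $*_\Phi$ to get $(*_\Phi F_A,F_A)_\tau=|F_A^+|^2-|F_A^-|^2+\tfrac{n-2}{n}|\Lambda_\omega F_A|^2$. Then identify $\int_M(*_\Phi F_A,F_A)_\tau\vol_M=-\int_M F_A\wedge F_A\wedge\Phi$ as the topological term $c\,\langle c_1(\mathcal{L}_\nu)^2\smile[\omega]^{n-2},[M]\rangle$, and finish with Cauchy--Schwarz.

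There is one small slip in your closing $n=2$ remark. For $i\R$-valued $F$ the classical identity is $\int_M(|F^-|^2-|F^+|^2)\vol_M=+\int_M F\wedge F$; the minus sign you wrote holds for real forms, and the discrepancy is exactly the $\tau(F)=-F$ sign you flagged. Your general identity already has the correct sign and specialises correctly when $n=2$.
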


\begin{proof}
By \cref{def:twistedhodge} and \cref{lib1},
$(*_{\Phi}F_{A},F_{A})_{\tau}\Omega=F_{A}\wedge\Phi\wedge\tau(F_{A})$, and since $\Phi$ is closed the integral of this expression is independent of $A$, hence a multiple of
$\langle c_{1}(\mathcal{L}_{\nu})^{2}\smile[\omega]^{n-2},[M]\rangle$, where $\mathcal{L}_{\nu}=\mathcal{E}_{\nu}\ox \mathcal{E}_{\nu}$% (see \cref{2nu})
. We write $F$ for $F_{A}$. Setting $F_{\omega}\coloneqq (\Lambda_{\omega}F)\omega/n$ and decomposing $\Pi^{+}F=F_{\omega}+F_{{|}_{V_+}}$, $F_{{|}_{V_-}}:=F-F_{\omega}-F_{{|}_{V_+}}$, \cref{prop:eigen} gives
\[
(*_{\Phi}F,F)_\tau=|F_{{|}_{V_+}}|^{2}-|F_{{|}_{V_-}}|^{2}+(n-1)|F_{\omega}|^{2}
=|F^{+}|^{2}-|F^{-}|^{2}+(n-2)|F_{\omega}|^{2}.
\]
Since $|F_{\omega}|^{2}=|\Lambda_{\omega}F|^{2}/n$, the Cauchy--Schwarz inequality
$\bigl|\int_{M}\Lambda_{\omega}F\ \vol_M\bigr|^{2}\le n\vol(M)\int_{M}|F_{\omega}|^{2}\ \vol_M$
holds, with equality if and only if $\Lambda_{\omega}F$ is constant. Rearranging the terms gives the claim.
\end{proof}
If $\mathcal{C}_\nu=\Gamma(\mathcal{S^+\ox E}_\nu)\x \mathcal{A}$, we denote by $\mathcal{C}_{SD}$ the space consisting of pairs $(\psi,A)\in \mathcal{C}_\nu$ such that the Hermitian--Yang--Mills type condition $F_{A}^-=0$ is satisfied. 
\begin{theorem}\label{theo:var}
Let $M=G/K$ be compact of complex dimension $n$, let $E_{\nu}$ be a one-dimensional admissible twist and let $A\in\mathcal{A}$. A pair $(\psi,A)\in \mathcal{C}_{SD}$ is an absolute minimum of the energy functional $\mathcal{E}_{{|}_{\mathcal{C}_{SD}}}$ if
\begin{equation}\label{swgen}
D_{A}^+\psi=0,
\qquad
F^{+}_{A}=-\frac{a_{n}}{8}\,\sigma(\psi)^{+},\qquad \Lambda_{\omega}F_{A}\text{ is constant.}
\end{equation}
 The last condition is vacuous for $n=2$, and is automatic for every $A\in\mathcal{A}^G$ by \cref{lem:Fomega}.
\end{theorem}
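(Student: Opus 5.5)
The strategy is to rewrite $\mathcal{E}$ on $\mathcal{C}_{SD}$ as a sum of non-negative squares plus a term that does not depend on $(\psi,A)$, exactly as in the classical four-dimensional computation.

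\emph{Kinetic term.} Integrate the Lichnerowicz identity of \cref{Dsquare}, or its higher-dimensional form recorded after \cref{prop:eigen}, against $\psi$ over the compact $M$. This gives
\[
\int_M|\nabla_A\psi|^2+\frac{\scal}{4}|\psi|^2
=\int_M|D_A^+\psi|^2-\frac12\int_M\bigl(\gamma(F_A^+)\psi,\psi\bigr).
\]
Here I will use $F_A^-=0$ on $\mathcal{C}_{SD}$, so that only the self-dual part acts. I must keep track of the normalisation: $F_A$ is the curvature on $\mathcal{L}_\nu$, and the virtual line bundle $\mathcal{E}_\nu$ carries half of it, which is where the factor $\tfrac12$ comes from. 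The pairing $(\gamma(F_A^+)\psi,\psi)$ is then rewritten through the definition \cref{eq:sigma} of $\sigma$. Expanding in the basis $\{X_i^*\wedge X_j^*\}$ gives
\[
\bigl(\gamma(F^+_A)\psi,\psi\bigr)=a_n\,\bigl(F_A^+,\sigma(\psi)\bigr)=a_n\,\bigl(F_A^+,\sigma(\psi)^+\bigr),
\]
where the second equality holds because $F_A^+$ is orthogonal to $\Lambda^{2,-}$.

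\emph{Curvature term.} On $\mathcal{C}_{SD}$ we have $|F_A|^2=|F_A^+|^2$. \Cref{prop:topological} then gives
\[
\int_M|F_A^+|^2=-\int_M|F_A^+|^2+c\,\langle c_1(\mathcal{L}_\nu)^2\smile[\omega]^{n-2},[M]\rangle+(\text{defect}),
\]
where the defect term is $\ge0$ and vanishes exactly when $\Lambda_\omega F_A$ is constant (and is absent for $n=2$). Consequently $\int_M|F_A|^2$ is at least a topological constant plus $\tfrac{n-2}{n\vol(M)}\bigl|\int_M\Lambda_\omega F_A\bigr|^2$. For the absolute minimum I will compare the whole functional with the topological lower bound. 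It is more convenient to keep $\int_M|F_A^+|^2$ and decompose $F^+_A=F_\omega+F_{V_+}$. This is exactly where the third condition in \cref{swgen} appears.

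\emph{Quartic term and completing the square.} By \cref{prop:norm}, applied to the $i\R$-valued self-dual form $\sigma(\psi)^+$,
\[
\frac{a_n|\gamma(\sigma(\psi)^+)|^2}{32}=\frac{a_n^2}{32}\,|\sigma(\psi)^+|^2 .
\]
Collecting the pieces, the integrand becomes
\[
|D_A^+\psi|^2+\Bigl|F_A^++\frac{a_n}{8}\sigma(\psi)^+\Bigr|^2\cdot\kappa_n,
\]
for a positive constant $\kappa_n$ to be fixed, plus the topological constant plus the non-negative Cauchy--Schwarz defect. The coefficients must match: the cross term $-\tfrac{a_n}{2}(F_A^+,\sigma^+)$ coming from Lichnerowicz has to equal $2\kappa_n\cdot\tfrac{a_n}{8}(F^+_A,\sigma^+)$ with the correct sign, and the quadratic $\tfrac{a_n^2}{32}|\sigma^+|^2$ has to equal $\kappa_n\tfrac{a_n^2}{64}$. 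I expect this forces $\kappa_n=2$, with the $|F_A^+|^2$ contributions balanced by the identity of \cref{prop:topological}. The sign conventions, $\sigma(\psi)$ being imaginary and $\gamma(a)^*=-\gamma(a)$, will be checked against the $n=2$ case, where everything must reduce to the classical identity via \cref{lem:Qzero} and $\gamma(\sigma(\psi))=-\Quad_0(\psi)$.

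\emph{Conclusion.} Once the identity is in place, each remaining term is non-negative. The pairs satisfying \cref{swgen} make every square vanish and attain the topological constant, which is therefore the infimum, so they are absolute minima. The main obstacle is the coefficient bookkeeping in the last step: the $\tfrac12$ from passing between $\mathcal{E}_\nu$ and $\mathcal{L}_\nu$, the normalisation $1/a_n$ in $\sigma$, and the factor $a_n$ of \cref{prop:norm} all have to conspire to give exactly $a_n/8$. I would first fix all of these by carrying out the $n=2$ computation with \cref{eq:cliffordn2}, and then follow the same constants for general $n$.
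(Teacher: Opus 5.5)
Your route is the paper's: the Lichnerowicz formula with $F_A^-=0$, the cross term rewritten through the definition of $\sigma$, \cref{prop:norm} for the quartic term, completion of the square, and \cref{prop:topological} for the remainder. Your constant $\kappa_n=2$ is right. By \cref{prop:norm}, the paper's square $2^{n-6}\bigl|2^{-n+4}\gamma(F_A^+)+\gamma(\sigma(\psi)^+)\bigr|^2$ equals $2\bigl|F_A^++\tfrac{a_n}{8}\sigma(\psi)^+\bigr|^2$, and keeping $a_n$ general lets you treat $n=2$ together with $n>2$.

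Your curvature paragraph, however, has the inequality the wrong way round, and this is exactly the step that produces the third condition. On $\mathcal{C}_{SD}$, the identity behind \cref{prop:topological} gives $\int_M|F_A|^2\,\vol_M=c\,\langle c_1(\mathcal{L}_\nu)^2\smile[\omega]^{n-2},[M]\rangle-(n-2)\int_M|F_\omega|^2\,\vol_M$, with $F_\omega=(\Lambda_\omega F_A)\,\omega/n$. What you get from this is an \emph{upper} bound on $\int_M|F_A|^2$. Your display $\int_M|F_A^+|^2=-\int_M|F_A^+|^2+\dots$ is therefore incorrect, and so is the ``at least'' claim.

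What the argument actually needs is the following. With $\kappa_n=2$ the square consumes $2|F_A^+|^2$, but the functional supplies only $|F_A|^2=|F_A^+|^2$. The leftover is therefore $-\int_M|F_A^+|^2\,\vol_M$; in general it is $\int_M(-|F_A^+|^2+|F_A^-|^2)\,\vol_M$, which is what the paper isolates. \Cref{prop:topological} is precisely a lower bound for this leftover, namely $-c\,\langle\cdots\rangle+\frac{n-2}{n\vol(M)}\bigl|\int_M\Lambda_\omega F_A\,\vol_M\bigr|^2$, with equality if and only if $\Lambda_\omega F_A$ is constant.

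For this to be a constant lower bound, you should also note that $\Lambda_\omega F_A\,\vol_M=F_A\wedge\omega^{n-1}/(n-1)!$ and $d\omega=0$. Hence $\int_M\Lambda_\omega F_A\,\vol_M$ depends only on $c_1(\mathcal{L}_\nu)$. With these corrections, your final identity and conclusion hold.

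Finally, $A\in\mathcal{A}$ is arbitrary, so you should cite the general $\Spin^c$ Lichnerowicz formula, as the paper does from Lawson--Michelsohn. \Cref{Dsquare} concerns only the invariant connection and does not suffice.
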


\begin{proof}
We write $F$ for $F_{A}$. By the generalised Lichnerowicz formula in \cite{LM89} and the fact that on $\mathcal{C}_{SD}$ we have $F^-=0$, the first and third terms of \cref{eq:energy} combine into
\begin{equation}
\mathcal{E}(\psi,A)\coloneqq\int_{M}\left(
\bigl|D_{A}^+\psi\bigr|^{2}-\frac{1}{2}\bigl(\gamma(F^{+})\psi,\psi\bigr)
+\bigl|\,F\,\bigr|^{2}
+\frac{a_{n}\bigl|\gamma(\sigma(\psi)^{+})\bigr|^{2}}{32}
\right){\vol}_{M}.
\end{equation}
We isolate the square of the Dirac operator and assume that $n> 2$. The remaining terms are 
\begin{equation}\label{presquare}
    -\frac{1}{2}\bigl(\gamma(F^{+})\psi,\psi\bigr)
+\frac{1}{2^{-n+6}}|\,\gamma(\sigma(\psi)^{+})\,|^{2}+|\,F\,|^{2}.
\end{equation}
By \cref{prop:norm}, $|\,\gamma(F^{+})\,|^{2}=2^{n-1}|\,F^{+}\,|^{2}$, and by the definition of $\sigma$ in \cref{eq:sigma},
\[\bigl(\gamma(F^{+})\psi,\psi\bigr)=-\bigl\langle\gamma(F^{+}),\gamma(\sigma(\psi)^{+})\bigr\rangle.\] Completing the square in \cref{presquare}, we get
\[
2^{\,n-6}\Bigl(2^{-n+4}\gamma(F^{+})+\gamma(\sigma(\psi)^{+})\Bigr)^{2}
\  -|\,F^{+}\,|^{2}+|\,F^{-}\,|^{2}.
\]
So, 
\begin{equation}\label{squares}
\mathcal{E}(\psi,A)=
\bigl\|D_{A}\psi\bigr\|_{L^2}^{2}
+2^{n-6}\bigl\|2^{-n+4} \, \gamma(F^+)+\gamma(\sigma(\psi)^+)\bigr\|_{L^2}^{2}+\int_M( -|\,F^{+}\,|^{2}+|\,F^{-}\,|^{2})\vol_M ,
\end{equation}
and the last term is bounded below by \cref{prop:topological}, with equality exactly when $\Lambda_{\omega}F$ is constant. For $n=1$, $\psi$ and $F$ are zero, and the claim follows. For $n=2$, $|\,\gamma(F^{+})\,|^{2}=4|\,F^{+}\,|^{2}$, and the square completes to $F^+=-1/2\sigma(\psi)^+$. The last summand is purely topological and the constancy condition is empty, which is the classical situation.

\end{proof}

\begin{corollary}\label{cor:equivmin}
Let $M=G/K$ be compact. Every pair $(\psi,A)$ with $A\in\mathcal{A}^G$ satisfying
\cref{swgen} is an absolute minimum of $\mathcal{E}_{{|}_{\mathcal{C}_{SD}}}$.
\end{corollary}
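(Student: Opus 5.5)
The plan is to reduce \cref{cor:equivmin} to \cref{theo:var}. Two things need checking: that an invariant connection automatically places the pair in $\mathcal{C}_{SD}$, and that the constancy condition in \cref{swgen} is automatic. Together these mean that, for $A\in\mathcal{A}^G$, the hypotheses of \cref{theo:var} reduce to the first two equations of \cref{swgen}.

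First, fix $A\in\mathcal{A}^G$ on $\mathcal{L}_\nu$. The curvature of the virtual connection on $\mathcal{E}_\nu$ is half that of $\mathcal{L}_\nu$, so \cref{lem:Fomega} gives $F_A=\tfrac{\nu i}{2\lambda}\,\omega$. By \cref{prop:eigen} and \cref{selfduality}, $\omega$ spans the eigenspace $\C\omega\subset\Lambda^{2,+}\fp^*_\C$. Since $*_\Phi$ is $K$-equivariant (\cref{lib4}) and $\omega$ is $G$-invariant, this decomposition holds at every point. Hence $F_A^-=0$, and the pair $(\psi,A)$ lies in $\mathcal{C}_{SD}$ for every spinor $\psi$.

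Second, by the same lemma $\Lambda_\omega F_A=\tfrac{n\nu i}{2\lambda}$ is constant on $M$. For $n>2$ this is precisely the equality case of \cref{prop:topological}. For $n=2$ the condition is vacuous.

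Now take $(\psi,A)$ satisfying \cref{swgen}, and use the identity \cref{squares} from the proof of \cref{theo:var}. The first term vanishes because $D_A^+\psi=0$. The second vanishes because the monopole equation $F_A^+=-\tfrac{a_n}{8}\sigma(\psi)^+$ is exactly the vanishing of the completed square (for $n=2$, of its classical form). The remaining term $\int_M(-|F^+|^2+|F^-|^2)$ attains the lower bound of \cref{prop:topological}, since $\Lambda_\omega F_A$ is constant. That bound depends only on $c_1(\mathcal{L}_\nu)$, $[\omega]$ and $\int_M\Lambda_\omega F_A$, and the last quantity is itself topological on $\mathcal{C}_{SD}$: there $F=F_\omega+F|_{V_+}$, and integrating $F\wedge\omega^{n-1}$ gives a cohomological pairing. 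Therefore $\mathcal{E}(\psi,A)$ equals a constant $C(\nu)$ that bounds $\mathcal{E}$ from below on all of $\mathcal{C}_{SD}$, so $(\psi,A)$ is an absolute minimum.

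The main obstacle is this last step. One must confirm that the lower bound in \cref{prop:topological} is uniform over $\mathcal{C}_{SD}$, i.e.\ that $\int_M\Lambda_\omega F_A\,\vol_M$ does not depend on $A$. I expect this to follow from the fact that $\int_M\Lambda_\omega F_A\,\vol_M$ is proportional to $\langle c_1(\mathcal{L}_\nu)\smile[\omega]^{n-1},[M]\rangle$, because $\omega$ is closed. If this holds, the argument is only a check of hypotheses.
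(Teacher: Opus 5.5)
Your proposal is correct and follows the paper's route. The corollary is \cref{theo:var} specialised through \cref{lem:Fomega}: for $A\in\mathcal{A}^G$ that lemma makes $F_A$ a constant multiple of $\omega$, hence $F_A^-=0$ and $\Lambda_\omega F_A$ is constant. The uniformity of the lower bound that you flag holds exactly as you expect, because $F\wedge\omega^{n-1}/(n-1)!=(\Lambda_\omega F)\,\omega^{n}/n!$ for every two-form, so $\int_M\Lambda_\omega F_A\,\vol_M$ is a cohomological pairing for every $A\in\mathcal{A}$, not only on $\mathcal{C}_{SD}$.

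The one case you skip is $n=1$, which lies outside \cref{prop:topological}. There the same completion of the square gives $\mathcal{E}(\psi,A)=-\|F_A\|_{L^2}^2\le0$ at a solution, while $\mathcal{E}\ge0$ since $\scal>0$. So the solution is trivially a minimum, and this underlies the paper's one-line remark in the proof of \cref{theo:var} that for $n=1$ both $\psi$ and $F$ vanish.
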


Their perturbed version also has a variational nature tied with the energy functional induced after expanding the perturbed squares in \cref{squares}. The compactness hypothesis in \cref{prop:topological}, \cref{theo:var} and \cref{cor:equivmin} is not removable: on a non-compact dual the volume is infinite and $\mathcal{E}$ need not be finite. Nevertheless, we define the equations on both settings, having the discussion above as motivation for the non-compact ones.
\begin{definition}\label{eqSW}
    The $it\omega$-perturbed equations of \cref{swgen} are the \emph{equivariant Seiberg--Witten equations}. 
    \begin{equation}\label{swcngen}
D_{A}^+\psi=0,
\qquad
F^{+}_{A}+it\omega=-\frac{a_{n}}{8}\,\sigma(\psi)^{+}.
\end{equation}
   We denote by $\mathcal{S}_\nu^\infty$ the space of their solutions.
\end{definition}

\subsection{The monopole equation in explicit form}\label{sec:mono}
The following theorem, combined with \cref{theo:var}, recovers Theorem A of the introduction. We use the decomposition of a positive spinor by exterior degree,
\begin{equation*}\label{eq:degdecomp}
\psi=\sum_{k=0}^{\lfloor n/2\rfloor}\psi^{(2k)},
\qquad \psi^{(2k)}\in\Lambda^{2k}\fp_\C^{-\,*}\otimes E_{\nu}.
\end{equation*}

\begin{theorem}\label{theo:monopole}
Let $(G/K,\lambda B_{\fp})$ be of complex dimension $n$, $E_{\nu}$ be a one-dimensional admissible twist and $A\in\mathcal{A}^G$. With $\mono(t,\nu,\lambda)=8n\bigl(t+\tfrac{\nu}{2\lambda}\bigr)$, the $it\omega$-perturbed monopole equation of \cref{swgen} is equivalent to the
pointwise system
\begin{equation*}
\sum_{k=0}^{\lfloor n/2\rfloor}(n-4k)\bigl|\psi^{(2k)}(g)\bigr|^{2}=\mono(t,\nu,\lambda),
\qquad
\bigl(\gamma(\eta_{ij})\psi,\psi\bigr)(g)=\bigl(\gamma(\theta_{ij})\psi,\psi\bigr)(g)=0,
\quad 1\le i<j\le n .
\end{equation*}
\end{theorem}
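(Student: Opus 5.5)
The plan is to expand both sides of the perturbed monopole equation $F^{+}_{A}+it\omega=-\tfrac{a_n}{8}\sigma(\psi)^{+}$ in the real orthonormal basis $\{\omega/\sqrt n,\eta_{ij},\theta_{ij}\}$ of self-dual forms from \cref{selfdualelements}, and then compare coefficients. On the curvature side nothing remains to compute. By \cref{lem:Fomega}, $F_{A}=\tfrac{\nu i}{4\lambda}\omega$ for $A\in\mathcal A^G$, so $F_A^-=0$ and $F_A^+=F_A$. Hence the left-hand side is $i\bigl(t+\tfrac{\nu}{4\lambda}\bigr)\omega$, which has no $\eta_{ij}$ or $\theta_{ij}$ components. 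Here I must fix the normalisation between the virtual bundle $\mathcal E_\nu$ and the characteristic bundle $\mathcal L_\nu$. Since $\mathcal A$ consists of connections on $\mathcal L_\nu$, the relevant curvature is $2F_{A_{E_\nu}}=\tfrac{\nu i}{2\lambda}\omega$, and this is what produces the $\tfrac{\nu}{2\lambda}$ in $\mono$. I would state this convention explicitly at the start.

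On the spinor side I apply $\Pi^{+}$ from \cref{def:proj} to $\sigma(\psi)$ as defined in \cref{eq:sigma}. For a real $2$-form $\beta=\sum_{i<j}\beta_{ij}X_i^*\wedge X_j^*$ the contraction gives $\Lambda_\beta\sigma(\psi)=\tfrac{1}{a_n}\sum_{i<j}\beta_{ij}(\gamma(X_i^*)\gamma(X_j^*)\psi,\psi)=\tfrac1{a_n}(\gamma(\beta)\psi,\psi)$, using that the $X_i^*$ are orthonormal. Therefore
\[
\sigma(\psi)^{+}=\frac{1}{a_n}\Bigl(\frac{(\gamma(\omega)\psi,\psi)}{n}\,\omega+\sum_{i<j}\bigl((\gamma(\eta_{ij})\psi,\psi)\eta_{ij}+(\gamma(\theta_{ij})\psi,\psi)\theta_{ij}\bigr)\Bigr).
\]
Linear independence of the basis splits the monopole equation into three pieces. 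The $\eta_{ij}$ and $\theta_{ij}$ coefficients give exactly the vanishing conditions $(\gamma(\eta_{ij})\psi,\psi)=(\gamma(\theta_{ij})\psi,\psi)=0$. The $\omega$-coefficient gives the scalar equation $i\bigl(t+\tfrac{\nu}{2\lambda}\bigr)=-\tfrac{1}{8n}(\gamma(\omega)\psi,\psi)$.

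To evaluate $(\gamma(\omega)\psi,\psi)$ I use \cref{lem:cliffordomega}. Since $\gamma(\omega)$ acts on $\Lambda^{2k}\fp_\C^{-\,*}\otimes E_\nu$ by $i(4k-n)$, and the degree summands are orthogonal, this pairing equals $i\sum_k(4k-n)|\psi^{(2k)}|^2$. Substituting into the scalar equation gives $\sum_k(n-4k)|\psi^{(2k)}|^2=8n\bigl(t+\tfrac{\nu}{2\lambda}\bigr)=\mono$. Every step is reversible, so the two systems are equivalent. Both sides are evaluated pointwise at $g$, which is legitimate because the left-hand side is constant by \cref{FAabelian}.

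I expect the main obstacle to be bookkeeping of signs and constants rather than any structural issue. Three points need care. First, I must check that $(\cdot,\cdot)$ is Hermitian in the convention where $\gamma$ of a real form is skew-Hermitian, so that the pairings above are purely imaginary and the real and imaginary parts match correctly. Second, I must fix the factor of $2$ between $F_{A_{E_\nu}}$ and the curvature on $\mathcal L_\nu$. Third, the case $n=2$ needs a separate check: there $a_2=4$ differs from $2^{n-1}$, but the coefficient $a_n/8\cdot 1/a_n=1/8$ cancels $a_n$ regardless. I would verify the signs against \cref{eq:cliffordn2} in that case.
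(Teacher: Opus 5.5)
Your proposal is correct and follows essentially the same route as the paper: use $F_A=2F_{A_{E_\nu}}$ (from \cref{linebndleequiv}) together with \cref{lem:Fomega} to see that the left-hand side is $i\bigl(t+\tfrac{\nu}{2\lambda}\bigr)\omega$. The $\eta_{ij}$- and $\theta_{ij}$-conditions then come from the vanishing of those components, and contracting with $\omega$ using \cref{eq:sigma} and \cref{lem:cliffordomega} yields the scalar equation. The one point to tidy is your first expression $i\bigl(t+\tfrac{\nu}{4\lambda}\bigr)\omega$: you should write it with the factor $2$ from the characteristic bundle from the start, as you in fact go on to do.
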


\begin{proof}
The perturbed $it\omega$-monopole reads $2F_{A_{E_{\nu}}}^{+}+it\omega=-\tfrac{a_n}{8}\sigma(\psi)^{+}$, since \cref{linebndleequiv} indicates that $F_{A}=2F_{A_{E_{\nu}}}$. By \cref{lem:Fomega}, the left-hand side is $\bigl(\tfrac{\nu i}{2\lambda}+it\bigr)\omega$, so all $\eta_{ij}$- and $\theta_{ij}$-components of the left-hand side vanish, which leads to the second and third families of equations. Contracting with $\omega$, the left-hand side contributes $n\bigl(\tfrac{\nu i}{2\lambda}+it\bigr)$, while the right-hand side, by
\cref{eq:sigma} and \cref{lem:cliffordomega}, contributes
\[
-\frac{a_n}{8}\cdot\frac{1}{a_n}\bigl(\gamma(\omega)\psi,\psi\bigr)
=-\frac18\sum_{k}i(4k-n)\bigl|\psi^{(2k)}\bigr|^{2}.
\]
Multiplying by $8/i$ gives the first equation.
\end{proof}

\begin{example}\label{ex:lowdimmono}
Writing $\psi_{1},\dots,\psi_{2^{n-1}}$ for the components in the basis of \cref{uweights}, ordered by increasing degree, \cref{theo:monopole} reads:
\[
\begin{array}{ccccc}
n
& &
\omega\text{-condition}
& &
\eta_{ij}\ \&\ \theta_{ij}\text{-condition}
\\[2pt]
\hline
\\
2 & &
{2|\psi_{1}|^{2}-2|\psi_{2}|^{2}=\mono[2]} & &
{\psi_{1}\overline{\psi_{2}}=0}
\\[10pt]
3 & &
{3|\psi_{1}|^{2}-|\psi_{2}|^{2}-|\psi_{3}|^{2}-|\psi_{4}|^{2}=\mono[3]}
& &
{\psi_{1}\overline{\psi_{2}}}
={\psi_{1}\overline{\psi_{3}}}
={\psi_{1}\overline{\psi_{4}}=0}
\\[10pt]
4 & &
{4|\psi_{1}|^{2}-4|\psi_{8}|^{2}=\mono[4]} & &
{\psi_{1}\overline{\psi_{2}}+\overline{\psi_{7}}\psi_{8}}=0,
\quad 
{\psi_{1}\overline{\psi_{3}}-\overline{\psi_{6}}\psi_{8}}=0,
\\
& & & &{\psi_{1}\overline{\psi_{4}}+\overline{\psi_{5}}\psi_{8}}=0,\quad
{\psi_{1}\overline{\psi_{7}}+\overline{\psi_{2}}\psi_{8}}=0,
\\
& & & &\psi_{1}\overline{\psi_{6}}-\overline{\psi_{3}}\psi_{8}=0,\quad
\psi_{1}\overline{\psi_{5}}+\overline{\psi_{4}}\psi_{8}=0.
\end{array}
\]
\end{example}
The case $n=2$ is the classical monopole. The case $n=4$ exhibits the first genuinely new phenomenon: the six middle components carry coefficient $n-4k=0$ and therefore do not appear in the first equation at all, although they are constrained by the second and third families. Additionally, it is the first case where these families contain sums of products, foreshadowing their complicated expression in higher dimensions. The shape of the monopole condition is captured in the following proposition.
\begin{proposition}[General picture of the monopole]\label{monoshape}
The equations of \cref{theo:monopole} take the form
\[
\sum_{k}d_{k}\bigl|\psi^{(2k)}\bigr|^{2}=\mono(t,\nu,\lambda),
\qquad
\sum_{i_{1}\neq i_{2}}\varepsilon_{i_{1}i_{2}}\,\psi_{i_{1}}\overline{\psi_{i_{2}}}=0,
\qquad d_{k}\in\Z,\ \varepsilon_{i_{1}i_{2}}\in\{0,\pm1\}.
\]
\end{proposition}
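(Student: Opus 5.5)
The plan is to start from \cref{theo:monopole}, where the monopole condition has already been reduced to contractions against the orthonormal basis $\{\omega/\sqrt n,\eta_{ij},\theta_{ij}\}$ of \cref{selfdualelements}. The task is to compute $(\gamma(\varphi)\psi,\psi)$ for each basis element $\varphi$ in the weight basis $\{u_I\otimes v\}$ of \cref{uweights}. Throughout I write $\psi=\sum_I\psi_I\,u_I\otimes v$, where $u_I$ is orthonormal for $(\cdot,\cdot)$ and $I$ ranges over the even subsets of $\{1,\dots,n\}$.

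\emph{The $\omega$-equation.} By \cref{lem:cliffordomega}, $\gamma(\omega)$ is diagonal in this basis and acts on degree $2k$ by $i(4k-n)$. Hence $(\gamma(\omega)\psi,\psi)=\sum_k i(4k-n)|\psi^{(2k)}|^2$, and the first equation takes the stated form with $d_k=n-4k\in\Z$. This is exactly what \cref{theo:monopole} records, so nothing new is needed here.

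\emph{The $\eta_{ij}$ and $\theta_{ij}$ equations.} Here I rewrite $\eta_{ij}$ and $\theta_{ij}$ in the complex basis $X^*_{\pm\alpha}$. With $X^*_{2i-1}=(X^*_{\alpha_i}+X^*_{-\alpha_i})/\sqrt2$ and $X^*_{2i}=(X^*_{\alpha_i}-X^*_{-\alpha_i})/(\sqrt2 i)$, a direct expansion shows that $\eta_{ij}$ and $\theta_{ij}$ are, up to a factor $1/\sqrt2$ and units $\pm1,\pm i$, the combinations
\[
X^*_{\alpha_i}\wedge X^*_{\alpha_j}\pm X^*_{-\alpha_i}\wedge X^*_{-\alpha_j}.
\]
The mixed terms $X^*_{\pm\alpha_i}\wedge X^*_{\mp\alpha_j}$ cancel, which is consistent with $V_+=\Lambda^{2,0}\oplus\Lambda^{0,2}$ in \cref{prop:eigen}. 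Since $\gamma=\sqrt2\varepsilon-\sqrt2\iota$ on $S=\Lambda\fp^{*-}_\C$, $\gamma(X^*_{-\alpha_i})\gamma(X^*_{-\alpha_j})$ acts on $u_I$ by wedging with $X^*_{-\alpha_i}\wedge X^*_{-\alpha_j}$. It sends $u_I\mapsto\pm2\,u_{I\cup\{i,j\}}$ when $i,j\notin I$ and kills $u_I$ otherwise. Dually, $\gamma(X^*_{\alpha_i})\gamma(X^*_{\alpha_j})$ acts by a double contraction, sending $u_I\mapsto\pm2\,u_{I\setminus\{i,j\}}$ when $i,j\in I$. In both cases the sign is $\pm1$ and comes from reordering indices in the wedge.

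Consequently each matrix entry $(\gamma(\eta_{ij})u_I,u_J)$ is either zero or a unit times a common constant, and it is nonzero only when $J=I\triangle\{i,j\}$ with $\{i,j\}\subseteq I$ or $\{i,j\}\cap I=\varnothing$. In particular $I\neq J$, so no diagonal terms $|\psi_I|^2$ occur. Expanding $(\gamma(\eta_{ij})\psi,\psi)=\sum_{I,J}\psi_I\overline{\psi_J}(\gamma(\eta_{ij})u_I,u_J)$ and dividing the resulting equation $=0$ by the common nonzero constant (a real or imaginary unit multiple of $2/\sqrt2$) therefore gives $\sum_{I\ne J}\varepsilon_{IJ}\psi_I\overline{\psi_J}=0$. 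The same argument applies to $\theta_{ij}$.

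\emph{Main obstacle: the coefficients.} The delicate point is showing that after this normalisation every coefficient lies in $\{0,\pm1\}$, not in $\{0,\pm1,\pm i\}$. I would handle this by pairing terms: $\gamma(\eta_{ij})$ is skew-Hermitian and real in the appropriate sense, so the $(I,J)$ and $(J,I)$ entries are related by conjugation with a sign. If some coefficients come out as $\pm i$, I would take the real and imaginary parts of the complex equation separately, or multiply by $i$, which is allowed since the right-hand side is zero. The low-dimensional computations of \cref{ex:lowdimmono} serve as a check on this step.

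Finally, since $\psi_I\overline{\psi_J}$ and $\psi_J\overline{\psi_I}$ are distinct monomials, it remains to index the sum over ordered pairs of distinct components, $i_1\neq i_2$. This yields the stated form with $\varepsilon_{i_1i_2}\in\{0,\pm1\}$.
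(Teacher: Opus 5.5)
Your proposal is correct and follows the route the paper has in mind: the paper defers the proof to the author's thesis, but its ingredients are \cref{lem:cliffordomega} for the $\omega$-equation, giving $d_k=n-4k$, and the formulas \eqref{geeii} for $\gamma(\sqrt2\eta_{ij})$ and $\gamma(\sqrt2\theta_{ij})$, which are exactly what you derive. The step you flag as the main obstacle is already settled by your own computation: $\sqrt2\,\eta_{ij}$ and $\sqrt2\, i\,\theta_{ij}$ equal $X^*_{\alpha_i}\wedge X^*_{\alpha_j}\pm X^*_{-\alpha_i}\wedge X^*_{-\alpha_j}$ with a real sign, so every nonzero entry of $\gamma(\eta_{ij})$ is $\pm\sqrt2$ and every nonzero entry of $\gamma(\theta_{ij})$ is $\pm\sqrt2\,i$; dividing by $\sqrt2$, resp.\ $\sqrt2\,i$, gives $\varepsilon_{i_1i_2}\in\{0,\pm1\}$ directly, so the real/imaginary-part fallback is unnecessary (and it would not in general remove genuinely mixed $\pm1,\pm i$ coefficients anyway).
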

\begin{proof}
    We refer to \cite{Tsi26} for the proof.
\end{proof}

\subsection{Gauge group and equivariant solutions}

As in the four-dimensional case, we examine solutions of the ${it\omega}$-perturbed equivariant Seiberg--Witten equations with ${A\in \mathcal{A}^G}$. Since the statement is given for all Hermitian symmetric spaces, regardless of their (non-)compactness, we will restrict to solutions ${(\psi,A)}$ such that the spinor $\psi$ belongs in the $L^2$ kernel of the Dirac operator. We first note that solutions of \cref{swgen} are invariant under the gauge group $\mathcal{G}\simeq  C^{\infty}(G/K,U(1))$, acting by $u\cdot(\psi,A)=(u\psi,A-2u^{-1}du)$. The curvature transforms by conjugation while its trace component is invariant by \cref{rem:trace}, and $\sigma(\psi)$ is preserved by scalar, unitary transformations.

\begin{proposition}\label{prop:gaugeeq}
A gauge transformation $u$ preserves the set of pairs $(\psi,A)$ for which
$A\in\mathcal{A}^G$ if and only if $u\colon G/K\to U(1)$ is constant.
Consequently, the equivariant gauge group is $\mathcal{G}_{eq}\simeq U(1)$.
\end{proposition}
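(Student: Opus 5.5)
The plan is to use Wang's correspondence for the line bundle $\mathcal{L}_\nu$ and then show that any gauge transformation preserving $\mathcal{A}^G$ must be constant. Take $A\in\mathcal{A}^G$ and $u\in\mathcal{G}$. The transformed connection is $A'=A-2u^{-1}du$. Since $\mathcal{A}^G$ is an affine space over the $G$-invariant $i\R$-valued one-forms, $A'\in\mathcal{A}^G$ holds if and only if $u^{-1}du$ is a $G$-invariant one-form on $G/K$.

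By the argument preceding \cref{prop:kernelrep}, a $G$-invariant one-form corresponds to an element of
\[\{\fp^*\ox i\R\}^K\simeq\Hom_K(\fp,i\R).\]
Because $K$ acts irreducibly on $\fp$, this space is zero. Equivalently, as in \cref{eq:Lambdavanish}, $\mathcal{A}^G$ consists of the single Wang map $\Lambda$ with $\Lambda_{|_\fp}=0$.

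It follows that $u^{-1}du=0$. Since $G/K$ is connected, $u$ is constant. Conversely, a constant $u\in U(1)$ has $du=0$, so it fixes $A$. It also sends $\psi$ to $u\psi$, and $u\psi$ still lies in the relevant $G$-representation space, since scalars commute with the left $G$-action. This gives $\mathcal{G}_{eq}\simeq U(1)$.

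The only step needing care is the first: identifying the difference of two invariant connections with an invariant $\mathfrak{u}(1)$-valued one-form. Here one needs the invariance of $u^{-1}du$ under left translation. This follows directly: if $A$ and $A'$ are both $G$-invariant, then $L_g^*(u^{-1}du)=u^{-1}du$ for all $g\in G$. No compactness is used, so the statement holds in both the compact and the non-compact case.
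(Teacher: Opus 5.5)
Your proposal is correct and follows essentially the same route as the paper. In both arguments, the difference $-2u^{-1}du$ between the two invariant connections is a $G$-invariant one-form, which vanishes because $\Hom_K(\fp,i\R)=0$ by irreducibility of the $K$-action on $\fp$; connectedness of $G/K$ then forces $u$ to be constant.
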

\begin{proof}
Under a gauge transformation $u:G/K\to\U(1)$, the connection changes by the Maurer-Cartan term $-2u^{-1}du$. For $(u\psi,(u^{-1})^*A)$ to remain an equivariant solution, $u^{-1}du$ must be a $G$-invariant $1$-form. Since the action of $K$ on $\fp$ is irreducible, it has no non-zero trivial subrepresentation, therefore $u^{-1}du=0$. So, $du=0$, and since $G/K$ is connected, $u$ is constant. The converse is readily verified.
\end{proof}

\begin{definition}\label{def:eqsolutions}
We denote $\mathcal{M_\nu^\infty}(c_n)$ the moduli space $\mathcal{S}_{\nu}^{\infty}(c_n)/\mathcal{G}$. The sets of \emph{equivariant solutions} are
\[
\mathcal{S}_{\nu,eq}^{\infty}(c_n):=\bigl\{(\psi,A)\ \text{satisfying \cref{swcngen}}\, |\, A\in\mathcal{A}^G\bigr\},
\qquad
\mathcal{S}_{\nu,eq}^{L^{2}}(c_n)\coloneqq\mathcal{S}^{\infty}_{\nu,eq}(c_n)\cap\bigl\{\psi\in\ker_{L^{2}}D^{+}_{0}\bigr\}.
\]
The corresponding moduli sets are
$\mathcal{M}^{\infty}_{\nu,eq}(c_n)\coloneqq\mathcal{S}^{\infty}_{\nu,eq}(c_n)/\mathcal{G}_{eq}$ and
$\mathcal{M}^{L^{2}}_{\nu,eq}(c_n)\coloneqq\mathcal{S}^{L^{2}}_{\nu,eq}(c_n)/\mathcal{G}_{eq}$.
\end{definition}

By ellipticity of the Dirac operator $D_0$, every $L^{2}$ kernel element is smooth, see \cite{Par72} for reference. Thus,
\[
G\ \text{compact}\ \Longrightarrow\ \mathcal{S}_{\nu,eq}^{L^{2}}=\mathcal{S}_{\nu,eq}^{\infty},
\qquad
G\ \text{non-compact}\ \Longrightarrow\ \mathcal{S}_{\nu,eq}^{L^{2}}\subset\mathcal{S}_{\nu,eq}^{\infty}.
\]
We conclude this section with a fact about invariant spinors that will be used repeatedly in the study of equivariant solutions.

\begin{proposition}\label{cor:muzero}
    Let $G/K$ be a compact irreducible Hermitian symmetric space, let $E_{\mu\rho_{\fp}}$ be a one-dimensional $K$-representation and let $\psi\in H^{(0,0)}(G/K,E_{\mu\rho_{\fp}})$ have non-zero constant pointwise norm. Then $\mu=0$.
\end{proposition}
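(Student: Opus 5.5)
The plan is to turn the constant-norm condition into a flatness statement. A holomorphic section of constant norm trivialises the line bundle holomorphically and isometrically, and \cref{lem:Fomega} then forces $\mu=0$. Write $\mathcal{E}=G\times_K E_{\mu\rho_{\fp}}$ with its invariant Hermitian metric $h$. The Chern connection of $(\mathcal{E},h)$ is $G$-invariant, since $h$ and the holomorphic structure are, so it lies in $\mathcal{A}^G$. By \cref{lem:Fomega} (whose proof uses only the one-dimensionality of the $K$-representation and Wang's formula, so it applies to $E_{\mu\rho_\fp}$), its curvature is $F=\tfrac{\mu i}{4\lambda}\omega$.

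Next, take $\psi\in H^{(0,0)}(G/K,E_{\mu\rho_\fp})$ holomorphic with $|\psi|^2\equiv c>0$. On a compact Kähler manifold, $\psi$ is nowhere vanishing, so on all of $M$ we may use
\[
\bar\partial\partial\log|\psi|^2_h = F
\]
for the Chern curvature (up to the standard sign convention $F=\bar\partial\partial\log h$ in a local frame). Since $|\psi|^2$ is constant, the left-hand side vanishes identically, hence $F=0$.

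Comparing with $F=\tfrac{\mu i}{4\lambda}\omega$ and using $\omega\neq0$, $\lambda\neq0$, we get $\mu=0$.

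An alternative route avoids the local formula. Apply the Weitzenböck/Bochner identity for holomorphic sections, $\Delta_{\bar\partial}|\psi|^2 = -|\nabla\psi|^2 + (i\Lambda_\omega F\,\psi,\psi)$ (suitably signed). With $|\psi|$ constant this gives $|\nabla\psi|^2=\tfrac{n\mu}{4\lambda}|\psi|^2\cdot(\pm1)$ pointwise. Integrating, or noting that $\nabla\psi$ is then parallel-type, forces $\mu$ to have a sign, and applying the same argument to $\bar\psi$ pairing, or using that $\nabla^{1,0}\psi$ vanishes when $\nabla|\psi|^2=0$ and $\bar\partial\psi=0$, gives $\nabla\psi=0$ and hence $F\psi=0$. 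Concretely, $0=\partial|\psi|^2=h(\nabla^{1,0}\psi,\psi)$, and since $\psi$ spans the line fibrewise, $\nabla^{1,0}\psi=0$. Together with $\nabla^{0,1}\psi=\bar\partial\psi=0$ we get $\nabla\psi=0$, hence $F\psi=0$ and $F=0$. I would present this second argument, since it is purely pointwise and avoids local frames.

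The main obstacle is justifying that the relevant connection is the invariant one of \cref{lem:Fomega}. That is, the holomorphic structure defining $H^{(0,0)}$ must be the one induced by the invariant connection, and the invariant metric must be the one in which $|\psi|$ is constant. Both are fixed by $G$-invariance together with the uniqueness of the Chern connection. It remains to check that \cref{lem:Fomega} extends verbatim from admissible twists to an arbitrary one-dimensional $K$-representation $E_{\mu\rho_\fp}$, which its proof supports.
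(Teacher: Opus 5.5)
Your argument is correct, but it takes a different route from the paper's.

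\textbf{The paper's route.} The paper argues globally. A nowhere-vanishing holomorphic section trivialises $\mathcal{E}_\mu$, so $c_1(\mathcal{E}_\mu)=0$. By \cref{lem:Fomega} the invariant curvature is $\tfrac{\mu i}{4\lambda}\omega$, and since $[\omega]\neq0$ on the compact K\"ahler manifold $G/K$, this gives $\mu=0$.
\begin{itemize}
\item It uses only that $\psi$ is nowhere vanishing, not that $|\psi|$ is constant.
\item It never has to identify the invariant connection with the Chern connection, because the curvature of any unitary connection represents $c_1$.
\item It genuinely needs compactness, through $[\omega]\neq0$.
\end{itemize}

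\textbf{Your route.} Your first-order computation is $0=\partial|\psi|^2=h(\nabla^{1,0}\psi,\psi)$. This forces $\nabla^{1,0}\psi=0$, hence $\nabla\psi=0$ and $F\equiv0$ as a form.
\begin{itemize}
\item It uses the constant-norm hypothesis fully and is purely local.
\item It would work verbatim on the non-compact dual. There $G/K$ is contractible and $[\omega]=0$, so the cohomological argument gives nothing, even though every $\mathcal{E}_\mu$ has nowhere-vanishing holomorphic sections.
\end{itemize}

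\textbf{The point you flag.} The price of your route is exactly the obstacle you identify: the invariant connection must be the Chern connection of the homogeneous holomorphic structure and invariant metric. This is harmless. The Chern connection is $G$-invariant, and by \cref{eq:Lambdavanish} the set $\mathcal{A}^G$ consists of a single connection. Hence \cref{lem:Fomega}, whose proof only uses one-dimensionality of the $K$-character, applies to it.

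\textbf{Minor points.} The Weitzenb\"ock/Bochner sentence in your alternative route is vague and unnecessary; the first-order computation alone closes the argument. Also, $\psi$ is nowhere vanishing simply because its norm is a non-zero constant, not because $M$ is compact K\"ahler.
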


\begin{proof}
Indeed, $\psi$ is then nowhere vanishing and holomorphic, hence trivialises $\mathcal{E}_\mu$ as a holomorphic line bundle, thus $c_1(\mathcal{E}_\mu) = 0$. By \cref{lem:Fomega}, 
\[F_{A_{E\mu}} = \frac{\mu i}{4\lambda}\omega.\] 
Since $[\omega]\neq 0$, this forces $\mu= 0$.
\end{proof}

\section{Geometric realisation of irreducible representations}\label{sec:geomreal}
Two fundamental objectives of representation theory of semisimple Lie groups are the classification of irreducible representations and their geometric realisation. We study the realisation of irreducible representations as kernels of Dirac operators twisted by the one-dimensional admissible twist ${E_{\nu\rho_p}}$. Our aim is to determine explicitly the conditions on $\nu$ under which such a realisation occurs.

\subsection{Compact case}\label{sec:compact}
 By elliptic regularity, the $L^2$ kernel coincides with the smooth kernel of the Dirac operator. According to %Theorems 3 \& 4 of
 \cite{La00}, its restriction to the positive spinor bundle is realised as a highest weight representation, 
\[\ker\, D^+(\mathcal{E_{\mu}})=V_{w(\mu+\rho_\fk)-\rho_\fg},%\qquad \qquad D^+(\mathcal{E_{\mu}}):L^2(\mathcal{S^+\ox E_\mu})\rightarrow L^2(\mathcal{S^-\ox E_\mu})
\] 
    if there exists a Weyl element ${w\in W_G}$ of even length such that ${w(\mu+\rho_\fk)-\rho_\fg}$ is dominant for G; in any other case ${\ker\, D^+(\mathcal{E_{\mu}})=0}$. We provide the Weyl group elements and their length for all compact, irreducible Hermitian symmetric spaces; we first recall that if such an element exists, it is unique.
\begin{remark}\label{uniquew}
    The Weyl group acts simply transitively on the Weyl chambers of the regular elements. Since $\langle\rho_{\fg},\check\alpha_{i}\rangle=1$ for every simple root $\alpha_{i}$, $\langle w(\mu+\rho_{\fk}),\check\alpha_{i}\rangle\ge1>0$, where $\langle\lambda,\check{\alpha}\rangle
 =2(\lambda,\alpha)/(\alpha,\alpha)$. So, $w(\mu+\rho_{\fk})$ is regular dominant and, by transitivity, a regular Weyl orbit meets the open dominant chamber once.
\end{remark}
\begin{lemma}\label{rhoND}
        If $G/K$ is an irreducible Hermitian symmetric space and ${\alpha\in \Delta_{\fp}^+}$, then ${( \rho_{\fp},\alpha )=N}$ if G/K is of Type C\txs{I} and ${( \rho_{\fp},\alpha)=N/2}$ otherwise.
    \end{lemma}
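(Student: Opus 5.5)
The plan is to split the claim into a uniform constancy statement and a single evaluation per type. For the constancy, I would use \cref{Z_0*rho}, which gives $\rho_{\fp}=\tfrac n2\, iZ_0^*$, where $iZ_0^*\in\fz_{\fk}^*$. Under the identification of $i\ft^*$ with $i\ft$ via the invariant inner product, $iZ_0^*$ corresponds to a fixed multiple of $-iZ_0$. Hence, for every root $\alpha$,
\[
(\rho_{\fp},\alpha)=c\,\alpha(-iZ_0)
\]
for a constant $c$ depending only on $G/K$ and the normalisation. By \cref{aZ0}, $\alpha(Z_0)=i$ for every $\alpha\in\Delta_{\fp}^+$. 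So $(\rho_{\fp},\alpha)$ takes the same value on all positive $\fp$-roots, and it suffices to compute it on one of them. The natural choice is the unique simple $\fp$-root $\alpha_{\fp}$, or the highest root.

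For the evaluation I would use the normalisation implicit in the tables of \cref{subsec:HSS}: the standard Euclidean product $(\gve_i,\gve_j)=\delta_{ij}$ for the classical types, and roots of squared length $2$ for $E_6$ and $E_7$. Reading $\rho_{\fp}$ from the table, the classical cases go as follows.
\begin{itemize}
\item Type A\txs{III}: with $\alpha=\gve_i-\gve_j$, $i\le p<j$, one gets $\tfrac q2+\tfrac p2=\tfrac{p+q}{2}=N/2$.
\item Type BD\txs{I}: with $\alpha=\gve_1\pm\gve_j$ (or $\gve_1$ in type B), one gets $\tfrac p2=N/2$.
\item Type D\txs{III}: with $\alpha=\gve_i+\gve_j$, one gets $p-1=N/2$.
\item Type C\txs{I}: the $\fp$-roots are $\gve_i+\gve_j$ and $2\gve_i$, and both give $p+1=N$.
\end{itemize}
The doubling in type C\txs{I} occurs because $\Delta_{\fp}$ there contains long roots $2\gve_i$ while the $\gve$-normalisation is kept. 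This explains the exceptional value in the statement.

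For the exceptional types I would write $\rho_{\fp}=\tfrac N2\varpi_{\fp}$ and use $(\varpi_{\fp},\alpha)=m_{\fp}(\alpha)\,(\alpha_{\fp},\alpha_{\fp})/2$. Here $m_{\fp}(\alpha)$ is the coefficient of $\alpha_{\fp}$ in $\alpha$, which equals $1$ for every $\alpha\in\Delta_{\fp}^+$ since $\alpha(Z_0)=i$. With $(\alpha_{\fp},\alpha_{\fp})=2$ this gives $(\rho_{\fp},\alpha)=N/2$, that is $6$ for E\txs{III} and $9$ for E\txs{VII}. The same argument in fact handles every type uniformly: the answer is $\tfrac N2\cdot\tfrac{(\alpha_{\fp},\alpha_{\fp})}{2}$. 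It equals $N/2$ when $\alpha_{\fp}$ has squared length $2$, and $N$ in type C\txs{I}, where $\alpha_{\fp}=2\gve_p$ has squared length $4$.

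The only delicate point is the normalisation. The statement is not scale-invariant, so I would need to fix explicitly that the pairing is the one in which the tables' $\gve_i$ are orthonormal, and that the exceptional roots have length $\sqrt2$. With that fixed, the step $m_{\fp}(\alpha)=1$, which follows from \cref{aZ0}, makes the computation routine.
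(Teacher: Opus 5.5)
Your proposal is correct and follows essentially the same route as the paper: you get constancy of $(\rho_{\fp},\alpha)$ on $\Delta_{\fp}^+$ from $\rho_{\fp}\propto Z_0^*$ together with $\alpha(Z_0)=i$, then evaluate type by type, and all your values check out. Your uniform formula $(\rho_{\fp},\alpha)=\tfrac N2\cdot\tfrac{(\alpha_{\fp},\alpha_{\fp})}{2}$, which relies on $m_{\fp}(\alpha)=1$, is a useful addition that explains the Type C\txs{I} exception, and you are right to insist on the tables' normalisation, since the statement is not scale-invariant.
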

\begin{proof}
    By \cref{Z_0*rho}, $\rho_{\fp}$ is proportional to $Z_0^{*}$, and the ordering of Convention \ref{conventionHSS} gives $\alpha(Z_{0})=i$ for every $\alpha\in\Delta_\fp^{+}$, so $(\rho_{\fp},\alpha)$ is independent of $\alpha$. Its value is read off type by type.
\end{proof}
\begin{convention}\label{conv:weyl}
We set ${s_{i}\coloneqq s_{\gve_i-\gve_{i+1}}}$, ${\tilde{s}_{i}\coloneqq s_{\gve_i+\gve_{i+1}}}$ and ${\check{s}_{p,i} \coloneqq s_{p-1} }$ when $i$ even and ${\check{s}_{p,i} \coloneqq \tilde{s}_{p-1}} $ when ${i}$ odd. Products of reflections are composed left to right, that is, $s_{i}s_{j}$ means that \emph{we first apply $s_{i}$, then $s_{j}$}. 
\end{convention}
\begin{proposition}\label{weylelem}
    Let ${G/K}$ be a non-exceptional, compact, irreducible Hermitian symmetric space of complex dimension ${n}$. The unique Weyl group element ${w\in W_G}$ such that ${\lambda_\nu\coloneqq w(\nu\rho_{\fp}+\rho_\fk)-\rho_\fg}$ is dominant for G when $\nu\geq 1$ is ${w= {id}}$. For $-1< \nu <1 $ there is no such element. For $\nu\leq -1$, ${w}$ is given explicitly in the following table and satisfies ${\ell(w)=n}$.
    
\vspace{2mm}
    \begin{center}
    \small
\renewcommand{\arraystretch}{1.3}
\begin{NiceTabular}{|c|c|c|c|c|c|}[hlines]
\textbf{Type} & \textbf{G} & ${W_G}$ & ${w}$  \\ 
\hline 
A\txs{III} (${ p\geq q} $) & $ {\SU(p+q)} $
& ${\langle s_{\gve_i-\gve_j}\rangle}$ & ${\prod\limits_{j=1}^{q}s_{p+j-1}\cdot s_{p+j-2} \cdots s_{j}}$\\ 
BD\txs{I} ($ {p> 2 }$, odd) & $ {\SO(2+p)} $ & ${\langle s_{i}, s_{\gve_{\lceil p/2 \rceil}}\rangle}$ & ${\prod\limits_{i=1}^{\lfloor p/2 \rfloor}  s_i \cdot s_{\gve_{\lceil p/2\rceil}} \cdot \prod\limits_{i=1}^{\lfloor p/2\rfloor}  s_{\lfloor p/2\rfloor-i+1}}$ \\ 
BD\txs{I} (${ p> 2} $, even) & $ {\SO(2+p) }$ & ${\langle s_{i}, \tilde{s}_{p/2}\rangle}$ &  ${\prod\limits_{i=1}^{p/2-1}  s_i \cdot \tilde{s}_{p/2} \cdot \prod\limits_{i=1}^{p/2}  s_{p/2+1-i}}$ \\ 
D\txs{III}%, ${p}$ odd 
& $ {\SO(2p)} $& ${\langle s_{i},\tilde{s}_{p-1}\rangle}$ & ${\prod\limits_{i=1}^{ p-1}s_{p-i}\cdot s_{p-i+1}\cdot \cdots \cdot s_{p-2}\cdot\check{s}_{p,i}} $  \\ 
%D\txs{III}, ${p}$ even & $ {\SO(2p) }$ & ${\langle s_{i},\tilde{s}_{p-1}\rangle}$ & ${\prod\limits_{i=1}^{ p-1}s_{p-i}\cdot s_{p-i+1}\cdot\cdots \cdot s_{p-2}\cdot\check{s}_i} $  \\ 
C\txs{I}  & $ {\Sp(p)}  $ & ${\langle s_{i}, s_{2\gve_p}\rangle}$ & ${s_{2\gve_p} (s_{p-1} s_{2\gve_p}) \cdots  (s_1 \cdots  s_{p-1} s_{2\gve_p})}$ \\ 
%E\txs{III} &  $ E_{6}^{-78}/\Spin(10) _{Z_4}\U(1) $ & 32 \\ 
%E\txs{VII} & $ E_{7}^{-133}/\color{red}E_{6}^{-78}\color{black} _{Z_2}\U(1) $ & 54 \\
\end{NiceTabular}
\end{center}
\end{proposition}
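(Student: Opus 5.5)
The plan is to reduce everything to one scalar criterion: inner products with simple coroots. By \cref{uniquew}, $\lambda_\nu$ is dominant exactly when $w(\nu\rho_\fp+\rho_\fk)$ is regular dominant. Then $w$ is the unique Weyl element sending $\delta_\nu:=\nu\rho_\fp+\rho_\fk$ into the open dominant chamber, and it exists if and only if $\delta_\nu$ is regular. By \cref{Z_0*rho}, $\rho_\fp\perp\Delta_\fk$, so for $\beta\in\Delta_\fk^+$ we get $\langle\delta_\nu,\check\beta\rangle=\langle\rho_\fk,\check\beta\rangle>0$, independently of $\nu$. For $\alpha\in\Delta_\fp^+$, \cref{rhoND} gives $(\delta_\nu,\alpha)=\nu c+(\rho_\fk,\alpha)$, where $c=N/2$ (or $N$ in type C\txs{I}). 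The argument thus becomes a question about where the hyperplanes $(\delta_\nu,\alpha)=0$, $\alpha\in\Delta_\fp^+$, sit as $\nu$ varies.

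\emph{Case $\nu\ge1$.} I would write $\delta_\nu=\rho_\fg+(\nu-1)\rho_\fp$. Since $\rho_\fp$ is a positive multiple of $\varpi_\fp$, it is dominant, so $\delta_\nu$ is regular dominant, $w=\id$, and $\lambda_\nu=(\nu-1)\rho_\fp$ is dominant.

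\emph{Case $-1<\nu<1$.} The plan is to exhibit a root $\alpha\in\Delta_\fp^+$ with $(\delta_\nu,\alpha)=0$ for every admissible $\nu$ in this range, or else show that the resulting $w(\delta_\nu)-\rho_\fg$ fails dominance. Here I would use the table of $\rho_\fp$ and $\rho_\fk$ in the $\gve$-coordinates type by type. The numbers $(\rho_\fk,\alpha)$ for $\alpha\in\Delta_\fp^+$ range over an arithmetic progression symmetric about zero, from $-(c-\text{min})$ to $c-\text{min}$ roughly. Since admissible $\nu=1+2\kappa/N$ lies in a lattice of step $2/N$, each admissible $\nu\in(-1,1)$ should hit one of these values, giving a singular $\delta_\nu$. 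I would first check this on A\txs{III} with $\alpha=\gve_i-\gve_{p+j}$, where $(\rho_\fk,\alpha)$ runs through $(p+1)/2-i+(q+1)/2-j$, and then do the other types analogously.

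\emph{Case $\nu\le-1$.} Now $(\delta_\nu,\alpha)<0$ for every $\alpha\in\Delta_\fp^+$, while $\delta_\nu$ stays positive on $\Delta_\fk^+$. The positive roots made negative by $w$ are then exactly the $\fp$-roots, so the inversion set is $\Delta_\fp^+$ and $\ell(w)=|\Delta_\fp^+|=n$. What remains is to confirm that the tabulated products map $\delta_\nu$ into the dominant chamber, and that the word lengths equal $n$. For A\txs{III}, the product is $q$ blocks of $p$ simple reflections, giving length $pq$. For D\txs{III}, the lengths sum to $\sum i=p(p-1)/2$. For C\txs{I}, they sum to $\sum (2i-1)$ adjusted appropriately to $p(p+1)/2$. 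For BD\txs{I}, the length is $2\lfloor p/2\rfloor+1$ or $p$.

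I would verify the tabulated words concretely by computing the action on coordinates. In A\txs{III}, for example, $w$ moves the last $q$ coordinates in front of the first $p$.

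I expect the main obstacle to be the middle range $-1<\nu<1$. There the argument depends on the lattice of admissible $\nu$ matching the values $-(\rho_\fk,\alpha)/c$ exactly, and that has to be checked type by type, including the $\kappa\in2\Z$ restriction in type D\txs{III}.
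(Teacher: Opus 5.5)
Your proposal is correct, but it handles the two non-trivial cases differently from the paper.

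\emph{The case $-1<\nu<1$.} The paper does not locate a singular wall. Since $W_G$ acts isometrically and $\rho_\fp\perp\rho_\fk$, one has $\|\lambda_\nu+\rho_\fg\|^2=\nu^2\|\rho_\fp\|^2+\|\rho_\fk\|^2<\|\rho_\fg\|^2$. Dominance of $\lambda_\nu$ would instead give $\|\lambda_\nu+\rho_\fg\|^2=\|\rho_\fg\|^2+2(\lambda_\nu,\rho_\fg)+\|\lambda_\nu\|^2\ge\|\rho_\fg\|^2$. This two-line argument removes the step you flagged as the main obstacle. It also works for every real $\nu$ in the range, with no matching of the lattice $1+2\kappa/N$ against the values $-(\rho_\fk,\alpha)/c$. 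Your matching does go through: the walls are hit by $\gve_i-\gve_{p+j}$ in A\txs{III}, by $\gve_1\pm\gve_j$ in BD\txs{I}, by $\gve_i+\gve_j$ in D\txs{III}, and by the long roots $2\gve_i$ in C\txs{I}. But it covers only admissible $\nu$, and only after a type-by-type check.

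\emph{The case $\nu\le-1$.} Here your route is better than the paper's. The paper reads $\ell(w)=pq$ off the tabulated word in type A\txs{III}, which a priori gives only an upper bound unless the word is known to be reduced, and it defers the other types to the thesis. Your observation that the inversion set of $w$ is $\{\alpha>0:(\delta_\nu,\alpha)<0\}=\Delta_\fp^+$ gives $\ell(w)=n$ in all types at once. The word counts then become unnecessary; in any case, in C\txs{I} the blocks have lengths $1,2,\dots,p$, not odd numbers. Only the identification of $w$ with the tabulated words remains a coordinate computation, exactly as in the paper.

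Two steps need a line of justification.
\begin{itemize}
\item \emph{Integrality.} \cref{uniquew} only shows that dominance of $\lambda_\nu$ forces $w\delta_\nu$ to be regular dominant. Your converse, ``$\delta_\nu$ regular $\Rightarrow$ $\lambda_\nu$ dominant'', needs integrality. This holds because $(\nu-1)\rho_\fp=\kappa\varpi_\fp$, so $\delta_\nu=\rho_\fg+\kappa\varpi_\fp$ pairs integrally with all coroots.
\item \emph{The inequality on $\Delta_\fp^+$.} Your $\nu\le-1$ case rests on $(\rho_\fk,\alpha)<(\rho_\fp,\alpha)$ for $\alpha\in\Delta_\fp^+$, which you only sketch via ``ranges''. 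It follows uniformly because $\Delta_\fk^+\cup\Delta_\fp^-$ is a positive system whose half-sum is $\rho_\fk-\rho_\fp$, which is therefore strictly dominant for it. The same observation gives $w(\rho_\fk-\rho_\fp)=\rho_\fg$, hence $\lambda_\nu=-(|\nu|-1)\,w\rho_\fp$. This is dominant because $w\rho_\fp$ is antidominant.
\end{itemize}
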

\begin{proof}
We proceed by finding an element for which the proposition holds and use Remark \ref{uniquew} to establish its uniqueness. From Lemma \ref{rhoND}, when $\nu\geq 1$ the dominance of $\rho_{\fp}$ implies that $(\nu-1)\rho_{\fp}$ is also dominant and therefore ${w=  {id}}$. For ${-1<\nu<1}$, suppose that there exists ${w\in W_G}$ such that ${\lambda_\nu}$ is dominant. The Weyl group preserves the invariant inner product and ${\rho_{\fp}\perp \Delta_\fk}$ by Lemma \ref{Z_0*rho},
\[{\|\lambda_\nu+\rho_{\fg}\|^2=\|w(\nu\rho_{\fp}+\rho_{\fk})\|^2=\|\nu\rho_{\fp}+\rho_{\fk}\|^2=\nu^2\|\rho_{\fp}\|^2+\|\rho_{\fk}\|^2<\|\rho_{\fg}\|^2.
}\]
But, ${\lambda_\nu}$ is dominant and therefore
\[{
\|\lambda_\nu+\rho_{\fg}\|^2
=
\|\rho_{\fg}\|^2
+2\langle\lambda_\nu,\rho_{\fg}\rangle
+\|\lambda_\nu\|^2
\geq
\|\rho_{\fg}\|^2.
}\]
Hence there is no ${w\in W_G}$ for which ${\lambda_\nu}$ is dominant when ${-1<\nu<1}$. For ${\nu\leq -1}$, we prove here the case of type A\txs{III}. For the other cases we refer to \cite{Tsi26}.
        \[{\nu\rho_\fp+\rho_{\fk}=\sum_{i=1}^{p}\underset{\eqqcolon A_i}{\underbrace{\tfrac{\nu q+p+1-2i}{2}}}\gve_i+\sum_{j=1}^{q}\underset{\eqqcolon B_j}{\underbrace{\tfrac{-\nu p+q+1-2j}{2}}\gve_{p+j}}.}\]
        We note that for ${1\leq i\leq p-1}$, ${A_i-A_{i+1}=1}$, and for all ${1\leq j\leq q-1}$, ${B_j-B_{j+1}=1}$. % For $-1<\nu<1$, if such ${w}$ existed then $ {w(\nu\rho_\fp+\rho_{\fk})_1\geq(\rho_\fg)_1}$, which is a contradiction due to the following. The first equality follows from the fact that the Weyl group consists only of permutations.
        %\[{w(\nu\rho_\fp+\rho_{\fk})_1=\max\lb \tfrac{\nu q+p-1}{2},\tfrac{-\nu p+q-1}{2}\rb <\tfrac{ q+p-1}{2}=(\rho_\fg)_1.}\]
        For $\nu\leq -1$,  
        \[{B_q-A_1=\tfrac{-\nu p-q+1}{2}-\tfrac{\nu q+p-1}{2}=1-\tfrac{(\nu+1)(p+q)}{2}\geq 1.}\]
        In order for $\nu\rho_\fp+\rho_{\fk}$ to be dominant for $\fg$, even before subtracting $\rho_\fg$, we need to find an element ${w}$ such that 
        \[{w(\gve_i)=\gve_{q+i}, \quad 1\leq i\leq p, \quad  w(\gve_{p+j})=\gve_{j}, \quad 1\leq j\leq q.}\]
        So,
        \begin{align*}
            {w(\nu\rho_\fp+\rho_{\fk})-\rho_\fg}&{=\sum_{j=1}^{q}\tfrac{-\nu p+q+1-2j}{2}\gve_j+\sum_{i=1}^{p}\tfrac{\nu q+p+1-2i}{2}\gve_{q+i}-\sum_{j=1}^{p+q}\tfrac{p+q+1-2j}{2} \gve_j}\\
            &{=\sum_{j=1}^{q}\tfrac{(-\nu-1)p }{2}\gve_j+\sum_{i=1}^{p}\tfrac{(\nu+1)q }{2}\gve_{q+i}}{=(-\nu-1)\tfrac{(p+q)}{2}\varpi_q,}
        \end{align*}   
        which is dominant for $\fg$. This unique Weyl group element is expressed in terms of ${s_{i}}$ as $w=\prod\limits_{j=1}^{q}s_{p+j-1}s_{p+j-2}\cdots s_{j}$ and therefore ${\ell(w)=pq}$.
\end{proof}
\begin{corollary}\label{weylhwdim}
    Let ${G/K}$ be a compact, non-exceptional irreducible Hermitian symmetric space. The highest weight $\lambda_\nu$ of \cref{weylelem} as well as the dimension of ${\ker D(\mathcal{E}_\nu)}$ are given below.
    \vspace{2mm}
\LARGE
\newcommand{\vs}{\rule[-4ex]{0pt}{8ex}}
\newcommand{\vsb}{\rule[-8ex]{0pt}{19ex}}
\begin{center}
\renewcommand{\arraystretch}{1.6}
\setlength{\tabcolsep}{12pt}
\resizebox{\textwidth}{!}{%
\begin{tabular}{|c|c|c|c@{\hspace{1.5em}}|}
\hline
\textbf{Type} & \textbf{G} & \textbf{$\lambda_\nu$} & \textbf{$\dim(\ker D(\mathcal{E}_\nu))$}\\
\hline
A\txs{III} $(p\geq q)$ & $\SU(p+q)$ &
$\left\{\begin{array}{@{}l@{\hspace{1.5em}}l@{\hspace{1em}}}
(\nu-1)\rho_{\fp}, & \nu\geq 1\\[2ex]
-\tfrac{(\nu+1)(p+q)}{2}\varpi_{q}, & \nu\leq -1
\end{array}\right.$ &
$\vsb\left\{\begin{array}{@{}l@{\hspace{1.5em}}l@{\hspace{1em}}}
\displaystyle\prod_{i=1}^{p}\prod_{j=p+1}^{p+q}\Bigl(1+\tfrac{(\nu-1)(p+q)}{2(j-i)}\Bigr), & \nu\geq 1\\[5ex]
\displaystyle\prod_{i=1}^{q}\prod_{j=q+1}^{p+q}\Bigl(1+\tfrac{(|\nu\,|-1)(p+q)}{2(j-i)}\Bigr), & \nu\leq -1
\end{array}\right.$\\
\hline
BD\txs{I} $(p>2)$ & $\SO(2+p)$ & $(|\nu\,|-1)\rho_{\fp}$ &
$\vs|\nu\,|\,\dbinom{p-1+\tfrac{(|\nu\,|-1)p}{2}}{\tfrac{(|\nu\,|-1)p}{2}}$\\
\hline
D\txs{III}, $p$ odd & $\SO(2p)$ &
$\left\{\begin{array}{@{}l@{\hspace{1.5em}}l@{\hspace{1em}}}
(\nu-1)\rho_{\fp}, & \nu\geq 1\\[2ex]
(|\nu\,|-1)(p-1)\varpi_{p-1}, & \nu\leq -1
\end{array}\right.$ &
$\vsb\left\{\begin{array}{@{}l@{\hspace{1.5em}}l@{\hspace{1em}}}
\displaystyle\prod_{1\leq i<j\leq p}\Bigl(1+\tfrac{(\nu-1)(p-1)}{2p-(i+j)}\Bigr), & \nu\geq 1\\[5ex]
\dbinom{|\nu\,|(p-1)}{(|\nu\,|-1)(p-1)}\displaystyle\prod_{1\leq i<j\leq p-1}\Bigl(1+\tfrac{(|\nu\,|-1)(p-1)}{2p-(i+j)}\Bigr), & \nu\leq -1
\end{array}\right.$\\
\hline
D\txs{III}, $p$ even & $\SO(2p)$ & $(|\nu\,|-1)\rho_{\fp}$ &
$\vs\displaystyle\prod_{1\leq i<j\leq p}\Bigl(1+\tfrac{(|\nu\,|-1)(p-1)}{2p-(i+j)}\Bigr)$\\
\hline
C\txs{I} & $\Sp(p)$ & $(|\nu\,|-1)\rho_{\fp}$ &
$\vs\displaystyle\prod_{1\leq i<j\leq p}\Bigl(1+\tfrac{(|\nu\,|-1)(p+1)}{2p+2-(i+j)}\Bigr)\dbinom{p+\frac{(|\nu\,|-1)(p+1)}{2}}{\frac{(|\nu\,|-1)(p+1)}{2}}$\\
\hline
%E\txs{III} & $E_6(-78)$ &$\left\{\begin{array}{@{}l@{\hspace{1.5em}}l@{\hspace{1em}}}(\nu-1)\rho_{\fp}, & \nu\geq 1\\[2ex]6(|\nu\,|-1)\varpi_{1}, & \nu\leq -1\end{array}\right.$ & \\\hlineE\txs{VII} & $E_7(-133)$ & $(|\nu\,|-1)\rho_{\fp}$ & \\ \hline
\end{tabular}}
\vspace{1mm}
\end{center}
\end{corollary}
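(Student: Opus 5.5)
The corollary has two parts, which I would treat separately: identifying $\lambda_\nu$ in each type, and computing its dimension. For the highest weight I will use the Weyl element $w$ from \cref{weylelem}. The case $\nu\geq 1$ is immediate: $w=\id$, and $\rho_\fg=\rho_\fk+\rho_\fp$ gives $\lambda_\nu=\nu\rho_\fp+\rho_\fk-\rho_\fg=(\nu-1)\rho_\fp$ in every type.

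For $\nu\leq-1$ I will mimic the type A\txs{III} computation in the proof of \cref{weylelem}, which already produced $-\tfrac{(\nu+1)(p+q)}{2}\varpi_q$. In each remaining classical type I will write $\nu\rho_\fp+\rho_\fk$ in $\gve$-coordinates, using the table of $\rho_\fp$ and the standard $\rho_\fk$. I will then apply the explicit $w$ as a signed permutation and subtract $\rho_\fg$.

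In types BD\txs{I}, C\txs{I} and D\txs{III} with $p$ even, I expect $w$ to act on $\nu\rho_\fp+\rho_\fk$ as $-w_0^K$-type conjugation. This sends $\nu\rho_\fp\mapsto|\nu|\rho_\fp$ and fixes $\rho_\fk$ in the relevant sense, since $w_0^G=-1$ in these types. The result should be $(|\nu|-1)\rho_\fp$.

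For D\txs{III} with $p$ odd, $-1\notin W_G$, so the sign flips can occur only in even number. The last coordinate then survives, and the weight becomes $(|\nu|-1)(p-1)\varpi_{p-1}$ rather than a multiple of $\rho_\fp=(p-1)\varpi_p$. This is the case I would check most carefully, by comparing with the $\nu\geq1$ formula under the outer automorphism exchanging $\varpi_{p-1}$ and $\varpi_p$.

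For the dimension I will apply the Weyl dimension formula
\[\dim V_\lambda=\prod_{\alpha\in\Delta_\fg^+}\frac{(\lambda+\rho_\fg,\alpha)}{(\rho_\fg,\alpha)}=\prod_{\alpha>0}\Bigl(1+\frac{(\lambda,\alpha)}{(\rho_\fg,\alpha)}\Bigr).\]
When $\lambda=c\rho_\fp$, \cref{Z_0*rho} gives $\rho_\fp\perp\Delta_\fk$, so only $\fp$-roots contribute. By \cref{rhoND}, each contributing root has $(\rho_\fp,\alpha)=N/2$, or $N$ in type C\txs{I}. The product thus reduces to $\prod_{\alpha\in\Delta_\fp^+}\bigl(1+\tfrac{cN/2}{(\rho_\fg,\alpha)}\bigr)$ with the appropriate normalisation of $(\cdot,\cdot)$.

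The remaining work is to list $(\rho_\fg,\alpha)$ for the $\fp$-roots in each type, using $\rho_\fg$ in $\gve$-coordinates:
\begin{itemize}
\item in A\txs{III}, $\alpha=\gve_i-\gve_j$ with $i\le p<j$, giving $j-i$;
\item in D\txs{III}, $\alpha=\gve_i+\gve_j$, giving $2p-(i+j)$;
\item in C\txs{I}, the roots $\gve_i+\gve_j$ with $i<j$ give $2p+2-(i+j)$, while the long roots $2\gve_i$ produce the factor that telescopes into the binomial coefficient.
\end{itemize}
In BD\txs{I}, the $\fp$-roots $\gve_1\pm\gve_k$ and $\gve_1$ give a product that telescopes into $|\nu|\binom{p-1+m}{m}$, where $m=(|\nu|-1)p/2$.

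For the exceptional-looking cases, namely A\txs{III} and D\txs{III} with $\nu\le-1$ and $p$ odd, the weight is a multiple of a fundamental weight. Only the roots with positive $\varpi$-coefficient contribute, and the analogous product splits as displayed. In D\txs{III} the roots $\gve_i+\gve_p$ and $\gve_i-\gve_p$ telescope into the binomial $\binom{|\nu|(p-1)}{(|\nu|-1)(p-1)}$.

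The main obstacle is bookkeeping. This means getting the correct normalisation of $(\cdot,\cdot)$ relative to the coroot pairing, which matters especially in C\txs{I} with its long roots. It also means carrying out the telescoping products in BD\txs{I}, C\txs{I} and D\txs{III} with $p$ odd. I would verify each formula against the low-rank cases of \cref{exatable}, for instance $\fsu(3)$ at $\nu=1$, where the dimension should be $1$.
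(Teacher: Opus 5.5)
Your plan is correct and is essentially the paper's own argument: the proof there likewise combines the Weyl elements of \cref{weylelem} with the Weyl dimension formula and leaves the type-by-type bookkeeping to the thesis.

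Two small slips should be corrected when you write it out. First, in type BD\txs{I} with $4\mid p$ one has $-1\notin W_G$, because $\SO(p+2)$ is then of type $D_{(p+2)/2}$ with $(p+2)/2$ odd. The conclusion $(|\nu|-1)\rho_\fp$ still holds, since the element in the table flips the signs of $\gve_1$ and of the last coordinate, on which $\nu\rho_\fp+\rho_\fk$ has coefficient $0$. Second, in D\txs{III} with $p$ odd, only the roots $\gve_i-\gve_p$ produce the binomial, because $(\varpi_{p-1},\gve_i+\gve_p)=0$.
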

\begin{proof}
    The proof is a combination of \cref{weylelem} and the Weyl dimension formula. We refer to \cite{Tsi26} for more details.
\end{proof}
    \begin{example}\label{ex:su3realisation}
For $\fg=\mathfrak{su}(3)$, type A\txs{III}, $p=2$, $q=1$, $n=2$, $N=3$, admissible twists
are $\nu=1+2\kappa/3$, $\kappa\in\Z$. With $\varpi_{1}=\tfrac23\epsilon_{1}-\tfrac13\epsilon_{2}-\tfrac13\epsilon_{3}$
and $\rho_{\fp}=\tfrac32\varpi_{2}$,
\[
\ker D^{+}(\mathcal{E}_{\nu})=
\begin{cases}
V_{3(\nu-1)\varpi_{2}/2}, & \nu\ge1,\\
0, & \nu=\pm\tfrac13,\\
V_{-3(\nu+1)\varpi_{1}/2}, & \nu\le-1,
\end{cases}
\qquad \dim(\ker D^{+}(E_{\nu}))=\frac{9\nu^{2}-1}{8}.
\]\end{example}
\begin{proposition}\label{exeptweylelem}
    Let ${G/K}$ be an exceptional, compact, irreducible Hermitian symmetric space of complex dimension ${n}$. The unique Weyl group element ${w\in W_G}$ such that ${\lambda_\nu\coloneqq w(\nu\rho_{\fp}+\rho_\fk)-\rho_\fg}$ is dominant for G when $\nu\geq 1$ is ${w= {id}}$. For $-1< \nu <1 $ there is no such element. For $\nu\leq -1$, ${w}$ is given explicitly in the following table and satisfies ${\ell(w)=n}$. Here ${s_i\coloneqq s_{\alpha_i}}$.
    \vspace{2mm}
    \begin{center}
\renewcommand{\arraystretch}{1.3}
\small
\begin{NiceTabular}{|c|c|c|c|c|c|}[hlines]
\textbf{Type} & \textbf{G} & ${w}$ \\ 
\hline
E\txs{III} &  $ {E_{6}%^{-78}
}$ & ${s_6\cdot s_5\cdot s_4\cdot s_2\cdot s_3\cdot s_1\cdot s_4\cdot s_3\cdot s_5\cdot s_4\cdot s_2\cdot s_6\cdot s_5\cdot s_4\cdot s_3\cdot s_1 }$ \\ 
E\txs{VII} & $ {E_{7}%^{-133}
}$ & ${s_7\cdot s_6\cdot s_5\cdot s_4\cdot s_2\cdot s_3\cdot s_1\cdot s_4\cdot s_3\cdot s_5\cdot s_4\cdot s_2\cdot s_6\cdot s_5\cdot s_4\cdot s_3\cdot s_1\cdot s_7\cdot s_6\cdot s_5\cdot s_4\cdot s_2\cdot s_3\cdot s_4\cdot s_5\cdot s_6\cdot s_7 } $
\end{NiceTabular}
\end{center}
\vspace{2mm}
\end{proposition}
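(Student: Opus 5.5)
The plan follows the proof of \cref{weylelem} for the first two claims and then treats the exceptional element directly. For $\nu\geq1$, \cref{rhoND} shows that $\rho_{\fp}$ pairs positively with every $\fp$-root. By \cref{Z_0*rho} it is orthogonal to $\Delta_\fk$, so $\rho_\fp$ is dominant for $\fg$. Hence $(\nu-1)\rho_\fp=\nu\rho_\fp+\rho_\fk-\rho_\fg$ is dominant, and $w=\id$ works. For $-1<\nu<1$, the norm argument of \cref{weylelem} applies verbatim, since it uses only $\rho_\fp\perp\Delta_\fk$ and $W_G$-invariance of the norm:
\[
\|\nu\rho_\fp+\rho_\fk\|^2<\|\rho_\fg\|^2\leq\|\lambda_\nu+\rho_\fg\|^2 .
\]
This is a contradiction. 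Uniqueness in every case follows from \cref{uniquew}.

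For $\nu\leq-1$, I would first identify the element abstractly, not through the displayed word. Let $w_0$ and $w_{0,\fk}$ denote the longest elements of $W_G$ and $W_K$, and set $w=w_0w_{0,\fk}$, the longest minimal coset representative. This element maps $\Delta_\fk^+$ into $\Delta^+_\fg$ and sends every positive $\fp$-root to a negative root. Its length is therefore $|\Delta_\fp^+|=n$.

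To check dominance, note that $w_{0,\fk}$ fixes $\rho_\fp$ (the centre of $\fk$) and sends $\rho_\fk$ to $-\rho_\fk$. Hence
\[
w(\nu\rho_\fp+\rho_\fk)=w_0(\nu\rho_\fp-\rho_\fk)=-w_0(|\nu|\rho_\fp+\rho_\fk).
\]
Since $-w_0$ preserves the dominant chamber and $-w_0\rho_\fg=\rho_\fg$, this gives
\[
\lambda_\nu=-w_0\bigl((|\nu|-1)\rho_\fp\bigr),
\]
which is dominant because $|\nu|\geq1$ and $\rho_\fp$ is dominant. This argument in fact recovers the non-exceptional cases uniformly. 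For $E_6$, $-w_0$ is the diagram automorphism, consistent with the $\varpi_1$ appearing in place of $\varpi_6$. For $E_7$, $-w_0=\id$, giving $(|\nu|-1)\rho_\fp$.

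The remaining step, and the main obstacle, is to confirm that the tabulated words equal $w_0w_{0,\fk}$ and are reduced, of lengths $16$ and $27$. I would attack this by computation. Starting from $\varpi_6$ (resp.\ $\varpi_7$), which is the minuscule weight fixed by $W_K$ and proportional to $\rho_\fp$, I would apply the simple reflections of the word successively in the left-to-right convention of \cref{conv:weyl}. At each step I would check that the current weight pairs positively with the coroot being reflected, so that the reflection lowers the weight by exactly that simple root. Because the orbit of a minuscule weight is walked through such a chain, each step raises the length by one. The word is then reduced of length $16$ (resp.\ $27$), the number of such steps being the size of the minuscule orbit minus one.

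The chain must also terminate at $w_0\varpi_\fp$, the lowest weight $-\varpi_1$ (resp.\ $-\varpi_7$). A minimal coset representative mapping $\varpi_\fp$ to $w_0\varpi_\fp$ is unique, so the word equals $w_0w_{0,\fk}$. This step is mechanical but error-prone; I would cross-check it with the Bourbaki coordinates of the $E_6$ and $E_7$ roots.
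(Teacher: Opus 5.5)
Your proof is correct. For $\nu\le-1$ it takes a genuinely different route from the paper.

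\textbf{The paper's route.} It works with the tabulated word directly. For E\txs{III} it computes $w(\varpi_6)=-\varpi_1$ and $w(\rho_\fk)=-5\varpi_1+\varpi_2+\dots+\varpi_6$, and reads off $\lambda_\nu=6(-\nu-1)\varpi_1$. It takes $\ell(w)=16$ from the word without arguing reducedness, and defers E\txs{VII} to the thesis.

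\textbf{Your route.} You identify the element abstractly as $w_0w_{0,\fk}$, the minimal representative of the coset $w_0W_K$. This gives $\ell(w)=|\Delta_\fp^+|=n$ and the closed formula $\lambda_\nu=-w_0\bigl((|\nu|-1)\rho_\fp\bigr)$ for every type at once. It explains the $\varpi_1$ for E\txs{III}, where $-w_0$ is the diagram involution, and the answer $(|\nu|-1)\rho_\fp$ for E\txs{VII}. It also recovers the non-exceptional rows of \cref{weylhwdim}.

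What remains is the walk along the minuscule orbit of $\varpi_\fp$. This is lighter than computing $w(\rho_\fk)$, and it also certifies that the words are reduced. The walk does succeed. In the left-to-right order of \cref{conv:weyl}, each of the $16$ (resp.\ $27$) reflections meets a pairing equal to $1$, and the chain ends at $-\varpi_1$ (resp.\ $-\varpi_7$).

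\textbf{Trade-off.} The paper's computation needs no coset theory and yields the explicit value of $w(\rho_\fk)$. Your argument is uniform and explains why $\ell(w)=n$, rather than just verifying it.

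\textbf{Two slips in your last step.} Neither affects the argument.
\begin{enumerate}
\item The orbits of $\varpi_6$ in $E_6$ and $\varpi_7$ in $E_7$ have $27$ and $56$ elements, so the chain is not ``the orbit size minus one''. Its length is the height of $\varpi_\fp-w_0\varpi_\fp$. This is $2\alpha_1+2\alpha_2+3\alpha_3+4\alpha_4+3\alpha_5+2\alpha_6$ (height $16$), resp.\ $2\varpi_7=2\alpha_1+3\alpha_2+4\alpha_3+6\alpha_4+5\alpha_5+4\alpha_6+3\alpha_7$ (height $27$).
\item Minusculeness only guarantees that each step subtracts exactly $\alpha_i$. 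The length increases because $\langle u\varpi_\fp,\check\alpha_i\rangle>0$ with $\varpi_\fp$ dominant forces $u^{-1}\alpha_i>0$, hence $\ell(s_iu)=\ell(u)+1$.
\end{enumerate}
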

\begin{proof}
For $\nu\ge1$ and $-1<\nu<1$ the argument of \cref{weylelem} applies
verbatim. For E\txs{III} with $\nu\le-1$ one has
$\nu\rho_{\fp}+\rho_{\fk}=\varpi_{1}+\dots+\varpi_{5}+(6\nu-5)\varpi_{6}$, and the
element above satisfies $w(\varpi_{6})=-\varpi_{1}$ and
$w(\rho_{\fk})=-5\varpi_{1}+\varpi_{2}+\dots+\varpi_{6}$, thus $w(\nu\rho_{\fp}+\rho_{\fk})-\rho_{\fg}=6(-\nu-1)\varpi_{1}$, dominant. For E\txs{VII} we refer to \cite{Tsi26}.
\end{proof}

\begin{corollary}\label{weylhwdim2}
    The highest weight $\lambda_\nu$ of \cref{exeptweylelem} for the exceptional irreducible Hermitian symmetric spaces of type E\txs{III} is $(\nu-1)\rho_\fp$ for $\nu\geq 1$ and $6(|\nu\,|-1)\varpi_{1}$ for $\nu \leq -1$. For type E\txs{VII}, it is $(|\nu\,|-1)\rho_\fp$ for all admissible $\nu$.
\end{corollary}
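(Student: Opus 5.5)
The plan is to combine \cref{exeptweylelem} with a direct weight computation, in the same way \cref{weylhwdim} follows from \cref{weylelem}. For $\nu\geq1$, \cref{exeptweylelem} gives $w=\id$. Then
\[
\lambda_\nu=\nu\rho_\fp+\rho_\fk-\rho_\fg=(\nu-1)\rho_\fp,
\]
using $\rho_\fg=\rho_\fk+\rho_\fp$. This settles the case $\nu\geq 1$ for both E\txs{III} and E\txs{VII}; dominance is automatic, since $\rho_\fp=6\varpi_6$, respectively $9\varpi_7$, is a non-negative multiple of a fundamental weight.

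For E\txs{III} with $\nu\leq-1$, I would quote the computation already recorded in the proof of \cref{exeptweylelem}. There one has $w(\varpi_6)=-\varpi_1$ and $w(\rho_\fk)=-5\varpi_1+\varpi_2+\dots+\varpi_6$. With $\nu\rho_\fp+\rho_\fk=\varpi_1+\dots+\varpi_5+(6\nu-5)\varpi_6$ and $\rho_\fg=\varpi_1+\dots+\varpi_6$, this gives
\[
\lambda_\nu=6(-\nu-1)\varpi_1=6(|\nu|-1)\varpi_1 .
\]
The only thing to check is the bookkeeping: $\rho_\fk=\rho_\fg-\rho_\fp=\varpi_1+\dots+\varpi_5-5\varpi_6$. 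By linearity,
\[
w(\nu\rho_\fp+\rho_\fk)=6\nu\,w(\varpi_6)+w(\rho_\fk),
\]
which recovers the stated value once $\rho_\fg$ is subtracted.

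For E\txs{VII} with $\nu\leq-1$, I would use that $-1$ belongs to $W_{E_7}$. Let $w_0$ denote the longest element, so $w_0=-1$, and let $w_{0,\fk}$ denote the longest element of $W_K$. Then I would take $w=w_0w_{0,\fk}$, up to the left-to-right composition convention; this is the element of length $\ell(w_0)-\ell(w_{0,\fk})=63-36=27=n$. It sends $\rho_\fp\mapsto-\rho_\fp$ and $\rho_\fk\mapsto\rho_\fk$, because $w_{0,\fk}$ fixes $\rho_\fp$ (as $\rho_\fp\perp\Delta_\fk$), negates $\rho_\fk$, and $w_0=-1$. Hence
\[
w(\nu\rho_\fp+\rho_\fk)-\rho_\fg=-\nu\rho_\fp+\rho_\fk-\rho_\fg=(|\nu|-1)\rho_\fp .
\]
This is dominant, so by the uniqueness in \cref{uniquew} it is the element of \cref{exeptweylelem}. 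In both subcases $\lambda_\nu=(|\nu|-1)\rho_\fp$, which matches the uniform statement.

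The main obstacle is the E\txs{VII} identification. One has to make sure that the explicit reduced word of \cref{exeptweylelem} agrees with $w_0w_{0,\fk}$. If \cref{uniquew} is invoked, this reduces to checking that $w_{0,\fk}$ fixes $\rho_\fp$, which holds since $\rho_\fp$ is orthogonal to $\Delta_\fk$. The same argument fails for E\txs{III} because $-1\notin W_{E_6}$, which explains why $\varpi_1$ appears there instead of $\varpi_6$.
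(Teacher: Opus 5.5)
Your proof is correct. For $\nu\geq1$ and for E\txs{III} it follows the paper's route: the corollary is read off from \cref{exeptweylelem}, whose proof contains exactly the E\txs{III} computation you quote.

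For E\txs{VII} your route is genuinely different. The paper asserts an explicit $27$-letter reduced word and defers its verification to \cite{Tsi26}. You instead use $w_0=-1$ in $W(E_7)$ and take $w=w_0w_{0,\fk}$. This gives $\lambda_\nu=(|\nu|-1)\rho_\fp$ and $\ell(w)=|\Delta_\fg^+|-|\Delta_\fk^+|=63-36=27=n$ without any word computation. Because $w_0=-1$ is central, the composition convention plays no role. The identification with the paper's word is also unnecessary for the corollary: $\lambda_\nu+\rho_\fg$ is the unique dominant element of the (regular) $W$-orbit of $\nu\rho_\fp+\rho_\fk$. Note that ``all admissible $\nu$'' must be read as $|\nu|\geq1$; for instance $\nu=8/9$ is admissible for E\txs{VII}, but then no $\lambda_\nu$ exists.

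The paper's approach produces an explicit reduced expression. Yours is conceptual and in fact works for every type. With $-w_0$ the opposition involution, $w_0w_{0,\fk}(\nu\rho_\fp+\rho_\fk)=(\nu+1)w_0\rho_\fp+\rho_\fg$. Hence $\lambda_\nu=(|\nu|-1)(-w_0\rho_\fp)$, which is dominant, and $\ell(w_0w_{0,\fk})=|\Delta_\fp^+|=n$ holds uniformly.

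Your closing remark is therefore slightly too pessimistic. For E\txs{III} the argument does not fail; it only picks up the diagram involution $-w_0$, which exchanges $\varpi_1$ and $\varpi_6$. This gives $6(|\nu|-1)\varpi_1$ directly. Likewise the uniform formula recovers the entries $\frac{(|\nu|-1)(p+q)}{2}\varpi_q$ for A\txs{III} and $(|\nu|-1)(p-1)\varpi_{p-1}$ for D\txs{III} with $p$ odd. It also yields the key identity $\ell(w)=n$ behind \cref{coro:neven} without case-by-case words.
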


\begin{corollary}\label{coro:neven}
     Let ${G/K}$ be a compact Hermitian symmetric space of complex dimension $n$ and let
    ${\lambda_\nu=w(\nu\rho_{\fp}+\rho_\fk)-\rho_\fg}$ be the highest weight of Propositions \ref{weylelem} and \ref{exeptweylelem}. For $\nu\leq-1$, ${\ell(w)=n}$ and, consequently,
    \[
        {\ker D^+(\mathcal{E}_{\nu})\neq0
        \iff n\in2\Z, \qquad\ker D^-(\mathcal{E}_{\nu})\neq0
        \iff n\in2\Z+1}.
    \]
    % Similarly for ${\ker \, D^-(\mathcal{E}_{\nu\rho_{\fp}})}$.
     For $\nu\geq1$, the Weyl element is $w=\id$, so the representation occurs in $\ker D^+(\mathcal{E}_{\nu})$ and not in $\ker D^-(\mathcal{E}_{\nu})$. For $|\nu|<1$, the kernel in question vanishes.
\end{corollary}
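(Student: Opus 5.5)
The corollary follows by combining Landweber's realisation of the kernel, quoted at the start of \cref{sec:compact}, with the Weyl group data of \cref{weylelem} and \cref{exeptweylelem}. For $\mu=\nu\rho_{\fp}$, Landweber's theorem says the following. If there is $w\in W_G$ with $w(\nu\rho_{\fp}+\rho_{\fk})-\rho_{\fg}$ dominant, the full kernel $\ker D(\mathcal{E}_\nu)$ is the irreducible module $V_{\lambda_\nu}$. It lies in the positive half $\ker D^+$ when $\ell(w)$ is even and in the negative half $\ker D^-$ when $\ell(w)$ is odd. If no such $w$ exists, both halves vanish. By \cref{uniquew}, such a $w$ is unique whenever it exists, so the parity of $\ell(w)$ is well defined and decides in which half the kernel sits.

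I would treat the three ranges of $\nu$ in turn.

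\emph{Case $\nu\ge1$.} By \cref{weylelem} (non-exceptional types) and \cref{exeptweylelem} (types E\txs{III}, E\txs{VII}), $w=\id$ and $\lambda_\nu=(\nu-1)\rho_{\fp}$ is dominant. Since $\ell(\id)=0$ is even, $V_{\lambda_\nu}\neq0$ sits in $\ker D^+(\mathcal{E}_\nu)$, and $\ker D^-(\mathcal{E}_\nu)=0$.

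\emph{Case $|\nu|<1$.} The same propositions give that no $w$ exists; this is the norm comparison $\nu^2\|\rho_{\fp}\|^2+\|\rho_{\fk}\|^2<\|\rho_{\fg}\|^2$ in the proof of \cref{weylelem}. Hence both kernels vanish.

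\emph{Case $\nu\le-1$.} The propositions give the explicit $w$ and assert $\ell(w)=n$. Then $\ker D(\mathcal{E}_\nu)=V_{\lambda_\nu}\ne0$ lies in $\ker D^+$ exactly when $n$ is even and in $\ker D^-$ exactly when $n$ is odd, which is the stated equivalence.

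The substantive work is the claim $\ell(w)=n$. The cleanest uniform argument I would give is this. For $\nu\le-1$, the vector $w(\nu\rho_{\fp}+\rho_{\fk})$ is regular dominant, so
\[
\ell(w)=\#\{\alpha\in\Delta_\fg^+ : (\nu\rho_{\fp}+\rho_{\fk},\alpha)<0\}.
\]
For $\alpha\in\Delta_{\fk}^+$ we have $(\nu\rho_{\fp}+\rho_{\fk},\alpha)=(\rho_{\fk},\alpha)>0$, using $\rho_{\fp}\perp\Delta_\fk$ (\cref{Z_0*rho}). For $\alpha\in\Delta_{\fp}^+$, \cref{rhoND} gives $(\nu\rho_{\fp},\alpha)=\nu(\rho_{\fp},\alpha)\le-N/2$, or $\le-N$ in type C\txs{I}. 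It remains to bound $|(\rho_{\fk},\alpha)|$ strictly below $N/2$ (resp.\ $N$), so that the whole pairing is negative. This bound amounts to the inequality $(\rho_{\fg},\alpha)>0$ together with $(\rho_{\fk},\alpha)+(\rho_{\fp},\alpha)=(\rho_\fg,\alpha)$, after accounting for root lengths. Granting it, every $\fp$-root is counted and no $\fk$-root is, so $\ell(w)=|\Delta_{\fp}^+|=n$.

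The main obstacle is precisely this uniform bound on $|(\rho_{\fk},\alpha)|$ for $\alpha\in\Delta_\fp^+$ with the correct normalisation, particularly for the long roots in type C\txs{I}. If the uniform bound resists, I would fall back on the type-by-type reduced words tabulated in \cref{weylelem} and \cref{exeptweylelem}. Their lengths are visibly $pq$, $p$, $p(p-1)/2$, $p(p+1)/2$, $16$ and $27$, matching $n$ in the Hermitian data table.
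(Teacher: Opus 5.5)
Your three-case argument is the paper's proof. The paper combines Landweber's criterion with \cref{weylelem} and \cref{exeptweylelem}, and those propositions already assert $\ell(w)=n$, so the corollary needs nothing further.

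\textbf{The uniform argument for $\ell(w)=n$.} The key step of your optional uniform argument points the wrong way. For $\nu\le-1$ and $\alpha\in\Delta_{\fp}^{+}$ you need the upper bound $(\rho_{\fk},\alpha)<|\nu|(\rho_{\fp},\alpha)$. Positivity of $(\rho_{\fg},\alpha)=(\rho_{\fk},\alpha)+(\rho_{\fp},\alpha)$ gives only the lower bound $(\rho_{\fk},\alpha)>-(\rho_{\fp},\alpha)$. What you actually need is $(\rho_{\fg},\alpha)<2(\rho_{\fp},\alpha)$, and no root-length bookkeeping extracts that from positivity.

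The missing ingredient is the longest element $w_{\fk}$ of $W_K$. It sends $\rho_{\fk}\mapsto-\rho_{\fk}$ and permutes $\Delta_{\fp}^{+}$, because $\fp^{+}_{\C}$ is $K$-stable; hence it fixes $\rho_{\fp}$. Therefore $w_{\fk}\rho_{\fg}=\rho_{\fp}-\rho_{\fk}$, and so $(\rho_{\fp}-\rho_{\fk},\alpha)=(\rho_{\fg},w_{\fk}^{-1}\alpha)>0$. This also gives $2(\rho_{\fp},\alpha)=(\rho_{\fg},\alpha)+(\rho_{\fg},w_{\fk}^{-1}\alpha)>0$. Consequently
\[
(\nu\rho_{\fp}+\rho_{\fk},\alpha)=(\nu+1)(\rho_{\fp},\alpha)-(\rho_{\fg},w_{\fk}^{-1}\alpha)<0,\qquad \alpha\in\Delta_{\fp}^{+},\ \nu\le-1 .
\]
On $\Delta_{\fk}^{+}$ the pairing is $(\rho_{\fk},\alpha)>0$. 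Your inversion count then yields $\ell(w)=|\Delta_{\fp}^{+}|=n$.

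\textbf{What the repaired argument buys.} With this fix your route is genuinely better than the paper's in several ways:
\begin{itemize}
\item It is uniform in the type, whereas the paper checks reduced words case by case and defers E\txs{VII} to the thesis.
\item It uses no normalisation from \cref{rhoND}.
\item It proves that $\nu\rho_{\fp}+\rho_{\fk}$ is regular.
\item It identifies $w$. At $\nu=-1$ one has $\nu\rho_{\fp}+\rho_{\fk}=-w_{\fk}\rho_{\fg}$, and the sign pattern, hence the chamber, is constant for $\nu\le-1$. So $w=w_0w_{\fk}$ as maps (apply $w_{\fk}$ first), where $w_0$ is the longest element of $W_G$. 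Its length is $|\Delta_{\fg}^{+}|-|\Delta_{\fk}^{+}|=n$.
\end{itemize}

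\textbf{The fallback.} Your fallback is weaker than you state. Counting letters in the tabulated words only bounds $\ell(w)$ from above unless you check the words are reduced. There you should simply cite the propositions' assertion $\ell(w)=n$.
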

\begin{proof}
    The proof is a direct consequence of \cref{weylelem}, \cref{exeptweylelem} and Landweber's condition mentioned above.
\end{proof}

\subsection{Non-compact case}

When G is non-compact, discrete-series representations are classified by their Harish-Chandra parameter, while their geometric realisation is via $L^2$ kernels of Dirac operators. We have $\ker_{L^2}D\subseteq\ker_{C^\infty}D$ and every element of $\ker_{L^2}D$ is smooth by elliptic regularity.
According to  Theorem 9.3 of \cite{AS77}, ${\ker_{L^2}\,  D(\mathcal{E}_{\sigma(\lambda+\rho_\fg)-\rho_\fk})}$, where
\[{D(\mathcal{E}_{\sigma(\lambda+\rho_\fg)-\rho_\fk}):L^2(\mathcal{S\ox E}_{\sigma(\lambda+\rho_\fg)-\rho_\fk})\longrightarrow L^2(\mathcal{S\ox E}_{\sigma(\lambda+\rho_\fg)-\rho_\fk}),}\] 
    is the discrete series representation of $G$ with Harish-Chandra parameter ${\delta_\lambda}$ ${=\sigma(\lambda+\rho_\fg)}$. That holds under the assumptions that ${\lambda\in\ft_{\fg}^*}$ is such that $\langle\lambda,\check{\alpha}\rangle\in \Z$ for all ${\alpha\in \Delta_\fg}$ and $\langle\lambda,\alpha\rangle\geq 0$ for all ${\alpha\in \Delta_\fg^+}$. 
    \bigskip

    Under the first assumption, for a twist $\mu=\sigma(\lambda+\rho_\fg)-\rho_\fk$ the corresponding discrete series representation 
    \[{\ker_{L^2}\, D(\mathcal{E}_{\mu})=\ker_{L^2}\, D(\mathcal{E}_{\sigma(\lambda+\rho_\fg)-\rho_\fk})},\] 
    has Harish-Chandra parameter ${\delta_\mu=\mu+\rho_\fk}$. Therefore, for an admissible one-dimensional twist, ${\ker_{L^2} \, D(\mathcal{E}_{\nu\rho_{\fp}})}$ is the discrete series representation with Harish-Chandra parameter ${\delta_\nu=\nu\rho_{\fp}+\rho_\fk}$, if there exists a Weyl group element ${w=\sigma^{-1}}$ such that ${\lambda_\nu\coloneqq w(\nu\rho_{\fp}+\rho_\fk)-\rho_\fg\in\ft_{\fg}^*}$ satisfies
    \[{\langle\lambda_\nu,\check{\alpha}\rangle\in \Z}\text{ for all }\alpha\in \Delta_\fg\text{ and }{\langle\lambda_\nu,\alpha\rangle\geq 0}\text{ for all }\alpha\in \Delta_\fg^+.\] 
     Comparing with \cref{sec:compact}, we observe that the Weyl group elements of \cref{weylelem} are such that these conditions are satisfied. Given that ${\ell(\sigma^{-1})=\ell(\sigma)}$, the same Weyl group element determines the compact highest weight and the non-compact Harish–Chandra parameter. Their half-spinor realisations, however, require a translation of grading conventions. With the fixed exterior grading used here, the compact positive kernel requires $\ell(w)$ to be even, whereas the non-compact positive kernel requires $n+\ell(w)$ to be even, see \cite{HP06} Theorem 7.4.5 for reference. Since $\ell(w)=0$ for $\nu\geq1$ and $\ell(w)=n$ for $\nu\leq-1$, the non-compact exterior degrees are complementary to those in the compact case. We have thus established the following, which, combined with \cref{coro:neven}, results in Theorem~C of the introduction.
    \begin{corollary}\label{coro:nevennc}
        Let G/K be a non-compact, irreducible 2$n$-dimensional Hermitian symmetric space, ${E_{\nu\rho_{\fp}}}$ a one-dimensional admissible twist and ${\delta_\nu\coloneqq\nu\rho_{\fp}+\rho_\fk}$. If $\nu\geq 1$, 
        \begin{align*}
            {\ker_{L^2} \, D^+(\mathcal{E}_{\nu\rho_{\fp}})} \text{ is a discrete series representation} & {\iff  n\in 2\Z.}\\
            {\ker_{L^2} \, D^-(\mathcal{E}_{\nu\rho_{\fp}})} \text{ is a discrete series representation} & {\iff  n\in 2\Z+1.}
        \end{align*}
        In either parity, the non-zero half-spinor kernel has Harish-Chandra parameter $\delta_\nu$. If $\nu\leq -1$, the realisation is in the positive half-spinor kernel, with the same parameter. If $|\nu\,|<1$ the kernel in question vanishes.
    \end{corollary}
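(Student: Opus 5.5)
The plan is to reduce the statement to the Atiyah--Schmid realisation recalled just above, and to feed it the Weyl group data already computed in the compact case. First, fix $\nu$ admissible and set $\delta_\nu=\nu\rho_\fp+\rho_\fk$. The compact dual has the same root data and the same $\rho_\fk$, $\rho_\fp$, $\rho_\fg$. So \cref{weylelem} and \cref{exeptweylelem} supply, for $|\nu|\ge1$, the unique $w$ with $\lambda_\nu=w(\delta_\nu)-\rho_\fg$ dominant, namely $w=\id$ for $\nu\ge1$ and $\ell(w)=n$ for $\nu\le-1$. Then check the two hypotheses of Theorem 9.3 of \cite{AS77} for this $\lambda_\nu$. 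Dominance, $\langle\lambda_\nu,\alpha\rangle\ge0$, is exactly what those propositions give. Integrality, $\langle\lambda_\nu,\check\alpha\rangle\in\Z$, I would deduce from admissibility. Admissibility means $(\nu-1)\rho_\fp$ lifts to $K$ (\cref{Liftinglemma}), so $\delta_\nu-\rho_\fg=(\nu-1)\rho_\fp$ is $K$-integral and, having equal rank, $\ft$-integral. This integrality is preserved by $w$ modulo the integral weight $w\rho_\fg-\rho_\fg$. Taking $\sigma=w^{-1}$, the theorem identifies $\ker_{L^2}D(\mathcal E_{\nu\rho_\fp})$ with the discrete series of Harish--Chandra parameter $\sigma(\lambda_\nu+\rho_\fg)=\delta_\nu$.

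Second, determine in which half-spinor summand this representation sits. Here the plan is to invoke \cite{HP06} Theorem 7.4.5, suitably translated to the exterior grading $S^{\pm}=\Lambda^{\mathrm{ev/odd}}\fp^{*\,-}_\C$ fixed in \cref{sec:spin}. The non-compact $L^2$ kernel lies in the $S^+$-part exactly when $n+\ell(w)$ is even. With $\ell(w)=0$ for $\nu\ge1$, this gives the parity condition $n\in2\Z$ for $D^+$ and $n\in2\Z+1$ for $D^-$. With $\ell(w)=n$ for $\nu\le-1$, $n+\ell(w)=2n$ is always even, so the realisation is always in the positive kernel. In both cases the Harish--Chandra parameter is unaffected by the grading and remains $\delta_\nu$.

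Third, for $|\nu|<1$, reuse the norm argument of \cref{weylelem}. Since $\rho_\fp\perp\Delta_\fk$, we get $\|\delta_\nu\|^2=\nu^2\|\rho_\fp\|^2+\|\rho_\fk\|^2<\|\rho_\fg\|^2$, while any $w$ with $w\delta_\nu-\rho_\fg$ dominant would force $\|\delta_\nu\|\ge\|\rho_\fg\|$. Hence $\delta_\nu$ is not a regular parameter of the required form. Then apply Parthasarathy's vanishing \cite{Par72}, which says the $L^2$ kernel vanishes unless the twist parameter plus $\rho_\fk$ is $\fg$-regular in this sense. Equivalently, $D^2=-\Omega_\fp+\ldots$ is bounded below by $\|\delta_\nu\|^2-\|\rho_\fg\|^2$ up to sign conventions, and this excludes $L^2$ harmonic spinors.

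The main obstacle is the grading translation in the second step. One must check carefully that the convention $S^+=\Lambda^{\mathrm{ev}}\fp^{*\,-}_\C$, with weights from \cref{uweights}, corresponds in \cite{HP06} to the half-spin module whose $\rho_n$-shift is $(-1)^{n}$ times that of the compact picture. The reason is that on the non-compact dual the roles of $\fp^{\pm}_\C$ are interchanged relative to the sign of $\lambda$, see \cref{rem:signlambda}. I would verify this first on $\fsu(2,1)$, using \cref{ex:su3realisation} and the holomorphic discrete series, and then argue uniformly via the weights $\mp\rho_\fp$ of $u_\varnothing$ and $u_{\mathrm{top}}$ from \cref{prop:weights}.
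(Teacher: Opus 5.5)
Your proposal is correct and follows the paper's own argument: Theorem 9.3 of \cite{AS77} applied with $\sigma=w^{-1}$ for the compact-dual Weyl element of \cref{weylelem} and \cref{exeptweylelem}, the grading rule ``$n+\ell(w)$ even'' taken from \cite{HP06} Theorem 7.4.5, and the norm argument for $|\nu|<1$. You actually supply a little more than the paper, namely the integrality check via admissibility and an explicit vanishing mechanism for $|\nu|<1$. Only your ``equivalently'' aside is misstated: on a discrete-series summand with parameter $\Lambda$ one has $D^2=\|\delta_\nu\|^2-\|\Lambda\|^2\le\|\delta_\nu\|^2-\|\rho_\fg\|^2<0$, which contradicts $D^2\ge0$, whereas a negative lower bound on $D^2$ would by itself exclude nothing.
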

\section{The equivariant moduli set}

 We have seen that the kernel of the Dirac operator may be finite- or infinite-dimensional, depending on the compactness of ${G/K}$. In this section, we examine the constraints imposed by the monopole equation and their effect on the geometry of the kernel. More precisely, the quadratic monopole equations factor through a Hermitian map. After this factorisation, the equations become affine-linear conditions on a Veronese-type cone. As a consequence, the monopole constraint is represented by a section of this cone.
   \medskip
   
    For the compact case, the quadratic constraints of \cref{theo:monopole} impose a strong condition on the kernel of the Dirac operator; the equivariant moduli set, when non-empty, consists of points in the cone of both reducible and irreducible nature. In the non-compact case, the equivariant solutions sit in the ${C^\infty}$ kernel, outside the discrete series.
    \subsection{Geometry of the monopole}
    We fix a $g\in G$ and write $\Quad_{g}(\psi)$ for the evaluation of $\Quad(\psi)$ at $g$. %In the orthonormal basis of $S^{+}\otimes E_{\nu}$, $\Quad(\psi)=(\psi_i\overline{\psi_j})_{ij}$.
    %\[{\Quad(\psi)=\begin{pmatrix}{\psi_1\overline{\psi_1}}  & {\psi_1\overline{\psi_2}} & \cdots & {\psi_1\overline{\psi_{ {m^+}}}} \\\psi_2\overline{\psi_1} & \psi_2\overline{\psi_2} & \cdots & {\psi_2\overline{\psi_{ {m^+}}}} \\\vdots & \vdots & \ddots & \vdots \\{\psi_{ {m^+}}\overline{\psi_1}} & {\psi_{ {m^+}}\overline{\psi_2}} & \cdots & {\psi_{ {m^+}}\overline{\psi_{ {m^+}}}}\end{pmatrix}\eqqcolon (M_{ij})}\]
\begin{lemma}\label{Mmatrix}
    $\Quad(\psi)$ is Hermitian, positive semi-definite, of rank at most one.
\end{lemma}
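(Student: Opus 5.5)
The plan is to work pointwise. Fix $g\in G$ and write $\psi(g)=\sum_{k}\psi_k(g)\,u_k\otimes e$ in the orthonormal basis of $S^{+}\otimes E_\nu$ fixed in \cref{subsec:dirac}, so that $\psi(g)\in\C^{m^+}$. Then $\Quad_g(\psi)=\psi(g)\psi(g)^{*}$ is the matrix $M=(M_{ij})$ with $M_{ij}=\psi_i(g)\overline{\psi_j(g)}$. Each of the three properties will be read off from this outer-product form; no input from the monopole equations is needed.

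For the Hermitian property, note that $\overline{M_{ji}}=\overline{\psi_j\overline{\psi_i}}=\psi_i\overline{\psi_j}=M_{ij}$, so $M^{*}=M$. Equivalently, $(\psi\psi^{*})^{*}=(\psi^{*})^{*}\psi^{*}=\psi\psi^{*}$.

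For positive semi-definiteness, take any $v\in S^{+}\otimes E_\nu$ and compute
\[(\Quad_g(\psi)v,v)=(\psi(g),v)\,\overline{(\psi(g),v)}=|(\psi(g),v)|^{2}\ge0.\]
Here $(\cdot,\cdot)$ is the Hermitian product, linear in the first slot as used in the paper.

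For the rank bound, observe that the image of $\Quad_g(\psi)$ is $\{(v,\psi(g))\,\psi(g)\}\subseteq\C\,\psi(g)$. So the rank is at most one, with equality exactly when $\psi(g)\neq0$. In matrix terms, every $2\times2$ minor $M_{ij}M_{kl}-M_{il}M_{kj}=\psi_i\overline{\psi_j}\psi_k\overline{\psi_l}-\psi_i\overline{\psi_l}\psi_k\overline{\psi_j}$ vanishes. This is the Veronese-type description the section goes on to use.

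All three steps are routine and none is a real obstacle. The only point needing care is the sesquilinearity convention: $(\cdot,\cdot)$ must be conjugate-linear in the second slot, so that $\psi^{*}v=(v,\psi)$ and the signs match the earlier uses of $\Quad_0$ in \cref{lem:Qzero}.
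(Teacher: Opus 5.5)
Your proof is correct and follows essentially the same route as the paper. The paper likewise notes that $\psi\psi^{*}$ is self-adjoint and obtains positivity from $v^{*}\psi\psi^{*}v=|\psi^{*}v|^{2}\ge 0$, which is your $|(\psi(g),v)|^{2}$; it then bounds the rank by observing that every column is a multiple of $\psi$, i.e.\ the image lies in $\C\,\psi(g)$.
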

\begin{proof}
    Given that ${\Quad(\psi)=\psi\psi^*}$, it is self-adjoint. It is positive semi-definite since for ${v\in \C^{2^{n-1}}}$, $v^*\psi\psi^*v$$=(\psi^* v)^*\psi^*v=|\psi^* v|^2\geq 0$. Moreover, each column is a multiple of $\psi$, thus its rank is at most one.
\end{proof}
\begin{definition}
    Let $m^+=2^{n-1}$. The following set is called the \emph{Veronese cone}
    \[{\mathcal{V}\coloneqq\{A \in \Herm(m^+) \, | \, A \text{ positive semi-definite}, \, \text{rk} A\leq 1\}}.\]
\end{definition}
By its definition, one readily verifies that the Veronese cone is a real semi-algebraic cone with a singular locus only at $0$. By \cref{Mmatrix}, we have a well defined pointwise map
\begin{align*}
   { \ker D^+ \xrightarrow{\quad \Quad_g\quad} \text{Veronese cone}\subset \Herm(m^+).}
\end{align*}
We realise the set of equivalence classes of solutions as an intersection of ${\Quad_g(\ker D^+)}$ with a real semi-algebraic subset $\mathcal{X}_n$ inside the Veronese cone, arising from the conditions of the monopole. Therefore, we define $\pi$ to be the map 
    \[{\pi:\Herm(m^+)\longrightarrow \R\op\C^{n(n-1)/2},}\]
    for which the coordinates $\pi_i(M)$ are obtained from the corresponding conditions of the monopole equation after substituting $M_{ij}=\psi_i\overline{\psi}_j$. Following \cref{monoshape}, $\pi_0(M)=\sum d_jM_{jj}$, and for $1\leq i \leq n(n-1)/2$, $ \pi_i(M)$ are given as in \cref{ex:lowdimmono}. We set $v_{n}=(\mono[n],0,\ldots,0)$ and consider
    \[\mathcal{X}_{n}(\mono):=\mathcal{V}\cap\pi^{-1}(v_{n}).\]
    \begin{example}
        According to \cref{ex:lowdimmono} for ${n=3}$, ${\pi:\Herm(4)\longrightarrow \R\op\C^{3}}$ and $v_3$ are
        \[{\pi(M)=(3M_{11}-M_{22}-M_{33}-M_{44},M_{12},M_{13}, M_{14}), \qquad v_3=(c_3,0,0,0)}\]
    \end{example}

\begin{remark}
From \cref{eqSW} and \cref{theo:monopole}, $\mathcal{S}_{\nu}^\infty(\mono)$ denotes the space of smooth solutions. Let
\[{\mathcal{Z}_{\nu,eq}^\infty(\mono)\coloneqq \left\{\psi\in\ker D^+ \,|\,\Quad_g(\psi)\in \mathcal{X}_n(\mono)\text{ for every }g\in G\right\}.}\]
It is immediate from \cref{eq:Lambdavanish} that ${\mathcal{S}_{\nu,eq}^\infty(\mono)\simeq\mathcal{Z}_{\nu,eq}^\infty(\mono)\x \{pt\}}$, where ${\{pt\}=\{\Lambda_{E_\nu}\}}$. Consequently,
\[
{\mathcal{M}_{\nu,eq}^\infty(\mono)\simeq
\mathcal{Z}_{\nu,eq}^\infty(\mono)/\mathcal{G}_{eq}.}
\]
\end{remark}
Under the condition that spinors are constant, the equivariant moduli set injects into $\mathcal{X}_n$. Injectivity is automatic by the definition of $\Quad$ and the constancy hypothesis is only needed for the injection to land in a single copy of $\Herm(m^+)$, rather than being a pointwise property.

\subsection{Equivariant moduli set and geometric interpretation}
We show that the moduli set of equivariant solutions behaves symmetrically when the complex dimension of the irreducible ${G/K}$ is even. This property does not extend to the odd-dimensional case, due to the discussion in \cref{sec:geomreal}. For that purpose, we recall Hodge--Serre duality which is realised by the induced map of the conjugate-linear Hodge star operator and Dolbeault's theorem. More specifically, if $h^\#$ denotes the conjugate-linear isomorphism induced by the Hermitian metric $h$ between $E$ and its dual, the map 
\[\overline{*}_E:\Omega^{(p,q)}(M,E)\rightarrow\Omega^{(n-p,n-q)}(M,E^*),\qquad \overline{*}_E(u\ox e)=*(\tau u)\ox h^\#(e),\]
induces a complex linear isomorphism
\[{H^{(p,q)}\,(M,\mathcal{E})\simeq H^{(n-p,n-q)}\,(M,\mathcal{E}^*)^*.}\]
Applying Dolbeault's theorem one gets the following.
\begin{proposition}[Hodge--Serre duality]\label{hodgeserre}
Let $M$ be a compact complex manifold of dimension ${n}$ and $\mathcal{E}$ be a Hermitian holomorphic vector bundle over $M$. 
\[{H^{(0,q)}\,(M,\mathcal{E})\simeq H^{(0,n-q)}\,(M,\mathcal{K}\otimes \mathcal{E}^*)^*.}\]
\end{proposition}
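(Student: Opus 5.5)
The plan is to derive the statement from the conjugate-linear isomorphism $\overline{*}_E$ introduced just before it, combined with Dolbeault's theorem. First, I will check that $\overline{*}_E$ carries harmonic forms to harmonic forms. Fix a Hermitian metric on $M$ and $h$ on $\mathcal{E}$, and use the Chern connection. Then $\bar\partial_{\mathcal{E}}^{*}=-\overline{*}_{\mathcal{E}^*}\,\bar\partial_{\mathcal{E}^*}\,\overline{*}_{\mathcal{E}}$ on $\Omega^{(p,q)}(M,\mathcal{E})$. It follows that $\overline{*}_{\mathcal{E}}$ intertwines the Dolbeault Laplacians $\Delta_{\bar\partial,\mathcal{E}}$ and $\Delta_{\bar\partial,\mathcal{E}^*}$. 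By Hodge theory on the compact manifold $M$, each Dolbeault cohomology group is represented by harmonic forms. Hence $\overline{*}_{\mathcal{E}}$ gives a conjugate-linear isomorphism $\mathcal{H}^{(p,q)}(M,\mathcal{E})\to\mathcal{H}^{(n-p,n-q)}(M,\mathcal{E}^*)$.

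Next, I turn this conjugate-linear map into a complex-linear duality. Composing with the pairing $(\alpha,\beta)\mapsto\int_M\alpha\wedge\beta$, with the bundle contraction $\mathcal{E}\otimes\mathcal{E}^*\to\C$, gives
\[\int_M \alpha\wedge\overline{*}_{\mathcal{E}}\alpha=\|\alpha\|_{L^2}^2 .\]
Therefore the bilinear pairing
\[H^{(p,q)}(M,\mathcal{E})\times H^{(n-p,n-q)}(M,\mathcal{E}^*)\to\C\]
is non-degenerate. It is well defined on cohomology by Stokes' theorem and the Leibniz rule for $\bar\partial$ on the contracted product. This yields the stated $H^{(p,q)}(M,\mathcal{E})\simeq H^{(n-p,n-q)}(M,\mathcal{E}^*)^*$.

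Finally, I specialise to $p=0$. Dolbeault's theorem gives $H^{(n,n-q)}(M,\mathcal{E}^*)\simeq H^{n-q}(M,\Omega^n\otimes\mathcal{E}^*)$, and $\Omega^n=\mathcal{K}$. Identifying $(n,r)$-forms with values in $\mathcal{E}^*$ with $(0,r)$-forms with values in $\mathcal{K}\otimes\mathcal{E}^*$ then gives $H^{(0,q)}(M,\mathcal{E})\simeq H^{(0,n-q)}(M,\mathcal{K}\otimes\mathcal{E}^*)^*$.

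The main obstacle is the sign bookkeeping in the adjoint formula $\bar\partial^*=-\overline{*}\,\bar\partial\,\overline{*}$ with bundle coefficients. This is needed to ensure that harmonicity is preserved and that the pairing descends to cohomology. I will handle it by computing $\bar\partial(\alpha\wedge\overline{*}\beta)$, which is $\partial$-exact in top degree, and applying Stokes. The remaining steps are standard Hodge theory, for which I will cite the usual references rather than reprove them.
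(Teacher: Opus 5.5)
Your proposal is correct and follows the same route as the paper. The paper only sketches the argument: the conjugate-linear operator $\overline{*}_{\mathcal{E}}$ yields $H^{(p,q)}(M,\mathcal{E})\simeq H^{(n-p,n-q)}(M,\mathcal{E}^*)^*$, and Dolbeault's theorem with $\Omega^{(n,r)}(M,\mathcal{E}^*)=\Omega^{(0,r)}(M,\mathcal{K}\otimes\mathcal{E}^*)$ gives the statement; you fill in the harmonic-form and pairing details it leaves implicit. One wording fix: for an $(n,n-1)$-form $\gamma\wedge\beta$, one has $\bar\partial(\gamma\wedge\beta)=d(\gamma\wedge\beta)$ because $\partial$ vanishes in that bidegree, so it is $d$-exact (not ``$\partial$-exact''), and that is what lets Stokes' theorem apply.
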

Cohomology groups play a crucial role in the geometric realisation of irreducible representations: we recall that Bott, generalising the theorem of Borel and Weil, in Theorem IV of \cite{Bo57}, proves that for a complex semisimple Lie group ${G}^\C$ and any parabolic subgroup ${P}$ of $G^\C$ with irreducible representation ${E_\mu}$, if there is a unique Weyl element ${w\in W_G}$ such that $w(\mu+\rho_\fg)-\rho_\fg$ is dominant for $G$,
    \begin{equation}\label{bott}
        H^{(0,q)}(G^\C/P,\mathcal{E}_\mu)=\begin{cases}
            V_{w(\mu+\rho_\fg)-\rho_\fg}, & q=\ell(w),\\
            0, & \text{otherwise}.
        \end{cases}
    \end{equation}
    If there is no such element, the cohomology vanishes for all $0\leq q\leq n$. 
    \medskip
    
    In our case, $M$ is a compact irreducible Hermitian symmetric space and $\mathcal{E}$ is a Hermitian holomorphic line bundle. Following \cref{sec:spin}, we write ${\mathcal{E}_{\mu}}$ for ${\mathcal{E}_{\mu\rho_\fp}}$, where ${\mu= 2\kappa/N}$ for some $\kappa$ as in \cref{Liftinglemma}. Consequently, its dual is $\mathcal{E}_{-\mu}$ and Hodge--Serre duality is expressed as 
\[{H^{(0,q)}\,(G/K,\mathcal{E}_\mu)\simeq H^{(0,n-q)}\,(G/K,\mathcal{E}_{-2-\mu})^*.}\] 
We take $G^\C$ to be the complexification of groups $G$ displayed in the table of \cref{subsec:HSS}. Choosing $P=K^\C \, \ltimes\exp(\fp_\C^-)$, one gets $G^\C/P\simeq G/K$.
\begin{corollary}\label{nu2nu}
    For a compact irreducible Hermitian symmetric space of even complex dimension, 
\begin{equation*}
    {\ker D^+ (\mathcal{E}_\nu)\, \simeq\, \ker D^+(\mathcal{E}_{-\nu})^*.}
\end{equation*}
\end{corollary}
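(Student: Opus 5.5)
The plan is to identify both kernels with sheaf cohomology of line bundles on $G/K$, and then apply the Hodge--Serre duality displayed just before the statement. On a compact Hermitian (Kähler) symmetric space, the spin$^c$ Dirac operator on $\mathcal{S}\otimes\mathcal{E}_\nu$ with $S=\Lambda\fp_\C^{-\,*}$ is, up to the factor $\sqrt2$ coming from $\gamma=\sqrt2\,\varepsilon-\sqrt2\,\iota$, the Dolbeault operator $\bar\partial+\bar\partial^*$. The twist is by $\mathcal{E}_\nu\otimes\mathcal{K}^{1/2}=\mathcal{E}_{\nu-1}$, using the identification $S\simeq\Lambda\fp^{+}\otimes\mathbb{C}_{-\rho_\fp}$ from \cref{Liftinglemma}. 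Hodge theory then gives
\[\ker D^+(\mathcal{E}_\nu)\simeq\bigoplus_{q\ \mathrm{even}}H^{(0,q)}(G/K,\mathcal{E}_{\nu-1}).\]
This is a genuine (non-virtual) holomorphic line bundle, since $\nu-1=2\kappa/N$ lifts.

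Next we use Bott's theorem \cref{bott}, or equivalently \cref{weylelem}, \cref{exeptweylelem} and \cref{coro:neven}. For $\nu\geq1$ the only non-zero group is $H^{(0,0)}(\mathcal{E}_{\nu-1})=V_{(\nu-1)\rho_\fp}$. For $\nu\le-1$ the only non-zero group sits in degree $\ell(w)=n$, which is even by hypothesis. For $|\nu|<1$ every group vanishes, so both sides of the claimed isomorphism are $0$ and there is nothing to prove. We therefore assume $|\nu|\ge1$ and, by symmetry of the claim (an isomorphism $V\simeq W^*$ yields $W\simeq V^*$), take $\nu\geq1$.

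Now apply the Hodge--Serre duality in the form stated before the corollary, with $\mu=\nu-1$:
\[H^{(0,0)}(\mathcal{E}_{\nu-1})\simeq H^{(0,n)}(\mathcal{E}_{-2-(\nu-1)})^{*}=H^{(0,n)}(\mathcal{E}_{-\nu-1})^{*}.\]
Since $\mathcal{E}_{-\nu-1}=\mathcal{E}_{(-\nu)-1}$, the right-hand group is exactly the cohomology computing $\ker D^+(\mathcal{E}_{-\nu})$. The index shift works because $\mathcal{K}=\mathcal{E}_{-2}$ matches the $-1$ shift from $\mathcal{K}^{1/2}$ on each side. Because $n$ is even, degree $n$ is an even degree, so $H^{(0,n)}(\mathcal{E}_{-\nu-1})$ is the whole of $\ker D^+(\mathcal{E}_{-\nu})$ and not part of $\ker D^-$. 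This gives $\ker D^+(\mathcal{E}_\nu)\simeq\ker D^+(\mathcal{E}_{-\nu})^*$. The evenness of $n$ is used only in this last step: for odd $n$ the dual would land in $\ker D^-$, in line with \cref{coro:neven}.

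The main obstacle is the first step. It requires checking, with the paper's normalisations, that $D_0^+$ on $\{C^\infty(G)\otimes S^+\otimes E_\nu\}^K$ coincides with $\sqrt2(\bar\partial+\bar\partial^*)$ on $\Omega^{(0,\mathrm{even})}(\mathcal{E}_{\nu-1})$. This needs the identification $\fp_\C^{-\,*}\simeq T^{*(0,1)}$ under the Convention, and also that $\mathcal{E}_{\nu-1}$ carries the correct sign of degree, so that ``$\nu\ge1$ gives $H^0$'' matches Bott's positivity. I would first try to verify this by comparing weights: by \cref{prop:weights}, $u_\varnothing$ has weight $-\rho_\fp$, so $u_\varnothing\otimes E_\nu$ has weight $(\nu-1)\rho_\fp$, consistent with $\lambda_\nu=(\nu-1)\rho_\fp$ in \cref{weylhwdim}. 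The remaining point is that the $\mathcal{E}_\mu\leftrightarrow\mathcal{E}_\mu^*$ conventions in the Hodge--Serre statement agree with the identification $\mathcal{E}_\mu^*=\mathcal{E}_{-\mu}$.
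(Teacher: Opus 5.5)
Your proposal is correct and follows essentially the paper's proof: identify $\ker D^+(\mathcal{E}_\nu)$ with $\bigoplus_{q\ \mathrm{even}}H^{(0,q)}(G/K,\mathcal{E}_{\nu-1})$, then apply Hodge--Serre duality with $\mathcal{K}=\mathcal{E}_{-2}$ to land in $\mathcal{E}_{-\nu-1}$. The paper takes that identification directly from Landweber, which disposes of the normalisation worry you raise; your Bott/Weyl-group case split is also unnecessary, since the paper applies the duality to the whole even-degree sum and uses only that $q\mapsto n-q$ preserves parity when $n$ is even.
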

\begin{proof}
    From Theorems 3 and 4 of \cite{La00}, Theorem IV of \cite{Bo57} and Hodge--Serre duality we get
\[\ker D^{+}(\mathcal{E}_{\nu})=\bigoplus_{q\ \mathrm{even}}H^{(0,q)}\bigl(G/K,\mathcal{E}_{(\nu-1)}\bigr)\simeq \bigoplus_{q\ \mathrm{even}}H^{(0,n-q)}\bigl(G/K,\mathcal{E}_{(-\nu-1)}\bigr)^*=\ker D^{+}(\mathcal{E}_{-\nu})^*.\]
\end{proof}
Following the isomorphism between the kernel and its image under $\overline{*}_E$, we next examine how the monopole behaves under $\overline{*}_E$. Since the monopole conditions apply pointwise on each fiber, we describe the action of $\overline{*}_E$ via an antiunitary involution.
\begin{definition}
    If ${u_I=X_{-\alpha_{i_1}}^*\w \cdots \w X_{-\alpha_{i_{2k}}}^*}$, ${I=\{ 1\leq i_1<\cdots<i_{2k}\leq n\}}$ is an element of ${S^+=\Lambda^{ev}{\fp_\C^-}^*}$, we denote by ${I^c}$ the complement of ${I}$. For $\gve_I\in\{\pm1\}$ the \emph{complementary operator} $L$ satisfies
\[{L\, (a_Iu_I)\coloneqq \gve_I\overline{a_I} u_{I^c}, \quad \text{ where }\quad  u_I\wedge u_{I^c}=\gve_I\, u_{top}.}\]
Its extension on $S^+\ox E_\nu$ by tensoring with $h^\#$ is denoted by the same symbol.
\end{definition}
\begin{lemma}\label{Cmap}
 
 When $n$ is even, the operator ${L}$ is an antiunitary involution on ${S^+\ox E_\nu}$ and satisfies
\begin{align*}
    {L\circ \epsilon\bigl(X_{-\alpha_i}^*\bigr)\circ \epsilon\bigl(X_{-\alpha_j}^*\bigr)}\,&={-\iota\bigl(X_{\alpha_i}^*\bigr)\circ \iota\bigl(X_{\alpha_j}^*\bigr)\circ L,}\\
    {L\circ \iota\bigl(X_{\alpha_i}^*\bigr)\circ \iota\bigl(X_{\alpha_j}^*\bigr)}\,&={-\epsilon\bigl(X_{-\alpha_i}^*\bigr)\circ \epsilon\bigl(X_{-\alpha_j}^*\bigr)\circ L.}
\end{align*}
\end{lemma}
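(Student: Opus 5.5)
The plan is to verify everything on the basis $\{u_I\}$ of \cref{uweights} and then extend to $S^+\ox E_\nu$ through $h^\#$. Since $E_\nu$ is one-dimensional and $h^\#$ is a conjugate-linear isometry, all the content sits in the factor $S^+=\Lambda^{ev}\fp_\C^{-\,*}$. Note first that $L$ preserves $S^+$ only because $n$ is even: if $|I|=2k$ then $|I^c|=n-2k$ is even. Antilinearity is built into the definition. Since $L$ sends the orthonormal basis $\{u_I\}$ to $\{\pm u_{I^c}\}$ with unimodular coefficients, it is antiunitary, i.e.\ $(L\phi,L\psi)=\overline{(\phi,\psi)}$.

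For involutivity, compute $L^2u_I=\gve_I\gve_{I^c}u_I$ (the coefficients $\pm1$ are real, so conjugation does nothing). From $u_I\w u_{I^c}=\gve_I u_{top}$ and graded commutativity,
\[
u_{I^c}\w u_I=(-1)^{|I||I^c|}u_I\w u_{I^c},
\]
and $|I|$ is even, so $\gve_{I^c}=\gve_I$. Hence $L^2=\id$.

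For the intertwining identities, evaluate both sides on $u_I$. The left side $L\,\epsilon(X^*_{-\alpha_i})\epsilon(X^*_{-\alpha_j})u_I$ vanishes unless $i,j\notin I$. In that case it equals $\pm u_{I^c\setminus\{i,j\}}$, with sign $\gve_{J}$ times the reordering sign, where $J=I\cup\{i,j\}$. On the right, $\iota(X^*_{\alpha_i})\iota(X^*_{\alpha_j})$ acts on $u_{I^c}$, which contains $i,j$. One must use the normalisation $\iota(X^*_{\alpha_k})X^*_{-\alpha_l}=\delta_{kl}$ coming from $(\cdot,\cdot)^\C_\lambda$, which pairs $\fp^{+*}$ with $\fp^{-*}$. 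This also vanishes unless $i,j\in I^c$ and removes exactly these two indices, so both sides are supported on the same basis vectors. It then remains to match the signs. Write $\epsilon(X^*_{-\alpha_i})\epsilon(X^*_{-\alpha_j})u_I=\sigma_1 u_J$ and $\iota\iota\, u_{I^c}=\sigma_2 u_{I^c\setminus\{i,j\}}$. The claim becomes $\gve_J\sigma_1=-\gve_I\sigma_2$. To prove it, wedge both expressions against the common complementary factor and compare with $u_{top}$. Moving the pair $X^*_{-\alpha_i}\w X^*_{-\alpha_j}$ across $u_I$ costs no sign, since $|I|$ is even. Contraction is a graded derivation, and the order in which the two interior products act against the two exterior factors reverses them; this reversal produces exactly the factor $-1$.

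The second identity follows from the first by conjugating with $L$ and using $L^2=\id$: apply $L$ on both sides of the first identity from the left and from the right. The main obstacle is the sign bookkeeping in the previous step. I would fix a convention such as $\iota\iota\,(X^*_{-\alpha_i}\w X^*_{-\alpha_j}\w\beta)=-\beta$ for $\beta$ not involving $i,j$, check it once, and reduce the general case to this one by permuting $u_I$ into a standard order.
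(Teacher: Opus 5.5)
Your proposal is correct and follows the same route as the paper, which simply declares the lemma immediate from the definitions. You carry out that direct check on the basis $\{u_I\}$, and your sign bookkeeping is right: $\varepsilon_{I^c}=\varepsilon_I$ for $|I|$ even, and $\iota(X^*_{\alpha_i})\iota(X^*_{\alpha_j})(X^*_{-\alpha_i}\wedge X^*_{-\alpha_j}\wedge\beta)=-\beta$ under the normalisation forced by $\gamma(X_j^*)^2=-\mathrm{id}$. Deducing the second identity from the first by conjugating with $L$ is valid because $L^2=\mathrm{id}$ on $S^+$ (on $S^-$ one would get $L^2=-\mathrm{id}$ for $n$ even) and every vector involved stays in $S^+$.
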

\begin{proof} Immediate by applying the definitions.
\end{proof}
\begin{proposition}\label{XX}
Let ${n}$ be even. If ${\psi}$ satisfies the monopole conditions with parameter ${\mono}$, then ${L\psi}$ satisfies the same system with parameter ${-\mono}$.
\end{proposition}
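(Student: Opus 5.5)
The plan is to check the two families of conditions of \cref{theo:monopole} separately, using that $L$ is an antiunitary involution (\cref{Cmap}) which maps $\Lambda^{2k}\fp_\C^{-\,*}\otimes E_\nu$ onto $\Lambda^{n-2k}\fp_\C^{-\,*}\otimes E_\nu$. Since $n$ is even, $n-2k$ is even, so $L$ preserves $S^+\otimes E_\nu$. Write $\psi=\sum_k\psi^{(2k)}$. Then $(L\psi)^{(n-2k)}=L\psi^{(2k)}$, and by antiunitarity $|(L\psi)^{(n-2k)}|=|\psi^{(2k)}|$ pointwise.

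For the $\omega$-condition I reindex with $k\mapsto n/2-k$:
\[\sum_k (n-4k)|(L\psi)^{(2k)}|^2=\sum_k \bigl(n-4(n/2-k)\bigr)|\psi^{(2k)}|^2=-\sum_k(n-4k)|\psi^{(2k)}|^2=-\mono.\]
An equivalent route is to show $L\gamma(\omega)=\gamma(\omega)L$ from \cref{lem:cliffordomega}. Combined with antiunitarity this gives $(\gamma(\omega)L\psi,L\psi)=\overline{(\psi,\gamma(\omega)\psi)}=-(\gamma(\omega)\psi,\psi)$, since $\gamma(\omega)$ is skew-Hermitian.

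For the $\eta_{ij},\theta_{ij}$ conditions, I express $\gamma(\eta_{ij})$ and $\gamma(\theta_{ij})$ in the root basis. Both span the complex combinations $X^*_{\alpha_i}\wedge X^*_{\alpha_j}$ and $X^*_{-\alpha_i}\wedge X^*_{-\alpha_j}$, i.e. $V_+$. With $\gamma=\sqrt2\varepsilon-\sqrt2\iota$, this means $\gamma(\eta_{ij})$ and $\gamma(\theta_{ij})$ are, up to constants, combinations $a\,\varepsilon(X^*_{-\alpha_i})\varepsilon(X^*_{-\alpha_j})+b\,\iota(X^*_{\alpha_i})\iota(X^*_{\alpha_j})$ on $S^+$. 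One must check the mixed terms cancel; they do, because $\eta_{ij},\theta_{ij}$ have type $(2,0)+(0,2)$. The two relations of \cref{Cmap} show that conjugating by $L$ sends such a combination to $-(\bar a\,\iota\iota+\bar b\,\varepsilon\varepsilon)$, using antilinearity for the conjugates.

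Then I use $(TL\psi,L\psi)=\overline{(L^{-1}TL\psi,\psi)}$ for antiunitary $L$. This gives
\[(\gamma(\eta_{ij})L\psi,L\psi)=\overline{(T'\psi,\psi)},\]
where $T'$ is a linear combination of $\gamma(\eta_{ij})$ and $\gamma(\theta_{ij})$ with $T'\in\mathrm{span}$, after rewriting $\varepsilon\varepsilon$ and $\iota\iota$ back in terms of $\gamma(\eta_{ij})$ and $\gamma(\theta_{ij})$. The $\varepsilon\varepsilon$ and $\iota\iota$ pieces are each recoverable as combinations of $\gamma(\eta_{ij})$ and $\gamma(\theta_{ij})$ restricted to $S^+$. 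Hence the vanishing of $(\gamma(\eta_{ij})\psi,\psi)$ and $(\gamma(\theta_{ij})\psi,\psi)$ for $\psi$ forces the same for $L\psi$.

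Finally I would remark that the extension by $h^\#$ on the line factor $E_\nu$ is unitary up to conjugation and does not affect any of these pointwise scalar identities.

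I expect the main obstacle to be the bookkeeping in the $\eta,\theta$ step. One must confirm that $\mathrm{span}\{\gamma(\eta_{ij}),\gamma(\theta_{ij})\}$ equals $\mathrm{span}\{\varepsilon\varepsilon,\iota\iota\}$ for the pair $(i,j)$, so that it is closed under the conjugation induced by $L$. Here the $\sqrt2$ normalisations and the signs $\gve_I$ in \cref{Cmap} must be tracked consistently.
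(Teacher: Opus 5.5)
Your proposal is correct and follows essentially the paper's route. The paper writes $\gamma(\sqrt2\eta_{ij})=2\bigl(\epsilon(X^*_{-\alpha_i})\epsilon(X^*_{-\alpha_j})+\iota(X^*_{\alpha_i})\iota(X^*_{\alpha_j})\bigr)$ and $\gamma(\sqrt2\theta_{ij})=2i\bigl(\epsilon(X^*_{-\alpha_i})\epsilon(X^*_{-\alpha_j})-\iota(X^*_{\alpha_i})\iota(X^*_{\alpha_j})\bigr)$, applies the two commutation relations for $L$ together with antiunitarity, and reindexes $k\mapsto n/2-k$ in the $\omega$-equation; the one refinement is that it reads off that $L$ anticommutes with each of $\gamma(\sqrt2\eta_{ij})$ and $\gamma(\sqrt2\theta_{ij})$, giving $\bigl(\gamma(\sqrt2\eta_{ij})L\psi,L\psi\bigr)=-\overline{\bigl(\gamma(\sqrt2\eta_{ij})\psi,\psi\bigr)}$ (and likewise for $\theta_{ij}$), so your span-closure bookkeeping is not needed.
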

\begin{proof}
    By \cref{selfduality} and Clifford multiplication $\gamma$ in \cref{sec:spin}, we note that
\begin{align}\label{geeii}
    \begin{split}
    {\gamma\bigl(\sqrt{2}\eta_{ij}\bigr)}\,&={2\left(\epsilon\bigl(X_{-\alpha_i}^*\bigr)\circ\epsilon\bigl(X_{-\alpha_j}^*\bigr)+\iota\bigl(X_{\alpha_i}^*\bigr)\circ\iota\bigl(X_{\alpha_j}^*\bigr)\right)}\\
    {\gamma\bigl(\sqrt{2}\theta_{ij}\bigr)}\,&={2i\left(\epsilon\bigl(X_{-\alpha_i}^*\bigr)\circ\epsilon\bigl(X_{-\alpha_j}^*\bigr)-\iota\bigl(X_{\alpha_i}^*\bigr)\circ\iota\bigl(X_{\alpha_j}^*\bigr)\right)}
    \end{split}
\end{align}
According to \cref{Cmap}, ${L}$ anti-commutes with both ${\gamma\left(\sqrt{2}\eta_{ij}\right)}$ and ${\gamma\left(\sqrt{2}\theta_{ij}\right)}$. Since ${L}$ is also unitary, the second and third families of equations are preserved by ${L}$ due to the following properties.
\[{\bigl(\gamma(\sqrt{2}\eta_{ij}) L\psi,L\psi\bigr)= -\tau(\gamma(\sqrt{2}\eta_{ij})\psi,\psi)
,\qquad \bigl(\gamma(\sqrt{2}\theta_{ij})L\psi,L\psi\bigr)=-\tau\bigl(\gamma(\sqrt{2}\theta_{ij})\psi,\psi\bigr).
}\]
Additionally, ${L}$ maps the component of degree ${2k}$ to that of degree ${n-2k}$. Therefore, the corresponding coefficient in the first equation of the monopole conditions becomes ${n-4\left(n/2-k\right)=-(n-4k)}.$ Consequently, $\sum_{k}(n-4k)|(L\psi)^{(2k)}|^{2}=-\sum_{k}(n-4k)|\psi^{(2k)}|^{2}$.
\end{proof}

\begin{theorem}[Parity pattern]\label{theo:parity}
Let ${(G/K,\lambda{B_{\fg}}_{|_{\fp}}})$ be a $2n$-dimensional irreducible compact Hermitian symmetric space, $ {E_\nu}$ be a one-dimensional admissible twist and $ {A\in \mathcal{A}^G}$. For ${n}$ odd the moduli are asymmetric in ${\mono\lessgtr0}$, while for ${n}$ even the symmetry leads to 
\[{\mathcal{M}_{\nu,eq}^\infty(\mono)\simeq\mathcal{M}_{-\nu,eq}^\infty(-\mono)}.\] 
The reducible wall is always at ${\mono=0}$, that is, ${t=-\tfrac{\nu}{2\lambda}}$.
\end{theorem}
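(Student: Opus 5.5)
The plan splits the statement into three parts: the location of the reducible wall, the symmetry for $n$ even, and the asymmetry for $n$ odd.

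\textbf{Reducible wall.} Reducible solutions have $\psi=0$. By \cref{theo:monopole}, the monopole system with $\psi=0$ reduces to $0=\mono(t,\nu,\lambda)=8n\bigl(t+\tfrac{\nu}{2\lambda}\bigr)$. Since $A\in\mathcal A^G$ is the unique invariant connection $\Lambda_{E_\nu}$, by \cref{eq:Lambdavanish}, the pair $(0,A)$ solves the equations exactly when $t=-\nu/(2\lambda)$. Equivalently, $\mono=0$, independently of $\nu$.

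\textbf{Symmetry for $n$ even.} Here I would combine \cref{nu2nu} with \cref{XX}. The candidate map sends $\psi\in\ker D^+(\mathcal E_\nu)$ to $L\psi$, where $L$ is the complementary operator extended by $h^\#$. The plan is to check that, applied pointwise to sections, $L$ realises the conjugate-linear Hodge--Serre isomorphism $\overline{*}_E$ restricted to the Dirac kernel. This is the identification used in the proof of \cref{nu2nu}, where even degrees go to even degrees because $n$ is even. Consequently $L$ maps $\ker D^+(\mathcal E_\nu)$ bijectively onto $\ker D^+(\mathcal E_{-\nu})$; the twist $E_\nu$ becomes $E_\nu^*=E_{-\nu}$ under $h^\#$.

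By \cref{XX}, $\Quad_g(\psi)\in\mathcal X_n(\mono)$ implies $\Quad_g(L\psi)\in\mathcal X_n(-\mono)$ for every $g$. Hence $L$ maps $\mathcal Z^\infty_{\nu,eq}(\mono)$ into $\mathcal Z^\infty_{-\nu,eq}(-\mono)$, and $L^2=\id$ shows the map is a bijection. Since $L$ is antiunitary, $L(u\psi)=\bar u L\psi$ for constant $u\in U(1)$. Thus $L$ intertwines the $\mathcal G_{eq}\simeq U(1)$-actions up to inversion, and it descends to the bijection $\mathcal M^\infty_{\nu,eq}(\mono)\simeq\mathcal M^\infty_{-\nu,eq}(-\mono)$ via the remark identifying $\mathcal M^\infty_{\nu,eq}$ with $\mathcal Z^\infty_{\nu,eq}/\mathcal G_{eq}$.

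\textbf{Main obstacle.} The hardest step is verifying that the pointwise operator $L$ really preserves the Dirac kernels. This means checking that $L$ commutes with the $K$-action up to the twist $E_\nu\mapsto E_{-\nu}$, so that it is well defined on $\{C^\infty(G)\otimes S^+\otimes E_\nu\}^K$, and that $L D_0^+=\pm D_0^+L$. I would try to derive the latter from the relations in \cref{Cmap}, extended to single Clifford generators: $L\epsilon(X^*_{-\alpha_i})=\pm\iota(X^*_{\alpha_i})L$. Combined with the fact that $\tau$ swaps $r(X_{\alpha_i})$ and $r(X_{-\alpha_i})$, this should give the commutation with $D_0$. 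Failing that, one can fall back on the Bott/Landweber description in \cref{nu2nu}, which identifies both kernels abstractly.

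\textbf{Asymmetry for $n$ odd.} It suffices to exhibit one case where the two sides differ. By \cref{coro:neven}, for $n$ odd and $\nu\le-1$ we have $\ker D^+(\mathcal E_\nu)=0$, so $\mathcal M^\infty_{\nu,eq}(\mono)$ is empty unless $\mono=0$. In contrast, for $\nu\ge1$ the kernel is non-zero. Taking $\nu=1$, the kernel is the trivial representation, consisting of the degree-zero constant spinor $u_\varnothing$. For this spinor the $\eta,\theta$ conditions hold because each term involves a product of degree $0$ and degree $\ge2$ components. The $\omega$-condition becomes $n|\psi^{(0)}|^2=\mono$, which is solvable exactly when $\mono>0$. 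This already gives the asymmetry between $\mono>0$ and $\mono<0$, since no corresponding solution exists for $-\nu$.
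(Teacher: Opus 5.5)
Your proposal is correct and follows the paper's route. The symmetry for $n$ even comes from \cref{XX} combined with \cref{nu2nu}; the asymmetry for $n$ odd comes from \cref{coro:neven} together with the degree-zero solution at $\nu=1$, $\mono>0$; and the wall is read off from \cref{theo:monopole} at $\psi=0$. Your check that $L$ itself maps $\ker D^+(\mathcal{E}_\nu)$ onto $\ker D^+(\mathcal{E}_{-\nu})$ (via $L\,D_0=\pm D_0\,L$, equivalently $\overline{*}_E$ preserving harmonic forms) and intertwines the $\mathcal{G}_{eq}$-actions up to conjugation is a step the paper leaves implicit. That direct check is what lets \cref{XX} be applied, which your fallback on the abstract isomorphism of \cref{nu2nu} would not achieve.
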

\begin{proof}
    The asymmetry for $n$ odd is a consequence of \cref{coro:neven}. Indeed, when ${\nu\leq -1}$, one has ${\ker D^+=0}$, and therefore $\mathcal{M}_{\nu,eq}^\infty(\mono)=\varnothing$ when $\mono\neq0$; on the other hand, for $\nu=1$ the irreducible point $[(\sqrt{\mono/n}, 0,\ldots, 0), \Lambda_{E_{1}}]$, $c_n>0$, always belongs to $\mathcal{M}_{\nu,eq}^\infty(\mono)$. For ${n}$ even, the symmetry is a consequence of \cref{XX} and \cref{nu2nu}.
\end{proof}

\begin{theorem}\label{theo:compactmoduli}
    Let ${(G/K,\lambda{B_{\fg}}_{|_{\fp}}})$ be a $2n$-dimensional irreducible compact Hermitian symmetric space, $ {E_\nu}$ be a one-dimensional admissible twist and $ {A\in \mathcal{A}^G}$. The equivariant smooth solutions and moduli are given as follows.
\begin{align*}
{\mathcal{Z}_{\nu,eq}^\infty(\mono(t,\nu,\lambda))}\ &\simeq\
\begin{cases}
\text{the circle }{\{ |\psi^{(0)}|=\sqrt{\mono/n}\}\simeq \mathbb{S}^1}, & {\mono>0,\  \nu=1,}\\[2pt]
{\varnothing,}& {\mono>0,\  \nu\neq 1,}\\[2pt]
\text{the singleton } \{\psi=0\}, & {\mono=0,}\\[2pt]
{\varnothing}, &{\mono<0,\ (\nu\neq -1 \text{ or n odd} ),}\\[2pt]
    \text{the circle }{\{ |\psi^{(n)}|=\sqrt{|\mono|/n}\}\simeq \mathbb{S}^1}, &{\mono<0, \ }\nu=-1\text{, n even.} 
\end{cases}\\
{\mathcal{M}_{\nu,eq}^\infty(\mono(t,\nu,\lambda))}\ &\simeq\
\begin{cases}
{\{pt\},} & {\mono>0,\  \nu=1,}\\[2pt]
{\varnothing,} & {\mono>0,\ \nu\neq 1}\\[2pt]
{\{\text{reducible pt}\},} & {\mono=0,}\\[2pt]
{\varnothing,} &{\mono<0,\ (\nu\neq -1\text{ or n odd}),}\\[2pt]
{\{pt\},} &{\mono<0, \ \nu=-1,\ \text{n even}.}
\end{cases}
\end{align*}
\end{theorem}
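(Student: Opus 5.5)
The plan is to reduce everything to two ingredients: a constancy statement for $|\psi^{(2k)}|$ forced by the monopole system, and the representation-theoretic description of $\ker D^+(\mathcal{E}_\nu)$ from \cref{coro:neven} via Bott's theorem \cref{bott}. Since $A\in\mathcal A^G$, \cref{eq:Lambdavanish} fixes the connection, so we only have to determine $\mathcal Z^\infty_{\nu,eq}(\mono)$; the moduli set then follows by dividing by $\mathcal G_{eq}\simeq U(1)$ (\cref{prop:gaugeeq}). The reducible case $\mono=0$ comes first: $\psi=0$ is always a solution. To show it is the only one, note that by \cref{ex:lowdimmono}, \cref{monoshape} the system with $\mono=0$ is a homogeneous rank-one condition on $\Quad_g(\psi)$, which we combine with the global argument below.

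Next we use the degree decomposition of $\ker D^+(\mathcal E_\nu)=\bigoplus_{q\ \mathrm{even}}H^{(0,q)}(G/K,\mathcal E_{\nu-1})$, as in the proof of \cref{nu2nu}. By Bott's theorem this kernel is concentrated in the single degree $q=\ell(w)$. That degree is $q=0$ for $\nu\ge1$; it is $q=n$ for $\nu\le-1$, which lies in $S^+$ only when $n$ is even; and the kernel is zero for $|\nu|<1$. Hence a harmonic spinor is pure of degree $0$ or pure of degree $n$. In either case the off-diagonal conditions $(\gamma(\eta_{ij})\psi,\psi)=(\gamma(\theta_{ij})\psi,\psi)=0$ hold automatically, because by \cref{geeii} these operators shift the degree by $\pm2$. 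The $\omega$-condition of \cref{theo:monopole} then reads
\[
n|\psi^{(0)}|^2=\mono \quad\text{or}\quad -n|\psi^{(n)}|^2=\mono .
\]
This already settles the sign cases. If $\mono>0$, the spinor must be of degree $0$ and $\nu\ge1$. If $\mono<0$, it must be of degree $n$, with $\nu\le-1$ and $n$ even. If $\mono=0$, $|\psi|\equiv0$. When $\mono\neq0$, $\psi$ has constant non-zero pointwise norm.

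The key step is to pin down $\nu$ exactly, using \cref{cor:muzero}. For $\nu\ge1$, $\psi^{(0)}$ is a section of $H^{(0,0)}(G/K,\mathcal E_{\nu-1})$ with constant non-zero norm, so $\nu-1=0$. For $\nu\le-1$ with $n$ even, we apply the complementary operator $L$ of \cref{Cmap}. By \cref{XX} and \cref{nu2nu}, $L\psi$ is a degree-$0$ solution for $-\nu$ with parameter $-\mono>0$, so the previous case gives $-\nu=1$. Conversely, for $\nu=1$ the constant function $\psi=(\sqrt{\mono/n},0,\dots,0)$ lies in $H^{(0,0)}(G/K,\mathcal E_0)=V_0$, the trivial representation. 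Thus the solution set is exactly the circle $|\psi^{(0)}|=\sqrt{\mono/n}$, and since $V_0$ is one-dimensional all solutions are constant multiples of this one. The case $\nu=-1$ is symmetric via $L$.

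Finally, $U(1)$ acts freely and transitively on each circle, which gives a single point in the moduli set, while for $\mono=0$ the singleton $\{0\}$ gives the reducible point. The main obstacle I anticipate is justifying that the Bott degree decomposition is compatible with the pointwise degree decomposition $\psi=\sum\psi^{(2k)}$ under the Landweber identification. In other words, one must check that the $H^{(0,q)}$ summand consists of spinors pointwise in $\Lambda^{q}\fp_\C^{-*}\otimes E_\nu$. I would try to verify this from the $K$-weights of \cref{uweights}, by matching the lowest $K$-type of $V_{\lambda_\nu}$ with $u_\varnothing$ or $u_{\mathrm{top}}$ via \cref{prop:weights}.
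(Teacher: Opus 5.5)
Your proposal is correct and follows essentially the same route as the paper's proof. The shared steps are: concentration of $\ker D^+(\mathcal{E}_\nu)$ in a single degree $0$ or $n$ via Landweber and Bott; automatic vanishing of the $\eta_{ij},\theta_{ij}$-pairings by the degree shift in \cref{geeii}; the $\omega$-condition fixing the sign of $\mono$ and forcing constant norm; \cref{cor:muzero} to get $\nu=1$; and the complementary operator together with Hodge--Serre duality to transfer this to $\nu=-1$.

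The degree-compatibility you flag is used tacitly in the paper as well. It is supplied by the standard K\"ahler identification $D=\sqrt2(\bar\partial+\bar\partial^{*})$ on $\Lambda^{0,\bullet}\otimes\mathcal{K}^{1/2}\otimes\mathcal{E}_\nu$, under which the degree-$q$ harmonic spinors are exactly the harmonic $(0,q)$-forms.
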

\begin{proof}
According to Theorems 3 and 4 of \cite{La00} and \cref{bott}
\[\ker D^{+}(\mathcal{E}_{\nu})=\bigoplus_{q\ \mathrm{even}}H^{(0,q)}\bigl(G/K,\mathcal{E}_{(\nu-1)\rho_{\fp}}\bigr).\]
By Theorem IV of \cite{Bo57}, \cref{weylelem} and \cref{exeptweylelem}, the kernel has at most one non-zero summand, and the degree is forced to be $q\in\{0,n\}$. Suppose $\psi$ is concentrated in degree $0$ and $\mono\ne0$. By \cref{geeii}, for the $\eta_{ij}$, $\theta_{ij}$-conditions, $\gamma(\eta_{ij})$, $\gamma(\theta_{ij})$ shift exterior degree by $\pm2$; since $ \psi$ is concentrated in a single degree, the pairings vanish automatically. The first equation of
\cref{theo:monopole} reads 
\[n\,|\psi^{(0)}|^{2}=\mono, \qquad\text{so }\mono>0\text{ and }|\psi^{(0)}|\text{ is pointwise constant.}\] 
By \cref{uweights}, the degree-zero component is a section of $\mathcal{E}_{\nu-1}$, so \cref{cor:muzero}, gives $\nu=1$. The solutions form the circle of radius $\sqrt{\mono/n}$, on which $\mathcal{G}_{eq}\simeq U(1)$ acts simply transitively. Suppose now that $\psi$ is concentrated in degree $n$. This requires $n$ even, since otherwise the summand lies in $S^{-}$ and $\ker D^{+}=0$ by \cref{coro:neven}. Applying \cref{hodgeserre} and \cref{theo:parity} reduces this case to the previous one with $\mono$ replaced by $-\mono$ and $\nu$ by $-\nu$, giving $\nu=-1$, $\mono<0$ and $|\psi^{(n)}|=\sqrt{|\mono|/n}$. Finally, if $\mono=0$ then in either concentration degree the first equation forces $\psi=0$, and the invariant connection $\Lambda_{E_{\nu}}$ is unconstrained.
\end{proof}
\begin{corollary}\label{cor:injection}
The quadratic map $\Quad$ induces an injection $\mathcal{M}^{\infty}_{\nu,eq}(\mono)\hookrightarrow \mathcal{X}_{n}(\mono)$, whose image is a single point in the Veronese cone whenever the source is non-empty.
\end{corollary}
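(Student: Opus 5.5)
The plan is to read off the corollary from \cref{theo:compactmoduli}, the definition of $\mathcal{X}_n(\mono)$, and the remark identifying $\mathcal{M}^\infty_{\nu,eq}(\mono)$ with $\mathcal{Z}^\infty_{\nu,eq}(\mono)/\mathcal{G}_{eq}$. First, I check that the map is well defined. For $\psi\in\mathcal{Z}^\infty_{\nu,eq}(\mono)$, each value $\Quad_g(\psi)$ lies in $\mathcal{X}_n(\mono)$ by definition. The equivariant gauge group is $\mathcal{G}_{eq}\simeq U(1)$ by \cref{prop:gaugeeq}, and it acts by constant phases $\psi\mapsto u\psi$. Since $\Quad(u\psi)=u\bar u\,\psi\psi^*=\Quad(\psi)$, the value $\Quad_g$ descends to the quotient.

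The main point is the dependence on $g$, because a priori $\Quad_g$ is only a pointwise map. I handle this with the explicit list of cases in \cref{theo:compactmoduli}.
\begin{itemize}
\item In the irreducible case $\mono>0$, $\nu=1$, the spinor is concentrated in degree zero, with $|\psi^{(0)}|=\sqrt{\mono/n}$ constant. The degree-zero summand of $S^+\otimes E_\nu$ is one-dimensional, so in the basis of \cref{uweights} one has $\Quad_g(\psi)=\tfrac{\mono}{n}W_{11}$ for every $g$. This value is independent of $g$.
\item In the case $\mono<0$, $\nu=-1$, $n$ even, one gets $\Quad_g(\psi)=\tfrac{|\mono|}{n}W_{m^+m^+}$ in the same way, using the top-degree basis vector $u_{\mathrm{top}}$.
\item In the case $\mono=0$, we have $\psi=0$, and the image is $0\in\mathcal{V}$, which is the apex of the cone.
\end{itemize}
In each case the constancy hypothesis stated before the corollary holds, so the map lands in a single copy of $\Herm(m^+)$. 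It remains to verify that these matrices lie in $\pi^{-1}(v_n)$. By \cref{theo:monopole} this is exactly the monopole system, so there is nothing further to check.

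For injectivity, suppose $\Quad_g(\psi)=\Quad_g(\psi')$. A rank-one positive semi-definite matrix $\psi\psi^*$ determines $\psi$ up to a phase. Constancy in $g$, together with connectedness of $G$, gives a constant phase, and this phase is an element of $\mathcal{G}_{eq}$.

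The only obstacle I expect is justifying that the phase is constant rather than $g$-dependent. I would avoid this issue by using the explicit description of \cref{theo:compactmoduli}: the solution set is a single circle, and $\mathcal{G}_{eq}$ acts on it simply transitively. Injectivity then follows from the fact that the source has at most one point. The image is a single point by the computation above.
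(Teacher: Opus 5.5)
Your proposal is correct and follows the paper's route. The paper gives no separate proof: it reads the corollary off from \cref{theo:compactmoduli}, together with the preceding remark that $\Quad$ is invariant under the constant phases of $\mathcal{G}_{eq}\simeq U(1)$ and that constancy is only needed to land in a single copy of $\Herm(m^+)$; your explicit values $\tfrac{\mono}{n}W_{11}$, $\tfrac{|\mono|}{n}W_{m^+m^+}$ and $0$ make this precise.

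You are also right to drop the claim that connectedness of $G$ forces a constant phase, since a $g$-dependent phase $e^{i\theta(g)}$ leaves every $\Quad_g$ unchanged. Resting injectivity on the source having at most one point is the correct fix.
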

\medskip
In the four-dimensional case, for the canonical $\Spin^c$ structure $\mathfrak {s}_0$ and one distinguished value of the parameter, the assumption $A\in\mathcal{A}^G$ can be dropped: the solution of the $(-\tfrac{\nu}{2\lambda}+\tfrac12)i\omega$-perturbed equations for $\nu=1$ is unique up to
gauge equivalence and equals $\bigl((2,0),\Lambda_{E_{1}}\bigr)$, by the analytic argument of \cite{W94} in general and by a representation-theoretic argument for Hermitian locally symmetric spaces in \cite{Sp00}. %In particular for $\nu=1$ and any perturbation $(-\tfrac{\nu}{2\lambda}+s)i\omega$ with $s\ge0$, every solution is equivariant. 
Consistently with \cref{theo:compactmoduli}, this value of $t$ gives $\mono[2]=8$ and $|\psi|=\sqrt{\mono[2]/2}=2$.
\begin{theorem}\label{ZM2}
    Let ${(G/K,\lambda{B_{\fg}}_{|_{\fp}}})$ be a $2n$-dimensional irreducible non-compact Hermitian symmetric space, $ {E_\nu}$ be a one-dimensional admissible twist and $ {A\in \mathcal{A}^G}$. The equivariant solutions and moduli are given as follows.
\begin{align*}
{\mathcal{Z}_{\nu,eq}^{L^2}(\mono)}\ &\simeq\
\begin{cases}
\varnothing, & {\mono\neq 0,}\\[2pt]
\text{the singleton } \{\psi=0\}, & {\mono=0.}%\\[2pt]
%{\varnothing}, &{\mono<0.}\\[2pt]
\end{cases}    \\
{\mathcal{M}_{\nu,eq}^{L^2}(\mono)}\ &\simeq\
\begin{cases}
{\varnothing,} & \hspace{12mm}  {\mono\neq0,}\\[2pt]
{\{\text{reducible pt}\},} & \hspace{12mm} {\mono=0.}
%\\[2pt]{\varnothing,} &{\mono<0.}
\end{cases}
\end{align*}
\end{theorem}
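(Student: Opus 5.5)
The argument is a constant-norm argument combined with infinite volume. I will show that any equivariant solution with $\mono\neq0$ has a spinor of non-zero constant pointwise norm, and that such a spinor cannot lie in $L^2$ on the non-compact $G/K$.

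The first step is to extract the pointwise consequence of the monopole equation. By \cref{theo:monopole}, for $A\in\mathcal A^G$ the $it\omega$-perturbed monopole equation is equivalent to
\[
\sum_{k}(n-4k)\bigl|\psi^{(2k)}(g)\bigr|^{2}=\mono(t,\nu,\lambda)
\]
at every $g\in G$, together with the vanishing of the $\eta_{ij},\theta_{ij}$ pairings. Since $|n-4k|\le n$ for every $0\le k\le\lfloor n/2\rfloor$, the triangle inequality gives
\[
|\mono|\le\sum_k|n-4k|\,\bigl|\psi^{(2k)}(g)\bigr|^{2}\le n\,|\psi(g)|^{2}.
\]
Hence $|\psi(g)|^{2}\ge|\mono|/n$ pointwise. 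This avoids having to prove that $|\psi|$ is exactly constant: a uniform lower bound suffices.

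The second step is integrability. The norm $|\psi|$ is a $K$-invariant function on $G$ and descends to $G/K$. Because $G$ is non-compact, $G/K$ is a complete Riemannian manifold diffeomorphic to $\R^{2n}$ via the Cartan decomposition. Its volume is infinite: the $G$-invariant volume of $G/K$ is a Haar measure of the non-compact unimodular group $G$ pushed down along the compact fibres $K$. Therefore
\[
\|\psi\|_{L^2}^{2}\ge\frac{|\mono|}{n}\,\vol(G/K)=\infty
\]
whenever $\mono\neq0$. So no element of $\ker_{L^2}D_0^+$ can satisfy the monopole system, and $\mathcal Z^{L^2}_{\nu,eq}(\mono)=\varnothing$ for $\mono\neq0$. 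This is consistent with \cref{coro:nevennc}: the discrete-series kernel is non-zero, but none of its vectors survives.

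The third step is the case $\mono=0$. The zero spinor lies in $\ker_{L^2}D_0^+$ and satisfies all the equations, so $\{\psi=0\}\subseteq\mathcal Z^{L^2}_{\nu,eq}(0)$.

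\textbf{Main obstacle.} The difficulty is the reverse inclusion, namely showing that a non-zero $L^2$ harmonic spinor cannot satisfy the system when $\mono=0$. Here the lower bound degenerates, because for $n\ge4$ the signed sum can vanish with $\psi\neq0$ (e.g.\ through the coefficient-zero middle components when $4\mid n$). My first approach uses equivariance. The solution set is $G$-stable and consists of spinors satisfying pointwise quadratic identities. I would then invoke the structure of $\ker_{L^2}D^+$ as a single irreducible discrete series with $K$-types described by \cref{coro:nevennc}, and use \cref{prop:kernelrep} to argue that a solution must be $G$-invariant up to the gauge $U(1)$ of \cref{prop:gaugeeq}. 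A $G$-invariant section has constant norm, so it is either zero or not in $L^2$.

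If that route fails, I would fall back on reading the moduli statement as the paper does in the compact case, counting only the reducible configuration at $\mono=0$. The connection is then the unique invariant $\Lambda_{E_\nu}$ from \cref{eq:Lambdavanish}. Quotienting by $\mathcal G_{eq}\simeq U(1)$ gives the reducible point for $\mono=0$ and the empty set otherwise, which yields both displays of the theorem.
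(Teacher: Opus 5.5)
Your argument for $c_n\neq0$ is correct. It is also slightly more economical than the paper's: the bound $|c_n|\le\sum_k|n-4k|\,|\psi^{(2k)}(g)|^2\le n|\psi(g)|^2$, together with infinite volume, excludes $L^2$ solutions without needing to know where the kernel lives.

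The gap is the case $c_n=0$. You correctly isolate it but do not close it, and it is exactly the part of the statement that makes $\mathcal{Z}^{L^2}_{\nu,eq}(0)$ a singleton. The equivariance route fails. \cref{prop:kernelrep} only makes the solution set $G$-stable; it gives no reason why, for an individual solution, $g^*\psi$ should equal $u_g\psi$ with $u_g\in U(1)$. Inside an infinite-dimensional irreducible representation no non-zero vector spans a $G$-stable line. So that premise is equivalent to the conclusion $\psi=0$, and the route is circular. The fallback is a reinterpretation of the statement, not a proof. Note that in \cref{theo:compactmoduli} the paper does not restrict to reducible configurations either; it derives $\psi=0$ from the first monopole equation.

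The missing ingredient is that $\ker_{L^2}D^+(\mathcal{E}_\nu)$ is concentrated in a single exterior degree $2k_0\in\{0,n\}$. The paper takes this from Theorem 9.3 of Atiyah--Schmid. Concretely, every element of the discrete-series realisation takes its values in the copy, inside $S\otimes E_\nu$, of the lowest $K$-type of the discrete series. Here that is the character $(\nu+1)\rho_{\fp}$ for $\nu\ge1$, carried by $u_{\mathrm{top}}\otimes e$, and $(\nu-1)\rho_{\fp}$ for $\nu\le-1$, carried by $u_{\varnothing}\otimes e$ (cf.\ \cref{prop:weights}). For $|\nu|<1$ the kernel is zero anyway. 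Every $K$-type of $S\otimes E_\nu$ lies in one degree, since $\sprep(Z_0)=\tfrac12\gamma(\omega)$ acts on degree $d$ by $\tfrac{i}{2}(2d-n)$.

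With this, the first equation of \cref{theo:monopole} collapses to $\pm n\,|\psi(g)|^2=c_n$, with a non-vanishing coefficient. This gives in one stroke a non-zero constant norm when $c_n\neq0$, hence not $L^2$, and $\psi\equiv0$ when $c_n=0$, exactly as in the proof of \cref{theo:compactmoduli}. Once this is in place, your passage from $\mathcal{Z}$ to $\mathcal{M}$ via the unique invariant connection $\Lambda_{E_\nu}$ and $\mathcal{G}_{eq}\simeq U(1)$ is fine.
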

\begin{proof}
    By Theorem 9.3 of \cite{AS77} the $L^{2}$-kernel is again concentrated in degree $0$ or $n$. If $\mono\ne0$, the first equation of \cref{theo:monopole} forces $|\psi|^{2}$ to be a non-zero constant function, hence $\|\psi\|_{L^{2}}=\infty$ on a space of infinite volume, which is a contradiction. Therefore, $\mono=0$, and then $|\psi|=0$.
\end{proof}
\begin{remark}
    In the non-compact case, there exist equivariant solutions which are smooth but not ${L^2}$. For such solutions, $\psi$ lies in ${\ker_{C^{\infty}}D^+}$ but outside the discrete-series realisation ${\ker_{L^2}D^+}$, and the compact finite-energy variational argument does not apply. 
\end{remark}

\begin{lemma}
    Let ${(G/K,\lambda{B_{\fg}}_{|_{\fp}}})$ be a $2n$-dimensional irreducible compact Hermitian symmetric space. If %${\mathcal{M}_\nu^\infty(c_n)\coloneqq}$ ${ \mathcal{S}_\nu^\infty/\mathcal{G}}$ is the moduli space and 
    ${(\mathcal{M}_\nu^\infty(c_n))^G}$ denotes the fixed-point locus of the ${G}$-action,
\begin{equation}\label{eqequalG}
    {(\mathcal{M}_\nu^\infty(c_n))^G=\mathcal{M}_{\nu,eq}^\infty(c_n)}.
\end{equation}
\end{lemma}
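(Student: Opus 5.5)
The claim is the equality $(\mathcal{M}_\nu^\infty(c_n))^G=\mathcal{M}_{\nu,eq}^\infty(c_n)$ inside $\mathcal{M}_\nu^\infty(c_n)$. I would prove the two inclusions separately.

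\textbf{The inclusion $\mathcal{M}_{\nu,eq}^\infty(c_n)\subseteq(\mathcal{M}_\nu^\infty(c_n))^G$.} First I make the $G$-action on $\mathcal{M}_\nu^\infty(c_n)$ explicit. $G$ acts on $\mathcal{C}_\nu$ by pulling back sections and connections through left translation on the homogeneous bundles. The equations \cref{swcngen} are $G$-invariant, because $\omega$, $\Phi$, $\gamma$ and $\sigma$ are all $G$-invariant. This action normalises $\mathcal{G}$, so it descends to the quotient. Now take an equivariant solution $(\psi,A)$ with $A\in\mathcal{A}^G$. Then $g^*A=A$ for every $g$, and $g^*\psi$ is again a solution with the same connection. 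By \cref{theo:compactmoduli}, the equivariant solutions with fixed $c_n$ form either a single $\mathcal{G}_{eq}\simeq U(1)$-orbit (a circle on which $U(1)$ acts simply transitively) or the reducible point. Hence $g^*\psi=u_g\psi$ for a constant $u_g\in U(1)$, and the class $[(\psi,A)]$ is $G$-fixed.

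\textbf{The inclusion $(\mathcal{M}_\nu^\infty(c_n))^G\subseteq\mathcal{M}_{\nu,eq}^\infty(c_n)$.} Let $[(\psi,A)]$ be $G$-fixed. Then for every $g\in G$ there is $u_g\in\mathcal{G}$ with $g^*(\psi,A)=u_g\cdot(\psi,A)$. The plan is to gauge-transform $(\psi,A)$ so that $A$ becomes genuinely invariant. On the connection side, write $A=A_0+a$ with $A_0\in\mathcal{A}^G$ fixed and $a$ an $i\R$-valued $1$-form. Since $F_A$ is gauge invariant, $g^*F_A=F_A$, so $F_A$ is a $G$-invariant $2$-form. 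A $G$-invariant closed $2$-form on an irreducible Hermitian symmetric space is a multiple of $\omega$, because $(\Lambda^2\fp^*)^K=\C\omega$. Moreover $F_A$ and $F_{A_0}$ represent the same class $2\pi i\,c_1(\mathcal{L}_\nu)$, so $F_A=F_{A_0}$ and $da=0$. Compact irreducible Hermitian symmetric spaces are simply connected, so $a=df$ for some function $f$. Applying the gauge transformation $e^{f/2}$ then replaces $A$ by $A_0$. After this step the stabiliser condition reads $g^*(\psi,A_0)=u_g\cdot(\psi,A_0)$. Since $A_0$ is invariant, $u_g^{-1}du_g=0$, so each $u_g$ is constant, exactly as in \cref{prop:gaugeeq}. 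Thus the representative $(\psi,A_0)$ has $A_0\in\mathcal{A}^G$ and is an equivariant solution.

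\textbf{Well-definedness on classes.} It remains to check that two equivariant representatives which are $\mathcal{G}$-equivalent are already $\mathcal{G}_{eq}$-equivalent. This follows again from \cref{prop:gaugeeq}: a gauge transformation carrying $A_0$ to another invariant connection must have $u^{-1}du$ invariant, hence zero, so $u$ is constant.

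\textbf{Expected main obstacle.} The delicate step is the cohomological one: showing that a connection with $G$-invariant curvature is gauge equivalent to an invariant one. This needs the irreducibility statement $(\Lambda^2\fp^*)^K=\C\omega$ together with $H^1(G/K)=0$. A secondary point is that $u_g$ might fail to assemble into a homomorphism $G\to\mathcal{G}$. This does not matter here, since we only need each individual $g^*(\psi,A)$ to be equivalent to $(\psi,A)$.
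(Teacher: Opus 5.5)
Your proof is correct, but it produces the invariant representative by a different route from the paper.

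\textbf{How you get an invariant connection.} The paper averages $A$ over the compact group $G$. For a $U(1)$-connection the curvature is affine in $A$, so the average is a $G$-invariant connection with the same, already invariant, curvature. The closed difference is then gauged away using $H^1(G/K,\R)=0$. You instead pin down $F_A$ directly. $G$-invariance together with $(\Lambda^2\fp^*_\C)^K=\C\omega$ makes $F_A$ a constant multiple of $\omega$. Chern--Weil then forces $F_A=F_{A_0}$. That last step implicitly uses $[\omega]\neq0$ when you write $F_A=F_{A_0}$; this holds because $M$ is compact K\"ahler, and you should say so.

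\textbf{What each route buys.} Averaging is shorter and uses nothing about the isotropy representation. It would work on any compact homogeneous space with $H^1=0$ and abelian structure group. Your argument uses the Hermitian symmetric structure. In exchange, it shows that every $G$-fixed class has curvature exactly $\tfrac{\nu i}{2\lambda}\omega$.

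\textbf{Differences in the remaining steps.} The paper goes further and shows that $g^*\psi=\psi$, by arguing that $g\mapsto u_g$ is a continuous character of the connected simple group $G$. You correctly observe this is unnecessary, since $\mathcal{S}^\infty_{\nu,eq}$ only requires $A\in\mathcal{A}^G$. Conversely, you make explicit two points the paper leaves implicit. The first is injectivity of $\mathcal{M}^\infty_{\nu,eq}\to\mathcal{M}^\infty_\nu$, which you get from \cref{prop:gaugeeq}. The second is an actual argument for the inclusion the paper calls immediate, using the simply transitive $U(1)$-action in \cref{theo:compactmoduli}.
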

\begin{proof}
    If ${[\psi,A]}$ is fixed by $G$, then $F_A$ is $G$-invariant. We average $A$ over the compact group $G$. The averaged connection is $G$-invariant and has the same curvature, because $F_A$ is already invariant. Hence, the difference between the two connections is a closed $i\R$-valued one-form. Since 
    \[H^1(G/K,\R) =0\]
    for compact irreducible Hermitian symmetric spaces, this difference is removed by a gauge transformation. Thus, the gauge class has a representative with $A\in A^G$. For such a representative, fixedness of the class gives, for each $g\in G$, a gauge transformation $u_g$ with $g^*\psi = u_g\psi$. Because $A$ is $G$-invariant, \cref{prop:gaugeeq} implies that $u_g$ is constant on $G/K$. If $\psi\neq0$ the cocycle relation makes $g\mapsto u_g$ a continuous character of $G$. Since $G$ is connected and simple, this character is trivial. Hence $g^*\psi = \psi$ for all $g$. If $\psi=0$, the conclusion is immediate. The reverse inclusion is immediate from \cref{theo:compactmoduli}.
\end{proof}
We recall from \cref{SWpara} the parametrisation of effective divisors of a Kähler manifold through solutions of the vortex equations. \cref{equivreg}, which recovers Theorem~D of the introduction, is the exact equivariant analogue of \cref{Brad1}. We set $\mu\coloneq\deg_\omega(\mathcal{E}_{\nu-1})$ and consider the reparametrisation $t\mapsto nt+ \tfrac{n}{2\lambda}$, under which
\[b\Bigl(\deg_\omega(\mathcal{E}_{\nu-1}),\,nt+ \frac{n}{2\lambda}\Bigr)
=nt+ \frac{n}{2\lambda}-\frac{4\pi\deg_\omega(\mathcal{E}_{\nu-1})}{\vol(M)}=n\Bigl(t+\frac{\nu}{2\lambda}\Bigr)=\frac{\mono(t,\nu,\lambda)}{8}.\]
The two problems are not identical: the equivariant Seiberg--Witten equations carry the curvature of $\mathcal{L}_{\nu}=\mathcal{E}_{\nu}\ox\mathcal{E}_{\nu}$, whereas the vortex field is a section of $\mathcal{F}=\mathcal{E}_{\mu}$ and carries the curvature of that.
\begin{theorem}[Equivariant rigidity]\label{equivreg}
    Let $M=G/K$ be compact irreducible, let $E_\nu$ be a one-dimensional admissible twist and let $\mu= \deg_\omega(\mathcal{E}_{\nu-1})$, $\nu\geq 1$. Then
   \begin{equation}
 \left(\mathcal{M}_\nu^\infty(\mono)\right)^G\cong
 \begin{cases}
   \left(\Div_+^{\mu}(M)\right)^G,&\mono>0,\\
   \left(\Hol^{\mu}(M)\right)^G,&\mono=0,\\
   \varnothing,&\mono<0.
 \end{cases}
 \end{equation}
\end{theorem}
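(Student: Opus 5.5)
By the preceding lemma, \cref{eqequalG} identifies the left-hand side with $\mathcal{M}^\infty_{\nu,eq}(\mono)$, and \cref{theo:compactmoduli} computes that set. The plan is to compute the right-hand side separately, via the $G$-action on Bradlow's moduli spaces, and match the two case by case. The bridge between the problems is the reparametrisation recorded just before the statement, $b(\mu,nt+\tfrac{n}{2\lambda})=\mono/8$. It makes the sign of $\mono$ coincide with the sign of Bradlow's parameter $b$, so the three cases of \cref{Brad1} line up with the three cases of the theorem.

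First I will determine the $G$-fixed loci on the Bradlow side. For $\mono<0$, Bradlow's moduli space is already empty, and so is $\mathcal{M}^\infty_{\nu,eq}(\mono)$: for $\nu\ge1$, the only non-empty negative case in \cref{theo:compactmoduli} needs $\nu=-1$. For $\mono=0$, the group $G$ acts on $\Hol^\mu(M)$ by pullback. Since $H^1(M,\mathcal{O})=0$ on a compact Hermitian symmetric space (Bott vanishing, or simply $b_1=0$), $\Hol^\mu(M)$ is a single point, namely the homogeneous line bundle $\mathcal{E}_{\nu-1}$. That point is fixed, which matches the single reducible point. For $\mono>0$, a $G$-invariant effective divisor $D$ is a union of $G$-orbits. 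Because $G$ acts transitively on $M$, every non-empty $G$-invariant subset is all of $M$, so the support of $D$ must be empty. This gives $(\Div_+^\mu(M))^G=\{0\}$ if $\mu=0$ and $\varnothing$ otherwise.

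Next I will show that $\mu=\deg_\omega(\mathcal{E}_{\nu-1})=0$ holds exactly when $\nu=1$. By \cref{lem:Fomega}, the curvature of $\mathcal{E}_{\nu-1}$ is $\tfrac{(\nu-1)i}{4\lambda}\omega$, and its degree is proportional to $(\nu-1)\int_M\omega^n\neq0$ unless $\nu=1$. Hence for $\mono>0$ the fixed divisor locus is a point precisely when $\nu=1$ and is empty otherwise. This agrees with \cref{theo:compactmoduli}: one point for $\nu=1$, and $\varnothing$ for $\nu\neq1$ with $\nu\ge1$.

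The step I expect to need the most care is stating the correspondence itself, since, as remarked before the theorem, the two problems live on different bundles ($\mathcal{L}_\nu$ versus $\mathcal{F}=\mathcal{E}_\mu$). I will use only a set-theoretic matching case by case, not a natural map between the moduli spaces. To make this more than a coincidence of cardinalities, I will note one further fact. The degree-zero component $\psi^{(0)}$ of an equivariant solution is a section of $\mathcal{E}_{\nu-1}$, by \cref{uweights}, and is holomorphic because it lies in $H^{(0,0)}$. It has constant norm, so its zero divisor is empty, and this is exactly the invariant divisor $0$ found above. I will therefore define the bijection by $[\psi,A]\mapsto\mathrm{div}(\psi^{(0)})$ for $\mono>0$, and by $[0,A]\mapsto\mathcal{E}_{\nu-1}$ with its holomorphic structure for $\mono=0$.
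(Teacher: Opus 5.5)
Your proposal is correct and follows essentially the same route as the paper. Both identify the fixed locus with $\mathcal{M}^\infty_{\nu,eq}(\mono)$ via \cref{eqequalG}, read off \cref{theo:compactmoduli}, and match the result case by case with the $G$-invariant divisors (only the zero divisor) and the $G$-fixed line bundles; you additionally supply details the paper only asserts, namely the transitivity argument for invariant divisors, $\mu=0\iff\nu=1$ from \cref{lem:Fomega}, and the explicit map $[\psi,A]\mapsto\mathrm{div}(\psi^{(0)})$.

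One small point needs tightening. $H^1(M,\mathcal{O})=0$ only shows that a holomorphic line bundle is determined by its $c_1$. To conclude that $\Hol^\mu(M)$ is a single point you also need the degree to determine $c_1$, which uses $H^2(M,\Z)\cong\Z$; this holds here because $M$ is simply connected with $b_2=1$.
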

\begin{proof}
    On a Hermitian symmetric space there are no non-zero effective $G$-invariant divisors, so $(\Div_+^{\mu}(M))^{G}$ $=\varnothing$ for every $\mu>0$, equivalently $\nu>1$. The case $\mu=0$, equivalently $\nu=1$, describes the zero divisor which corresponds to the nowhere-vanishing section of \cref{theo:compactmoduli}. For $\mono=0$, the $G$-equivariant holomorphic line bundles are parametrised by $\nu$ as in \cref{1-1cor} and the same holds for the reducible points of \cref{theo:compactmoduli}.
\end{proof}
\begin{remark}
    
For $\nu\leq -1$ and $n$ even, \cref{theo:parity} identifies the moduli at $(c_n,\nu)$ with those at $(-c_n,-\nu)$, so the negative branch is the Serre-dual copy of the same trichotomy with $\mu=\deg_\omega(\mathcal{E}_{-\nu-1})$. For $n$ odd, it is empty by \cref{coro:neven}.

\end{remark}

\newpage

\noindent{\bf Acknowledgements.} {The results presented in this paper largely originate from the author's doctoral thesis. The author would like to thank Panagiotis Batakidis and Salah Mehdi for their unwavering support and encouragement over the years. She would also like to thank Manousos Maridakis for introducing her to Gauge Theory and for the many occasions on which she has profited from his insights. 
\\
The present research project was supported by the Hellenic Foundation for Research and Innovation (HFRI) under the “4th Call for H.F.R.I. Ph.D. Fellowships” (F.N.: 10858).}

\end{document}